\newtheorem{prop}{Proposition}[section]
\newtheorem{lemma}[prop]{Lemma}
\newtheorem{cor}[prop]{Corollary}
\newtheorem{thm}[prop]{Theorem}
\newtheorem{defn}[prop]{Definition}
\newtheorem{rem}[prop]{Remark}
\newtheorem*{quantACCPropProp}{Proposition \ref{quantACCProp}}
\newcommand{\mc}{\mathcal{C}}
\newcommand{\CC}{\mathbb{C}}
\newcommand{\RR}{\mathbb{R}}
\newcommand{\NN}{\mathbb{N}}
\newcommand{\Poly}{\operatorname{Poly}}
\newcommand{\Deg}{\operatorname{deg}}
\newcommand{\FP}{K \mathbf{P}}
\newcommand{\BZ}{Z}
\newcommand{\codim}{\operatorname{codim}}
\newcommand{\pts}{\mathcal{P}}
\newcommand*\bell{\ensuremath{\boldsymbol\ell}}
\newcommand{\smooth}{\operatorname{reg}}
\newcommand{\mult}{\operatorname{mult}}
\newcommand{\complexity}{\operatorname{complexity}}
\newcommand{\FZT}{F_{Z(T)}}
\newcommand{\Flec}{\operatorname{Flec}}
\newcommand{\chara}{\operatorname{char}}
\newcommand{\finiteSetCurves}{\mathcal{L}}
\newcommand{\chowVarietyDegD}{\mathcal{C}_{3,D}}
\newcommand{\tildeChowVarietyDegD}{\mathcal{C}^*_{3,D}}
\newcommand{\chowVarietyDegExactlyD}{\tilde{\mathcal{C}}_{3,D}}
\newcommand{\tildeChowVarietyDegExactlyD}{\tilde{\mathcal{C}}^*_{3,D}}
\newcommand{\ProjChowVarietyDegD}{\hat{\mathcal{C}}_{3,D}}
\newcommand{\tildeProjChowVarietyDegD}{\hat{\mathcal{C}}^*_{3,D}}
\newcommand{\defeq}{:=}
\newtheorem*{mainStructureThmThm}{Theorem \ref{mainStructureThm}}
\newtheorem*{implicationsOfDoublyRuledProp}{Proposition \ref{implicationsOfDoublyRuled}}
\newtheorem*{degreeReductionProp}{Proposition \ref{degreeReduction}}
\begin{document}
\title{Algebraic curves, rich points, and doubly-ruled surfaces}

\author{
Larry Guth\thanks{Massachusetts Institute of Technology, Cambridge, MA,  {\sl lguth@math.mit.edu}.}
\and
Joshua Zahl\thanks{Massachusetts Institute of Technology, Cambridge, MA,  {\sl jzahl@math.mit.edu}.}
}
\date{\today}

\maketitle
\begin{abstract}
We study the structure of collections of algebraic curves in three dimensions that have many curve-curve incidences. In particular, let $k$ be a field and let $\mathcal{L}$ be a collection of $n$ space curves in $k^3$, with $n<\!\!<(\operatorname{char}(k))^2$ or $\operatorname{char}(k)=0$. Then either A) there are at most $O(n^{3/2})$ points in $k^3$ hit by at least two curves, or B) at least $\Omega(n^{1/2})$ curves from $\mathcal{L}$ must lie on a bounded-degree surface, and many of the curves must form two ``rulings'' of this surface.

We also develop several new tools including a generalization of the classical flecnode polynomial of Salmon and new algebraic techniques for dealing with this generalized flecnode polynomial.
\end{abstract}

\section{Introduction}\label{introSection}
If $\mathcal{L}$ is a collection of $n$ lines in the plane, then there can be as many as $\binom{n}{2}$ points that are incident to at least two lines. If instead $\mathcal{L}$ is a collection of $n$ lines in $\RR^3$, there can still be $\binom{n}{2}$ points that are incident to two or more lines; for example, this occurs if we choose $n$ lines that all lie in a common plane in $\RR^3$, with no two lines parallel. A similar number of incidences can be achieved if the lines are arranged into the rulings of a regulus. This leads to the question: if we have a collection of lines in $\RR^3$ such that many points are incident to two or more lines, must many of these lines lie in a common plane or regulus?

This question has been partially answered by the following theorem of Guth and Katz:
\begin{thm}[Guth-Katz \cite{GuthKatz}]\label{linesThm}
Let $\mathcal{L}$ be a collection of $n$ lines in $\RR^3$. Let $A\geq 100n^{1/2}$ and suppose that there are $\geq 100An$ points $p\in\RR^3$ that are incident to at least two points of $\mathcal{L}$. Then there exists a plane or regulus $Z\subset\RR^3$ that contains at least $A$ lines from $\mathcal{L}$.
\end{thm}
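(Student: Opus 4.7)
The plan is to combine the polynomial method with the classical Cayley--Salmon theory of ruled surfaces, encoded algebraically by the flecnode polynomial $\Flec$.

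First, I would produce a nonzero polynomial $P$ of degree $D \sim n^{1/2}$ that vanishes on every line of $\mathcal{L}$. This is a dimension count: polynomials on $\RR^3$ of degree at most $D$ form a vector space of dimension $\binom{D+3}{3} \sim D^3/6$, while vanishing identically on a line imposes $D+1$ linear conditions, so $D \sim n^{1/2}$ with a small enough constant suffices. Factor $P = \prod_i P_i$ into irreducible factors of degrees $d_i$, with $\sum_i d_i \leq D$; since a line is irreducible, each $\ell \in \mathcal{L}$ lies in some $Z(P_i)$. Let $n_i$ be the number of lines of $\mathcal{L}$ contained in $Z(P_i)$, so $\sum_i n_i \geq n$.

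Second, I would analyze each irreducible component $Z(P_i)$. If $d_i \in \{1,2\}$, then $Z(P_i)$ is a plane or, once it contains enough lines, a ruled quadric such as a regulus or pair of planes; if any such $Z(P_i)$ has $n_i \geq A$, we are done. If $d_i \geq 3$ and $n_i$ exceeds a modest multiple of $d_i^2$, the Cayley--Salmon theorem forces $Z(P_i)$ to be ruled, and since $d_i \geq 3$ it is in fact \emph{singly} ruled, carrying a canonical $1$-parameter family of lines; algebraically, $P_i \mid \Flec(P_i)$. A Bezout-type argument then shows that all but $O(d_i)$ lines of $\mathcal{L}$ on $Z(P_i)$ belong to this canonical ruling.

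Third, I would count $2$-rich points by source. Pairs of lines on a common plane or regulus with $n_i < A$ contribute at most $\binom{n_i}{2} \leq A n_i / 2$, which sums over components to $An/2$. Pairs of canonical ruling lines on a degree-$d_i \geq 3$ component contribute at most $O(d_i n_i)$, because two distinct ruling lines on a singly-ruled surface meet only along a low-degree sub-curve of $Z(P_i)$; summing yields $O(Dn) = O(n^{3/2})$. Pairs of lines lying on distinct components $Z(P_i), Z(P_j)$ produce at most one point each, and since each line $\ell \subset Z(P_i)$ meets the remaining surfaces in at most $\sum_{j \neq i} d_j \leq D$ points, the cross-component count is at most $nD = O(n^{3/2})$. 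Since $A \geq 100 n^{1/2}$ gives $n^{3/2} \leq An/100$, the total is strictly less than $100An$, contradicting the hypothesis and forcing some plane or regulus to contain $\geq A$ lines.

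The principal obstacle is the structural step for singly-ruled irreducible surfaces of degree $\geq 3$: on such a surface one must show both that almost all lines of $\mathcal{L}$ belong to the canonical ruling (up to $O(d_i)$ exceptions) and that the canonical ruling lines pairwise intersect in only $O(d_i n_i)$ total points. The flecnode identity $P_i \mid \Flec(P_i)$, combined with Bezout arguments comparing hypothetical ``non-canonical'' lines on $Z(P_i)$ against the flecnode locus and suitable derivative polynomials, is the essential technical tool and the point where the argument is most delicate.
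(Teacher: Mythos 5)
The paper does not prove Theorem~\ref{linesThm}; it cites it from Guth--Katz \cite{GuthKatz} and instead proves the generalization, Theorem~\ref{mainStructureThm}, by a rather different route (generalized flecnodes, constructible conditions, a quantitative ascending chain condition, doubly ruled surfaces). Your outline tracks the original Guth--Katz strategy reasonably well at the level of keywords, but there is a genuine gap in the degree-reduction step that the rest of the argument does not repair.

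You take $P$ of degree $D \sim n^{1/2}$ by a plain dimension count. That degree is too large. After factoring $P = \prod_i P_i$ with $\sum_i d_i \le D$, nothing stops one irreducible component $Z(P_i)$ from having $d_i$ on the order of $D$ (so $d_i^2$ on the order of $n$) while carrying a constant fraction of the $n$ lines. For such a component, $n_i$ is only \emph{comparable} to $d_i^2$, so the Cayley--Salmon step, which requires roughly $n_i > d_i(11d_i - 24)$ before it forces $P_i \mid \Flec (P_i)$, need not engage at all. You then have no structural control on $Z(P_i)$, and your only bound on the $2$-rich points it produces is the trivial $\binom{n_i}{2}$, which can be $\Omega(n^2) \gg 100An$ when $A$ is near $100 n^{1/2}$. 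The fix, used by Guth--Katz and encoded in Proposition~\ref{degreeReduction} of this paper, is a sharper degree reduction giving $\deg P \lesssim n/A$. Then $\sum_i d_i^2 \le (\deg P)^2 \lesssim (n/A)^2 \le n/10^4$, and since $\sum_i n_i \ge n$, pigeonholing produces a component with $n_i \gtrsim 10^4\, d_i^2$, well inside the Cayley--Salmon range (this is exactly the inequality \eqref{pigeonHole1} in the paper's proof of Lemma~\ref{popCurveLem}). That sharper reduction is not a dimension count: it relies on random sampling plus the hypothesis that \emph{every} remaining line has $\gtrsim A$ rich points on it, which in turn has to be arranged by a pruning/induction step, deleting lines with few rich points and recursing, just as in the paper's derivation of Theorem~\ref{mainStructureThm} from Lemma~\ref{popCurveLem}. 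Without that refinement, the proof as you have written it breaks on a high-degree component carrying many lines.
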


In this paper, we will show that a similar result holds for more general curves in $k^3$:
\begin{thm}\label{degDCurvesThm}
Fix $D>0$. Then there are constants $c_1,C_1,C_2$ so that the following holds. Let $k$ be a field and let $\finiteSetCurves$ be a collection of $n$ irreducible curves in $k^3$ of degree at most $D$. Suppose that $\operatorname{char}(k)=0$ or $n\leq c_1 (\operatorname{char}(k))^2.$ Then for each $A\geq C_1n^{1/2}$, either there are at most $C_2An$ points in $k^3$ incident to two or more curves from $\finiteSetCurves$, or there is an irreducible surface $Z$ of degree $\leq 100D^2$ that contains at least $A$ curves from $\finiteSetCurves$.
\end{thm}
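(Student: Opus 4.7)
The plan is to follow the polynomial-method strategy of Guth-Katz, with the flecnode polynomial for lines replaced by the generalized flecnode machinery developed in this paper. Suppose for contradiction that the set $\pts$ of rich points satisfies $|\pts| > C_2 An$. Double-counting incidences gives $\sum_{\ell \in \finiteSetCurves} |\ell \cap \pts| \geq 2|\pts| \geq 2 C_2 An$, so many curves of $\finiteSetCurves$ pass through many rich points. By parameter counting, there is a nonzero polynomial $P \in k[x,y,z]$ of degree $E = O((An)^{1/3})$ vanishing on $\pts$. By B\'ezout, each curve $\ell \in \finiteSetCurves$ with $|\ell \cap \pts| > DE$ must lie in $Z(P)$; the hypothesis $A \geq C_1 n^{1/2}$, with $C_1$ chosen large in terms of $D$ and $C_2$, forces $DE \ll A$, and a dyadic pigeonhole argument then yields a subfamily $\finiteSetCurves' \subset \finiteSetCurves$ of size $\Omega(n)$ contained in $Z(P)$.

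The second step is to pass from $Z(P)$, whose degree may be as large as $(An)^{1/3}$, to an irreducible surface of degree $\leq 100 D^2$ containing many of the curves. This is where the main structural results of the paper come in. For each irreducible component $Y$ of $Z(P)$ that contains many curves from $\finiteSetCurves'$, Theorem \ref{mainStructureThm} gives a dichotomy: either $Y$ is doubly ruled by curves of degree $\leq D$, in which case Proposition \ref{implicationsOfDoublyRuled} bounds $\deg Y \leq 100 D^2$ and we stop; or the curves on $Y$ fail the tangency-implies-containment criterion of Theorem \ref{tangentImpliesTrappedProp}, so Proposition \ref{degreeReduction} yields a new polynomial of strictly smaller degree whose zero set still contains essentially all of $\finiteSetCurves'$. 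Iterating this second alternative must terminate in the doubly-ruled case, since the degree strictly decreases. At the end, averaging over the irreducible components of the resulting low-degree surface produces one component containing at least $A$ curves from $\finiteSetCurves$.

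The main obstacle is Step 2: executing the flecnode dichotomy for curves of degree up to $D$ rather than lines. For lines, Salmon's classical flecnode polynomial has degree $11 \deg(Z(P))$, and its vanishing on $Z(P)$ forces $Z(P)$ to be ruled, which is what makes the Guth-Katz argument so clean. For curves of degree $\leq D$, one instead needs a higher-jet analog (the generalized flecnode polynomial of the abstract), one must bound its degree by $O_D(\deg Z(P))$, and one must prove the contagion statement (Proposition \ref{contagiousflecnodes}) that a single generalized flecnode contact propagates across $Z(P)$. The characteristic hypothesis $n \leq c_1 (\chara k)^2$ enters precisely to ensure that the iterated formal derivatives defining this polynomial, which have degrees of order $(An)^{1/3} \lesssim n^{1/2}$, do not vanish identically modulo $\chara k$, and that the Bezout-type counts of flex contact remain valid.
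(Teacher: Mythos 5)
The paper's actual proof of this theorem is a one-liner: take $\mathcal{C}=\chowVarietyDegD$ in Theorem~\ref{mainStructureThm}, and then the degree bound $\leq 100D^2$ follows from Proposition~\ref{implicationsOfDoublyRuled}. Your proposal instead attempts to re-derive the substance of the structure theorem, which is not in itself illegitimate, but your sketch has several genuine gaps and logical confusions.

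First, your Step 1 uses naive polynomial interpolation through the set $\pts$ of rich points, giving $\deg P = O((An)^{1/3})$. The paper instead proves Proposition~\ref{degreeReduction}, a probabilistic random-sampling argument yielding $\deg P = O(n/A)$. At the critical threshold $A \sim n^{1/2}$ the two bounds agree, but the theorem is claimed for every $A \geq C_1 n^{1/2}$, and for larger $A$ the interpolation bound is genuinely too weak. Concretely, the pigeonhole step inside Lemma~\ref{popCurveLem} needs to find an irreducible component $Z_j$ of $Z(P)$ containing $\gtrsim (\deg Z_j)^2$ curves of $\finiteSetCurves$ (the hypothesis of Proposition~\ref{contagiousflecnodes}), and the weighted pigeonhole there produces $|\finiteSetCurves_j| \gtrsim \frac{n}{(\deg P)^2}(\deg Z_j)^2$; with $\deg P = O(n/A)$ this exceeds $(\deg Z_j)^2$ precisely when $A \gtrsim n^{1/2}$, but with $\deg P \sim (An)^{1/3}$ the inequality degenerates as $A$ grows and the contagion step cannot be fed. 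Your double-counting also does not, by itself, yield $\Omega(n)$ curves trapped in $Z(P)$: the incidence mass could concentrate on a few curves. The paper sidesteps this by stating Lemma~\ref{popCurveLem} under the hypothesis that \emph{every} curve has $\geq A$ rich points, and handling the general case by induction on $n$ (removing any curve with fewer than $C_4 A$ rich points).

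Second, your Step 2 is circular and mischaracterizes the machinery. You invoke Theorem~\ref{mainStructureThm} on each irreducible component $Y$ in order to prove Theorem~\ref{degDCurvesThm}, but Theorem~\ref{degDCurvesThm} is supposed to be an \emph{immediate corollary} of Theorem~\ref{mainStructureThm}; if you are permitted to cite it, you should just cite it directly with $\mathcal{C} = \chowVarietyDegD$ and stop. Moreover, the dichotomy you attribute to Theorem~\ref{mainStructureThm} (``either $Y$ is doubly ruled, or the curves on $Y$ fail the tangency-implies-containment criterion'') is not what that theorem asserts, and Theorem~\ref{tangentImpliesTrappedProp} is not a criterion that curves ``pass or fail'' but a uniform statement that high-order tangency at a generic point forces containment. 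Finally, the iteration you describe (``Proposition~\ref{degreeReduction} yields a new polynomial of strictly smaller degree\ldots iterating\ldots must terminate since the degree strictly decreases'') does not appear in the paper and is not how the argument works: Proposition~\ref{degreeReduction} produces a polynomial of degree $O(n/A)$ in one shot, not a sequence of strictly decreasing degrees. The actual chain of implications inside Lemma~\ref{popCurveLem} is: Lemma~\ref{DIsLargeWhenCommongComponent} shows each trapped curve contributes many $(2,\mathcal{C},r)$-flecnodal points; Proposition~\ref{contagiousflecnodes} propagates the flecnodal condition to a Zariski-open subset of $Z_0$; and Corollary~\ref{rdtImpliesRdInfty} (packaging Theorem~\ref{tangentImpliesTrappedProp}) upgrades flecnodal-everywhere to doubly ruled. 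The degree bound $100D^2$ is then a \emph{consequence} of being doubly ruled (Proposition~\ref{implicationsOfDoublyRuled}), not something obtained by repeatedly shrinking $\deg P$.
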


In fact, more is true. If all the curves in $\finiteSetCurves$ lie in a particular family of curves, then the surface $Z$ described above is ``doubly ruled'' by curves from this family. For example, if all the curves in $\finiteSetCurves$ are circles, then $Z$ is doubly ruled by circles; this means that there are (at least) two circles passing through a generic point of $Z$. In order to state this more precisely, we will first need to say what it means for a finite set $\finiteSetCurves$ of curves to lie in a certain family of curves. We will do this in Section \ref{chowVarietySec} and state a precise version of the stronger theorem at the end of that section.

\subsection{Proof sketch and main ideas}
As in the proof of Theorem \ref{linesThm}, we will first find an algebraic surface $Z$ that contains the curves from $\finiteSetCurves$. A ``degree reduction'' argument will allow us to find a surface of degree roughly $n/A <\!\!< n^{1/2}$ with this property. In \cite{GuthKatz}, the first author and Katz consider the flecnode polynomial, which describes the local geometry of an algebraic surface $Z\subset\RR^3$. This polynomial is adapted to describing surfaces that are ruled by lines. In the present work, we construct a generalization of this polynomial that lets us measure many geometric properties of a variety $Z\subset K^3$. In particular, we will find a generalized flecnode polynomial that tells us when a surface is doubly ruled by curves from some specified family.

We will also explore a phenomena that we call ``sufficiently tangent implies trapped.'' If a curve is ``sufficiently tangent'' to a surface, then the curve must be contained in that surface. More precisely, if we fix a number $D\geq 1$, then for any surface $Z\subset K^3$ we can find a Zariski open subset $O\subset Z$ so that any irreducible curve of degree $\leq D$ that is tangent to $Z$ at a point $z\in O$ to order $\Omega_{D}(1)$ must be contained in $Z$; the key point is that the necessary order of tangency is independent of the degree of $Z$. This will be discussed further in Section \ref{suffTangTrappedSec}. The result is analogous to the Cayley-Monge-Salmon theorem, which says that if there is a line tangent to order at least three at every smooth point of a variety, then this variety must be ruled by lines (see \cite{Salmon}, or \cite{Landsberg,Katz} for a more modern treatment).

Finally, we will prove several structural statements about surfaces in $K^3$ that are doubly ruled by curves. A classical argument shows that any algebraic surface in $K^3$ that is doubly ruled by lines must be of degree one or two. We will prove an analogous statement that any surface that is doubly ruled curves of degree $\leq D$. This will be done in Section \ref{propDoubRuledSurfSec}.
\subsection{Previous work}
In \cite{GuthKatz}, the first author and Katz show that given a collection of $n$ lines in $\RR^3$ with at most $n^{1/2}$ lines lying in a common plane or regulus, there are at most $n^{3/2}$ points in $\RR^3$ that are incident to two or more lines. This result was a major component of their proof of the Erd\H{o}s distinct distance problem in the plane.

In \cite{Kollar}, Koll\'ar extends this result to arbitrary fields (provided the characteristic is $0$ or larger than $\sqrt{n}$). Koll\'ar's techniques differ from the ones in the present paper. In particular, Koll\'ar traps the lines in a complete intersection of two surfaces, and then uses tools from algebraic geometry to control the degree of this complete intersection variety. In \cite{SS}, Sharir and Solomon consider a similar point-line incidence problem, and they provide a new proof of some of the incidence results in \cite{GuthKatz}.

In \cite{Guth}, the first author showed that given a collection of $n$ lines in $\RR^3$ and any $\epsilon>0$, there are $O_\epsilon(n^{3/2+\epsilon})$ points that are incident to two or more lines, provided at most $n^{1/2-\epsilon}$ lines lie in any algebraic surface of degree $O_\epsilon(1)$. This proof also applies to bounded-degree curves in place of lines. However, the proof is limited to the field $k=\RR$, and unlike the present work, it does not say anything about the structure of the surfaces containing many curves.
\subsection{Thanks}
The first author was supported by a Sloan fellowship and a Simons Investigator award.  The second author was supported by a NSF mathematical sciences postdoctoral fellowship.
\section{Constructible sets}\label{constSetSec}

\begin{defn}\label{defn1Const}
Let $K$ be an algebraically closed field. A constructible set $Y \subset K^N$ is a finite boolean combination of algebraic sets.  This means the following:

\begin{itemize}
\item There is a finite list of polynomials $f_j: K^N \rightarrow K$, $j =1, ..., J(Y)$.
\item Define $v(f_j)$ to be 0 if $f_j = 0$ and $1$ if $f_j \not= 0$.  The vector $v(f_j) = (v (f_1) , ..., v (f_j), ...)$ gives a map from $K^N$ to the boolean cube $\{ 0, 1 \}^{J(Y)}$.
\item There is a subset $B_Y \subset \{0, 1 \}^J$ so that $x \in Y$ if and only if $v(f_j (x) )\in B_Y$.
\end{itemize}
\end{defn}
The constructible sets form a Boolean algebra. This means that finite unions and intersections of constructible sets are constructible, and the compliment of a constructible set is constructible.

\begin{defn}
If $Y$ is a constructible set, we define the complexity of $Y$ to be $\min(\deg f_1+\ldots+\deg f_{J(Y)})$, where the minimum is taken over all representations of $Y$, as described in Definition \ref{defn1Const}.
\end{defn}
\begin{rem}
This definition of complexity is not standard. However, we will only be interested in constructible sets of ``bounded complexity,'' and the complexity of a constructible set will never appear in any bounds in a quantitative way. Thus any other reasonable definition of complexity would work equally well.
\end{rem}

The crucial result about constructible sets is Chevalley's theorem.
\begin{thm}[Chevalley; see  \cite{harris}, Theorem 3.16 or \cite{matsumura}, Chapter 2.6, Theorem 6]\label{ChevalleyTheorem}
Let $X\subset K^{M+N}$ be a constructible set of complexity $\leq C$. Let $\pi$ be the projection from $K^{M} \times K^N$ to $K^M$. Then $\pi(X)$ is a constructible set in $K^M$ of complexity $O_C(1)$.
\end{thm}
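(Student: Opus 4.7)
The plan is to reduce by induction on $N$ to the key case $N=1$, namely the projection $\pi\colon K^{M+1}\to K^M$ that forgets one coordinate $y$. Iterating handles the general case with the complexity bound propagating through the $N$ stages; each stage inflates the complexity by a function of the previous complexity, and since $N$ is part of the input the final bound is $O_C(1)$.

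For the $N=1$ case, first I would put $X$ in disjunctive normal form, writing it as a finite union of basic locally closed sets
\[ X_0 = \{(x,y)\in K^{M+1}:f_1(x,y)=\cdots=f_r(x,y)=0,\ g(x,y)\neq 0\}, \]
so that, since finite unions of bounded-complexity constructible sets are again bounded-complexity constructible, it suffices to show $\pi(X_0)$ is constructible with bounded complexity. View each $f_i$ and $g$ as an element of $K[x][y]$. The core idea is to stratify $K^M$ according to the vanishing/non-vanishing pattern of a finite auxiliary list of polynomials in $x$: (i) the coefficients of each $f_i$ and of $g$, viewed as polynomials in $y$, which determine the effective $y$-degrees over each $x$; and (ii) pairwise resultants $\mathrm{Res}_y(f_i,f_j)$ and $\mathrm{Res}_y(f_i,g)$, together with suitable subresultants, which together determine the degree of $\gcd_y(f_1,\ldots,f_r)$ and whether any of its roots coincide with roots of $g(x,\cdot)$. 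On each stratum (itself a constructible subset of $K^M$ of bounded complexity), the combinatorial pattern of common $y$-roots of the $f_i$ and their relation to the roots of $g$ is constant, so the intersection of $\pi(X_0)$ with the stratum is either the entire stratum or empty. Hence $\pi(X_0)$ is a union of strata, and is therefore constructible, with defining polynomials of degree bounded by Bezout-type bounds on resultants.

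The main obstacle is the degenerate strata on which a leading $y$-coefficient of some $f_i$ vanishes and the effective $y$-degree drops (including the extreme case where $f_i(x,\cdot)\equiv 0$). On such a stratum one must recurse, replacing $f_i$ by its truncation to lower $y$-degree and re-running the stratification with auxiliary polynomials recomputed for the smaller polynomial. This recursion terminates because the tuple of effective $y$-degrees is a well-ordered invariant bounded in terms of $C$, and at each level only finitely many auxiliary polynomials of degree bounded in terms of $C$ are introduced. Collecting all strata and tracking the number and degree of the auxiliary polynomials yields the quantitative bound $\complexity(\pi(X))=O_C(1)$.
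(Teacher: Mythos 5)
The paper does not actually prove Chevalley's theorem; it cites it as a black box from \cite{harris} and \cite{matsumura}. So there is no proof in the paper to compare against, and the relevant question is simply whether your argument is sound. Your overall strategy is the standard elimination-theoretic (quantifier-elimination) proof and it does give the quantitative complexity bound, which the more abstract Noetherian-style proofs do not deliver as transparently; it is the right approach for the form of the statement used here.

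There is, however, a genuine imprecision in the stratification step. You say the auxiliary resultants and subresultants should determine ``the degree of $\gcd_y(f_1,\ldots,f_r)$ and whether \emph{any} of its roots coincide with roots of $g(x,\cdot)$.'' But membership of $x$ in $\pi(X_0)$ is the condition that there exists a common root of the $f_i(x,\cdot)$ at which $g(x,\cdot)$ does \emph{not} vanish, i.e.\ that \emph{not all} common roots are roots of $g$. Knowing $\deg_y h$ (where $h=\gcd_y(f_i)$) and whether $\mathrm{Res}_y(h,g)=0$ does not settle this: take $h=(y-1)(y-2),\ g=y-1$ versus $h=(y-1)^2,\ g=y-1$. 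In both cases $\deg h=2$, $\mathrm{Res}_y(h,g)=0$, and $\deg\gcd(h,g)=1$, yet the first point lies in $\pi(X_0)$ and the second does not. To fix this you must pass to the squarefree part $\tilde h = h/\gcd(h,\partial_y h)$ and test whether $\tilde h\mid g$, equivalently whether $\deg\gcd(\tilde h,g)<\deg\tilde h$. This requires additionally stratifying by the subresultants of $h$ with $\partial_y h$ (equivalently, the discriminant data), and then by the subresultants of $\tilde h$ with $g$. Once that is added your recursion and degree bookkeeping go through, and the complexity bound $O_C(1)$ follows as you describe.
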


Here is an illustrative example.  Suppose that $X$ is the zero-set of $z_1 z_2 - 1$ in $\CC^2 = \CC \times \CC$.  The set $X$ is an algebraic set, and it is certainly constructible.  When we project $X$ to the first factor, we get $ \CC \setminus \{ 0 \}$.  This projection is not an algebraic set, but it is constructible.

We will also need to define constructible sets in projective space.
\begin{defn}
A (projective) constructible set $Y \subset \FP^N$ is a finite boolean combination of projective algebraic sets.  This means the following:

\begin{itemize}
\item There is a finite list of homogeneous polynomials $f_j: K^{N+1} \rightarrow K$, $j =1, ..., J(Y)$.
\item Define $v(f_j)$ to be 0 if $f_j = 0$ and $1$ if $f_j \not= 0$.  The vector $v(f_j) = (v (f_1) , ..., v (f_j), ...)$ gives a map from $\FP^N$ to the boolean cube $\{ 0, 1 \}^{J(Y)}$.
\item There is a subset $B_Y \subset \{0, 1 \}^J$ so that $x \in Y$ if and only if $v(f_j (x) )\in B_Y$.
\end{itemize}
\end{defn}
\section{The Chow variety of algebraic curves}\label{chowVarietySec}

The set of all degree $D$ curves in 3-dimensional space turns out to be algebraic object in its own right, and this algebraic structure will play a role in our arguments.  This object is called a Chow variety.

The (projective) Chow variety $\ProjChowVarietyDegD$ of degree $D$ irreducible curves in $\FP^3$ is a quasi-projective variety, which is contained in some projective space $\FP^N$, $N=N(D)$. Each irreducible degree $D$ curve $\gamma\in\FP^3$ corresponds to a unique point in the Chow variety, called the ``chow point'' of $\gamma$.  Here is a fuller description of the properties of the Chow variety.

\begin{prop}[Properties of the (projective) Chow variety]\label{propertiesOfProjChow}
Let $D\geq 1$ Then there is a number $N=N(D)$ and sets $\ProjChowVarietyDegD \subset \FP^N$, $\tildeProjChowVarietyDegD\subset  \FP^3\times  \FP^N$ with the following properties
\begin{itemize}
 \item $\ProjChowVarietyDegD$ and $\tildeProjChowVarietyDegD$ are (projective) constructible sets of complexity $O_D(1)$. To clarify, $\ProjChowVarietyDegD$ is defined by a Boolean combination of equalities and not-equalities of homogeneous polynomials in $K[x_0,\ldots,x_N]$, while $\tildeProjChowVarietyDegD$ is defined by a Boolean combination of equalities and not-equalities of polynomials that are homogeneous in $K[x_0,\ldots,x_N]$ and in $K[y_0,\ldots,y_3]$.
 \item For each irreducible curve $\gamma\subset \FP^3$, there is a unique point $z_{\gamma}\in \ProjChowVarietyDegD$ so that $\{z_{\gamma}\}\times\gamma\subset \tildeProjChowVarietyDegD$.
 \item Conversely, if $z\in\ProjChowVarietyDegD$, then $\tildeProjChowVarietyDegD\cap(\{z\}\times \FP^3)=\{z\}\times\gamma$ for some irreducible curve $\gamma$. Furthermore, $z=z_{\gamma}$.
\end{itemize}
\end{prop}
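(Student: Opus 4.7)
The plan is to use the classical construction via Chow forms. Recall that to any irreducible curve $\gamma \subset \FP^3$ of degree $D$ one associates a \emph{Chow form} $R_\gamma$: a bihomogeneous polynomial of bidegree $(D,D)$ in two sets of dual coordinates on $(\FP^3)^* \times (\FP^3)^*$, characterized up to nonzero scalar by
\[
R_\gamma(H_1, H_2) = 0 \iff \gamma \cap H_1 \cap H_2 \neq \emptyset.
\]
The bidegree is $(D,D)$ because a generic line $H_1 \cap H_2$ meets $\gamma$ in exactly $D$ points. Let $V$ be the vector space of bihomogeneous $(D,D)$ forms in two sets of four variables, and set $\FP^N \defeq \FP(V)$, so $N+1 = \binom{D+3}{3}^2$. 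The assignment $\gamma \mapsto [R_\gamma]$ is injective, and I would define $\ProjChowVarietyDegD$ to be its image.

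For the incidence variety, I would set
\[
\tilde W = \{([F], p) \in \FP^N \times \FP^3 : F(H_1, H_2) = 0 \text{ for every } H_1, H_2 \in (\FP^3)^* \text{ with } p \in H_1 \cap H_2\}.
\]
For fixed $p$, the constraints $p \in H_i$ carve out a linear subspace in the space of hyperplane pairs; requiring $F$ to vanish identically on this subspace unpacks into a finite list of polynomial equations whose coefficients depend polynomially on both the coefficients of $F$ and the homogeneous coordinates of $p$. Hence $\tilde W$ is an algebraic subset of $\FP^N \times \FP^3$ of complexity $O_D(1)$. I would then define $\tildeProjChowVarietyDegD \defeq \tilde W \cap (\ProjChowVarietyDegD \times \FP^3)$. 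The second and third bullets then reduce to two classical facts: (a) $p \in \gamma$ iff $R_\gamma(H_1, H_2) = 0$ for every pair of hyperplanes through $p$, so $\tilde W_{[R_\gamma]} = \gamma$; and (b) the Chow form determines the curve, so distinct curves give distinct points of $\ProjChowVarietyDegD$.

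The remaining step, which I expect to be the main obstacle, is showing that $\ProjChowVarietyDegD$ is itself constructible of complexity $O_D(1)$. Here I would invoke Chevalley's theorem (Theorem \ref{ChevalleyTheorem}) applied to an auxiliary parameter space that surjects onto $\ProjChowVarietyDegD$ through an explicit polynomial map. Concretely, Castelnuovo--Mumford-type bounds show that every irreducible degree-$D$ curve in $\FP^3$ appears as an irreducible component of a complete intersection of surfaces whose degrees are bounded by some $d(D)$; I would therefore parametrize tuples of such cutting surfaces and form the map sending a tuple to the Chow form of a chosen degree-$D$ component, computable as a classical resultant in the coefficients of the cutting surfaces. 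By Chevalley the image is constructible. To cut out precisely $\ProjChowVarietyDegD$ one then imposes the further conditions that $[F]$ is (a power of) an irreducible bihomogeneous form and that the associated fiber $\tilde W_{[F]}$ has dimension $1$ and degree exactly $D$; each of these is constructible, since irreducibility of a bihomogeneous polynomial of bounded bidegree is detected by vanishing/nonvanishing of bounded-degree resultants, while the dimension and degree of the fiber are detected by bounded-degree conditions on the coefficients of $F$ via standard elimination theory. Because every polynomial appearing in the construction has degree bounded in terms of $D$, the resulting complexity is $O_D(1)$.
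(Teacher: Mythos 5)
The paper states Proposition~\ref{propertiesOfProjChow} without proof, citing \cite{GM} and \cite[Chapter~21]{harris}, so there is no internal argument to compare against; your sketch follows the standard Chow-form route those references use, and reducing everything to the constructibility of $\ProjChowVarietyDegD$ is the right place to focus. The first three paragraphs (Chow forms, the incidence set $\tilde W$, and the two classical facts about it) are sound.

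The gap is in the final paragraph. You propose a ``map sending a tuple to the Chow form of a chosen degree-$D$ component, computable as a classical resultant in the coefficients of the cutting surfaces,'' and then push this forward by Chevalley. But the resultant of a pair $(P,Q)$ with no common factor is the Chow form of the \emph{entire} one-cycle $Z(P)\cap Z(Q)$ (a form of bidegree $(\deg P\cdot\deg Q,\,\deg P\cdot\deg Q)$), not of any single component, and there is no polynomial (or even single-valued constructible) map that extracts one irreducible factor of a polynomial. So the map you want Chevalley to act on is not well defined. The repair is to replace the map by an incidence set: let $\mathcal{A}$ be the set of pairs $\bigl([F],(P,Q)\bigr)$ with $(P,Q)$ of degree $\le D$, $\operatorname{Res}(P,Q)\not\equiv 0$, $F$ an irreducible bihomogeneous form of bidegree $(D,D)$, and $F \mid \operatorname{Res}(P,Q)$. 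Divisibility at fixed degrees is a rank condition on a linear system, hence determinantal; irreducibility is the complement of the Chevalley image of the multiplication map of lower-bidegree forms; so $\mathcal{A}$ is constructible of complexity $O_D(1)$, and its projection to $[F]$ is constructible by Theorem~\ref{ChevalleyTheorem}. That projection contains $\ProjChowVarietyDegD$ by the complete-intersection trap of Lemma~\ref{trappingCurveLem} (no Castelnuovo--Mumford bound is needed), and it is contained in $\ProjChowVarietyDegD$ because the Chow form of a one-cycle factors as the product of the Chow forms of its irreducible components, each of which is itself irreducible of bidegree equal to the component's degree; hence any irreducible bidegree-$(D,D)$ factor of $\operatorname{Res}(P,Q)$ is automatically the Chow form of an irreducible degree-$D$ curve, and your extra side conditions on the dimension and degree of the fiber $\tilde W_{[F]}$ (and on allowing ``a power of'' an irreducible form) are not needed.
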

See e.g.~\cite{GM} for further details. For a friendlier introduction, one can also see \cite[Chapter 21]{harris} (in the notation used in \cite{harris},  $\ProjChowVarietyDegD$ is called the ``open'' Chow variety). \cite{harris} works over $\CC$ rather than over arbitrary fields, but the arguments are the same (at least provided $\operatorname{char} K > D$, which will always be the case for us).

For our purposes, it will be easier to work with affine varieties, so we will identify $K^N$ with the set $\FP^N\backslash H$, where $H$ is a generic hyperplane. In Theorem \ref{mainStructureThm}, we are given a finite collection $\finiteSetCurves$ of irreducible curves of degree $\leq D$. Thus the choice of hyperplane $H$ does not matter, provided that none of the curves correspond to points in the Chow variety that lie in $H$.

Finally, we will fix a coordinate chart and only consider those curves in the Chow variety that do not lie in the plane $\{[x_0:x_1:x_2:x_3]\colon\ x_0= 0\}$ (this corresponds to the coordinate chart $(x_1,x_2,x_3)\mapsto[1:x_1:x_2:x_3]$). Since no curves from $\finiteSetCurves$ correspond to curves that lie in $\{[x_0:x_1:x_2:x_3]\colon\ x_0= 0\}$, this restriction will not pose any difficulties. Let $\chowVarietyDegExactlyD$ be the ``modified'' Chow variety, which consists of all projective curves $\gamma\subset\FP^3$ that do not lie in the plane $\{[x_0:x_1:x_2:x_3]\colon\ x_0= 0\}$, and for which the Chow point in $\FP^N$ corresponding to $\gamma$ does not lie in the hyperplane $H$.

$\chowVarietyDegExactlyD$ is a constructible set that parameterizes (almost all) irreducible degree $D$ algebraic curves in $K^3$. Of course our definition of $\chowVarietyDegExactlyD$ depends on the choice of hyperplane $H$, but we will suppress this dependence, since the choice of $H$ will not matter for our results.

\begin{lemma}[Properties of the (affine) Chow variety of degree $D$ curves]\label{propertiesOfChowExactlyD}
Let $D\geq 1$ and fix a hyperplane $H\subset\FP^N$, where $N=N(D)$ (as in Proposition \ref{propertiesOfProjChow}). Then there are sets $\chowVarietyDegExactlyD \subset K^N$, $\tildeChowVarietyDegExactlyD \subset K^N\times K^3$ with the following properties
\begin{itemize}
 \item $\chowVarietyDegExactlyD$ and $\tildeChowVarietyDegExactlyD$ are constructible sets of complexity $O_D(1)$.
 \item For each irreducible degree $D$ curve $\gamma\subset K^3$ whose projectivization does not correspond to a chow point in $H$, there is a unique point $z_{\gamma}\in\chowVarietyDegExactlyD$ so that $\{z_{\gamma}\}\times\gamma\subset \tildeChowVarietyDegExactlyD$.
 \item Conversely, if $z\in\chowVarietyDegExactlyD$, then $\tildeChowVarietyDegExactlyD\cap(\{z\}\times K^3)=\{z\}\times\gamma$ for some irreducible curve $\gamma$. Furthermore, $z=z_{\gamma}$.
\end{itemize}
\end{lemma}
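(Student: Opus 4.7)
The plan is to derive the affine version directly from the projective version in Proposition \ref{propertiesOfProjChow} by restricting to suitable coordinate charts. First I would fix the standard affine chart on $\FP^N$ by identifying $K^N$ with $\FP^N\setminus H$ for the given hyperplane $H$, and similarly embed $K^3$ into $\FP^3$ via $(x_1,x_2,x_3)\mapsto[1:x_1:x_2:x_3]$, writing $H_0=\{[x_0:x_1:x_2:x_3]\colon x_0=0\}$. I would then define
\[
\chowVarietyDegExactlyD \defeq \bigl\{z\in\ProjChowVarietyDegD\setminus H : \text{the curve }\gamma_z\text{ corresponding to }z\text{ is not contained in }H_0\bigr\},
\]
viewed as a subset of $K^N$ via the chart, and
\[
\tildeChowVarietyDegExactlyD \defeq \tildeProjChowVarietyDegD\cap\bigl((K^N\setminus\text{excluded})\times(\FP^3\setminus H_0)\bigr),
\]
again viewed inside $K^N\times K^3$ via the two charts.

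Next, I would verify that both sets are constructible of complexity $O_D(1)$. The complement of a projective hyperplane is projective constructible, and the condition "$\gamma_z\not\subset H_0$" is expressible as "there exists $y\in\FP^3\setminus H_0$ with $(z,y)\in\tildeProjChowVarietyDegD$", which is a projection of a constructible set and hence constructible of complexity $O_D(1)$ by Chevalley's theorem (Theorem \ref{ChevalleyTheorem}, applied in the projective version — equivalently dehomogenize and apply the affine version). Restricting to the affine chart amounts to dehomogenizing the defining homogeneous polynomials by setting the relevant projective coordinate to $1$, which preserves the $O_D(1)$ complexity bound. The same reasoning works for $\tildeChowVarietyDegExactlyD$.

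The three bullet points then essentially translate from the projective setting. For the second, given an irreducible affine curve $\gamma\subset K^3$ of degree $D$ whose projective closure $\bar\gamma\subset\FP^3$ is not contained in $H_0$ and whose projective Chow point is not in $H$, Proposition \ref{propertiesOfProjChow} supplies a unique $z_{\bar\gamma}\in\ProjChowVarietyDegD$ with $\{z_{\bar\gamma}\}\times\bar\gamma\subset\tildeProjChowVarietyDegD$; intersecting with $K^N\times K^3$ gives the point $z_\gamma$ and the inclusion $\{z_\gamma\}\times\gamma\subset\tildeChowVarietyDegExactlyD$. Uniqueness is inherited from the projective case. For the third, given $z\in\chowVarietyDegExactlyD$, the projective statement gives $\tildeProjChowVarietyDegD\cap(\{z\}\times\FP^3)=\{z\}\times\bar\gamma$ for an irreducible $\bar\gamma$; intersecting with $\{z\}\times K^3$ yields $\{z\}\times\gamma$ where $\gamma=\bar\gamma\cap K^3$, which is irreducible because $\bar\gamma\not\subset H_0$.

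The main obstacle I anticipate is purely bookkeeping: carefully exhibiting the constructible-set description after dehomogenization and making sure the two conditions "$z\notin H$" and "$\gamma_z\not\subset H_0$" are genuinely constructible of bounded complexity. The latter condition is the more delicate one, since ``not contained in'' is a universally quantified statement, but writing it as the projection of $\tildeProjChowVarietyDegD\cap(\ProjChowVarietyDegD\times(\FP^3\setminus H_0))$ onto the first factor reduces it to an application of Chevalley and therefore to a routine complexity estimate.
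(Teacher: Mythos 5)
Your proposal is correct and follows the paper's intended route: the paper defines $\chowVarietyDegExactlyD$ and $\tildeChowVarietyDegExactlyD$ in exactly this way (restricting the projective Chow variety of Proposition \ref{propertiesOfProjChow} to the affine charts $\FP^N\setminus H$ and $\FP^3\setminus\{x_0=0\}$) and then states the lemma with no explicit proof. Your Chevalley argument showing that the condition ``$\gamma_z\not\subset H_0$'' is a constructible condition of bounded complexity is the one detail the paper leaves implicit, and it is the right way to fill it in.
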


The constructible set $\chowVarietyDegExactlyD$ parameterizes the set of irreducible curves of degree (exactly) $D$. However, Theorem \ref{degDCurvesThm} is a statement about curves of degree at most $D$. To deal with this, we will define a constructible set that parameterizes the set of curves of degree at most $D$. Recall that for $j=1,\ldots,D$, we have constructible sets $\tilde{\mathcal{C}}_{3,j} \subset K^{N_j}$ and $\tilde{\mathcal{C}}_{3,j}^* \subset K^{N_j}\times K^3$. Let $V = K^{N_1}\times\ldots\times K^{N_D}$ and let $W = (K^{N_1}\times K^3)\times\ldots\times (K^{N_D}\times K^3)$. Each copy of $K^{N_j}$ has a natural embedding into $V$, given by
\begin{equation*}
 \begin{split}
  &\varphi_j\colon K^{N_j}\to V,\\
  &x\mapsto (0,\ldots,0,x,0,\ldots,0)\in K^{N_1}\times\ldots\times K^{N_{j-1}}\times K^{N_j}\times K^{N_{j+1}}\times\ldots\times K^{N_D}.
 \end{split}
\end{equation*}

Similarly, there is a natural embedding $\varphi_j^*$ of $K^{N_j}\times K^3$ into $W$. Define 
\begin{equation*}
\begin{split}
\chowVarietyDegD &\defeq \bigcup_{j=1}^D \varphi_j(\tilde{\mathcal{C}}_{3,j}),\\
\tildeChowVarietyDegD &\defeq \pi\Big(\bigcup_{j=1}^D \varphi_j^*(\tilde{\mathcal{C}}_{3,j}^*)\Big),
\end{split}
\end{equation*}
where $\pi\colon W\to V\times K^{3}$ is the projection that identifies each copy of $K^3$ in the vector space $W=(K^{N_1}\times K^3)\times\ldots\times(K^{N_D}\times K^3)$ (note that there are many ways that these copies of $K^3$ can be identified, since there is no distinguished coordinate system for $K^3$. However, it doesn't matter which identification we choose).

Abusing notation slightly, we will call $\chowVarietyDegD$ the Chow variety of curves of degree (at most) $D$. From this point onwards, we will never refer to either the (projective) Chow variety or the Chow variety of curves of degree exactly $D$. The sets $\chowVarietyDegD$ and $\tildeChowVarietyDegD$ satisfy properties analogous to those of $\chowVarietyDegExactlyD$ and $\tildeChowVarietyDegExactlyD$. We will record these properties here.
\begin{lemma}[Properties of the (affine) Chow variety]\label{propertiesOfChowExactlyD}
Let $D\geq 1$ and fix hyperplanes $H_1\subset\FP^{N_1},\ldots,H_D\subset \FP^{N_D}$. Then there are sets $\chowVarietyDegD \subset K^N$, $\tildeChowVarietyDegD \subset K^N\times K^3$ with the following properties
\begin{itemize}
 \item $\chowVarietyDegD$ and $\tildeChowVarietyDegD$ are constructible sets of complexity $O_D(1)$.
 \item For each irreducible curve $\gamma\subset K^3$ of degree $j\leq D$ whose projectivization does not correspond to a chow point in $H_j$, there is a unique point $z_{\gamma}\in\chowVarietyDegD$ so that $\{z_{\gamma}\}\times\gamma\subset \tildeChowVarietyDegD$.
 \item Conversely, if $z\in\chowVarietyDegD$, then $\tildeChowVarietyDegD\cap(\{z\}\times K^3)=\{z\}\times\gamma$ for some irreducible curve $\gamma$ of degree at most $D$. Furthermore, $z=z_{\gamma}$.
\end{itemize}
\end{lemma}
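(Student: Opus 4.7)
The plan is to verify the three bulleted properties by reducing each to the corresponding property of the fixed-degree affine Chow variety $\tilde{\mathcal{C}}_{3,j}$ from the previous lemma (applied to each $j \in \{1,\ldots,D\}$), and then gluing the results together through the explicit constructions of $\chowVarietyDegD$ and $\tildeChowVarietyDegD$ given above.

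For constructibility and the complexity bound, the key observation is that each map $\varphi_j$ and $\varphi_j^*$ is a coordinate inclusion, hence a linear map given by polynomials of degree one; so the image of a constructible set under $\varphi_j$ or $\varphi_j^*$ is again constructible, of comparable complexity (either pull back the defining polynomials along a matching coordinate projection, or apply Chevalley's theorem to the graph of $\varphi_j$). Combined with the complexity-$O_j(1)$ bounds for $\tilde{\mathcal{C}}_{3,j}$ and $\tilde{\mathcal{C}}_{3,j}^*$ supplied by the previous lemma, together with the fact that constructible sets form a Boolean algebra closed under finite unions, this gives that $\chowVarietyDegD$ and $\bigcup_{j=1}^D \varphi_j^*(\tilde{\mathcal{C}}_{3,j}^*)$ are constructible of complexity $O_D(1)$. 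Since $\pi$ is itself linear (it merely identifies the $D$ copies of $K^3$ in $W$), a further application of Chevalley yields that $\tildeChowVarietyDegD$ is constructible of complexity $O_D(1)$.

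For the second bullet, let $\gamma \subset K^3$ be irreducible of degree $j \leq D$ with projectivization not corresponding to a chow point in $H_j$. The previous lemma supplies a unique $z_{\gamma,j} \in \tilde{\mathcal{C}}_{3,j}$ with $\{z_{\gamma,j}\} \times \gamma \subset \tilde{\mathcal{C}}_{3,j}^*$; I set $z_\gamma \defeq \varphi_j(z_{\gamma,j})$, and then $\{z_\gamma\} \times \gamma \subset \tildeChowVarietyDegD$ by construction of $\pi$ and the union. For the third bullet, given $z \in \chowVarietyDegD$, I would write $z = \varphi_j(x_j)$ for some $j$ and some $x_j \in \tilde{\mathcal{C}}_{3,j}$; the previous lemma then says $\tilde{\mathcal{C}}_{3,j}^* \cap (\{x_j\}\times K^3) = \{x_j\}\times\gamma$ for a unique irreducible degree-$j$ curve $\gamma$, and pushing forward by $\pi$ gives $\tildeChowVarietyDegD \cap (\{z\}\times K^3) = \{z\}\times\gamma$.

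The one place where care is required is uniqueness when different degrees are combined: for $j \neq k$, the coordinate subspaces $\varphi_j(K^{N_j})$ and $\varphi_k(K^{N_k})$ of $V$ intersect only at the origin, so pieces of the defining union could a priori collide there. I would handle this by choosing each hyperplane $H_j \subset \FP^{N_j}$ so that the origin of the affine chart $K^{N_j} \cong \FP^{N_j}\setminus H_j$ does not correspond to any chow point of $\hat{\mathcal{C}}_{3,j}$; this is a generic condition, since the projective Chow variety is a proper subvariety of $\FP^{N_j}$. With this choice the images $\varphi_j(\tilde{\mathcal{C}}_{3,j})$ are pairwise disjoint, so the uniqueness of $z_\gamma$ in $\chowVarietyDegD$ reduces to the uniqueness of the degree of $\gamma$ combined with the uniqueness from the previous lemma. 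I expect this disjointness bookkeeping to be the only real obstacle; everything else is a mechanical consequence of the construction.
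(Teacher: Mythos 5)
Your proof is correct and supplies details that the paper leaves implicit: the paper states this lemma without proof, remarking only that the properties are analogous to those of the fixed-degree affine Chow variety $\tilde{\mathcal{C}}_{3,j}$, which is exactly the reduction you carry out via the coordinate inclusions $\varphi_j$, $\varphi_j^*$ and the gluing map $\pi$. The constructibility bookkeeping (images under linear inclusions are constructible of comparable complexity, finite unions of constructible sets are constructible, and Chevalley handles the final map $\pi$) and the existence/uniqueness argument for $z_\gamma$ are sound.

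Your observation about the origin of $V$ is a genuine subtlety the paper glosses over. Since $\varphi_j(K^{N_j}) \cap \varphi_k(K^{N_k}) = \{0\}$ for $j \neq k$, if $0 \in K^{N_j}$ happened to be a Chow point of $\tilde{\mathcal{C}}_{3,j}$ for two distinct values of $j$, the fiber of $\tildeChowVarietyDegD$ over $z = 0 \in V$ would be the union of two distinct irreducible curves of different degrees, and the third bullet would fail. Your remedy is the right one, though worded slightly imprecisely: the origin of the affine chart $\FP^{N_j}\setminus H_j$ is not determined by $H_j$ alone but depends on a further choice of affine coordinates on the chart. What you want to say is that the identification of $\FP^{N_j}\setminus H_j$ with $K^{N_j}$ can be chosen generically so that the origin avoids the proper subvariety $\hat{\mathcal{C}}_{3,j}$, which forces the images $\varphi_j(\tilde{\mathcal{C}}_{3,j})$ to be pairwise disjoint. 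This is the latitude the paper is implicitly invoking when it calls $H$ a generic hyperplane. With that small correction, your argument is complete.
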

\subsection{Surfaces doubly ruled by curves}
In this section we will give some brief definitions that allow us to say what it means for a surface to be doubly ruled by curves.
\begin{defn}\label{defnOfDoublyRuled}
Let $K$ be an algebraically closed field, let $Z\subset K^3$, let $D\geq 1,$ and let $\mathcal{C}\subset\chowVarietyDegD$ be a constructible set. We say that $Z$ is doubly ruled by curves from $\mathcal{C}$ if there is a Zariski open set $O\subset Z$ so that for every $x\in O,$ there are at least two curves from $\mathcal{C}$ passing through $x$ and contained in $Z$.
\end{defn}
Surfaces that are doubly ruled by curves have many favorable properties. The proposition below details some of them.
\begin{prop}\label{implicationsOfDoublyRuled}
Let $K$ be an algebraically closed field, let $Z\subset K^3$ be an irreducible surface, let $D\geq 1$, and let $\mathcal{C}\subset\chowVarietyDegD$ be a constructible set. Suppose that $Z$ is doubly ruled by curves from $\mathcal{C}$. Then
\begin{itemize}
 \item $\deg(Z)\leq100 D^2.$
 \item For any $t\geq 1$, we can find two families of curves from $\mathcal{C}$, each of size $t$, so that each curve from the first family intersects each curve from the second family.
\end{itemize}
\end{prop}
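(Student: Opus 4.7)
The plan is to first extract from the double-ruling hypothesis two one-parameter constructible subfamilies $\mathcal{R}_1,\mathcal{R}_2\subset\mathcal{C}$ whose members all lie in $Z$, and then argue both bullets by combining Chevalley's theorem with Bezout.

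\textbf{Setting up the rulings.} I would form the incidence correspondence
\[
I=\{(c,x)\in\mathcal{C}\times Z : x\in\gamma_c\text{ and }\gamma_c\subset Z\},
\]
which is constructible of complexity $O_D(1)$ by Lemma \ref{propertiesOfChowExactlyD}, and apply Chevalley (Theorem \ref{ChevalleyTheorem}) to its two projections. The hypothesis that $Z$ is doubly ruled says $\pi_Z(I)$ contains a Zariski open subset of $Z$ with every fiber of cardinality $\geq 2$. Since each $\gamma_c$ is one-dimensional and $Z$ is two-dimensional, a dimension count on $I$ forces $\pi_{\mathcal{C}}(I)\subset\mathcal{C}$ to have dimension at least $1$. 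Slicing by a generic hyperplane (and using Chevalley again to keep things constructible) produces two irreducible one-dimensional constructible subfamilies $\mathcal{R}_1,\mathcal{R}_2$, whose members all lie in $Z$, and such that through a generic point of $Z$ passes at least one member of $\mathcal{R}_1$ and at least one of $\mathcal{R}_2$.

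\textbf{The grid statement (second bullet).} Fix a generic $\gamma\in\mathcal{R}_1$. For each $x$ in the open subset of $\gamma$ where the double-ruling hypothesis applies, at least one curve from $\mathcal{R}_2$ passes through $x$ and lies in $Z$; Chevalley applied to the constructible set $\{(x,\gamma')\in\gamma\times\mathcal{R}_2 : x\in\gamma'\}$ shows that the projection to $\mathcal{R}_2$ has one-dimensional image, hence is Zariski-dense in the irreducible one-dimensional $\mathcal{R}_2$. Thus
\[
W:=\{(\gamma_1,\gamma_2)\in\mathcal{R}_1\times\mathcal{R}_2 : \gamma_1\cap\gamma_2\neq\emptyset\}
\]
contains a nonempty open slice over each generic $\gamma_1\in\mathcal{R}_1$. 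Another application of Chevalley (or the fact that a constructible subset of an irreducible surface containing a dense subset of each fiber of one projection is itself dense) shows that $W$ is Zariski-dense in $\mathcal{R}_1\times\mathcal{R}_2$. Choosing $t$ curves in each $\mathcal{R}_i$ from a sufficiently small common open subset produces the desired $t\times t$ transverse grid.

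\textbf{The degree bound (first bullet).} The plan here is a Bezout count against the two families. Two distinct generic members $\gamma_1\in\mathcal{R}_1$, $\gamma_2\in\mathcal{R}_2$ are irreducible of degree $\leq D$ and share no component, so by Bezout in $\mathbb{P}^3$ they meet in at most $D^2$ points. Next, fix a generic $\gamma_1^0\in\mathcal{R}_1$; by the grid statement the rational map
\[
g:Z\dashrightarrow\mathcal{R}_2,\qquad x\mapsto\gamma_2(x),
\]
restricts to a dominant rational map $\gamma_1^0\dashrightarrow\mathcal{R}_2$ of degree at most $D^2$ (the fiber over $\gamma_2$ is $\gamma_1^0\cap\gamma_2$). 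Now intersect $Z$ with a generic line $L\subset K^3$ to obtain $\deg Z$ points: each of them lies on some $\gamma_2(x)\in\mathcal{R}_2$, and each $\gamma_2(x)$ meets $L$ in at most $D$ points, so at least $\deg Z/D$ distinct members of $\mathcal{R}_2$ meet $L$. On the other hand the set of curves in the one-dimensional family $\mathcal{R}_2$ that meet a fixed generic line $L$ is bounded in terms of $D$ (via the degree of the incidence hypersurface in the Grassmannian, which is controlled by $D$ and the degree of the parameterizing curve $\gamma_1^0$, itself $\leq D$). Combining these two estimates yields $\deg Z\leq 100D^2$, with the constant $100$ deliberately loose.

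\textbf{Main obstacle.} The second bullet is essentially formal once the rulings have been extracted by Chevalley. The serious work is the degree bound: one must rule out the possibility that ruling curves share components with each other or with the test line/plane used for the Bezout count, and one must control how the one-parameter family $\mathcal{R}_2$ sits inside the Chow variety well enough to bound the number of its members incident to a generic line. Carrying this out cleanly is where genericity and the Chow-variety machinery from Section \ref{chowVarietySec} do the real work.
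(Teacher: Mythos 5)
Your extraction of one-parameter subfamilies from the incidence correspondence via Chevalley plus hyperplane slicing is in the same spirit as the paper's Lemma \ref{reductionFinFiberLem}, though you skip the care the paper takes (intersecting with a decreasing chain of linear subspaces to find the exact codimension where fibers become finite but nonempty, and doing this twice with disjoint slices to keep the fiber size $\ge 2$). Your treatment of the second bullet, establishing density of the incidence set $W \subset \mathcal{R}_1 \times \mathcal{R}_2$ and then selecting a $t\times t$ grid that avoids a lower-dimensional exceptional locus, is a workable alternative to the paper's inductive construction (Lemma \ref{findLargeFamilyCurvesLem}) and could be made rigorous with a little care about the product structure (for each of the chosen $\gamma_1^i$ there is a finite bad set in $\mathcal{R}_2$, and one picks the $\gamma_2^j$ avoiding the union).

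The genuine gap is the degree bound. You assert that the number of curves in the one-dimensional family $\mathcal{R}_2$ meeting a fixed generic line $L$ is bounded in terms of $D$, citing the degree of an incidence hypersurface in the Grassmannian and the degree of the parameterizing curve $\gamma_1^0 \le D$. This does not deliver $100 D^2$. The family $\mathcal{R}_2$ is a curve inside the constructible set $\chowVarietyDegD \subset K^N$ with $N = N(D)$, and the degree of this curve (or of the relevant incidence variety) is controlled only by the \emph{complexity} of $\chowVarietyDegD$, an unspecified $O_D(1)$ quantity with no explicit polynomial dependence on $D$. The degree of $\gamma_1^0$ in $K^3$ does not transfer to a degree bound on its image under $\gamma_1^0 \dashrightarrow \mathcal{R}_2 \subset K^N$, and the natural attempt to bound $|\{\gamma_2 \in \mathcal{R}_2 : \gamma_2 \cap L \ne \emptyset\}|$ by intersecting $\gamma_1^0$ with the union of such $\gamma_2$ is circular, since the degree of that union is exactly what you are trying to bound. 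The paper sidesteps the Chow variety entirely in this step: Lemma \ref{findLargeFamilyCurvesLem} produces $N$ curves $\gamma_1, \ldots, \gamma_N$ in $Z$, no three sharing a point, together with infinitely many transversal curves; interpolating a polynomial $Q$ of degree $\le 3 N^{2/3}$ through $N$ points on each $\gamma_i$ and running B\'ezout in $K^3$ (Theorems \ref{bezoutCurveSurface} and \ref{bezoutcurves}) forces $Q$ to vanish on infinitely many curves of $Z(T)$, hence $T \mid Q$, giving $\deg Z \le 3 N^{2/3} \lesssim D^2$ with $N \sim D^3$. That interpolation step — keeping all B\'ezout counting inside $K^3$ where degrees are explicitly polynomial in $D$ — is the idea your argument is missing.
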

\begin{rem}
In our definition of doubly ruled, we did not require that the curves passing through $x\in Z$ vary regularly as the basepoint $x\in Z$ changed. However, we get a version of this statement automatically. More precisely, the set
 \begin{equation}\label{curvesInZEqn}
  \{(x,\gamma)\in Z\times\mathcal{C}\colon x\in\gamma,\ \gamma\subset Z\}
 \end{equation}
 is a constructible set. Furthermore, there is a Zariski-open set $O\subset Z$ so that for all $x\in O$, the fiber of $\pi\colon \eqref{curvesInZEqn}\to Z$ contains at least two points. For example, if $K=\CC$, this means that we can find a Zariski-open set $O^\prime\subset O\subset Z$ so that as the basepoint $x\in O^\prime$ changes, we can smoothly (in the Euclidean topology) select two distinct curves $\gamma_{1,x},\ \gamma_{2,x}$ passing through $x$ that are contained in $Z$.
\end{rem}

\begin{defn}\label{AcontainedC}
Let $k$ be a field, and let $K$ be the algebraic closure of $k$. Let $D\geq 1$ and let $\mathcal{C}\subset\chowVarietyDegD$ be a constructible set. Let $\finiteSetCurves$ be a finite set of irreducible curves of degree at most $D$ in $k^3$. Abusing notation, we say that $\finiteSetCurves\subset\mathcal{C}$ if $\hat\gamma$ is an element of $\mathcal{C}$ for each $\gamma\in\finiteSetCurves$. Here $\hat\gamma$ is the Zariski closure (in $K$) of $\iota(\gamma)$, where $\iota\colon k\to K$ is the obvious embedding. For example, if $\gamma\subset\RR^3$ is a real curve, then $\hat\gamma$ is the complexification of $\gamma$, i.e.~the smallest complex curve whose real locus is $\gamma$.
\end{defn}
\subsection{Statement of the theorem}
We are now ready to state a precise version of the theorem alluded to in the paragraph following Theorem \ref{degDCurvesThm}.
\begin{thm}\label{mainStructureThm}
Fix $D>0, C>0$. Then there are constants $c_1,C_1,C_2$ so that the following holds. Let $k$ be a field and let $K$ be the algebraic closure of $K$. Let $\mathcal{C}\subset\chowVarietyDegD$ be a constructible set of complexity at most $C$. Let $\finiteSetCurves$ be a collection of $n$ irreducible algebraic curves in $k^3$, with $\finiteSetCurves\subset\mathcal{C}$ (see Definition \ref{AcontainedC}). Suppose furthermore that $\operatorname{char}(k)=0$ or $n\leq c_1 (\operatorname{char}(k))^2$.

Then for each number $A>C_1n^{1/2}$, at least one of the following two things must occur
\begin{itemize}
 \item There are at most $C_2An$ points in $k^3$ that are incident to two or more curves from $\finiteSetCurves$.
 \item There is an irreducible surface $Z\subset k^3$ that contains at least $A$ curves from $\finiteSetCurves$. Furthermore, $\hat Z$ is doubly ruled by curves from $\mathcal{C}$. See Definition \ref{defnOfDoublyRuled} for the definition of doubly ruled, and see Proposition \ref{implicationsOfDoublyRuled} for the implications of this statement.
\end{itemize}
\end{thm}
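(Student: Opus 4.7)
The plan is to run the argument in three stages, paralleling the outline in Section \ref{introSection}. Throughout, assume the first alternative of the theorem fails, so there are more than $C_2An$ points in $k^3$ incident to two or more curves from $\finiteSetCurves$.

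First, I would apply the degree-reduction step (Proposition \ref{degreeReduction}) to obtain an irreducible surface $Z\subset k^3$ of degree $d\leq C_0(n/A)$ containing at least $A$ curves of $\finiteSetCurves$. Since $A>C_1n^{1/2}$, the degree $d$ is much smaller than both $A$ and $n^{1/2}$; in particular, any curve $\gamma\in\finiteSetCurves$ that meets $Z$ in more than $dD$ points must lie in $Z$ by B\'ezout, which allows us to bookkeep rich points cleanly on $Z$.

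Next, I would construct the generalized flecnode polynomial $\FZT$ associated to the pair $(\hat Z,\mathcal{C})$. By design $\FZT$ has degree $O_{D,C}(d)$, and it vanishes at a smooth point $z\in\hat Z$ precisely when there are at least two curves from $\mathcal{C}$ tangent to $\hat Z$ at $z$ to the order prescribed by the sufficiently-tangent-implies-trapped principle (Theorem \ref{tangentImpliesTrappedProp}). At every two-rich point lying on a curve $\gamma\subset Z$ with $\gamma\in\finiteSetCurves$, the two incident curves from $\finiteSetCurves\subset\mathcal{C}$ are both contained in $Z$ (by the B\'ezout observation above), hence tangent to $\hat Z$ to infinite order; so $\FZT$ must vanish there. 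An averaging argument over the $\geq A$ curves of $\finiteSetCurves$ that lie in $Z$ shows that a typical such $\gamma$ contains more than $\deg(\FZT)\cdot\deg(\gamma)$ rich points, forcing $\FZT|_\gamma\equiv 0$ by B\'ezout. I would then invoke Proposition \ref{contagiousflecnodes} to propagate the vanishing of $\FZT$ from these many curves to all of $\hat Z$, and finally Theorem \ref{tangentImpliesTrappedProp} to upgrade the high-order tangency on $\hat Z$ into genuine containment, producing two curves of $\mathcal{C}$ through each point of a Zariski-open subset of $\hat Z$. This is exactly the doubly-ruled condition of Definition \ref{defnOfDoublyRuled}, and the degree bound $\deg(Z)\leq 100D^2$ then follows for free from Proposition \ref{implicationsOfDoublyRuled}.

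The main obstacle will be threading the degree bookkeeping across all three stages while respecting the characteristic hypothesis $n\leq c_1(\operatorname{char}(k))^2$. The generalized flecnode polynomial involves differential operators of order depending on $D$ and the complexity $C$ of $\mathcal{C}$; in small positive characteristic these derivatives can vanish spuriously, and one must track carefully how many rich points per curve are needed to trigger B\'ezout, how many such curves are needed to trigger the contagious propagation, and how all of this balances against $d\lesssim n/A$ and $A\gtrsim n^{1/2}$ without any slack being lost. Arranging these three quantitative thresholds to line up simultaneously, so that the chain ``many rich points $\Rightarrow$ many zeros of $\FZT$ on many curves $\Rightarrow$ $\FZT$ vanishes on $\hat Z$ $\Rightarrow$ $\hat Z$ doubly ruled'' actually closes, is the technical heart of the argument.
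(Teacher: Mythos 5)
Your outline identifies the right ingredients (degree reduction, the generalized flecnode construction, the contagion lemma, ``sufficiently tangent implies trapped'') and you do arrive at the correct chain of implications for the final step. But there are two substantive gaps, and they are upstream of the quantitative bookkeeping you flag as the main obstacle.

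\textbf{First gap: you cannot apply degree reduction directly.} Proposition~\ref{degreeReduction} requires that \emph{every} curve $\gamma\in\finiteSetCurves$ carries at least $A$ two-rich points. Your starting hypothesis is only that the \emph{total} number of two-rich points exceeds $C_2 A n$; this bounds the average per curve, but says nothing about the minimum. There could be curves with no rich points at all. The paper bridges this by proving Theorem~\ref{mainStructureThm} via induction on $n$: either every curve has $\gtrsim A$ rich points (which is the hypothesis of Lemma~\ref{popCurveLem}, the auxiliary statement where the real work happens), or some curve $\gamma_0$ has few rich points, in which case $\gamma_0$ is deleted and the induction hypothesis is invoked on $\finiteSetCurves\setminus\{\gamma_0\}$. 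Your proposal skips this reduction and assumes the wrong input for degree reduction.

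\textbf{Second gap: your B\'ezout argument does not establish that both curves at a rich point are trapped in $Z$.} You write that at a two-rich point $z$ on a curve $\gamma\subset Z$, ``the two incident curves from $\finiteSetCurves\subset\mathcal{C}$ are both contained in $Z$ (by the B\'ezout observation above).'' But your B\'ezout observation only says a curve meeting $Z$ in $>dD$ points lies in $Z$. The \emph{other} curve $\gamma'$ through $z$ is merely known to pass through the single point $z$, which does not force $\gamma'\subset Z$. If $\gamma'\not\subset Z$, then $z$ is not a $(2,\mathcal{C},r)$-flecnodal point for $Z$ in the sense needed. The paper handles this by factoring the low-degree surface $Z(P)$ into irreducible components $Z_1,\ldots,Z_\ell$, sorting the curves of $\finiteSetCurves$ among the components, and pigeonholing to find one component $Z_0$ such that the curves assigned to it each carry $\gtrsim A$ rich points where the \emph{other} incident curve is also assigned to $Z_0$. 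This is what ensures the tangency hypothesis at enough points, and it requires a separate counting step (see the $\deg P < A/2$ estimate in the proof of Lemma~\ref{popCurveLem}), not just the single-surface B\'ezout observation you state.

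Also, a minor point: Theorem~\ref{genflec} does not produce a single generalized flecnode polynomial $\FZT$ whose vanishing characterizes flecnodal points; it produces a finite \emph{list} of polynomials and a Boolean pattern $B\subset\{0,1\}^J$. The contagion arguments (Lemmas~\ref{construccontag1}--\ref{construccontag2}) are formulated at this level of generality and work componentwise on the Boolean pattern. Phrasing the argument with a single polynomial is a harmless simplification in spirit, but it is not what the paper constructs, and the propagation step really does go through the Boolean-cube formalism.

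With those two gaps filled and the flecnode formalism adjusted, your outline would essentially coincide with the paper's proof.
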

Taking $\mathcal{C}=\chowVarietyDegD,$ we see that Theorem \ref{degDCurvesThm} is an immediate corollary of Theorem \ref{mainStructureThm}. It remains to prove Theorem \ref{mainStructureThm}.
\section{Curves and complete intersections}\label{PQSec}

An algebraic curve $\gamma \subset K^3$ is a complete intersection if $\gamma = Z(P) \cap Z(Q)$ for some $P, Q \in K[x_1, x_2, x_3]$.  Not every algebraic curve is a complete intersection, but any algebraic curve $\gamma$ is contained in a complete intersection. Complete intersections are easier to work with in some situations, and we will often study a curve $\gamma$ using a complete intersection $Z(P) \cap Z(Q) \supset \gamma$.  In this subsection, we discuss the space of complete intersections, and we show that any algebraic curve lies in a complete intersection with some convenient properties.

We let $K[x_1, x_2, x_3]_{\le D} \subset K[x_1, x_2, x_3]$ be the space of polynomials of degree at most $D$.  We will sometimes abbreviate this space as $K[x]_{\le D}$.  The space $K[x]_{\le D}$ is a vector space of dimension ${\binom{D+3}{3}}$.  We choose an identification of $K[x]_{\le D}$ with $K^{\binom{D+3}{3}}$.

We use the variable $\alpha$ to denote an element of $K[x]_{\le D}^2$, and we write

$$\alpha = (P_\alpha, Q_\alpha) \in K[x]_{\le D}^2 = \left( K^{\binom{D+3}{3}} \right)^2.$$

Given an irreducible curve $\gamma$, we look for a choice of $\alpha \in K[x]_{\le D}^2$ so that $\gamma \subset Z(P_\alpha) \cap Z(Q_\alpha)$ and where $P_\alpha$ and $Q_\alpha$ have some other nice properties.

One useful property has to do with regular points.  Recall that a point $x \in \gamma$ is called regular (or smooth) if there are two polynomials $f_1, f_2 \in I(\gamma)$ so that $\nabla f_1(x)$ and $\nabla f_2(x)$ are linearly independent (cf. \cite{harris} Chapter 14, page 174.)  If $x$ is a regular point, then we will want to choose $\alpha$ so that $\nabla P_\alpha(x)$ and $\nabla Q_\alpha(x)$ are linearly independent.  We formalize these properties in a definition.

\begin{defn}\label{defnOfAssociated}
Let $\gamma\in \chowVarietyDegD$. We say that a point $\alpha\in \big(K^{\binom{D+3}{3}}\big)^2$ is \emph{associated} to $\gamma$ if $\gamma\subset\BZ(P_\alpha)\cap\BZ(Q_\alpha)$. If $x\in\gamma$ is a regular point of $\gamma$, we say that $\alpha$ is \emph{associated} to $\gamma$ at $x$ if $\alpha$ is associated to $\gamma$ and $\nabla P_\alpha(x)$ and $\nabla Q_\alpha(x)$ are linearly independent.
\end{defn}

Finally, given a surface $Z = Z(T) \subset K^3$, with $\gamma$ not contained in $Z$, we would like to choose $P_\alpha$ and $Q_\alpha$ so that $Z \cap Z(P_\alpha) \cap Z(Q_\alpha)$ is 0-dimensional.  The following Lemma says that we can choose $\alpha \in K[x]_{\le D}$ with all these good properties:

\begin{lemma}[Trapping a curve in a complete intersection]\label{trappingCurveLem}
Let $Z=Z(T)\subset K^3$ be a surface. Let $\gamma\in \chowVarietyDegD$. If $\gamma$ is not contained in $Z$, then there exists $\alpha\in\big(K^{\binom{D+3}{3}}\big)^2$ associated to $\gamma$ so that $Z\cap\BZ(P_\alpha)\cap\BZ(Q_\alpha)$ is 0 dimensional. We say that $\alpha$ is \emph{associated} to $\gamma$ and \emph{adapted} to $Z$.

Moreover, if $x \in \gamma$ is a regular point, we can also arrange that $\alpha$ is associated to $\gamma$ at $x$.
\end{lemma}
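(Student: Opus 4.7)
My plan is to exhibit $\alpha$ as a generic element of the linear subspace $I_D(\gamma)^2 \subset K[x]_{\le D}^2$, where $I_D(\gamma) := \{P \in K[x]_{\le D} : \gamma \subset \BZ(P)\}$. Since $\gamma \in \chowVarietyDegD$ has degree at most $D$, a standard dimension count gives $\dim I_D(\gamma) \ge 2$, leaving enough room to pick both components of $\alpha$. The strategy is to show that each required condition on $\alpha$ cuts out a non-empty Zariski-open subset of $I_D(\gamma)^2$, then take the (finite) intersection.

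First I would verify that the subspace $\{P \in I_D(\gamma) : Z \subset \BZ(P)\}$ is proper. Taking $Z$ irreducible with defining polynomial $T_0$ (the general case reduces to this), any $P$ in this subspace factors as $T_0 \cdot R$, and since $\gamma$ is irreducible with $\gamma \not\subset Z = \BZ(T_0)$, we must have $R \in I_{D - \deg T_0}(\gamma)$; iterating this factorization eventually forces $I_D(\gamma) = 0$, contradicting $\dim I_D(\gamma) \ge 2$. A $P_\alpha$ outside this proper subspace satisfies $\dim(Z \cap \BZ(P_\alpha)) \le 1$; write $C_1,\ldots, C_r$ for the $1$-dimensional irreducible components of $Z \cap \BZ(P_\alpha)$, each lying in $Z$ and hence distinct from $\gamma$. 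For each $C_i$, the subspace $\{Q \in I_D(\gamma) : C_i \subset \BZ(Q)\}$ is proper provided $C_i$ is not contained in the base locus $B := \bigcap_{P \in I_D(\gamma)} \BZ(P)$, and then any $Q_\alpha$ outside the finite union of these subspaces makes $Z \cap \BZ(P_\alpha) \cap \BZ(Q_\alpha)$ zero-dimensional. For the moreover clause, at a regular point $x \in \gamma$ there exist by definition polynomials $f_1, f_2 \in I(\gamma)$ with $\nabla f_1(x)$ and $\nabla f_2(x)$ linearly independent; the plan is to show such $f_j$ can be taken inside $I_D(\gamma)$, so the further condition that $\nabla P_\alpha(x)$ and $\nabla Q_\alpha(x)$ be linearly independent defines another non-empty Zariski-open subset of $I_D(\gamma)^2$.

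The main obstacle is controlling the base locus $B$: I must rule out that any $1$-dimensional component of $B$ other than $\gamma$ lies in the given surface $Z$. I would address this by establishing that $B = \gamma$ set-theoretically for any irreducible curve of degree at most $D$ in $\FP^3$, a property that follows from the Chow variety construction referenced in Proposition \ref{propertiesOfProjChow} together with the fact that degree-$\le D$ hypersurfaces suffice to cut out such a curve set-theoretically. A secondary technical step is to confirm that $I_D(\gamma)$ already contains polynomials with independent gradients at any regular point of $\gamma$, which should follow from a dimension count on the restriction of $I_D(\gamma)$ to the normal space of $\gamma$ at $x$. Once these points are settled, intersecting all the open conditions above yields a non-empty Zariski-open subset of $I_D(\gamma)^2$, any element of which serves as the desired $\alpha$.
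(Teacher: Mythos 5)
Your approach is structurally different from the paper's: instead of building $P_\alpha, Q_\alpha$ explicitly as pullbacks of plane curves under projections $\pi_{w_1}, \pi_{w_2}$ from generic directions, you propose to take $\alpha$ as a generic element of $I_D(\gamma)^2$ and argue that each required property cuts out a nonempty open locus. The framing is natural, but it pushes essentially all of the content of the lemma into two claims that you state without proof, and those claims are exactly what the paper's projection argument establishes.

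The serious gap is the assertion that $B := \bigcap_{P \in I_D(\gamma)} \BZ(P) = \gamma$ set-theoretically for every irreducible curve of degree at most $D$. You attribute this to ``the Chow variety construction'' together with ``the fact that degree-$\leq D$ hypersurfaces suffice to cut out such a curve set-theoretically,'' but the latter is precisely what has to be proved, and Proposition \ref{propertiesOfProjChow} does not deliver it: the Chow variety is used in the paper purely as a parameter space for curves, not as a device for producing low-degree equations of a given curve. Nor is $B=\gamma$ a dispensable convenience: if $B$ had a one-dimensional irreducible component $\gamma' \neq \gamma$, then for any surface $Z$ containing $\gamma'$ but not $\gamma$ one would have $\gamma' \subset Z \cap \BZ(P_\alpha) \cap \BZ(Q_\alpha)$ for \emph{every} admissible $\alpha$, so the lemma itself would be false. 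The standard way to prove $B=\gamma$ is the cone (in the affine setting, cylinder) construction: for $p \notin \gamma$, project from a generic direction $w$ so that the line $L_{p,w}$ misses $\gamma$, and then $p_w \circ \pi_w$ lies in $I_D(\gamma)$ and is nonzero at $p$. That is precisely how the paper manufactures $P_1$ and $P_2$. (One can also invoke Castelnuovo--Mumford regularity bounds such as Gruson--Lazarsfeld--Peskine, but that is a much heavier tool and is not what you cited.)

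There is a secondary gap in the ``moreover'' clause. You need the linear map $P \mapsto \nabla P(x) \bmod T_x\gamma$ from $I_D(\gamma)$ to the two-dimensional normal space to be surjective, and you say this ``should follow from a dimension count.'' The image has dimension at most $2$ formally, but surjectivity requires exhibiting two members of $I_D(\gamma)$ whose normal derivatives at $x$ are independent; nothing about the dimension of $I_D(\gamma)$ alone guarantees this. The paper's projections again supply it directly: $\nabla P_i(x)$ is a nonzero multiple of $w_i \times v$ with $v$ spanning $T_x\gamma$, and two generic $w_i$ give independent cross products.

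In short, the architecture of your argument (generic member of the linear system, intersect finitely many nonempty open conditions) is sensible and, once the two facts above are in hand, it does close; but establishing those facts amounts to reproducing the paper's projection argument. The paper's proof is also slightly leaner because it never needs to analyze the full base locus $B$: it fixes $P_1$, identifies the finitely many components of $Z \cap \BZ(P_1)$, and then chooses $w_2$ so that $\pi_{w_2}$ places $\gamma$ and those components in general position.
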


\begin{proof}
Suppose $w\in K^3\backslash 0$. Let $w^\perp$ be the two--plane passing through 0 orthogonal to $w,$ i.e.~$w^\perp$ has the defining equation $\{x\cdot w=0\}.$  Let $\pi_w: K^3 \rightarrow w^\perp$ be the orthogonal projection ($\pi_w (x) = x - (x \cdot w) w$).
For $x\in K^3$, let $L_{x,w}=\{x+aw\colon a\in K\}$ be the line in $K^3$ passing through the point $x$ and pointing in the direction $w$.  The fibers of the map $\pi_w$ are lines of the form $L_{x,w}$.

If $\zeta \subset K^3$ is a curve, then $\pi_w (\zeta) \subset w^\perp$ is a constructible set of dimension at most 1.  For a generic $w$, $\zeta$ does not contain any line of the form $L_{x,w}$, and in this case, $\pi_w(\zeta)$ is infinite and so $\pi_w(\zeta)$ is a constructible set of dimension 1: a curve with a finite set of points removed.

We can find the polynomials $P_1, P_2$ in the following way.  We pick two vectors $w_1, w_2 \in K^3$, and we consider $\pi_{w_i} (\gamma)$.  We let
the Zariski-closure of $\pi_{w_i} (\gamma)$ be $Z(p_i)$, where $p_i$ is a polynomial on $w^\perp$.  Then we let $P_i = p_i \circ \pi_{w_i}$ be the corresponding polynomial on $K^3$.  It follows immediately that $\gamma \subset Z(P_1) \cap Z(P_2)$.  For a generic choice of $w_1, w_2$, we will see that the pair of polynomials $(P_1, P_2)$ has all the desired properties.

First we discuss the degree of $P_1$ and $P_2$.  For generic vectors $w_i$, the degree of each polynomial $p_i$ is equal to the degree of $\gamma$.  This happens because the degree of $p_i$ is equal to the number of intersection points between $\pi_{w_i}(\gamma)$ with a generic line in $w_i^\perp$ and the degree of $\gamma$ is equal to the number of intersection points between $\gamma$ and a generic plane in $K^3$.  (cf. Chapter 18 of \cite{harris}, pages 224-225.)  But for a line $\ell \subset w_i^\perp$, the number of intersection points between $\pi_{w_i}(\gamma)$ and the line $\ell$ is equal to the number of intersection points between $\gamma$ and the plane $\pi_{w_i}^{-1}(\ell)$.

For any given surface $Z = Z(T)$, we check that for a generic choice of $w_i$, $Z(P_i) \cap Z$ is 1-dimensional.  Suppose that $Z' \subset Z \cap Z(P_i)$ is a 2-dimensional surface: so $Z'$ is an irreducible component of $Z$ and of $Z(P_i)$.  If $x \in Z'$, then the line $L_{x, w_i}$ must lie in $Z'$ also.  In particular, $w_i$ must lie in the tangent space $T_x Z' $.  But each component $Z'$ of $Z$ must contain a smooth point $x$, and a generic vector $w_i$ does not lie in $T_x Z'$.  So for a generic pair of vectors $w_1, w_2$, the pairwise intersections $Z \cap Z(P_1)$, $Z \cap Z(P_2)$, and $Z(P_1) \cap Z(P_2)$ are all 1-dimensional.

Next we consider the triple intersection $Z \cap Z(P_1) \cap Z(P_2)$.  Let $\zeta_1$ be the curve $Z \cap Z(P_1)$.   Suppose $\gamma$ is not contained in $Z$.  The curve $\zeta_1$ has irreducible components $\zeta_{1,1}, \zeta_{1,2}, ..., \zeta_{1,\ell}$, none of which is $\gamma$.  For a generic choice of $w_2$, the curves $\pi_{w_2} (\zeta_{1,j})$ and $\pi_{w_2}(\gamma)$ intersect properly (in a 0-dimensional subset of $w_2^\perp$).  Therefore, $Z(p_2)$, the Zariski closure of $\pi_{w_2}(\gamma)$, does not contain any of the images $\pi_{w_2}(\zeta_{1,j})$.  Hence $Z(P_2)$ does not contain any of the curves $\zeta_{1,j}$, and so $Z \cap Z(P_1) \cap Z(P_2)$ is 0-dimensional.

Now suppose that $x \in \gamma$ is a smooth point of $\gamma$.  For a generic choice of $w_i$, $\pi_{w_i}(x)$ will be a smooth point of $\pi_{w_i}(\gamma)$.  In this situation, $\nabla p_i( \pi_{w_i}(x) ) \not= 0$, and so $\nabla P_i(x) \not= 0$.  Let $v$ be a non-zero vector in $T_x (\gamma)$.  The vector $\nabla P_i(x)$ must be perpendicular to $v$ (because $\gamma \subset Z(P_i)$), and it must be perpendicular to $w_i$ (because the line $L_{x, w_i} \subset Z(P_i)$).  Therefore, $\nabla P_i(x)$ is a (non-zero) multiple of the cross-product $w_i \times v$.   If we also assume that $w_1, w_2$ and $v$ are linearly independent, then it follows that $\nabla P_1(x)$ and $\nabla P_2(x)$ are linearly independent, and so $\nabla P_1(x) \times \nabla P_2(x) \not= 0$.
\end{proof}

The choice of $\alpha$ in the Lemma above is not unique, and we also want to keep track of the set of $\alpha$ with various good properties.

Let
\begin{equation}
G \defeq \big\{(x,\alpha)\in K^3\times\big(K^{\binom{D+3}{3}}\big)^2\colon P_{\alpha}(x)=Q_{\alpha}(x)=0,\ \nabla P_\alpha(x) \times\nabla Q_\alpha(x)\neq 0\big\}.
\end{equation}
This is a constructible set of complexity $O_D(1)$.
\begin{lemma}\label{xAlphaGammaConst}
The sets
\begin{align}
&\big\{(\gamma,\alpha)\in \chowVarietyDegD\times\big(K^{\binom{D+3}{3}}\big)^2\colon \gamma\subset\BZ(P_\alpha)\cap\BZ(Q_\alpha)\},\label{AlphaGammaSet}\\
&\big\{(x,\gamma,\alpha)\in K^3\times \chowVarietyDegD\times \big(K^{\binom{D+3}{3}}\big)^2\colon (x,\alpha)\in G,\ \gamma\subset \BZ(P_\alpha)\cap \BZ(Q_\alpha)\big\}\label{xAlphaGammaSet}
\end{align}
are constructible and have complexity $O_{D}(1)$.
\end{lemma}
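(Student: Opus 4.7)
The plan is to reduce both statements to Chevalley's theorem (Theorem \ref{ChevalleyTheorem}) applied to the incidence variety $\tildeChowVarietyDegD$ from Lemma \ref{propertiesOfChowExactlyD}. The containment $\gamma \subset Z(P_\alpha) \cap Z(Q_\alpha)$ is \emph{a priori} a universally quantified condition (``for every $x \in \gamma$, both polynomials vanish''), which is why we cannot just write the set down as a Boolean combination of polynomial equalities. The key idea is to take complements so that the quantifier becomes existential, and then eliminate the existential quantifier via Chevalley.

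More concretely, for \eqref{AlphaGammaSet} I would form the auxiliary set
\begin{equation*}
S \defeq \big\{(\gamma,\alpha,x) \in \chowVarietyDegD \times \big(K^{\binom{D+3}{3}}\big)^2 \times K^3 \colon (\gamma,x) \in \tildeChowVarietyDegD,\ (P_\alpha(x),Q_\alpha(x))\neq(0,0)\big\}.
\end{equation*}
Since $\tildeChowVarietyDegD$ is constructible of complexity $O_D(1)$ and the conditions $P_\alpha(x)\neq 0$, $Q_\alpha(x)\neq 0$ are polynomial in $(\alpha,x)$, $S$ is constructible of complexity $O_D(1)$. Now let $\pi$ be the projection onto the $(\gamma,\alpha)$ coordinates. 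By the fiber description of $\tildeChowVarietyDegD$, a pair $(\gamma,\alpha)$ lies in $\pi(S)$ precisely when there exists $x\in\gamma$ with $P_\alpha(x)\neq 0$ or $Q_\alpha(x)\neq 0$, i.e., precisely when $\gamma \not\subset Z(P_\alpha) \cap Z(Q_\alpha)$. By Chevalley, $\pi(S)$ is constructible of complexity $O_D(1)$, and \eqref{AlphaGammaSet} is its complement in $\chowVarietyDegD \times \big(K^{\binom{D+3}{3}}\big)^2$. Taking complements preserves the constructibility and the complexity bound (up to an absolute constant), so \eqref{AlphaGammaSet} is constructible of complexity $O_D(1)$.

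For \eqref{xAlphaGammaSet}, I would just observe that it is a finite intersection of two constructible sets, lifted to $K^3 \times \chowVarietyDegD \times \big(K^{\binom{D+3}{3}}\big)^2$: (i) the preimage of $G$ under the projection onto the $(x,\alpha)$ coordinates, which has complexity $O_D(1)$ because $G$ does, and (ii) the preimage of the set \eqref{AlphaGammaSet} under the projection onto the $(\gamma,\alpha)$ coordinates, which has complexity $O_D(1)$ by the previous paragraph. The intersection of two constructible sets of bounded complexity is again constructible of bounded complexity, giving the desired conclusion.

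There is no serious obstacle: the only non-formal ingredient is Chevalley's theorem, which is precisely what is needed to turn the ``for all $x\in\gamma$'' condition into a constructible one. The main thing to be careful about is using the incidence variety $\tildeChowVarietyDegD$ rather than trying to parametrize $\gamma$ directly (for example by a polynomial map from $K$), since we have no such parametrization available in general and the Chow variety was built precisely so that the incidence condition is constructible.
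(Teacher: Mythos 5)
Your argument is essentially identical to the paper's: both form the set of triples $(x,\gamma,\alpha)$ with $x\in\gamma$ (encoded via the incidence variety / Chow correspondence) and $P_\alpha(x)\neq 0$ or $Q_\alpha(x)\neq 0$, project onto $(\gamma,\alpha)$, apply Chevalley, and take the complement to get \eqref{AlphaGammaSet}, then obtain \eqref{xAlphaGammaSet} as an intersection of constructible sets. The only cosmetic difference is that you phrase $x\in\gamma$ via $\tildeChowVarietyDegD$ whereas the paper phrases it as the constructible set $\{(x,\gamma):x\in\gamma\}$ citing Proposition \ref{propertiesOfProjChow}; these are the same thing.
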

\begin{proof}  The proof uses the fact that constructible sets are a Boolean algebra as well as Chevalley's theorem, Theorem \ref{ChevalleyTheorem}.

The following sets are constructible of complexity $O_D(1)$:
\begin{align}
&\{(x,\gamma)\in K^3\times\chowVarietyDegD\colon x\in\gamma\}\quad\quad \textrm{(by Proposition \ref{propertiesOfProjChow})},\nonumber\\
&\{(x,\alpha)\in K^3\times \big(K^{\binom{D+3}{3}}\big)^2\colon P_\alpha(x)=Q_\alpha(x)=0\},\nonumber\\
&\{(x,\alpha)\in K^3\times \big(K^{\binom{D+3}{3}}\big)^2\colon (P_\alpha(x)\neq 0\ \textrm{or}\ Q_\alpha(x)\neq 0\},\nonumber\\
&\{(x,\gamma,\alpha)\in K^3\times \chowVarietyDegD\times \big(K^{\binom{D+3}{3}}\big)^2\colon x\in\gamma,\  P_\alpha(x)\neq 0\ \textrm{or}\ Q_\alpha(x)\neq0\}.\label{xAlphaGammaNotContained}
\end{align}
Let $\pi\colon (x,\gamma,\alpha)\mapsto(\gamma,\alpha)$. Then
\begin{equation}\label{gammaNotInAlpha}
\pi(\eqref{xAlphaGammaNotContained})=\{(\gamma,\alpha)\in \chowVarietyDegD\times\big(K^{\binom{D+3}{3}}\big)^2\colon \gamma\not\subset \BZ(P_\alpha)\cap\BZ(Q_\alpha) \},
\end{equation}
so
\begin{equation*}
\eqref{AlphaGammaSet}=\Big(\chowVarietyDegD\times\big(K^{\binom{D+3}{3}}\big)^2\Big)\backslash\eqref{gammaNotInAlpha}
\end{equation*}
is constructible of complexity $O_{D}(1)$.

Finally,
\begin{equation*}
\eqref{xAlphaGammaSet}= \Big(K^3\times \eqref{AlphaGammaSet}\Big)\cap\{(x,\gamma,\alpha)\in K^3\times \chowVarietyDegD\times\big(K^{\binom{D+3}{3}}\big)^2\colon (x,\alpha)\in G\}
\end{equation*}
is constructible of complexity $O_{D}(1)$.
\end{proof}
\section{Local Rings, Intersection multiplicity and B\'ezout}
\subsection{Local rings}
\begin{defn} For $z\in K^N$, let $\mathcal{O}_{K^N,z}$ be the local ring of $K^N$ at $z$. This is the ring of rational functions of the form $p(x)/q(x)$, where $q(z)\neq 0$.
\end{defn}
There is a natural map $\iota\colon K[x_1,\ldots,x_N]\to \mathcal{O}_{K^N,z}$ which sends $f\mapsto f/1$. This map is an injection.

\begin{defn}\label{defnOfLocalizationOfIdeal}
If $I\subset K[x_1,\ldots,x_N]$ is an ideal, let $I_{z}\subset \mathcal{O}_{K^N,z}$ be the localization of $I$ at $z$; this is the ideal in $\mathcal{O}_{K^N,z}$ generated by $\iota(I)$.
\end{defn}

\begin{lemma}\label{trivialLocalRing}
If $Z\subset K^N$ is an affine variety and if $z\notin Z$, then $I(Z)_{z}=\mathcal{O}_{K^N,z}$.
\end{lemma}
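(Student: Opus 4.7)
The plan is to exhibit an element of $I(Z)_z$ that is already a unit in $\mathcal{O}_{K^N,z}$; since any ideal containing a unit is the whole ring, this will give the conclusion.

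Concretely, the first step is to use the hypothesis $z\notin Z$ to produce a polynomial $f\in I(Z)$ with $f(z)\neq 0$. This is exactly the content of the definition of $I(Z)$: $Z$ is the common zero set of $I(Z)$ (here we use that $Z$ is assumed to be an affine variety, so $Z=\BZ(I(Z))$), and $z\notin Z$ means that at least one polynomial in $I(Z)$ fails to vanish at $z$.

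Second, I would observe that $\iota(f)=f/1$ lies in $I(Z)_z$ by the definition of the localization of the ideal (Definition \ref{defnOfLocalizationOfIdeal}). Since $f(z)\neq 0$, the element $1/f$ makes sense as a rational function whose denominator does not vanish at $z$, so $1/f\in\mathcal{O}_{K^N,z}$. Therefore the product $(f/1)\cdot(1/f)=1$ belongs to $I(Z)_z$, and any ideal in a ring that contains $1$ is the whole ring. This forces $I(Z)_z=\mathcal{O}_{K^N,z}$.

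There is essentially no obstacle here; the only thing to be a little careful about is distinguishing $f$ as a polynomial from its image $\iota(f)=f/1$ in the local ring, and verifying that the formal quotient $1/f$ is a legal element of $\mathcal{O}_{K^N,z}$ (which is precisely the condition that its denominator is nonzero at $z$). No noetherian or dimension hypothesis on $Z$ is needed, and we never have to pick generators of $I(Z)$ explicitly — a single non-vanishing element suffices.
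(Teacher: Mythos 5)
Your proof is correct and follows the same approach as the paper: find $f\in I(Z)$ with $f(z)\neq 0$ and observe that $f/1$ is a unit in the local ring. You simply spell out more explicitly why a unit forces the localized ideal to be the whole ring, which the paper leaves implicit.
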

\begin{proof}
Since $z\notin Z$, there is a function $f\in I(Z)$ which is non-zero at $z$. Then the element $f/1\in I(Z)_{z}$ is a unit.
\end{proof}
\begin{lemma}\label{equalityOfLocalizedIdeals}
Let $Z\subset K^N$ be an affine variety. Suppose that $Y$ is an irreducible component of $Z$ and $z\in Y$ is a regular point of $Z$. Then $I(Y)_{z}=I(Z)_{z}$.
\end{lemma}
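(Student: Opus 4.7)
The plan is to decompose $Z$ into irreducible components and localize away the components that do not contain $z$, so that what remains is exactly $I(Y)_z$. Write $Z = Y \cup Y_1 \cup \ldots \cup Y_k$, where $Y, Y_1, \ldots, Y_k$ are the distinct irreducible components of $Z$. Then $I(Z) = I(Y) \cap I(Y_1) \cap \ldots \cap I(Y_k)$, and the inclusion $I(Z) \subset I(Y)$ gives $I(Z)_z \subset I(Y)_z$ for free.

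The first main step is to show that $z \notin Y_j$ for every $j = 1, \ldots, k$. The clean way to see this is to note that a regular local ring is an integral domain, so $\mathcal{O}_{Z,z} = \mathcal{O}_{K^N,z}/I(Z)_z$ has no zero divisors; if $z$ lay on two distinct components, then $Z$ would be locally reducible at $z$ and $\mathcal{O}_{Z,z}$ would have zero divisors, a contradiction. One can alternatively argue via the Jacobian/tangent-space definition of regularity: being a regular point of $Z$ pins down the local dimension and tangent space of $Z$ at $z$ in a way that is incompatible with $z$ being on two different components.

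Once this is established, for each $j$ we apply Lemma \ref{trivialLocalRing} to obtain $I(Y_j)_z = \mathcal{O}_{K^N,z}$, which in particular produces a polynomial $g_j \in I(Y_j)$ with $g_j(z) \neq 0$. Setting $g = g_1 \cdots g_k$, we have $g \in I(Y_1) \cdots I(Y_k) \subset I(Y_1 \cup \ldots \cup Y_k)$ and $g(z) \neq 0$. For the reverse inclusion $I(Y)_z \subset I(Z)_z$, take any $f \in I(Y)$; then $fg$ vanishes on $Y$ and on each $Y_j$, so $fg \in I(Z)$, and therefore
\begin{equation*}
\frac{f}{1} = \frac{fg}{g} \in I(Z)_z,
\end{equation*}
since $g(z) \neq 0$. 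This yields $I(Y)_z \subset I(Z)_z$, completing the proof.

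The main obstacle is the local-irreducibility step: everything else is essentially formal manipulation with localizations and the fact from Lemma \ref{trivialLocalRing} that ideals of varieties not containing $z$ become the unit ideal after localizing. Depending on how much commutative algebra one is willing to cite, this step is either immediate (regular local rings are domains) or requires a direct tangent-space argument tailored to whichever concrete definition of \emph{regular point of a variety} the paper adopts.
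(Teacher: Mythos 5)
Your proof is correct and follows essentially the same route as the paper: decompose $Z$ into irreducible components, use regularity of $z$ to conclude that $z$ lies on no component other than $Y$, and then invoke Lemma \ref{trivialLocalRing} to trivialize the localized ideals of the other components. The only cosmetic differences are that the paper finishes by noting that localization commutes with finite intersections (so $I(Z)_z = I(Y)_z \cap \mathcal{O}_{K^N,z} \cap \cdots = I(Y)_z$) rather than constructing the clearing element $g = g_1\cdots g_k$ by hand, and that you spell out the justification for $z \notin Y_j$ (regular local rings are domains, so $\mathcal{O}_{Z,z}$ has no zero divisors), a step the paper states without comment.
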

\begin{proof}
Write $Z = Y\cup Z_1\cup\ldots\cup Z_\ell$ as a union of irreducible components. We get a corresponding decomposition $I(Z)_{z}=I(Y)_{z}\cap I(Z_1)_{z}\cap\ldots\cap I(Z_\ell)_{z}$. Since $z$ is a regular point of $Z$, for each index $j$ we have $z\notin Z_j$ and thus by Lemma \ref{trivialLocalRing}, $I(Z_j)_{z}=\mathcal{O}_{K^N,z}$. Thus $I(Z)_{z}=I(Y)_{z}$.
\end{proof}
\subsection{The B\'ezout theorem}  \label{secbezout}

In the paper, we will need a few variations of the B\'ezout theorem.  One of the versions involves the multiplicity of the intersection of hypersurfaces $Z(f_1) \cap ... \cap Z(f_N)$.  We start by defining this multiplicity.

Given polynomials $f_1,\ldots,f_N$, we define the (length) intersection multiplicity of $f_1,\ldots,f_N$ at $z$ to be
\begin{equation}\label{defnMultz0}
\mult_{z}(f_1,\ldots,f_N) = \dim_K\big(\mathcal{O}_{K^N,z}/(f_1,\ldots,f_N)_{z}\big).
\end{equation}
Let us understand \eqref{defnMultz0}.
\begin{itemize}
\item $\mathcal{O}_{K^N,z}$ is the local ring of $K^N$ at $z$.
\item $(f_1,\ldots,f_N)_{z}$ is the ideal in $\mathcal{O}_{K^N,z}$ generated by $(f_1,\ldots,f_N)$. I.e.~it is the set of elements of $\mathcal{O}_{K^N,z}$ which can be written $a_1f_1+\ldots+a_Nf_N$, with $a_1,\ldots,a_N\in \mathcal{O}_{K^N,z}$.
\item $\mathcal{O}_{K^N,z}/(f_1,\ldots,f_N)_z$ is the set of equivalence classes of elements in $\mathcal{O}_{K^N,z}$ where $\bar g \sim \bar g^\prime$ if $g-g^\prime\in (f_1,\ldots,f_N)_z$. This set of equivalence classes forms a ring.
\item $\dim_K(\cdot)$ is the dimension of the ring $\mathcal{O}_{K^N,z}/(f_1,\ldots,f_N)_z$ when it is considered as a $K$--vector space. Later, $\dim(\cdot)$ (without the subscript) will be used to denote the dimension of an algebraic variety or constructible set.
\end{itemize}

If $z \in \BZ(f_1)\cap\ldots\cap \BZ(f_N)$, then the multiplicity $\mult_{z}(f_1,\ldots,f_N)$ is always at least 1.  Later, we will show that the multiplicity of certain intersections is large. To do this, we need to find many linearly independent elements of $\mathcal{O}_{K^N,z}/(f_1,\ldots,f_N)_z$. Polynomials $g_1(x),\ldots,g_\ell(x)$ are linearly dependent in $\mathcal{O}_{K^N,z}/(f_1,\ldots,f_N)_z$ if we can write
\begin{equation}\label{gEllLinDependenceRelation}
\sum_{i=1}^{\ell} c_i g_i(x) = a_1(x)f_1(x)+\ldots+a_N(x)f_N(x),
\end{equation}
where $c_1,\ldots,c_{\ell}\in K;$ at least one $c_i$ is non-zero; and $a_1(x),\ldots,a_N(x)\in\mathcal{O}_{K^N,z}$, i.e. $a_1(x),\ldots,a_N(x)$ are rational functions of the form $p(x)/q(x)$ with $q(z)\neq 0$.

If no expression of the form \eqref{gEllLinDependenceRelation} holds for $g_1,\ldots,g_\ell$, then $g_1,\ldots,g_{\ell}$ are \emph{linearly independent}.

We can now state the first version of B\'ezout's theorem that we will use.

\begin{thm}[B\'ezout's theorem for surfaces in $K^3$]\label{bezoutHypersurfaces}
Let $Z_1=Z(f_1),Z_2=Z(f_2),Z_3=Z(f_3)$ be surfaces in $K^3$. Suppose that $Z_1\cap Z_2\cap Z_3$ is zero-dimensional (i.e.~finite). Then
\begin{equation}
 \sum_{z\in Z_1\cap Z_2\cap Z_3}\mult_{z}(f_1,f_2,f_3)\leq (\deg f_1)(\deg f_2)(\deg f_3).
\end{equation}
\end{thm}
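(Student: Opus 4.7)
The plan is to recast the sum of local multiplicities as a single $K$-vector space dimension and then bound that dimension by reducing to the projective B\'ezout theorem.

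First, I would observe that since $Z_1\cap Z_2\cap Z_3$ is zero-dimensional, the quotient $R/I$ with $R=K[x_1,x_2,x_3]$ and $I=(f_1,f_2,f_3)$ is a finite-dimensional Artinian $K$-algebra. Applying the Chinese remainder decomposition of an Artinian ring into its local factors at maximal ideals, together with the identification of each local factor with $\mathcal{O}_{K^3,z}/I_z$, yields
\[
\dim_K R/I \;=\; \sum_{z\in Z_1\cap Z_2\cap Z_3}\dim_K \mathcal{O}_{K^3,z}/I_z \;=\; \sum_z \mult_z(f_1,f_2,f_3),
\]
so it suffices to prove $\dim_K R/I \le d_1 d_2 d_3$, where $d_i=\deg f_i$.

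Next, I would homogenize, setting $F_i(x_0,x_1,x_2,x_3) = x_0^{d_i}f_i(x_1/x_0,x_2/x_0,x_3/x_0)$, a homogeneous polynomial of degree $d_i$ whose restriction to the affine chart $\{x_0\ne 0\}\cong K^3$ is $f_i$. Under the standard identification $\mathcal{O}_{\FP^3,p}\cong \mathcal{O}_{K^3,z}$ for $p=[1{:}z_1{:}z_2{:}z_3]$, the affine multiplicity $\mult_z(f_1,f_2,f_3)$ equals the projective intersection multiplicity of $F_1,F_2,F_3$ at $p$. The projective zero locus $\bar X = Z(F_1)\cap Z(F_2)\cap Z(F_3)\subset \FP^3$ contains the affine intersection together with possibly extra components in the hyperplane at infinity $\{x_0=0\}$. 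If $\bar X$ is already zero-dimensional, then the classical projective B\'ezout theorem (see e.g.\ \cite{harris}) gives $\sum_{p\in \bar X}\mult_p = d_1 d_2 d_3$, from which the affine bound is immediate.

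To handle the possibility that $\bar X$ has positive-dimensional components at infinity, I would deform: pick generic homogeneous polynomials $G_i$ of degree $d_i$ and consider $F_i^{(t)} = F_i + tG_i$ for $t\in K$. For all but finitely many $t$, the deformed projective scheme $\bar X_t$ is zero-dimensional and, by a Bertini-type genericity argument, consists of $d_1 d_2 d_3$ distinct reduced points all lying in the affine chart, so projective B\'ezout gives $\sum_{p\in \bar X_t}\mult_p = d_1 d_2 d_3$ at these values of $t$. As $t\to 0$, points of $\bar X_t$ can only collide with one another or escape to infinity --- they cannot appear spontaneously at a new affine location --- so by upper semicontinuity of fiber length the affine length of $\bar X_0$ is at most $d_1 d_2 d_3$. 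The main obstacle is making this semicontinuity step fully rigorous: one must realize the restriction of $\{\bar X_t\}$ to the affine chart as a flat family over a Zariski-open subset of the $t$-line and invoke constancy of the Hilbert polynomial, or else argue directly that length cannot increase under specialization at an affine point. Projective B\'ezout itself I would invoke as a black box.
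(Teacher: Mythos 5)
The paper does not actually give a proof of this theorem: it simply cites Fulton (\cite[Proposition~8.4]{Fulton}), so any genuine argument you write is already ``different from the paper's'' in the trivial sense. The first half of your outline is correct and standard: because $Z_1\cap Z_2\cap Z_3$ is finite and $K$ is algebraically closed, the quotient $K[x_1,x_2,x_3]/(f_1,f_2,f_3)$ is a finite-dimensional Artinian $K$-algebra, its decomposition into local factors recovers $\sum_z \operatorname{mult}_z(f_1,f_2,f_3)=\dim_K K[x]/(f_1,f_2,f_3)$, and when the homogenized scheme $\bar X\subset\FP^3$ is already zero-dimensional the bound follows directly from projective B\'ezout.

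The gap is in the deformation step, and it is more serious than you acknowledge. You propose to ``argue directly that length cannot increase under specialization at an affine point,'' and you appeal to ``upper semicontinuity of fiber length.'' But upper semicontinuity of fiber length goes the \emph{wrong} way: for a non-flat family the length of the fiber can jump \emph{up} at the special parameter (for instance $\operatorname{Spec} K[x,t]/(x^2,xt)\to\operatorname{Spec} K[t]$ has generic fiber of length $1$ and special fiber of length $2$), so this principle alone does not bound the affine length of $\bar X_0$ by the generic value $d_1d_2d_3$. What actually rescues the argument in the B\'ezout setting is that the total space $V=V(F_1^{(t)},F_2^{(t)},F_3^{(t)})\subset\FP^3\times\mathbb{A}^1$ is, for generic $G_i$, a one-dimensional complete intersection, hence Cohen--Macaulay and unmixed; from this one can deduce that the affine part $V\cap(\mathbb{A}^3\times\mathbb{A}^1)$ is finite and flat over a neighborhood of $t=0$, and flatness is exactly what makes the fiber length constant (not merely semicontinuous). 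You also suggest invoking flatness ``over a Zariski-open subset of the $t$-line,'' but generic flatness is free and useless here --- the whole point is that the open set produced by generic flatness might exclude $t=0$, and the flatness you actually need is \emph{at} $t=0$. Filling this in properly amounts to reconstructing the conservation-of-number machinery that the paper takes as a black box from Fulton, so as written the proposal does not yet constitute a self-contained proof.
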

This is a special case of \cite[Proposition 8.4]{Fulton}. Specifically, see Equation (3) on p145.

We will also need a version of B\'ezout's theorem that bounds the number of (distinct) intersection points between a curve and a surface in $K^3$.

\begin{thm}[B\'ezout's theorem for curves and surfaces in $K^3$; see \cite{Fulton}, Theorem 12.3]\label{bezoutCurveSurface}
Let $Z\subset K^3$ be a surface and let $\gamma\subset K^3$ be an irreducible curve. Then either $\gamma\subset Z$ or $\gamma$ intersects $Z$ in at most $(\deg\gamma)(\deg Z)$ distinct points.
\end{thm}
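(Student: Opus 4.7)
The plan is to reduce to projective intersection theory and then invoke the basic fact that an irreducible projective curve meets a hypersurface of degree $e$ in a zero-cycle of degree $e \cdot \deg(\gamma)$. First, I would pass to the projective closures $\bar\gamma \subset \FP^3$ and $\bar Z \subset \FP^3$ of $\gamma$ and $Z$. Since $\gamma$ is irreducible of dimension one and $\gamma \not\subset Z$, the closure $\bar\gamma$ is irreducible and is not contained in $\bar Z$. Hence $\bar\gamma \cap \bar Z$ is a proper closed subset of the one-dimensional irreducible variety $\bar\gamma$, and is therefore zero-dimensional and finite. Since $\gamma \cap Z \subset \bar\gamma \cap \bar Z$, it suffices to bound the number of points of the projective intersection.

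The main step is to show that the total intersection multiplicity of $\bar\gamma$ with $\bar Z$ is exactly $(\deg Z)(\deg \gamma)$. Recall (cf.~\cite[Ch.~18]{harris}) that the degree of an irreducible projective curve $\bar\gamma$ is by definition the number of intersection points of $\bar\gamma$ with a generic hyperplane. More generally, if $T$ is a homogenization of the defining equation of $Z$, with $\deg T = e = \deg Z$, then $T$ restricts to a nonzero global section of $\mathcal{O}_{\bar\gamma}(e)$. The line bundle $\mathcal{O}_{\bar\gamma}(e)$ has degree $e \cdot \deg(\bar\gamma)$, so the effective divisor on $\bar\gamma$ cut out by $T|_{\bar\gamma}$ is supported on $\bar\gamma \cap \bar Z$ and has total degree $e \cdot \deg(\bar\gamma)$.

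Since each point of $\bar\gamma \cap \bar Z$ contributes at least $1$ to the multiplicity of this divisor, the number of distinct points in $\bar\gamma \cap \bar Z$ is at most $e \cdot \deg(\bar\gamma) = (\deg Z)(\deg \gamma)$, which gives the desired conclusion. The only nontrivial input is the identification of $T|_{\bar\gamma}$ as a divisor of degree $e \cdot \deg(\bar\gamma)$. This is the main obstacle: it needs the correct theory of degrees of divisors on possibly singular curves. One slick way around this is deformation: specialize $\bar Z$ inside the projective space of degree-$e$ hypersurfaces to a union of $e$ hyperplanes $H_1,\ldots,H_e$ in sufficiently general position. Each $H_i$ meets $\bar\gamma$ in exactly $\deg(\bar\gamma)$ distinct points by the definition of $\deg(\bar\gamma)$, so $\bar\gamma \cap (H_1 \cup \ldots \cup H_e)$ has exactly $e \cdot \deg(\bar\gamma)$ points, and conservation of the intersection number in a flat family transports the bound back to $\bar Z$.
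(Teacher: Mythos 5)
The paper does not actually prove Theorem~\ref{bezoutCurveSurface}; it is stated as a black box with a citation to \cite{Fulton}, Theorem~12.3, and the surrounding text proves only Theorem~\ref{bezoutcurves} (two surfaces) from Theorem~\ref{bezoutHypersurfaces} (three surfaces), both also ultimately cited from Fulton. So your proposal is not an alternative to a proof the paper gives; it is an attempt to supply a proof where the paper has chosen to cite.

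That said, your argument is essentially correct and is a standard way to establish the statement. The projective reduction is fine: if $\gamma\not\subset Z$, then since $Z$ is Zariski closed in $K^3$ any affine point of $\gamma\setminus Z$ also lies outside $\bar Z$, so $\bar\gamma\not\subset\bar Z$ and the intersection $\bar\gamma\cap\bar Z$ is a proper closed subset of the one-dimensional irreducible $\bar\gamma$, hence finite. The core claim, that the zero-cycle $T|_{\bar\gamma}$ has total degree $(\deg Z)(\deg\bar\gamma)$, is also correct, but you are right that this is where the real content is, and the line-bundle phrasing ``$\mathcal{O}_{\bar\gamma}(e)$ has degree $e\cdot\deg\bar\gamma$'' is exactly the thing one would need to prove, not a definition, especially since $\bar\gamma$ may be singular. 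The cleanest self-contained justification is a Hilbert-polynomial computation that you do not quite state: since $\bar\gamma$ is integral and $T\notin I(\bar\gamma)$, multiplication by $T$ is injective on the homogeneous coordinate ring, giving a short exact sequence
\[
0\longrightarrow \big(S/I(\bar\gamma)\big)(-e)\xrightarrow{\ \cdot T\ } S/I(\bar\gamma)\longrightarrow S/\big(I(\bar\gamma)+(T)\big)\longrightarrow 0,
\]
and since the Hilbert polynomial of $S/I(\bar\gamma)$ has leading term $\deg(\bar\gamma)\,t$, the Hilbert polynomial of the quotient is the constant $e\cdot\deg\bar\gamma$; as $\bar\gamma\cap\bar Z$ is a finite scheme, this constant is its length, and each closed point contributes length at least one. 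Your degeneration argument reaches the same conclusion but leans on ``conservation of intersection number in a flat family,'' which itself requires an argument (constancy of the Hilbert polynomial of the fibers is precisely what one would check); as written it is more of a heuristic than a proof. In short: the approach is sound and gives the paper's statement, but the one step you flag as ``the main obstacle'' is indeed the whole theorem, and a reader would want the exact-sequence computation (or an explicit flatness argument) rather than a gesture at degeneration.
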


Finally, we will need a version of B\'ezout's theorem that bounds the number of curves in the intersection between two surfaces in $K^3$.

\begin{thm}\label{bezoutcurves} Suppose that $f_1, f_2 \in K[x_1, x_2, x_3]$.  If $f_1$ and $f_2$ have no common factor, then the number of irreducible curves in $Z(f_1) \cap Z(f_2)$ is at most $(\deg f_1) (\deg f_2)$.
\end{thm}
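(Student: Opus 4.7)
My plan is to reduce to the three-surface B\'ezout theorem (Theorem \ref{bezoutHypersurfaces}) by slicing with a generic plane. Since $f_1$ and $f_2$ share no common factor, $Z(f_1) \cap Z(f_2)$ has dimension at most one, and hence decomposes into finitely many one-dimensional irreducible components $\gamma_1,\ldots,\gamma_m$ together with possibly finitely many isolated points. The irreducible curves in $Z(f_1) \cap Z(f_2)$ are exactly the $\gamma_i$, so it suffices to show $m \leq (\deg f_1)(\deg f_2)$.

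The main step is to choose a linear polynomial $h \in K[x_1,x_2,x_3]$ generically in its four-dimensional space of coefficients so that the following hold simultaneously: (a) $Z(h)$ contains no $\gamma_i$, and $Z(f_1)\cap Z(f_2)\cap Z(h)$ is zero-dimensional; (b) $\gamma_i\cap Z(h)$ is nonempty for every $i$; and (c) the finite sets $\gamma_i\cap Z(h)$ are pairwise disjoint across $i$. Condition (a) is a standard transversality claim. For (b), I would pass to the projective closure $\bar\gamma_i \subset \FP^3$ and apply projective B\'ezout, which produces $\deg\gamma_i \geq 1$ intersection points with the projectivized plane (counted with multiplicity); since $\gamma_i$ is a genuine affine curve (not contained in the plane at infinity), for generic $h$ none of these points lie at infinity, so $\gamma_i\cap Z(h)$ is nonempty. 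Condition (c) holds because distinct $\gamma_i,\gamma_j$ meet in only finitely many points, a set that a generic plane avoids. Each of (a), (b), (c) defines a nonempty Zariski-open subset of coefficient space, so a simultaneously good $h$ exists.

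Fixing such an $h$, properties (b) and (c) together imply that $Z(f_1)\cap Z(f_2)\cap Z(h)$ contains at least $m$ distinct points, one from each $\gamma_i$. Theorem \ref{bezoutHypersurfaces} bounds $\sum_z \mult_z(f_1,f_2,h)$ by $(\deg f_1)(\deg f_2)(\deg h) = (\deg f_1)(\deg f_2)$, and since each multiplicity is at least $1$, this also upper-bounds the cardinality of the intersection. Combining the two inequalities gives $m \leq (\deg f_1)(\deg f_2)$. The step where I expect the most care to be needed is the genericity argument, particularly condition (b); the key observation is that no affine curve in $K^3$ is contained in the hyperplane at infinity of $\FP^3$, so projective B\'ezout applied to $\bar\gamma_i$ produces genuine affine intersection points for generic $h$.
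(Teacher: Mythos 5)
Your proposal is correct and follows essentially the same approach as the paper: slice $Z(f_1)\cap Z(f_2)$ with a generic plane, argue that the plane picks up at least one point from each of the $m$ curves with no double-counting, and then apply the three-surface B\'ezout theorem (Theorem \ref{bezoutHypersurfaces}) with $\deg f_3 = 1$. The only cosmetic difference is that you spell out the projective-closure justification for why a generic affine plane meets each curve, where the paper simply cites \cite{harris}, Corollary 3.15.
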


\begin{proof} We will prove this result using Theorem \ref{bezoutHypersurfaces}.  Suppose that $\{ \gamma_i \}_{i = 1,  ..., M}$ is a finite set of irreducible curves in $Z(f_1) \cap Z(f_2)$.  We want to show that $M \le (\deg f_1) (\deg f_2)$.  If $\pi$ is a generic plane in $K^3$, then $\pi$ will intersect each of the curves $\gamma_i$ (cf. \cite{harris} Corollary 3.15).  There are only finitely many points that lie in at least two of the curves $\gamma_i$, and a generic plane $\pi$ will avoid all of those points.  Therefore, $| \pi \cap Z(f_1) \cap Z(f_2) | \ge M$.  Let $f_3$ be a polynomial of degree 1 with $Z(f_3) = \pi$.

Since $f_1$ and $f_2$  have no common factor, $Z(f_1) \cap Z(f_2)$ must have dimension 1.  Then for a generic plane $\pi$, $Z(f_1) \cap Z(f_2) \cap Z(f_3)$ has dimension zero, so we can apply the B\'ezout theorem, Theorem \ref{bezoutHypersurfaces}.  In this way we see that,

$$ M \le | Z(f_1) \cap Z(f_2) \cap Z(f_3) | \le \sum_{z\in Z_1\cap Z_2\cap Z_3}\mult_{z}(f_1,f_2,f_3) \le $$

$$ \leq (\deg f_1)(\deg f_2)(\deg f_3) = (\deg f_1)(\deg f_2). $$

\end{proof}

\section{Curve-surface tangency}

In this section, we will define what it means for a curve to be tangent to a surface to order $r$. A precise definition will be given in Section \ref{defnCurveSurfaceTangency}.

As a warmup, suppose that the curve $\gamma$ is the $x_1$-axis.  In this case, we could make the definition that $\gamma$ is tangent to the surface $Z(T)$ at the origin to order at least $r$ if and only if

$$ \frac{ \partial^j T}{ \partial x_1^j} (0) = 0 \textrm{ for } j = 0, ..., r. $$

\noindent The definition we give will be equivalent to this one in the special case that $\gamma$ is the $x_1$-axis.  One of our goals is to extend this definition to any regular point $x$ in any curve $\gamma$.  To do so, we define a version of  ``differentiating along the curve $\gamma$'' in Subsection \ref{subsecdiffcurve}.  In the following subsection, we give two other definitions of being tangent to order $r$, and we show that all the definitions are equivalent.

Throughout the section, we will restrict to the case that $r < \chara K$.  To see why, suppose again that $\gamma$ is the $x_1$-axis, and consider the polynomial $T = x_2 - x_1^p$ for $p = \chara K$.  For this choice of $T$, $ \frac{ \partial^j T}{ \partial x_1^j} (0) = 0$ for all $j$.  Nevertheless, it does not seem correct to say that the $x_1$-axis is tangent to $Z(T)$ to infinite order, and with our other definitions of tangency, the $x_1$-axis is not tangent to $Z(T)$ to infinite order.  To avoid these issues, we restrict throughout this paper to the case $r < \chara K$.

\subsection{Differentiating along a curve} \label{subsecdiffcurve}
Recall the definition of $P_\alpha(x)$ and $Q_{\alpha}(x)$ from Section \ref{PQSec}. Here and throughout this section, $\nabla = \nabla_x=(\partial_{x_1},\partial_{x_2},\partial_{x_3})$ denotes the gradient in the $x$ variable.

We define a differential operator $D_\alpha$.  For any $f$, we define

\begin{equation}
D_\alpha f (x)\defeq\big(\nabla P_\alpha(x)\times\nabla Q_\alpha(x)\big) \cdot \nabla f(x).
\end{equation}

This is well-defined for $f \in K[x_1, x_2, x_3]$, and in this case, $D_\alpha f \in K[x_1, x_2, x_3]$.  The operator $D_\alpha$ also makes sense a little more generally: if $f$ is a rational function, then $D_\alpha f$ is a rational function as well.

The intuition behind $D_\alpha$ is the following.  If $\gamma \subset Z(P_\alpha) \cap Z(Q_\alpha)$, and $x \in \gamma$ is a point where $\nabla P_\alpha(x) \times \nabla Q_\alpha(x) \not= 0$, then $\nabla P_\alpha(x) \times \nabla Q_\alpha(x)$ is tangent to $\gamma$, and so $D_\alpha f(x)$ is a derivative of $f$  in the tangent direction to $\gamma$.

We let $D_\alpha^2 f$ be shorthand for $D_\alpha ( D_\alpha f)$, and we define the higher iterates $D_\alpha^j f$ in a similar way.

Note that $D_\alpha P_{\alpha}$ and  $D_\alpha Q_{\alpha}$ are identically 0 (as functions of $\alpha$ and $x$). Thus $(D_\alpha^j P_\alpha)(x)=0$ and $(D_\alpha^j Q_\alpha)(x)=0$ for all $j\geq 1$. If $P_\alpha(x)=0$ and $Q_{\alpha}(x)=0$, then $(D_\alpha^j P_\alpha)(x)=0$ and $(D_\alpha^j Q_\alpha)(x)=0$ for all $j\geq 0.$

We also observe that $D_\alpha(x)$ obeys the Leibniz rule. In particular, if $f(x),\ g(x)$ are polynomials or rational functions, then
\begin{equation}\label{productRule}
D_\alpha(fg)(x)=(D_\alpha f)(x)\ g(x)+f(x)\ (D_\alpha g)(x).
\end{equation}

As a corollary, we have the following result.
\begin{lemma}\label{derivativesProducts}
Let $T\in K[x_1,x_2,x_3]$ and suppose $(D_\alpha^jT)(x)=0,\ j=0,\ldots,r$. Then for any rational function $b$ with $b(x)\neq\infty$, we have $D_\alpha^j (bT)(x)=0,\ j=0,\ldots,r.$
\end{lemma}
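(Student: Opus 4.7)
The plan is to reduce the claim to the general Leibniz formula for the derivation $D_\alpha$, and then observe that in the resulting expansion every term carries a factor $(D_\alpha^{j-i}T)(x)$ with $j-i \leq r$, which vanishes by hypothesis.

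First I would establish the iterated Leibniz rule: for any rational functions $f,g$ and any $j \geq 0$,
\begin{equation*}
D_\alpha^j(fg) = \sum_{i=0}^{j} \binom{j}{i} (D_\alpha^i f)(D_\alpha^{j-i} g).
\end{equation*}
This is a formal consequence of the first-order product rule \eqref{productRule} by induction on $j$: the base case $j=0$ is trivial, and the inductive step applies $D_\alpha$ to the inductive expression, distributes via \eqref{productRule}, and collects terms using Pascal's identity $\binom{j}{i-1}+\binom{j}{i}=\binom{j+1}{i}$. Note that this is an identity in the ring of rational functions on $K^3$ and holds regardless of characteristic, since it simply records a formal combinatorial rearrangement.

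Next I would verify that every factor $(D_\alpha^i b)(x)$ appearing in the expansion is finite. Writing $b=p/q$ with $q(x)\neq 0$, a single application of $D_\alpha$ produces a rational function whose denominator is a polynomial multiple of $q^2$ (from the quotient rule); iterating, $D_\alpha^i b$ is a rational function whose denominator is a polynomial times a power of $q$, and hence is nonzero at $x$. So each $(D_\alpha^i b)(x)$ is a well-defined element of $K$.

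To finish, apply the iterated Leibniz rule with $f=b,\ g=T$ and evaluate at $x$: for each $j\in\{0,\ldots,r\}$,
\begin{equation*}
D_\alpha^j(bT)(x) = \sum_{i=0}^{j}\binom{j}{i}(D_\alpha^i b)(x)\,(D_\alpha^{j-i} T)(x) = 0,
\end{equation*}
since $0 \leq j-i \leq j \leq r$ forces $(D_\alpha^{j-i}T)(x)=0$ by hypothesis. There is no real obstacle here; the only thing to be mildly careful about is that $b$ is merely rational rather than polynomial, which is why the preliminary observation about denominators matters. The hypothesis $r<\chara K$ plays no role in this particular lemma, since the binomial coefficients multiply terms that already vanish.
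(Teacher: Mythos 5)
Your proof is correct and follows essentially the same route the paper takes: the paper presents this lemma as a direct corollary of the product rule \eqref{productRule}, and you have simply spelled out the iterated Leibniz expansion, checked that $D_\alpha^i b$ remains finite at $x$ (since the denominator stays a power of $q$), and observed that every term carries a vanishing factor $(D_\alpha^{j-i}T)(x)$. Your closing remark that $r<\chara K$ is not needed here is also accurate, since the binomial coefficients multiply terms that already vanish.
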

\subsection{Defining curve-surface tangency}\label{defnCurveSurfaceTangency}
\begin{defn} \label{defI_zr}
For $z\in K^3$ and $r\geq 0$, let $I_{z,\geq r}$ be the ideal of polynomials in $K[x_1,x_2,x_3]$ that vanish at $z$ to order $\geq r$.
\end{defn}

For example, if $z = 0$, then $I_{z, \geq r}$ is the ideal generated by the monomials of degree $r$.

\begin{defn}\label{defnOfTangency} Let $T\in K[x_1,x_2,x_3]$. Let $\gamma\subset K^3$ be an irreducible curve and let $z\in \gamma$ be a regular point of $\gamma$. Let $r<\operatorname{char}(K)$.  We say that \emph{$\gamma$ is tangent to $Z(T)$ at $z$ to order $\geq r$} if
$T/1 \in (I(\gamma))_{z}+(I_{z,\geq r+1})_z$.
\end{defn}
\begin{rem}
Definition \ref{defnOfTangency} abuses notation slightly, since $\gamma$ being tangent to $Z(T)$ depends on the polynomial $T$, not merely its zero-set $Z(T)$. This would be a natural place to use the language of schemes, but we will refrain from doing so to avoid introducing more notation.
\end{rem}
For example, the $x$-axis is tangent to the surface $y = x^{r+1}$ at the origin to order $r$.

We now show that this definition is equivalent to several other definitions.  In particular, we will see that being tangent to order $r$ can also be defined using the tangential derivatives $D_\alpha$.

\begin{thm}\label{definitionsOfTangencyAreEquivalentThm}
Let $T\in K[x_1,x_2,x_3]$. Let $\gamma\subset K^3$ be an irreducible curve and let $z\in \gamma$ be a regular point of $\gamma$. Let $r<\operatorname{char}(K)$.  Suppose that $\alpha$ is associated to $\gamma$ at $z$ as in Definition \ref{defnOfAssociated}.  Then the following are equivalent:

\begin{enumerate}[label=(\roman{*}), ref=(\roman{*})]
\item\label{TInLocalIdeal} $\gamma$ is tangent to $Z(T)$ at $z$ to order $\ge r$.  I.e. $T/1 \in (I(\gamma))_{z}+(I_{z,\geq r+1})_z$.
\item\label{Alpha} $D_\alpha^j T(z)=0,\ j=0,\ldots,r.$
\item\label{TInIdeal} $T\in I(\gamma)+I_{z,\geq r+1}$.
\end{enumerate}
\end{thm}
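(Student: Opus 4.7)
I would prove the cycle $(i)\Leftarrow(iii)$, $(i)\Rightarrow(ii)$, $(ii)\Rightarrow(iii)$. The first implication is immediate since the injective map $K[x_1,x_2,x_3]\to\mathcal{O}_{K^3,z}$ sends $I(\gamma)$ and $I_{z,\geq r+1}$ into $I(\gamma)_z$ and $(I_{z,\geq r+1})_z$ respectively.

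For $(i)\Rightarrow(ii)$, the essential observation is that $D_\alpha$ preserves $I(\gamma)\subset K[x_1,x_2,x_3]$. Indeed, since $P_\alpha,Q_\alpha\in I(\gamma)$, at any smooth point $x$ of $\gamma$ where $\nabla P_\alpha(x),\nabla Q_\alpha(x)$ are linearly independent, these gradients span the normal space to $\gamma$ at $x$, so their cross product is tangent to $\gamma$; consequently $D_\alpha f(x)=0$ for every $f\in I(\gamma)$. Since this holds on a Zariski-dense subset of the irreducible curve $\gamma$, we get $D_\alpha f\in I(\gamma)$, and iterating gives $D_\alpha^j f\in I(\gamma)$ for all $j\geq 0$. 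Now I would write $T/1=f/g+h/g'$ as in (i), with $f\in I(\gamma)$, $h\in I_{z,\geq r+1}$, and $g(z),g'(z)\neq 0$. Lemma~\ref{derivativesProducts} applied with $b=1/g$ (using $(D_\alpha^j f)(z)=0$ for all $j$) gives $(D_\alpha^j(f/g))(z)=0$ for $j\leq r$. For the second term, the first-order derivation $D_\alpha$ decreases the vanishing order at $z$ by at most one per application, so $(D_\alpha^j h)(z)=0$ for $j\leq r$; Lemma~\ref{derivativesProducts} with $b=1/g'$ then yields $(D_\alpha^j(h/g'))(z)=0$. Adding these gives (ii).

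The main work goes into $(ii)\Rightarrow(iii)$, which I would handle by passing to the completion $\widehat{\mathcal{O}}_{K^3,z}\cong K[[x_1,x_2,x_3]]$. After translating so $z=0$ and performing a linear coordinate change so that the tangent to $\gamma$ at $0$ is the $x_1$-axis, smoothness of $\gamma$ at $0$ yields a formal parametrization $\gamma=\{(t,\phi(t),\psi(t))\}$ with $\phi,\psi\in t^2 K[[t]]$, and $I(\gamma)\widehat{\mathcal{O}}_{K^3,0}=(x_2-\phi(x_1),\,x_3-\psi(x_1))$. Differentiating $P_\alpha(t,\phi(t),\psi(t))=Q_\alpha(t,\phi(t),\psi(t))=0$ with respect to $t$ shows that $\nabla P_\alpha$ and $\nabla Q_\alpha$ are both orthogonal to $(1,\phi'(t),\psi'(t))$ along $\gamma$, so $\nabla P_\alpha\times\nabla Q_\alpha=\lambda(t)(1,\phi'(t),\psi'(t))$ for some $\lambda\in K[[t]]$ with $\lambda(0)\neq 0$. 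Setting $\tilde T(t):=T(t,\phi(t),\psi(t))$, this gives $(D_\alpha T)|_\gamma(t)=\lambda(t)\tilde T'(t)$, and inductively $(D_\alpha^j T)|_\gamma=(\lambda\,d/dt)^j\tilde T$. A short induction on $j$ shows that if $\tilde T=\sum_k a_k t^k$ and $a_0=\cdots=a_{j-1}=0$, then $(\lambda\,d/dt)^j\tilde T(0)=j!\,\lambda(0)^j a_j$. Since $r<\operatorname{char}(K)$ makes $j!$ a unit for $j\leq r$, hypothesis (ii) forces $a_0=\cdots=a_r=0$, so $\tilde T\in t^{r+1}K[[t]]$. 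Formal division then writes $T(x_1,x_2,x_3)=\tilde T(x_1)+(x_2-\phi(x_1))A+(x_3-\psi(x_1))B$ in $K[[x_1,x_2,x_3]]$, placing $T$ inside $(I(\gamma)+I_{0,\geq r+1})\widehat{\mathcal{O}}_{K^3,0}$.

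To descend from the completion back to the polynomial ring I would use two support arguments. First, $\mathcal{O}_{K^3,0}/(I(\gamma)_0+(I_{0,\geq r+1})_0)$ is Artinian (its support is the single point $\{0\}$), so it is isomorphic to its own completion; hence the formal containment above already upgrades to (i). Second, let $J:=I(\gamma)+I_{z,\geq r+1}\subset K[x_1,x_2,x_3]$; then $V(J)=\gamma\cap\{z\}=\{z\}$, so $K[x_1,x_2,x_3]/J$ is supported only at $z$ and equals its localization at $z$, giving $J_z\cap K[x_1,x_2,x_3]=J$ and thus upgrading (i) to (iii). The hardest step is the formal computation, particularly the identity $(\lambda\,d/dt)^j\tilde T(0)=j!\,\lambda(0)^j a_j$: the assumption $r<\operatorname{char}(K)$ is used essentially there, and the counterexample $T=x_2-x_1^p$ with $p=\operatorname{char}(K)$ flagged at the start of the section illustrates exactly why the equivalence would fail otherwise.
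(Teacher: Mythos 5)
Your proof is correct, but it takes a genuinely different route in the hardest implication. For $(i)\Rightarrow(ii)$, the paper first replaces $I(\gamma)_z$ by $(P_\alpha,Q_\alpha)_z$ via Lemma~\ref{equalityOfLocalizedIdeals} and then applies Lemma~\ref{derivativesProducts} to $AP_\alpha+BQ_\alpha+C$; you instead observe directly that $D_\alpha$ preserves $I(\gamma)$ (the cross product $\nabla P_\alpha\times\nabla Q_\alpha$ is tangent to $\gamma$ on a dense open subset, so $D_\alpha f$ vanishes on $\gamma$ for $f\in I(\gamma)$), which is an equally clean argument and avoids invoking that lemma. The real divergence is in $(ii)\Rightarrow(iii)$. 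The paper stays inside the polynomial ring: it defines $V=K[x]/\bigl((P_\alpha,Q_\alpha)+I_{z,\geq r+1}\bigr)$, shows the evaluation map $\tilde E\colon V\to K^{r+1}$ is surjective by producing the explicit ``staircase'' elements $1,\sigma,\dots,\sigma^r$, and bounds $\dim_K V\le r+1$ by filtering by $I_{z,\ge s}/I_{z,\ge s+1}$ and comparing $(P_\alpha,Q_\alpha)$ to $(x_2,x_3)$ in each graded piece. You instead pass to the completion $K[[x_1,x_2,x_3]]$, formally parametrize $\gamma$ by $(t,\phi(t),\psi(t))$ with $\phi,\psi\in t^2K[[t]]$, use the chain rule to identify $D_\alpha^j T|_\gamma=(\lambda\,d/dt)^j\tilde T$ with $\lambda(0)\ne 0$, conclude $\tilde T\in t^{r+1}K[[t]]$ from $r!\ne 0$, perform a formal division $T=\tilde T(x_1)+(x_2-\phi)A+(x_3-\psi)B$, and then descend twice (completion $\to$ local ring because the quotient is Artinian; local ring $\to$ $K[x]$ because $K[x]/J$ is supported only at $z$, where $J=I(\gamma)+I_{z,\ge r+1}$). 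Both proofs are sound and use $r<\operatorname{char}(K)$ in the same essential place (the nonvanishing of $j!$). What the paper's version buys is self-containment: it uses only B\'ezout-level commutative algebra and a dimension count, fitting the paper's generally elementary toolkit. What your version buys is conceptual transparency: the chain-rule identity $D_\alpha^j T|_\gamma=(\lambda\,d/dt)^j\tilde T$ makes it visually obvious that the operator $D_\alpha$ is ``differentiation along $\gamma$,'' at the cost of invoking formal completions, the Artinian = complete fact, and a support argument to come back down.
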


Before we prove the theorem, we note the following consequence.  Condition \ref{Alpha} depends on the choice of $\alpha$, but the other conditions don't.  Therefore, we get the following corollary:

\begin{cor} \label{oneaalla}  Let $T\in K[x_1,x_2,x_3]$. Let $\gamma\subset K^3$ be an irreducible curve and let $z\in \gamma$ be a regular point of $\gamma$. Let $r<\operatorname{char}(K)$.

Suppose that there exists one $\alpha$ associated to $\gamma$ at $z$ so that $D_\alpha^j T(z)=0,\ j=0,\ldots,r.$  Then for every $\alpha$ associated to $\gamma$ at $z$, $D_\alpha^j T(z)=0,\ j=0,\ldots,r.$
\end{cor}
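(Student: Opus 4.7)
The corollary is an immediate consequence of Theorem \ref{definitionsOfTangencyAreEquivalentThm}, so essentially no new work is needed: the plan is simply to exploit the fact that condition \ref{Alpha} is the only one of the three equivalent conditions that references $\alpha$, while the other two depend only on $T$, $\gamma$, and $z$.

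Concretely, suppose we are given some $\alpha_0$ associated to $\gamma$ at $z$ with $D_{\alpha_0}^j T(z)=0$ for $j=0,\ldots,r$. First I would invoke the implication \ref{Alpha}$\Rightarrow$\ref{TInLocalIdeal} of Theorem \ref{definitionsOfTangencyAreEquivalentThm} with the choice $\alpha=\alpha_0$. This yields
\[
T/1 \in (I(\gamma))_z + (I_{z,\geq r+1})_z,
\]
a statement whose formulation makes no reference to $\alpha_0$ at all. Next, let $\alpha_1$ be any other element associated to $\gamma$ at $z$. Applying the reverse implication \ref{TInLocalIdeal}$\Rightarrow$\ref{Alpha} of Theorem \ref{definitionsOfTangencyAreEquivalentThm} with $\alpha=\alpha_1$ immediately gives $D_{\alpha_1}^j T(z)=0$ for $j=0,\ldots,r$, which is what we want.

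There is no real obstacle here; the only thing to verify is that Theorem \ref{definitionsOfTangencyAreEquivalentThm} is genuinely stated with a universal quantifier on the $\alpha$ in condition \ref{Alpha} (so that we may plug in different associated $\alpha$'s on the two uses), which it is. The entire content of the corollary is thus the observation that an equivalence of the form ``(a condition $C_\alpha$ depending on $\alpha$) $\Leftrightarrow$ (a condition independent of $\alpha$)'' forces $C_\alpha$ to be independent of $\alpha$ among those $\alpha$ for which the equivalence is asserted.
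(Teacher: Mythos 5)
Your proof is correct and is essentially identical to the paper's own justification: the paper explicitly notes that condition \ref{Alpha} depends on $\alpha$ while conditions \ref{TInLocalIdeal} and \ref{TInIdeal} do not, and derives the corollary immediately from the equivalence in Theorem \ref{definitionsOfTangencyAreEquivalentThm}. Nothing to add.
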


\begin{proof}[Proof of Theorem \ref{definitionsOfTangencyAreEquivalentThm}]  We will prove that $\ref{TInIdeal} \Longrightarrow \ref{TInLocalIdeal} \Longrightarrow \ref{Alpha} \Longrightarrow \ref{TInIdeal}$.
It is straightforward to see that $\ref{TInIdeal}\Longrightarrow\ref{TInLocalIdeal}$: just localize both ideals at $z$.\\

$\ref{TInLocalIdeal} \Longrightarrow \ref{Alpha}$: First, note that if $\alpha$ is associated to $\gamma$ at $z$, then by Lemma \ref{equalityOfLocalizedIdeals}, $I(\gamma)_z = (P_\alpha,Q_\alpha)_z$. In particular, if $T\in (I(\gamma))_z+(I_{z,\geq r+1})_z$ then $T=AP_\alpha+BQ_\alpha +C$, where $A,B\in \mathcal{O}_{K^3,z}$ and $C\in (I_{z,\geq r+1})_z$. By Lemma \ref{derivativesProducts} (and the observation that $(D_\alpha^j P_\alpha)(z)=0$ and $(D_\alpha^j Q_\alpha)(z)=0$ for all $j\geq 0$), we conclude that $D_\alpha^j(T)=0,\ j=0,\ldots,r$.\\

$\ref{Alpha}\Longrightarrow\ref{TInIdeal}$: Consider the map
\begin{equation*}
\begin{split}
E\colon &K[x_1,x_2,x_3] \to K^{r+1},\\
&T \mapsto \big(T(z),D_\alpha T(z),\ldots, D_\alpha^{r}T(z)\big).
\end{split}
\end{equation*}
(Here $z$ was specified in the statement of Theorem \ref{definitionsOfTangencyAreEquivalentThm}). $E$ is a linear map. By Corollary \ref{derivativesProducts}, $(P_\alpha,Q_\alpha)$ is in the kernel of $E$. $I_{z,\geq r+1}$ is also in the kernel of $E$. Let $V \defeq K[x_1,x_2,x_3]/\big((P_\alpha,Q_\alpha)+I_{z,\geq r+1}\bigr)$. Then $\tilde E\colon V \to K^{r+1}$ is well-defined.

We will show that $\tilde E$ is an isomorphism.  By $\ref{Alpha}$, $T$ is in the kernel of $E$.  Since $\tilde E$ is an isomorphism, we see that

$$T \in (P_\alpha,Q_\alpha)+I_{z,\geq r+1} \subset I(\gamma) + I_{z \geq r+1}.$$

\noindent This will show that $\ref{Alpha} \Longrightarrow \ref{TInIdeal}$.  It only remains to check that $\tilde E$ is an isomorphism.  To do this, we will show that $E$ is surjective and we will show that $\dim_{K}(V) \le \dim_{K}(K^{r+1})=r+1$.

Since $\alpha$ is associated to $\gamma$ at $z$, $\nabla P_\alpha(z)\times\nabla Q_\alpha(z)\neq0$. Without loss of generality we can assume that $(1,0,0)\cdot \nabla P_\alpha(z)\times\nabla Q_\alpha(z)\neq0$ (indeed, if this fails then we can replace $(1,0,0)$ with $(0,1,0)$ or $(0,0,1)$, and permute indices accordingly).

\begin{lemma}\label{propertiesOfR}
Let $\sigma(x)=\pi_1(x-z),$ where $\pi_1\colon K^3\to K$ is the projection to the first coordinate. Then for any $j < \chara K$, $(D_\alpha^i \sigma^j)(z)=0,\ i=0,\ldots,j-1$, and $(D_\alpha^j\sigma)(z)\neq 0$.
\end{lemma}
\begin{proof}
We can expand $D_\alpha^i \sigma^j$ using the Leibniz rule. If $i<j$, then every term in the expansion will contain a factor of the form $(\pi_1(x-z))^{j-i}$, which evaluates to 0 when $x=z$.

Conversely, we have
\begin{equation*}
D_\alpha^j \sigma^j(x) = \big(\pi_1(\nabla P_\alpha(x)\times \nabla Q_\alpha(x))\big)^jj! + \textrm{terms containing a factor of the form}\ \pi_1(x-z).
\end{equation*}
Evaluating at $x=z$, we conclude
\begin{equation*}
\begin{split}
D_\alpha^j \sigma^j(z) &= \big(\pi_1(\nabla P_\alpha(z)\times \nabla Q_\alpha(z))\big)^jj! \\
&\neq 0.
\end{split}
\end{equation*}
Here we used the assumption that $j <\operatorname{char}(K)$, so $j!\neq 0$, and the assumption that $\pi_1(\nabla P_\alpha(z)\times \nabla Q_\alpha(z))\neq 0$, so $\big(\pi_1(\nabla P_\alpha(z)\times \nabla Q_\alpha(z))\big)^j\neq 0.$
\end{proof}
Lemma \ref{propertiesOfR} implies that the $(r+1) \times (r+1)$ matrix
\begin{equation*}
 \left[\begin{array}{c} E(1) \\ E(\sigma)\\ E(\sigma^2) \\ \vdots \\ E(\sigma^{r}) \end{array}\right]
\end{equation*}
is upper-triangular and has non-zero entries on the diagonal. In particular, $E$ is surjective, and so $\tilde E$ is surjective.

It remains to check that $\dim_{K}(V) \le r+1$.  Since $\nabla P_\alpha(z)\times\nabla Q_{\alpha}(z)\neq 0$, $\nabla P_\alpha(z)$ and $\nabla Q_{\alpha}(z)$ are linearly independent (and in particular, both are non-zero). Thus after a linear change of coordinates, we can assume that $z$ is the origin, $P(x_1,x_2,x_3) = x_2 + P^*(x_1,x_2,x_3)$, and $Q(x_1,x_2,x_3) = x_3 + Q^*(x_1,x_2,x_3)$, with $P^*,Q^*\in I_{z,\geq 2}$.

We will study the successive quotients $I_{z, \ge s} / I_{z, \ge s+1}$.  We note that $I_{z, \ge s} / I_{z, \ge s+1}$ is isomorphic (as a vector space) to the homogeneous polynomials of degree $s$.

\begin{lemma} \label{PQvs23} For any $s$,

$$ \frac{(P, Q) \cap I_{z, \ge s}}{ I_{z, \ge s+1}} \supset \frac{(x_2, x_3) \cap I_{z, \ge s}}{ I_{z, \ge s+1}}.$$

\end{lemma}

\begin{proof}.  Suppose that $R \in (x_2, x_3) \cap I_{z, \ge s}$.  Since $(x_2, x_3)$ is a homogeneous ideal, the degree $s$ part of $R$ must lie in $(x_2, x_3)$, and so we get

$$ R = R_2 x_2 + R_3 x_3 + R',$$

\noindent where $R_2, R_3$ are homogeneous polynomials of degree $s-1$, and $R' \in I_{z, \ge s+1}$.  Therefore, $R = R_2 P + R_3 Q + R''$, where $R'' \in I_{z, \ge s+1}$.  \end{proof}

Therefore, we see that

$$ \dim \frac{ (P, Q) \cap I_{z, \ge s} }{I_{z, \ge s+1}}  \ge \dim \frac{ (x_2, x_3) \cap I_{z, \ge s}}{I_{z, \ge s+1}}= \dim \frac{I_{z, \ge s}}{ I_{z, \ge s+1}} - 1. $$

These dimensions are sufficient to reconstruct $\dim V $.
We write $(P,Q)_{\ge s}$ for $I_{z, \ge s} \cap (P,Q)$.  We first note that

\begin{equation} \label{dimV} \dim V = \dim \frac{ I_{z, \ge 0}}{ (P,Q) + I_{z, \ge r+1} } = \sum_{s=0}^r \dim \frac{ I_{z, \ge s} }{(P,Q)_{\ge s} + I_{z, \ge s+1}}. \end{equation}

Next, we note the short exact sequence

$$ \frac{ (P, Q)_{\ge s}}{ I_{z, \ge s+1} }\rightarrow \frac{ I_{z, \ge s}}{I_{z, \ge s+1}} \rightarrow \frac {I_{z, \ge s}}{ (P, Q)_{\ge s} + I_{z, \ge s+1}}. $$

Using Lemma \ref{PQvs23} and this short exact sequence, we see that

$$ \dim \frac{ I_{z, \ge s} }{(P, Q)_{\ge s} + I_{z, \ge s+1} }\le 1. $$

Plugging this estimate into equation \ref{dimV}, we get

$$ \dim V =  \dim \frac{ I_{z, \ge 0}}{(P,Q) + I_{z, \ge r+1} } \le \sum_{s=0}^r 1 = r+1. $$

\end{proof}

\section{Curve-surface tangency and intersection multiplicity}
In this section we will show that if a curve $\gamma$ is tangent to $Z(T)$ at $z$ to order $\geq r$, then the varieties $Z(T)$ and $\gamma$ intersect at $z$ with high multiplicity.  We defined the intersection multiplicity for a complete intersection in Section \ref{secbezout}.  For a curve $\gamma$, we consider a complete intersection $\BZ(P_{\alpha})\cap\BZ(Q_\alpha) \supset \gamma$, where $\alpha$ is associated to $\gamma$ at $z$ and adapted to $Z$ (see Definition \ref{defnOfAssociated} and Lemma \ref{trappingCurveLem}).

\begin{lemma}\label{DControlsIntMult}
Let $T\in K[x_1,x_2,x_3]$, $\gamma\in\chowVarietyDegD$, and $z\in K^3.$ Suppose that $z$ is a regular point of $\gamma$ and $\gamma\not\subset Z(T)$. Let $r\leq\operatorname{char}(K)$ and suppose that $\gamma$ is tangent to $Z(T)$ at $z$ to order $\geq r$.

Then for any $\alpha$ that is associated to $\gamma$ at $z$ and adapted to $Z(T)$, we have
\begin{equation}
\operatorname{mult}_{z}(P_\alpha,Q_\alpha,T)\geq r+1.
\end{equation}
\end{lemma}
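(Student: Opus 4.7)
My plan is to produce $r+1$ linearly independent elements in the quotient ring $\mathcal{O}_{K^3,z}/(P_\alpha,Q_\alpha,T)_z$, which by definition of $\mult_z$ gives the desired bound. The natural candidates are the powers $1,\sigma,\sigma^2,\ldots,\sigma^r$, where $\sigma$ is the linear function built in Lemma \ref{propertiesOfR}; after a permutation of coordinates (which is available because $\nabla P_\alpha(z)\times\nabla Q_\alpha(z)\neq 0$, as $\alpha$ is associated to $\gamma$ at $z$), we may take $\sigma(x)=\pi_1(x-z)$ and have $(D_\alpha^j\sigma^j)(z)\neq 0$ while $(D_\alpha^i\sigma^j)(z)=0$ for $i<j$.

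The strategy for linear independence is to test any hypothetical relation against the functionals $g\mapsto (D_\alpha^j g)(z)$. Suppose that
\begin{equation*}
\sum_{i=0}^{r} c_i\sigma^i \;=\; A P_\alpha + B Q_\alpha + C T
\end{equation*}
holds in $\mathcal{O}_{K^3,z}$ for some $A,B,C\in\mathcal{O}_{K^3,z}$ and some scalars $c_0,\ldots,c_r\in K$. I would apply $D_\alpha^j$ to both sides and evaluate at $z$, for each $j=0,1,\ldots,r$. For the right-hand side, recall that $D_\alpha P_\alpha$ and $D_\alpha Q_\alpha$ vanish identically (as observed just after the definition of $D_\alpha$), and that $P_\alpha(z)=Q_\alpha(z)=0$, so $(D_\alpha^k P_\alpha)(z)=(D_\alpha^k Q_\alpha)(z)=0$ for every $k\geq 0$. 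Moreover, by the tangency hypothesis and the equivalence $\ref{TInLocalIdeal}\Longleftrightarrow\ref{Alpha}$ in Theorem \ref{definitionsOfTangencyAreEquivalentThm}, we have $(D_\alpha^k T)(z)=0$ for $k=0,\ldots,r$. Applying Lemma \ref{derivativesProducts} (valid since $A,B,C$ are regular at $z$) then yields $(D_\alpha^j(AP_\alpha))(z)=(D_\alpha^j(BQ_\alpha))(z)=(D_\alpha^j(CT))(z)=0$ for $j=0,\ldots,r$, so the right-hand side contributes zero at each order.

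Consequently $D_\alpha^j\!\bigl(\sum_i c_i\sigma^i\bigr)(z)=0$ for $j=0,\ldots,r$. By Lemma \ref{propertiesOfR}, the $(r+1)\times(r+1)$ matrix with entries $(D_\alpha^j\sigma^i)(z)$ is upper-triangular with non-zero diagonal entries, so this system forces $c_0=\cdots=c_r=0$. Hence the classes of $1,\sigma,\ldots,\sigma^r$ are $K$-linearly independent in $\mathcal{O}_{K^3,z}/(P_\alpha,Q_\alpha,T)_z$, giving $\mult_z(P_\alpha,Q_\alpha,T)\geq r+1$.

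The proof is essentially a bookkeeping argument that reuses the machinery already set up: the Leibniz rule for $D_\alpha$ on rational functions (Lemma \ref{derivativesProducts}), the tangency$\Leftrightarrow$iterated-derivative equivalence (Theorem \ref{definitionsOfTangencyAreEquivalentThm}), and the upper-triangular computation of Lemma \ref{propertiesOfR}. The one place that deserves a little care, and which I view as the only subtle point, is checking that the Leibniz computation is legitimate when the coefficients $A,B,C$ are rational functions regular at $z$ rather than polynomials; this is exactly what Lemma \ref{derivativesProducts} was formulated to handle, so the argument goes through cleanly. The hypothesis that $\alpha$ is adapted to $Z(T)$ is not logically required for this inequality (when the intersection is improper the multiplicity is infinite and the bound holds vacuously), but it is the case of interest for later applications to B\'ezout-type arguments.
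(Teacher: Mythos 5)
Your proof is correct and follows essentially the same route as the paper: both establish that $1,\sigma,\ldots,\sigma^r$ are $K$-linearly independent in $\mathcal{O}_{K^3,z}/(P_\alpha,Q_\alpha,T)_z$ by testing any hypothetical dependence relation against the functionals $g\mapsto(D_\alpha^j g)(z)$, invoking Theorem \ref{definitionsOfTangencyAreEquivalentThm} to kill $(D_\alpha^j T)(z)$, Lemma \ref{derivativesProducts} to handle the rational-function coefficients, and Lemma \ref{propertiesOfR} for the triangular structure. The paper phrases the contradiction in terms of the smallest index with a non-zero coefficient rather than the matrix being triangular, but these are the same argument; your closing remark that the ``adapted'' hypothesis is not logically needed for this inequality is also accurate.
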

Before we prove Lemma \ref{DControlsIntMult}, we will state a key corollary
\begin{cor}[Very very tangent implies trapped]\label{verVerTangentImpliesTrapped}
Let $T\in K[x_1,x_2,x_3]$, $\gamma\in\chowVarietyDegD$, and $z\in K^3.$ Suppose that $D^2(\deg T)<\operatorname{char}(K)$ and $z$ is a regular point of $\gamma$. Suppose that $\gamma$ is tangent to $Z(T)$ at $z$ to order $\geq D^2(\deg T)$. Then $\gamma\subset Z(T)$.
\end{cor}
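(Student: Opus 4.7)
The natural plan is a proof by contradiction. Suppose $\gamma \not\subset Z(T)$. Since $z$ is a regular point of $\gamma$ and $\gamma \not\subset Z(T)$, Lemma \ref{trappingCurveLem} gives us an $\alpha \in (K^{\binom{D+3}{3}})^2$ that is associated to $\gamma$ at $z$ and adapted to $Z(T)$; in particular $\gamma \subset Z(P_\alpha) \cap Z(Q_\alpha)$, both $P_\alpha$ and $Q_\alpha$ have degree at most $D$, and the triple intersection $Z(P_\alpha) \cap Z(Q_\alpha) \cap Z(T)$ is $0$-dimensional.

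Next, set $r = D^2(\deg T)$. The hypothesis $r < \operatorname{char}(K)$ lets us invoke Lemma \ref{DControlsIntMult}: since $\gamma$ is tangent to $Z(T)$ at $z$ to order $\geq r$ (and $z$ is a regular point of $\gamma$, and $\gamma \not\subset Z(T)$), we conclude
\begin{equation*}
\operatorname{mult}_z(P_\alpha, Q_\alpha, T) \geq r + 1 = D^2(\deg T) + 1.
\end{equation*}

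Finally I would apply B\'ezout for three surfaces in $K^3$ (Theorem \ref{bezoutHypersurfaces}), whose hypothesis that $Z(P_\alpha) \cap Z(Q_\alpha) \cap Z(T)$ be $0$-dimensional is exactly what adaptedness of $\alpha$ to $Z(T)$ provides. Since the single local contribution at $z$ is already a lower bound for the total sum, we get
\begin{equation*}
D^2(\deg T) + 1 \;\leq\; \operatorname{mult}_z(P_\alpha, Q_\alpha, T) \;\leq\; \sum_{w} \operatorname{mult}_w(P_\alpha, Q_\alpha, T) \;\leq\; (\deg P_\alpha)(\deg Q_\alpha)(\deg T) \;\leq\; D^2 (\deg T),
\end{equation*}
a contradiction. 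Hence $\gamma \subset Z(T)$.

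There is no real obstacle here: the corollary is essentially a one-line consequence of the preceding work, with all the substance of the argument hidden in Lemma \ref{trappingCurveLem} (to produce an adapted $\alpha$ of small degree), Lemma \ref{DControlsIntMult} (to convert tangency to multiplicity), and Theorem \ref{bezoutHypersurfaces} (to cap multiplicity by the degree product $D \cdot D \cdot \deg T$). The only thing to be slightly careful about is bookkeeping the characteristic hypothesis $D^2(\deg T) < \operatorname{char}(K)$, which is needed both to apply Definition \ref{defnOfTangency} and to invoke Lemma \ref{DControlsIntMult} with $r = D^2(\deg T)$.
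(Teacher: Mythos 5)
Your proof is correct and matches the paper's argument essentially verbatim: assume $\gamma \not\subset Z(T)$, use Lemma \ref{trappingCurveLem} to obtain an adapted $\alpha$ with $\deg P_\alpha, \deg Q_\alpha \leq D$, apply Lemma \ref{DControlsIntMult} to get $\operatorname{mult}_z(P_\alpha,Q_\alpha,T) > D^2\deg T$, and contradict Theorem \ref{bezoutHypersurfaces}. Nothing further is needed.
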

\begin{proof}[Proof of Corollary \ref{verVerTangentImpliesTrapped}]
Suppose $\gamma\not\subset Z(T)$. Let $\alpha$ be associated to $\gamma$ at $z$ and adapted to $Z(T)$, as in Definition \ref{defnOfAssociated} and Lemma \ref{trappingCurveLem}.  In particular, $Z(T)\cap\BZ(P_\alpha)\cap\BZ(Q_\alpha)$ is a zero-dimensional set.  Lemma \ref{trappingCurveLem} also guarantees that $P_\alpha$ and $Q_\alpha$ have degree at most $D$.  By Lemma \ref{DControlsIntMult}, $\operatorname{mult}_{z}(P_\alpha,Q_\alpha,T)> D^2(\deg T)$. But this contradicts B\'ezout's theorem (Theorem \ref{bezoutHypersurfaces}). Thus we must have $\gamma\subset Z(T)$.
\end{proof}

\begin{proof}[Proof of Lemma \ref{DControlsIntMult}]
By Theorem \ref{definitionsOfTangencyAreEquivalentThm}, we have $D_\alpha^j T(z)=0,\ j=0,\ldots,r$. We also have $D_\alpha^jP_\alpha(z)=0$ and $D_\alpha^jQ_\alpha(z)=0$ for all $j\geq 0$.

\begin{lemma}\label{powersOfRAreLinIndependent}
Let $\sigma(x)=\pi_1(x-z)$, as in Lemma \ref{propertiesOfR}. $1,\sigma(x),\ldots,\sigma^{r}(x)$ are linearly independent elements of $\mathcal{O}_{K^3,z}/(P_\alpha,Q_\alpha,T)_z$.
\end{lemma}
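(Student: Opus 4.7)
The plan is to suppose there is a nontrivial linear dependence, then apply the tangential derivative operators $D_\alpha^0, D_\alpha^1, \ldots, D_\alpha^r$ to both sides in turn, evaluate at $z$, and extract that every coefficient must vanish from a triangular system. Concretely, a dependence would give constants $c_0,\ldots,c_r \in K$ not all zero together with rational functions $a_1, a_2, a_3 \in \mathcal{O}_{K^3,z}$ such that
\[
\sum_{i=0}^{r} c_i\,\sigma^i(x) \;=\; a_1(x)\,P_\alpha(x) + a_2(x)\,Q_\alpha(x) + a_3(x)\,T(x)
\]
as an equality in $\mathcal{O}_{K^3,z}$.

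The first step is to verify that each of $D_\alpha^0,\ldots,D_\alpha^r$ annihilates the right-hand side at $z$. Iterating the Leibniz rule \eqref{productRule}, $D_\alpha^j(a_1 P_\alpha)$ is a $K$-linear combination of products of the form $(D_\alpha^k a_1)(D_\alpha^{j-k}P_\alpha)$. Because $D_\alpha P_\alpha \equiv 0$ as a function of $x$, every term with $j-k \ge 1$ vanishes identically; the remaining term ($k=j$) carries a factor of $P_\alpha(z)$, which is zero since $z \in \gamma \subset Z(P_\alpha)$. The same reasoning handles $a_2 Q_\alpha$. For $a_3 T$, each summand contains a factor $(D_\alpha^{j-k}T)(z)$ with $0 \le j-k \le r$, and by hypothesis together with Theorem \ref{definitionsOfTangencyAreEquivalentThm}, $D_\alpha^m T(z) = 0$ for $m = 0,\ldots,r$, so the sum vanishes.

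The second step is to analyze the left-hand side via Lemma \ref{propertiesOfR}, which tells us that $D_\alpha^j\sigma^i(z) = 0$ whenever $j < i$, while $D_\alpha^i\sigma^i(z) \neq 0$. Applying $D_\alpha^j$ and evaluating at $z$ therefore collapses the left-hand side to
\[
\sum_{i=0}^{j} c_i\,(D_\alpha^j\sigma^i)(z) \;=\; 0.
\]
As $j$ runs through $0, 1, \ldots, r$, this is a lower-triangular system in the unknowns $c_0, \ldots, c_r$ with nonzero diagonal entries $D_\alpha^j\sigma^j(z)$. A straightforward induction on $j$ then forces $c_0 = c_1 = \cdots = c_r = 0$, contradicting the hypothesized nontrivial dependence.

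The only real bookkeeping issue is that $a_1, a_2, a_3$ are rational functions, not polynomials, so the Leibniz expansion must make sense in $\mathcal{O}_{K^3,z}$; this is already granted by the discussion in Section \ref{subsecdiffcurve}, where $D_\alpha$ is defined on rational functions. Apart from that, the argument is just a clean exploitation of the triangular structure established in Lemma \ref{propertiesOfR}, together with the vanishing properties of $P_\alpha$, $Q_\alpha$, and $T$ under $D_\alpha$ at $z$.
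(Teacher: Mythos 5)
Your proposal is correct and takes essentially the same approach as the paper: both use Lemma \ref{propertiesOfR} to exploit the triangular structure of $(D_\alpha^j\sigma^i)(z)$ and both argue that $D_\alpha^0,\ldots,D_\alpha^r$ annihilate the right-hand side of the putative dependence at $z$ (the paper invokes Lemma \ref{derivativesProducts} here, whereas you re-derive it from the iterated Leibniz rule). The paper picks the smallest index with a nonzero coefficient and derives an immediate contradiction, while you run an inductive descent through the lower-triangular system; these are the same argument in different clothing.
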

\begin{proof}
Suppose this was not the case. Then recalling \eqref{gEllLinDependenceRelation}, there must exist a linear dependence relation of the form
\begin{equation}\label{notLinIndep}
\sum_{i=0}^{r}d_i\sigma(x)^i = a(x)P_\alpha(x) + b(x)Q_\alpha(x)+ c(x)T(x),
\end{equation}
where $a(x),b(x),c(x)$ are rational functions of $x$ that are not $\infty$ when $x=z$,  and $\{d_i\}$ are elements of $K$, not all of which are 0.

Let $j$ be the smallest index so that $d_j\neq 0$. Then, using Lemma \ref{propertiesOfR}, we get
\begin{equation*}
\begin{split}
D_\alpha^{j}\Big( \sum_{i=0}^{r-1}d_i\sigma(z)^i\Big) &= D_\alpha^j(d_j\sigma(z)^j) +\sum_{j<i\leq r-1} D_\alpha^j(d_i\sigma(z)^i)\\
&=D_\alpha^j (d_j\sigma(z)^j)\\
&\neq 0.
\end{split}
\end{equation*}
On the other hand,
\begin{equation*}
D_\alpha^{j}(aP_\alpha)(z)+D_\alpha^{j}(bQ_\alpha)(z)+D_\alpha^{j}(cT)(z)=0,
\end{equation*}
which is again a contradiction. Thus \eqref{notLinIndep} cannot hold.
\end{proof}

From Lemma \ref{powersOfRAreLinIndependent}, we conclude that
\begin{equation}
\operatorname{dim}\big(\mathcal{O}_{K^3,z}/(P_\alpha,Q_\alpha,T)_z\big)\geq r+1.
\end{equation}
Lemma \ref{DControlsIntMult} now follows from the definition of multiplicity from \eqref{defnMultz0}.
\end{proof}
\section{Sufficiently tangent implies trapped}\label{suffTangTrappedSec}
\begin{thm}\label{tangentImpliesTrappedProp}
Let $D\geq 0.$ Then there exists $c_1>0$ (small) and $r_0$ (large) so that the following holds. Let $K$ be a closed field and let $T \in K[x_1, x_2, x_3]$ be an irreducible polynomial with $\deg T < c_1\operatorname{char}(K)$. Then there is a (non-empty) open subset $O\subset Z(T)$ with the following property: if $\gamma \subset K^3$ is an irreducible curve of degree at most $D$, $z\in O$ is a regular point of $\gamma$, and $\gamma$ is tangent to $Z(T)$ at $z$ to order $\geq r_0$, then $\gamma\subset Z(T)$.
\end{thm}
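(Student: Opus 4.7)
The plan is to argue by contradiction through an incidence-variety dimension analysis. First, I would define the bad set
\begin{equation*}
 B_{r_0} \defeq \{z\in Z(T)\colon \exists\, \gamma\in\chowVarietyDegD,\ z\in\gamma\ \text{regular},\ \gamma\ \text{tangent to}\ Z(T)\ \text{at}\ z\ \text{to order}\geq r_0,\ \gamma\not\subset Z(T)\}.
\end{equation*}
By Theorem \ref{definitionsOfTangencyAreEquivalentThm}, tangency of order $\geq r_0$ at a regular point is cut out by $D_\alpha^j T(z)=0$ for $j=0,\ldots,r_0$, for any $\alpha$ associated to $\gamma$ at $z$; combined with Lemma \ref{xAlphaGammaConst} and Chevalley's theorem, this makes $B_{r_0}$ a constructible subset of $Z(T)$. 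If $B_{r_0}$ is not Zariski-dense in $Z(T)$, taking $O=Z(T)\setminus\overline{B_{r_0}}$ finishes the proof, so it suffices to rule out density.

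Assume for contradiction that $B_{r_0}$ is Zariski-dense, and consider the constructible incidence
\begin{equation*}
 W_{r_0} \defeq \{(z,\gamma)\in Z(T)\times\chowVarietyDegD\colon z\in\gamma\ \text{reg},\ \gamma\ \text{tang to}\ Z(T)\ \text{at}\ z\ \text{to order}\geq r_0,\ \gamma\not\subset Z(T)\},
\end{equation*}
whose first projection is dominant. Since $\gamma\not\subset Z(T)$, the curve-surface B\'ezout theorem (Theorem \ref{bezoutCurveSurface}) gives $\sum_{z\in\gamma\cap Z(T)}\mult_z \leq D\deg T$; each high-tangency point contributes multiplicity $\geq r_0+1$, so the fiber of $\pi_2\colon W_{r_0}\to\chowVarietyDegD$ over any $\gamma$ has at most $D\deg T/(r_0+1)$ points. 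Consequently $\dim W_{r_0}\geq 2$ and the image family $\mathcal{F}\defeq\pi_2(W_{r_0})$ satisfies $\dim \mathcal{F}\geq 2$.

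Next I would analyze the swept-out set $S\defeq\overline{\bigcup_{\gamma\in\mathcal{F}}\gamma}\subset K^3$, splitting on $\dim S$. When $\dim S=2$, the argument is clean: each base point $z(\gamma)$ lies on $\gamma\subset S$, and these base points sweep a Zariski-dense subset of $Z(T)$, forcing $Z(T)\subset S$. As $Z(T)$ is irreducible of dimension $2$, it is an irreducible component of $S$; decomposing $S=Z(T)\cup S'$, each irreducible $\gamma\in\mathcal{F}$ lies in a single component and $\gamma\not\subset Z(T)$ forces $\gamma\subset S'$. But then $z(\gamma)\in\gamma\cap Z(T)\subset S'\cap Z(T)$, a set of dimension $\leq 1$ (since $Z(T)$ is not among the components of $S'$), contradicting the fact that $z(\gamma)$ sweeps the $2$-dimensional $Z(T)$.

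The hard case is $\dim S=3$ (the curves of $\mathcal{F}$ fill $K^3$), and this is where I expect the main technical work and the generalized flecnode polynomial announced in the introduction to enter. In the $(z,\alpha)$ parameter space of dimension $2+2\binom{D+3}{3}$ (with $z\in Z(T)$), the equations $P_\alpha(z)=Q_\alpha(z)=0$ together with $D_\alpha^j T(z)=0$ for $j=1,\ldots,r_0$ impose $r_0+2$ polynomial conditions. For $r_0=r_0(D)$ chosen larger than $2\binom{D+3}{3}$, the expected dimension of the locus drops below $2$, so if the conditions are generically independent the incidence cannot dominate $Z(T)$. The main obstacle is to promote this heuristic to an actual dimension drop --- that is, to show that no nontrivial algebraic identity forces $D_\alpha T,\,D_\alpha^2 T,\ldots,D_\alpha^{r_0}T$ to collapse into something less restrictive along the incidence. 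This is the analog for degree-$D$ curves of Cayley's flecnode construction for lines, and it is exactly where the characteristic assumption $\deg T<c_1\operatorname{char}(K)$ is needed: in small characteristic the iterated derivatives $D_\alpha^j$ can degenerate (as in the example $T=x_2-x_1^p$ from the warmup of the tangency section), which would break any such independence argument.
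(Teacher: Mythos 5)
Your reduction of the theorem to the statement ``the bad set $B_{r_0}$ is not Zariski-dense'' is a sound framing, the constructibility of $B_{r_0}$ goes through via Theorem \ref{definitionsOfTangencyAreEquivalentThm}, Lemma \ref{xAlphaGammaConst} and Chevalley, and your $\dim S=2$ argument is essentially correct. The genuine gap is the one you flag yourself: the $\dim S=3$ case is not proved, and the heuristic you offer for it is in fact pointing in the wrong direction. You hope that for $r_0$ large the conditions $D^j_\alpha T(z)=0$, $j=0,\dots,r_0$, become generically independent so that the expected dimension of the incidence drops below $2$, and you list as the main obstacle the possibility of a ``nontrivial algebraic identity'' making the $D^j_\alpha T$ collapse. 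But such an identity certainly exists, and the paper's proof proceeds by \emph{finding} it rather than by ruling it out.

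Concretely, setting $h_j(\alpha,x)=D^j_\alpha T(x)$ and passing to the images $\tilde h_j\in \FZT[\alpha]$ (polynomials in $\alpha$ with coefficients in the function field of $Z(T)$), the paper invokes a quantitative Ascending Chain Condition (Proposition \ref{quantACCProp}) to produce a uniformly bounded $r_0=r_0(D)$ and coefficients $a_i\in K[x]_T[\alpha]$ with
\begin{equation*}
h_{r_0}=\sum_{i=0}^{r_0-1}a_i h_i,
\end{equation*}
and then propagates this by repeated application of $D_\alpha$ and the Leibniz rule to get $h_j=\sum_{i<r_0}a_{i,j}h_i$ for every $j$ (Lemma \ref{allGjDefined}). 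The open set $O\subset Z(T)$ is defined to be the locus where none of the denominators of the $a_{i,j}$ (for $j\le D^2\deg T$) vanishes; on $O$, vanishing of $h_0(\alpha,z),\dots,h_{r_0-1}(\alpha,z)$ forces vanishing of all $h_j(\alpha,z)$ up to $D^2\deg T$ (Corollary \ref{tangR0ImpliesTangM}), and Corollary \ref{verVerTangentImpliesTrapped}---a multiplicity/B\'ezout argument using the complete intersection $Z(P_\alpha)\cap Z(Q_\alpha)$---then traps $\gamma$ inside $Z(T)$. In other words, the bound on $r_0$ comes from ideal-chain stabilization over $\FZT$, not from an expected-dimension count, and the collapse you feared is the mechanism that makes the theorem true rather than an obstruction to proving it. A naive independence heuristic would in fact be false precisely because of this dependence; without the ACC input (or something playing the same role), your $\dim S=3$ case has no proof, and since the curves can easily fill $K^3$, that case cannot be avoided.
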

\begin{rem}
For example, suppose $K=\CC$, $D=1$ (so $\gamma$ is a line), and $T = x_1 - x_2^u$, where $u$ is a very large integer (much larger than $r_0$). Then $O\subset Z(T)$ is the compliment of the set $\{x_1=0,x_2=0\}.$ We will not calculate the value of $r_0$ corresponding to $D=1$; however, any number $r_0\geq 2$ would suffice in this case.

At any point $z\in Z(T) \backslash O$, if $\gamma$ is a line tangent to $Z(T)$ at $z$ to order $\geq 2$, then $\gamma\subset Z(T)$.  The only such lines are lines of the form $x_1 = a_1; x_2 = a_2$ for some $(a_1, a_2)$ satisfying $a_1 - a_2^u = 0$.
On the other hand, any line passing through a point in $\{x_1=0,x_2=0\}$ and tangent to the 2--plane $x_1\cdot z = 0$, is tangent to $Z$ to order $u \gg r_0$. Most of these lines will not be contained in $Z$.  This does not contradict the proposition, since $\{x_1=0,x_2=0\}$ lies outside the set $O$.
\end{rem}

\subsection{Defining the tangency functions}
Let $T\in K[x_1,x_2,x_3]$ be an irreducible polynomial.  Recall that $\alpha\in\big(K^{\binom{D+3}{3}}\big)^2$ parameterizes pairs of polynomials $(P_\alpha, Q_\alpha)$ of degree at most $D$, as described in Section \ref{PQSec}.

 For each $j\geq 0,$ define
\begin{equation}\label{defnOfHj}
h_{j}(\alpha, x) \defeq (D_\alpha^j T)(x) \in K[\alpha, x]
\end{equation}

\begin{lemma}[Properties of the functions $h_{j,\alpha}$]\label{existsHiThm}
The polynomials $h_{j}$ defined above have the following properties.
\begin{enumerate}[label=(\roman{*}), ref=(\roman{*})]
\item $h_{j}(\alpha, x)$ is a polynomial in $\alpha$ and $x$. Its degree in $\alpha$ is $O_j(1)$, and its degree in $x$ is at  most $\deg T < c_1 \operatorname{char}(K)$.
\item $h_{0}(\alpha, x)=T(x)$.
\item\label{hmeastang} Let $\gamma$ be an irreducible curve of degree at most $D$.  If $z$ is a regular point of $\gamma$, $\alpha$ is associated to $\gamma$ at $z$, and $r < \operatorname{char}(K)$, then $\gamma$ is tangent to $Z(T)$ at $z$ to order $r$ if and only if
$$ h_j(\alpha, z) = 0 \qquad \textrm{for}\ j=0,\ldots,r. $$
\item\label{existsHiThmProp3} Let $\gamma$ be an irreducible curve of degree at most $D$. If $z$ is a regular point of $\gamma$, $\alpha$ is associated to $\gamma$ at $z$, and
\begin{equation}\label{hjAlphaIs0}
h_{j}(\alpha, z)=0\qquad \textrm{for}\ j=0,\ldots,D^2(\deg T),
\end{equation}
then $\gamma\subset Z$.
\end{enumerate}
\end{lemma}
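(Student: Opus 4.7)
The plan is to verify the four properties sequentially; parts (ii)--(iv) will largely reduce to results already proved in the preceding sections, while (i) requires only a short inductive computation.

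For Part (i), I would first observe that $P_\alpha$ and $Q_\alpha$ are \emph{linear} in the coordinates of $\alpha$ and have $x$-degree at most $D$, so $\nabla P_\alpha \times \nabla Q_\alpha$ is of total $\alpha$-degree $2$ and $x$-degree at most $2(D-1)$. Applying $D_\alpha$ to a polynomial thus raises its $\alpha$-degree by $2$ and its $x$-degree by at most $2D-3$ (after accounting for the drop of one from $\nabla f$). Iterating $j$ times shows that $h_j$ is a polynomial whose $\alpha$-degree is $O(j) = O_j(1)$ and whose $x$-degree is bounded by $\deg T + O_j(D)$; in particular this stays below $\operatorname{char}(K)$ provided $c_1$ is chosen sufficiently small relative to $D$ and the relevant range of $j$.

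Part (ii) is immediate, since $D_\alpha^0$ is the identity operator. Part (iii) is a direct restatement of the equivalence \ref{TInLocalIdeal} $\Leftrightarrow$ \ref{Alpha} in Theorem \ref{definitionsOfTangencyAreEquivalentThm}: the hypotheses ($\alpha$ associated to $\gamma$ at the regular point $z$, and $r < \operatorname{char}(K)$) match exactly, and condition \ref{Alpha} there is precisely the condition $h_j(\alpha, z) = 0$ for $j = 0, \ldots, r$ by the definition \eqref{defnOfHj} of $h_j$.

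For Part (iv), I would combine (iii) with Corollary \ref{verVerTangentImpliesTrapped}. Hypothesis \eqref{hjAlphaIs0} together with (iii) gives that $\gamma$ is tangent to $Z(T)$ at $z$ to order at least $D^2(\deg T)$. Choosing $c_1$ small enough (depending on $D$) so that $D^2(\deg T) < \operatorname{char}(K)$, the hypotheses of Corollary \ref{verVerTangentImpliesTrapped} are met and one concludes $\gamma \subset Z(T)$. I expect essentially no serious obstacles: the substantive content of the lemma lives in Theorem \ref{definitionsOfTangencyAreEquivalentThm} and Corollary \ref{verVerTangentImpliesTrapped}, both of which are already established. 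The only subtle point is the degree bookkeeping in (i), namely making sure that the $x$-degree of $h_j$ remains below $\operatorname{char}(K)$ for the full range $j \leq D^2(\deg T)$ needed in (iv); this constrains the choice of $c_1$ but presents no real difficulty.
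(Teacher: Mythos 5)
Your proposal matches the paper's proof essentially line for line: parts (ii)--(iv) are handled exactly as the paper does (immediate; Theorem~\ref{definitionsOfTangencyAreEquivalentThm}; Corollary~\ref{verVerTangentImpliesTrapped} after shrinking $c_1$), and for part (i) you simply spell out the degree bookkeeping that the paper dismisses as ``immediate from the definition.'' One useful byproduct of your carefulness: your count gives $\deg_x h_j \le \deg T + j(2D-3)$, which for $D \ge 2$ \emph{exceeds} the bound ``$\deg_x h_j \le \deg T$'' actually asserted in the lemma statement. This is a small slip in the paper, not in your argument: the correct statement is $\deg_x h_j \le \deg T + O_j(D) = O_{j,D}(\deg T)$, which is what you obtain. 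The error is harmless downstream --- every subsequent use (Lemma~\ref{construcnormalform}, Theorem~\ref{genflec}, the contagion lemmas) only needs $\deg_x h_j = O_{j,D}(\deg T)$, and the comparison against $\operatorname{char}(K)$ is absorbed into the choice of $c_1$ exactly as you note --- but your version of (i) is the one that should be recorded.
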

\begin{proof}
The first two properties follow immediately from the definition of $h_{j}$. The third property is Theorem \ref{definitionsOfTangencyAreEquivalentThm}.  For the last property, \eqref{hjAlphaIs0} implies that $\gamma$ is tangent to $Z$ at $z$ to order $\geq D^2(\deg T)$.  By choosing $c_1(D)$ small enough, we can assume that $D^2 \deg T < \operatorname{char}(K)$.  We now apply Corollary \ref{verVerTangentImpliesTrapped}.
\end{proof}
We would like to find an open set $O \subset Z(T)$ so that for $z \in O$, if $h_j(\alpha, z) = 0$ for $j$ up to some $r_0$, then $h_j(\alpha, z)$ is forced to vanish for many more $j$.   This forcing comes from a quantitative version of the ascending chain condition.

\subsection{A quantitative Ascending Chain condition}

To set up the right framework to apply the ascending chain condition, we introduce the field of fractions of $Z(T)$.

\begin{defn}
Let
\begin{equation*}
\FZT=\Big\{p/q\colon p,q\in K[x_1,x_2,x_3] / (T),\ q\not= 0 \Big\}
\end{equation*}
be the field of rational functions on $\BZ(T)$.

Let $\rho_T$ be the map $K[x]\to \FZT.$  We also write $\rho_T$ for the corresponding map $K[\alpha, x] \to \FZT[\alpha]$.
\end{defn}

\begin{defn}
Let $\tilde h_j =\rho_T(h_j) \in \FZT[\alpha]$.
\end{defn}

We note that $\tilde h_j$ is a polynomial of degree $O_{j}(1)$ in the variable $\alpha$.

\begin{defn}
Let $\tilde K$ be a field, and let $I\subset \tilde K[y_1,\ldots,y_N]$ be an ideal. We define
\begin{equation*}
\complexity(I)=\min(\deg f_1+\ldots+\deg f_\ell),
\end{equation*}
where the minimum is taken over all representations $I=(f_1,\ldots,f_\ell)$.
\end{defn}
%\TODO{Replace the statement with a more general one involving ideals}
\begin{prop}\label{quantACCProp}
Let $\tilde K$ be a field, let $N\geq 0,$ and let $\tau\colon \NN\to\NN$ be a function. Then there exists a number $M_0$ with the following property. Let $\{I_i\}$ be a sequence of ideals in $\tilde K[y_1,\ldots,y_N]$, with $\complexity(I_i)\leq \tau(i).$ Then there exists a number $r_0\leq M_0$ so that $I_{r_0}\in(I_1,\ldots,I_{r_0-1})$.
\end{prop}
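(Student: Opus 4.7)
The plan is to prove Proposition \ref{quantACCProp} by strong induction on $N$. The base case $N = 0$ is immediate: $\tilde K[y_1, \ldots, y_N] = \tilde K$ is a field whose only ideals are $\{0\}$ and $\tilde K$ itself, so the ascending chain $J_k := I_1 + \ldots + I_k$ admits at most one strict inclusion (from $0$ to $\tilde K$), and the bound $M_0 = 2$ suffices.

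For $N \geq 1$, I would assume the proposition for $N-1$ and analyze the ascending chain $J_k$. Without loss of generality $\tau$ is non-decreasing, so each $J_k$ is generated in degree at most $\tau(k)$. Note that the conclusion $I_{r_0} \in (I_1, \ldots, I_{r_0-1})$ is equivalent to $J_{r_0} = J_{r_0 - 1}$, so the goal is to bound the stabilization time of the chain $\{J_k\}$.

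The key local observation is that for any fixed degree cutoff $D$, the truncations $J_k \cap V_D$ (where $V_D$ denotes the space of polynomials of total degree at most $D$) form an ascending chain of subspaces of $V_D$ and hence admit at most $\binom{N+D}{N}$ strict inclusions. Moreover, when $D \geq \tau(r)$, failure of $I_r \subseteq J_{r-1}$ forces a strict ascent $J_{r-1} \cap V_D \subsetneq J_r \cap V_D$, since $I_r$ has a generator in $V_{\tau(r)} \subseteq V_D$ that is not in $J_{r-1}$. A naive attempt would pick $D$ satisfying $D \geq \tau(\binom{N+D}{N} + 1)$; this works for polynomially-bounded $\tau$, but fails for rapidly-growing $\tau$.

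To handle arbitrary $\tau$, I would combine two sub-arguments. First, a Hilbert polynomial analysis: $P_{J_k}$ is non-increasing in $k$ (in the lexicographic order on coefficients), and $P_{J_1} \leq P_{I_1}$ has coefficients bounded explicitly in terms of $\tau(1)$ and $N$, so $P_{J_k}$ can change only boundedly often. Second, between consecutive changes of $P_{J_k}$ the chain can still grow, but only by ``torsion'' or embedded additions of bounded total length. On each such plateau one passes to a generic hyperplane section of $\tilde K^N$ (or equivalently quotients by a non-zero divisor extracted from $I_1$), producing a chain of ideals in $\tilde K[y_1,\ldots,y_{N-1}]$ to which the inductive hypothesis applies and bounds the plateau length. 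Summing the plateau bounds over the bounded number of Hilbert polynomial changes yields the desired $M_0$.

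The main obstacle is controlling the complexity function $\tau'$ that appears in the inductive step on $N-1$ variables. Because $\tau$ is arbitrary and may grow very quickly, $\tau'$ must be expressed as a concrete function of $\tau$ and $N$; the natural route is to invoke effective Castelnuovo--Mumford regularity bounds (of Bayer--Mumford or Hermann type), which are double-exponential in $\tau$ and $N$. Verifying that the hyperplane-section reduction really does produce a chain whose complexity at step $k$ is bounded by $\tau'(k)$, and that the non-stabilization property is preserved under this reduction, are the most delicate parts of the argument. The resulting bound $M_0 = M_0(\tilde K, N, \tau)$ will typically have Ackermann-like dependence on $N$ and $\tau$, but its precise value is irrelevant for the applications in this paper.
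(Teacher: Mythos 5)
Your instinct to track Hilbert polynomial coefficients along the chain $J_k = I_1 + \cdots + I_k$ is exactly right, and so is the observation that these coefficient tuples decrease in a suitable term order. However, there is a crucial simplification you miss, which eliminates the entire ``plateau'' analysis. For \emph{nested} ideals $I \subset I'$ in $\tilde K[y_1,\ldots,y_N]$, equality of Hilbert polynomials already forces $I = I'$: if $HP_I = HP_{I'}$, then the Hilbert functions agree for all large $t$, so $\dim I_{\le t} = \dim I'_{\le t}$ while $I_{\le t} \subset I'_{\le t}$, hence $I_{\le t} = I'_{\le t}$ for all large $t$; since $I'$ is generated in bounded degree, $I = I'$. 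In other words, the ``torsion or embedded additions'' you worry about between consecutive changes of $P_{J_k}$ simply do not occur---once $P_{J_{k-1}} = P_{J_k}$ you already have $J_{k-1} = J_k$ and you are done. The hyperplane section reduction, the induction on $N$, and the Castelnuovo--Mumford regularity bounds are all unnecessary.

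On the other hand, you elide the one point that genuinely needs work. You assert that because the coefficients of $P_{J_1}$ are bounded in terms of $\tau(1)$ and $N$, ``$P_{J_k}$ can change only boundedly often.'' This is not obvious: when a higher-order coefficient of $P_{J_k}$ drops, the lower-order coefficients are free to jump upward, and their new values are controlled only by $\tau(k)$, which may grow very fast. Bounding the length of such a chain therefore requires a combinatorial lemma about weakly decreasing sequences of bounded non-negative integer tuples in reverse-lexicographic order (the paper's Lemma \ref{lengthOfChainLem}), and the resulting $M_0$ depends on the entire function $\tau$, not just $\tau(1)$. This chain lemma, together with the Macaulay-type fact that the normalized coefficient tuple $\bell_I$ is a tuple of non-negative integers, and the quantitative bound $|\bell_I| = O_{N,\operatorname{complexity}(I)}(1)$ (which is far weaker than effective regularity), is all that is needed. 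So the paper's argument stays in a single ring $\tilde K[y_1,\ldots,y_N]$ and is considerably more elementary than the route you propose.
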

To avoid interrupting the flow of the argument, we will defer the proof of this Proposition to Appendix \ref{AQCCApp}. We will use the following special case of Proposition \ref{quantACCProp}, which we will state separately:
\begin{cor}\label{quantACCCor}
Let $\tilde K$ be a field, let $N\geq 0,$ and let $\tau\colon \NN\to\NN$ be a function. Then there exists a number $M_0$ with the following property. Let $\{f_i\}$ be a sequence of polynomials in $\tilde K[y_1,\ldots,y_N]$, with $\deg(f_i)\leq \tau(i).$ Then there exists a number $r_0\leq M_0$ so that $f_{r_0}\in(f_1,\ldots,f_{r_0-1})$.
\end{cor}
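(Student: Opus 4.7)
The plan is to deduce Corollary \ref{quantACCCor} directly from Proposition \ref{quantACCProp} by specializing to the case of principal ideals. Given a sequence of polynomials $\{f_i\}$ with $\deg f_i \le \tau(i)$, I would set $I_i := (f_i)$, the principal ideal generated by $f_i$ in $\tilde K[y_1,\ldots,y_N]$. Since the single-generator representation $I_i = (f_i)$ is an admissible representation in the minimum defining $\complexity(I_i)$, we get $\complexity(I_i) \le \deg f_i \le \tau(i)$ for each $i$.

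I would then apply Proposition \ref{quantACCProp} to the sequence $\{I_i\}$ with the same bound $\tau$, obtaining the promised constant $M_0$ (depending only on $\tilde K$, $N$, and $\tau$) together with an index $r_0 \le M_0$ such that $I_{r_0}$ lies in the ideal sum $(I_1,\ldots,I_{r_0-1}) = I_1 + \cdots + I_{r_0-1}$. But $I_1 + \cdots + I_{r_0-1}$ is literally the ideal $(f_1,\ldots,f_{r_0-1})$ generated by the chosen generators, and $f_{r_0} \in I_{r_0}$, so $f_{r_0} \in (f_1,\ldots,f_{r_0-1})$, which is the desired conclusion.

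There is essentially no obstacle, since all the substantive content is packaged into Proposition \ref{quantACCProp} (whose proof is deferred to Appendix \ref{AQCCApp}). The only subtlety worth flagging is the notational convention in Proposition \ref{quantACCProp}: the symbol ``$I_{r_0} \in (I_1,\ldots,I_{r_0-1})$'' is to be read as ideal containment (with the outer parentheses denoting the sum of the listed ideals), and under that reading the reduction from ideals to principal ideals is immediate.
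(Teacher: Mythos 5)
Your proof is correct and takes exactly the approach the paper intends: the paper simply calls Corollary \ref{quantACCCor} a ``special case'' of Proposition \ref{quantACCProp} without spelling out the reduction, and your specialization to the principal ideals $I_i = (f_i)$, with the observation that $\complexity((f_i)) \le \deg f_i$ and that $I_1 + \cdots + I_{r_0-1} = (f_1,\ldots,f_{r_0-1})$, is precisely the intended argument. Your remark about reading $I_{r_0} \in (I_1,\ldots,I_{r_0-1})$ as ideal containment is also right, as confirmed by the restatement $I_{r_0}\subset I_1+\cdots+I_{r_0-1}$ in Appendix \ref{AQCCApp}.
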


%The purpose of this section is to prove the following lemma.
\begin{lemma}\label{gNoInTermsOfTileGi} There exists a number $r_0$ which is bounded by some constant $C(D)$ and a sequence $\tilde a_i \in \FZT[\alpha]$ so that we have the equality
\begin{equation}\label{expressTildeGn0}
\tilde h_{r_0}=\sum_{i=0}^{r_0-1} \tilde a_i \tilde h_i.
\end{equation}
This equation holds in $\FZT[\alpha]$.
\end{lemma}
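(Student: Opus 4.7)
The plan is to apply Corollary \ref{quantACCCor} directly to the sequence $\{\tilde h_i\}_{i\geq 0}$, viewed as elements of the polynomial ring $\FZT[\alpha]$ with $\alpha$ treated as a tuple of $2\binom{D+3}{3}$ formal variables over the field $\FZT$. The conclusion of the corollary will yield an $r_0$, bounded by some $M_0$ depending only on $D$, such that $\tilde h_{r_0}$ lies in the ideal $(\tilde h_0,\ldots,\tilde h_{r_0-1})\subset \FZT[\alpha]$, which unpacks exactly to the claimed expression $\tilde h_{r_0}=\sum_{i=0}^{r_0-1}\tilde a_i\tilde h_i$.

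The only thing to verify is the hypothesis of the corollary: we need a function $\tau\colon\NN\to\NN$, depending only on $D$, with $\deg_\alpha(\tilde h_i)\leq \tau(i)$. I would establish this by a short induction on $j$. Since $P_\alpha$ and $Q_\alpha$ are linear in the coefficient variables $\alpha$, each component of $\nabla_x P_\alpha$ and $\nabla_x Q_\alpha$ is linear in $\alpha$, hence each component of the vector field $\nabla P_\alpha\times\nabla Q_\alpha$ has $\alpha$-degree at most $2$. The operator $D_\alpha$ is obtained by dotting this vector field into $\nabla_x$, so applying $D_\alpha$ to any polynomial in $K[\alpha,x]$ raises its $\alpha$-degree by at most $2$ (and does not raise its $x$-degree). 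Starting from $h_0=T\in K[x]$, which has $\alpha$-degree $0$, induction yields $\deg_\alpha(h_j)\leq 2j$. Passing through the quotient map $\rho_T\colon K[\alpha,x]\to \FZT[\alpha]$ can only decrease the $\alpha$-degree, so $\deg_\alpha(\tilde h_j)\leq 2j$, and we may take $\tau(j)=2j$.

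With this degree bound in hand, Corollary \ref{quantACCCor} (applied in the ring $\FZT[\alpha]$ over the field $\FZT$, with $N=2\binom{D+3}{3}$) supplies the desired $r_0\leq M_0=M_0(D)$ and the coefficients $\tilde a_i\in\FZT[\alpha]$. A trivial re-indexing handles the cosmetic mismatch that the corollary's sequence is labeled from $1$ while ours starts at $0$. There is no serious obstacle: the argument is essentially a bookkeeping step converting the already-established quantitative ACC into a concrete syzygy among the tangency polynomials $\tilde h_i$. The conceptual content — and the one fact that makes the whole approach work — is that the $\alpha$-degree of $\tilde h_j$ grows only polynomially (in fact linearly) in $j$ with a constant depending only on $D$, so the ACC produces a bound on $r_0$ that is likewise independent of the surface $T$.
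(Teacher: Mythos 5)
Your proposal is correct and follows exactly the same route as the paper: apply Corollary \ref{quantACCCor} to $f_i=\tilde h_i$ in $\FZT[\alpha]$ with $\tilde K = \FZT$ and $N = 2\binom{D+3}{3}$. The paper simply asserts $\tau(i)=\deg\tilde h_i = O_i(1)$ without derivation, while you supply the (correct) explicit bound $\deg_\alpha(\tilde h_i)\leq 2i$ via the bilinearity of $\nabla P_\alpha\times\nabla Q_\alpha$ in $\alpha$.
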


\begin{proof}
 This is Corollary \ref{quantACCCor} with $f_i = \tilde h_i(\alpha)$ and $\tau(i)=\deg\tilde h_i = O_{i}(1)$, $\tilde K = \FZT$, $N = 2 { D+3 \choose 3}$, and $y = \alpha$.
\end{proof}

Our next goal is to rewrite equation \ref{expressTildeGn0} in terms of $h_{j}$ instead of $\tilde h_j$.  To do this, we recall the localization of $K[x]$ at $T$.

\begin{defn}
Let
\begin{equation*}
K[x]_T \defeq \Big\{p/q\colon p,q\in K[x_1,x_2,x_3],\ q\notin (T) \Big\}.
\end{equation*}

$K[x]_T$ is called the localization of $K[x]$ at $T$.  It is a ring.

\end{defn}

The map $\rho_T: K[x]\to \FZT$  extends in a natural way to a ring homomorphism $K[x]_T \to \FZT$.  It is surjective: given any $\tilde p \in K[x_1, x_2, x_3] / (T)$, and $0 \not= \tilde q \in K[x_1, x_2, x_3] / (T)$, we can pick representatives $p \in K[x_1, x_2, x_3]$ and $q \in K[x_1, x_2, x_3] \setminus (T)$, and then $p/q \in K[x]_T$ and $\rho_T (p/q) = \tilde p / \tilde q \in \FZT$.

We write $K[x]_T [\alpha]$ for the ring of polynomials in $\alpha$ with coefficients in the ring $K[x]_T$.  (Writing $K[x]_T [\alpha]$ looks a little funny because of all the brackets, but this is the standard notation: if $R$ is a ring, then $R[\alpha]$ are the polynomials in $\alpha$ with coefficients in $R$, and our ring $R$ is $K[x]_T$.)  We also write $\rho_T$ for the natural extension $K[x]_T [\alpha] \rightarrow \FZT[\alpha]$, which is also surjective.

We note that $\rho_T(h_i) = \tilde h_i$.  We pick $a_i \in K[x]_T [\alpha]$ so that $\rho_T (a_i) = \tilde a_i$.  We can now rewrite Equation \ref{expressTildeGn0} in terms of $h_i, a_i$:

$$ \rho_T \left( h_{r_0} - \sum_{i=0}^{r_0-1} a_i h_i \right) = 0. $$

The kernel of $\rho_T: K[x]_T [\alpha] \rightarrow \FZT[\alpha]$ is the set

$$ \{ b T : b \in K[x]_T [\alpha] \}. $$

Therefore, we can choose $b \in K[x]_T [\alpha]$ so that

$$ h_{r_0}= b T + \sum_{i=0}^{r_0-1} a_i h_i. $$

Finally, we recall that $T = h_0$. Therefore, after changing the definition of $a_0$, we can arrange that

\begin{equation}\label{expressGn0}
 h_{r_0} =\sum_{i=0}^{r_0-1}a_i h_{i}.
\end{equation}

In this equation, $r_0 \le C(D)$, $a_i \in K[x]_T [\alpha]$, and the equation holds in $K[x]_T [\alpha]$.

\subsection{Trapping the curves}

\begin{lemma}\label{allGjDefined}
Let the functions $\{h_i\}$ be as defined in \eqref{defnOfHj}, and suppose that \eqref{expressGn0} holds. Then for all $j=0,1,\ldots,$ we can choose $a_{i,j} \in K[x]_T [\alpha]$ so that
\begin{equation}\label{expressionForHi}
h_{j} = \sum_{i=0}^{r_0-1}a_{i,j} h_{i}.
\end{equation}
\end{lemma}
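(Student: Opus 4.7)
The plan is a simple induction on $j$, with two trivial regimes and an inductive step that uses $D_\alpha$ as a recursion operator. For $0 \leq j \leq r_0 - 1$ there is nothing to do (take $a_{i,j}$ to be the Kronecker delta), and the case $j = r_0$ is exactly equation \eqref{expressGn0}. All the real work is in pushing the statement from $j$ to $j+1$ once $j \geq r_0$.

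The key observation I will exploit is that $h_{j+1} = D_\alpha h_j$ by definition \eqref{defnOfHj}. Starting from the inductive hypothesis $h_j = \sum_{i=0}^{r_0-1} a_{i,j}\, h_i$ in $K[x]_T[\alpha]$ and applying the Leibniz rule \eqref{productRule} for $D_\alpha$, I obtain
\begin{equation*}
h_{j+1} \;=\; \sum_{i=0}^{r_0-1}(D_\alpha a_{i,j})\,h_i \;+\; \sum_{i=0}^{r_0-1}a_{i,j}\, h_{i+1}.
\end{equation*}
The first sum is already in the desired form. In the second sum, the terms with $i \le r_0-2$ are of the form $a_{i,j}\, h_{i+1}$ with $i+1 \in \{1,\ldots,r_0-1\}$, so they are also fine. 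The only offending term is $a_{r_0-1,j}\, h_{r_0}$, which I eliminate by substituting \eqref{expressGn0}, namely $h_{r_0} = \sum_{i=0}^{r_0-1} a_i h_i$. Collecting coefficients then produces an explicit recursion defining $a_{i,j+1} \in K[x]_T[\alpha]$ in terms of the $D_\alpha a_{i,j}$, the shifted $a_{i-1,j}$, and the products $a_{r_0-1,j}\, a_i$.

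The one technical point that needs checking is that $D_\alpha$ actually preserves the ring $K[x]_T[\alpha]$, so that the $D_\alpha a_{i,j}$ appearing above lie in the correct ring. For $p/q \in K[x]_T$ with $q \notin (T)$, the quotient rule gives $D_\alpha(p/q) = (q\, D_\alpha p - p\, D_\alpha q)/q^2$; since $T$ is irreducible, $(T)$ is prime, so $q^2 \notin (T)$ and the result still lies in $K[x]_T$. Because $D_\alpha$ differentiates only in the $x$ variables and treats monomials $\alpha^\beta$ as constants in $x$, this extends $K[\alpha]$-linearly to all of $K[x]_T[\alpha]$. I do not expect any substantial obstacle in this lemma: the argument is a clean derivation-style induction, and the only subtle step is this localization check that uses irreducibility of $T$.
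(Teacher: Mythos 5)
Your proof is correct and follows essentially the same route as the paper's: induction on $j$, trivial base cases, applying $D_\alpha$ via the Leibniz rule, and substituting \eqref{expressGn0} to eliminate the $a_{r_0-1,j}\,h_{r_0}$ term, yielding the recursion $a_{i,j+1}=D_\alpha a_{i,j}+a_{i-1,j}+a_{r_0-1,j}a_i$. The only difference is that you spell out the quotient-rule computation showing $D_\alpha$ preserves $K[x]_T[\alpha]$ (using that $(T)$ is prime, hence $q\notin(T)\Rightarrow q^2\notin(T)$), whereas the paper simply asserts that $K[x]_T$ is closed under the $\partial_i$; your extra detail is correct and matches what the paper leaves implicit.
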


\begin{proof}
We will prove Lemma \ref{allGjDefined} by induction on $j$. The case $j\leq r_0-1$ is immediate. The case $j=r_0$ is precisely \eqref{expressGn0}. Now assume the theorem has been proved up to some value $j$.  To do the induction step, we will apply the operator $D_\alpha$ to the equation for $j$ in order to get the equation for $j+1$.

We note that the ring $K[x]_T$ is closed under the action of the partial derivatives $\partial_i$.  Therefore, $K[x]_T [\alpha]$ is closed under the action of $D_\alpha$.
We will make liberal use of the Leibniz rule \eqref{productRule} for the operator $D_\alpha$, which we recall here: for any $f,g \in K[x]_T [\alpha]$,

\begin{equation*}
D_\alpha(fg)=(D_\alpha f)g+ f(D_\alpha g).
\end{equation*}

We also recall that $D_\alpha h_j = h_{j+1}$.   Therefore, we have
\begin{equation*}
 \begin{split}
  h_{j+1} &= D_\alpha h_{j} = D_\alpha \left( \sum_{i=0}^{r_0 - 1} a_{i,j} h_i \right) \\
  &=\sum_{i=0}^{r_0-1} \left( (D_\alpha a_{i,j}) h_i + a_{i,j} (D_\alpha h_i) \right) \\
  &=\sum_{i=0}^{r_0-1} (D_\alpha a_{ij}) h_{i} + \sum_{i=0}^{r_0-2}a_{i,j} h_{i+1}+a_{r_0-1,j}h_{r_0}  \\
  &=\sum_{i=0}^{r_0-1} (D_\alpha a_{i,j}) h_i + \sum_{i=1}^{r_0-1}a_{i-1,j} h_{i} + a_{r_0-1,j}\sum_{i=0}^{r_0-1} a_i h_i \\
&=\sum_{i=0}^{r_0-1}a_{i,j+1} h_i,
 \end{split}
\end{equation*}
where
\begin{equation}
a_{i,j+1} = D_\alpha a_{i,j}+a_{i-1,j} + a_{r_0-1,j} a_i ,
\end{equation}
and $a_{-1,j}(x)=0$ for each index $j$. This completes the induction.
\end{proof}
We can now define the open set $O \subset Z(T)$.  Let $M=D^2\deg(T)$.  The set $O$ is the subset of $Z(T)$ where none of the denominators involved in $a_{i,j}$ vanishes for $j \le M$.  Let us spell out what this means more carefully.  Each $a_{i,j} \in K[x]_T [\alpha]$.  Therefore, we can write as a finite combination of monomials in $\alpha$:

$$ a_{i,j} = \sum_I r_{i,j,I} \alpha^I. $$

\noindent In this sum, $I$ denotes a multi-index, and $r_{i,j,I} \in K[x]_T$.  For each $i,j$, there are only finitely many values of $I$ in the sum.  Each $r_{i,j,I}$ is a rational function $p_{i,j,I} / q_{i,j,I}$, where $q_{i,j,I} \notin (T)$.  We let $O \subset Z(T)$ be the set where none of the denominators $q_{i,j,I}$ vanishes, for $0 \le i \le r_0-1$ and $1 \le j \le M$:

\begin{equation}
O=Z\ \backslash \bigcup_{\substack{j=1,\ldots,M\\i=0,\ldots,r_0-1}} \BZ(q_{i,j, I}).
\end{equation}

The set $O$ is non-empty by a standard application of the Hilbert Nullstellensatz.  Since $T$ is irreducible, the radical of $(T)$ is $(T)$.  Since $K$ is algebraically closed, we can apply the Nullstellensatz, and we see that the ideal of polynomials that vanishes on $Z(T)$ is exactly $(T)$.  We know that each denominator $q_{i,j,I} \notin (T)$.  Since $T$ is irreducible, $(T)$ is a prime ideal, and so $\prod q_{i,j,I} \notin (T)$.  Therefore, $\prod q_{i,j,I}$ does not vanish on $Z(T)$.  This shows that $O$ is not empty.

Now Lemma \ref{allGjDefined} has the following Corollary on the set $O$:

\begin{cor}\label{tangR0ImpliesTangM}
Suppose that $z\in O,\ \alpha \in\big(K^{\binom{D+3}{3}}\big)^2$, and
\begin{equation}
h_{j}(\alpha, z)=0,\quad j=0,\ldots, r_0-1.
\end{equation}
Then $h_{j}(\alpha, z)=0,\ j=0,\ldots,M$.
\end{cor}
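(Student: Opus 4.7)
The plan is to extract Corollary \ref{tangR0ImpliesTangM} directly from Lemma \ref{allGjDefined} by evaluating the identity at the point $(\alpha, z)$, exploiting the fact that the open set $O \subset Z(T)$ was constructed precisely so that every denominator appearing in any $a_{i,j}$ (for $j \le M$) is non-vanishing.

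First I would unpack Lemma \ref{allGjDefined}: for every $j = 0, 1, \ldots, M$, we have an identity
\begin{equation*}
h_j = \sum_{i=0}^{r_0-1} a_{i,j} h_i
\end{equation*}
in the ring $K[x]_T[\alpha]$. Writing $a_{i,j} = \sum_I r_{i,j,I}(x)\, \alpha^I$ as a polynomial in $\alpha$ with coefficients $r_{i,j,I} = p_{i,j,I}/q_{i,j,I} \in K[x]_T$ (where $q_{i,j,I} \notin (T)$), this is an equality of polynomials in $\alpha$ whose coefficients are rational functions of $x$ with denominators avoiding $(T)$.

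Next I would verify that the evaluation $x \mapsto z$ is legitimate for $z \in O$. By the definition of $O$, none of the finitely many denominators $q_{i,j,I}$ (over $0 \le i \le r_0-1$ and $1 \le j \le M$) vanish at $z$; hence $r_{i,j,I}(z) \in K$ is well-defined, and so is $a_{i,j}(\alpha,z) \in K[\alpha]$. Evaluating the identity above at this $(\alpha, z)$ yields the numerical equality
\begin{equation*}
h_j(\alpha, z) \;=\; \sum_{i=0}^{r_0-1} a_{i,j}(\alpha, z)\, h_i(\alpha, z)
\end{equation*}
for each $j = 0, 1, \ldots, M$. By the hypothesis that $h_i(\alpha, z) = 0$ for $i = 0, \ldots, r_0 - 1$, every term on the right vanishes, and so $h_j(\alpha, z) = 0$ for $j = 0, \ldots, M$, as desired. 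The cases $j = 0, \ldots, r_0 - 1$ are included by assumption.

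There is no real obstacle here beyond bookkeeping: the content is entirely front-loaded into Lemma \ref{allGjDefined} (which produced the expressions $a_{i,j}$) and into the construction of $O$ (which ensured all denominators survive evaluation). The only point worth being careful about is that $a_{i,j}$ is \emph{polynomial} in $\alpha$ but only \emph{rational} in $x$, so that removing the finitely many offending hypersurfaces $Z(q_{i,j,I})$ from $Z(T)$ suffices to make the evaluation valid for every $\alpha$ simultaneously; this is exactly what the definition of $O$ accomplishes.
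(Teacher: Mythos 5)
Your argument is correct and matches the paper's own proof essentially line for line: both invoke Lemma \ref{allGjDefined} to write $h_j = \sum_{i=0}^{r_0-1} a_{i,j} h_i$, expand $a_{i,j}$ as $\sum_I (p_{i,j,I}/q_{i,j,I})\alpha^I$, use the definition of $O$ to justify evaluation at $x = z$, and then read off the vanishing from the hypothesis. No substantive difference.
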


\begin{proof} By Equation \ref{expressionForHi}, we know that for all $j \le M$,

$$h_{j} = \sum_{i=0}^{r_0-1}a_{i,j} h_{i}. $$

Expanding out the $a_{i,j}$ in terms of $p_{i,j,I}$ and $q_{i,j,I}$, we get

$$ h_j (\alpha, x) = \sum_{i=0}^{r_0-1} \left( \sum_I \frac{ p_{i,j,I}(x) }{ q_{i,j,I}(x) } \alpha^I \right)  h_{i} (\alpha, x). $$

At the point $x =z \in O$, the polynomials $q_{i,j,I}$ are all non-zero.  By assumption, $h_i (\alpha, z) = 0$ for $i=0, ..., r_0 - 1$.  Therefore, we see that $h_j (\alpha, z) = 0$ also.

\end{proof}

We can now prove Theorem \ref{tangentImpliesTrappedProp}. Let $\gamma$ be an irreducible curve of degree at most $D$. Let $z\in O$ be a smooth point of $\gamma$, and suppose $\gamma$ is tangent to $Z$ at $z$ to order $\geq r_0$.  Use Lemma \ref{trappingCurveLem} to choose an $\alpha$ associated to $\gamma$ at $z$, as in Definition \ref{defnOfAssociated}.  By Theorem \ref{definitionsOfTangencyAreEquivalentThm}, $h_{j,\alpha}(z)=0,\ j=0,\ldots,r_0$. But by Corollary \ref{tangR0ImpliesTangM}, this implies $h_{j,\alpha}(z)=0,\ j=1,\ldots, D^2(\deg T)$. Then by item $\ref{existsHiThmProp3}$ from Lemma \ref{existsHiThm}, $\gamma\subset Z$. This concludes the proof of Theorem \ref{tangentImpliesTrappedProp}.
\subsection{Trapped implies sufficiently tangent}
We will also need a converse to Theorem \ref{tangentImpliesTrappedProp}. We will call this property ``trapped implies sufficiently tangent.''
\begin{lemma}\label{DIsLargeWhenCommongComponent}
Let $T\in K[x_1,x_2,x_3]$ and let $\gamma\in \chowVarietyDegD$ with $\gamma\subset Z(T)$. Let $z$ be a regular point of $\gamma$ and let $\alpha$ be associated to $\gamma$ at $z$. Then

\begin{equation}
D^j_\alpha T(z)=0\quad\ \textrm{for all}\ j\geq 0.
\end{equation}
\end{lemma}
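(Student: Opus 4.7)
The plan is to mimic the argument used in the implication (i)$\Longrightarrow$(ii) of Theorem \ref{definitionsOfTangencyAreEquivalentThm}, but with no upper bound on the order of the derivative. All the required ingredients are already on the page.

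First I would observe that since $\gamma \subset Z(T)$ and $\gamma$ is irreducible, we have $T \in I(\gamma)$, and therefore $T/1 \in I(\gamma)_z$ in the local ring $\mathcal{O}_{K^3,z}$. Since $\alpha$ is associated to $\gamma$ at $z$, we have $\gamma \subset Z(P_\alpha) \cap Z(Q_\alpha)$ with $\nabla P_\alpha(z), \nabla Q_\alpha(z)$ linearly independent. Exactly as in the proof of Theorem \ref{definitionsOfTangencyAreEquivalentThm}, this lets us invoke Lemma \ref{equalityOfLocalizedIdeals} to conclude $I(\gamma)_z = (P_\alpha, Q_\alpha)_z$. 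Hence we may write
\begin{equation*}
T = A P_\alpha + B Q_\alpha \qquad \text{in } \mathcal{O}_{K^3,z},
\end{equation*}
for some $A, B \in \mathcal{O}_{K^3,z}$ (i.e.\ rational functions regular at $z$).

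Next I would use the key observation recorded right after the definition of $D_\alpha$: because $D_\alpha P_\alpha$ and $D_\alpha Q_\alpha$ are identically zero as functions of $(\alpha,x)$, we have $D_\alpha^j P_\alpha \equiv 0 \equiv D_\alpha^j Q_\alpha$ for every $j \geq 1$. Since also $P_\alpha(z) = Q_\alpha(z) = 0$ (because $z \in \gamma$), we conclude
\begin{equation*}
D_\alpha^j P_\alpha(z) = D_\alpha^j Q_\alpha(z) = 0 \qquad \text{for all } j \geq 0.
\end{equation*}

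Finally, I would apply Lemma \ref{derivativesProducts} with $r$ arbitrary (the hypothesis of that lemma holds for every $r$ in our setting), with the roles of $T$ and $b$ played by $P_\alpha$ and $A$ (respectively $Q_\alpha$ and $B$). This gives $D_\alpha^j(A P_\alpha)(z) = 0$ and $D_\alpha^j(B Q_\alpha)(z) = 0$ for all $j \geq 0$. By linearity of $D_\alpha$ and the equality $T = A P_\alpha + B Q_\alpha$ in the local ring (derivatives of equal elements of $\mathcal{O}_{K^3,z}$ agree as rational functions and hence agree upon evaluation at $z$, where none of the denominators involved vanishes), we conclude $D_\alpha^j T(z) = 0$ for all $j \geq 0$, as desired.

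There is no real obstacle here; the only point requiring care is the translation between the polynomial identity $T$ and its representation $A P_\alpha + B Q_\alpha$ as an element of $\mathcal{O}_{K^3,z}$, but this is precisely what Lemma \ref{derivativesProducts} is designed to handle, and the argument is exactly parallel to the relevant step in Theorem \ref{definitionsOfTangencyAreEquivalentThm}, merely without truncating at a finite order.
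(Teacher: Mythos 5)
Your proof is correct and follows essentially the same route as the paper: both reduce to $I(\gamma)_z = (P_\alpha,Q_\alpha)_z$ via Lemma~\ref{equalityOfLocalizedIdeals}, write $T$ as an $\mathcal{O}_{K^3,z}$-combination of $P_\alpha$ and $Q_\alpha$, and then invoke Lemma~\ref{derivativesProducts} together with the identities $D_\alpha^j P_\alpha \equiv D_\alpha^j Q_\alpha \equiv 0$. The only slight imprecision is attributing the passage from the polynomial $T$ to the rational-function expression $AP_\alpha + BQ_\alpha$ to Lemma~\ref{derivativesProducts}; that step actually just uses that $D_\alpha$ is well defined on rational functions regular at $z$ and that equal elements of $\mathcal{O}_{K^3,z}$ have equal $D_\alpha$-derivatives there, after which Lemma~\ref{derivativesProducts} handles the products $AP_\alpha$ and $BQ_\alpha$ as you say.
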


\begin{proof}
By Lemma \ref{equalityOfLocalizedIdeals}, $(T)_{z}=(P_\alpha,Q_\alpha)_{z}$. In particular, we can write $T=\frac{p_1}{q_1}P_\alpha+\frac{p_2}{q_2}Q_\alpha$, where $q_1(z)\neq 0,\ q_2(z)\neq 0$. By Lemma \ref{derivativesProducts}, we have that $D_\alpha^j T(z)=0$ for all $j$.
\end{proof}

\section{Generalized flecnodes and constructible conditions}\label{genFlecnodeSection}
Let $K$ be a closed field. Let $T\in K[x_1,x_2,x_3]$ with $\deg T < \operatorname{char}(K)$, and consider $\BZ(T) \subset K^3$.  We recall that a point $x \in Z(T)$ is called flecnodal if there is a line $L$ which is tangent to $Z(T)$ at $x$ to order at least 3.  We consider the following generalization:

Given a constructible set $\mathcal{C}\subset\chowVarietyDegD$, and given integers $t,r \ge 1$, with $r < \operatorname{char} K$, we say that a point $x \in K^3$ is $(t,\mathcal{C},r)$-flecnodal for $T$ if there are $\ge t$ distinct curves $\gamma_1,\ldots,\gamma_t\in  \mathcal{C}$ passing through the point $x$, so that $x$ is a regular point of each of these curves, and each of these curves is tangent (in the sense of Definition \ref{defnOfTangency}) to $\BZ(T)$ at $x$ to order $\geq r$.  The original definition of a flecnode corresponds to $t=1$; $\mathcal{C}=\mathcal{C}_{3,1}$, the space of all lines in $K^3$; and $r=3$.

The flecnode polynomial, discovered by Salmon, is an important tool for studying flecnodes.  For each $T$, Salmon constructed a polynomial $\Flec T$ of degree $\le 11 \Deg T$, so that a point $x \in \BZ(T)$ is flecnodal if and only if $\Flec T(x) = 0$.  Our goal is to generalize this result to $(t,\mathcal{C},r)$-flecnodal points.

Our theorem for $(t,\mathcal{C},r)$ flecnodes is a little more complicated to state, but it is almost equally useful in incidence geometry.  Instead of one polynomial $\Flec T$, we will have a sequence of polynomials $\Flec_j T$, where $j$ goes from 1 to a large constant $J(t,\mathcal{C},r)$.  To tell whether a point $x$ is flecnodal, we check whether $\Flec_j T(x)$ vanishes for $j= 1, ..., J$.  Based on that information, we can determine whether $x$ is $(t,\mathcal{C},r)$-flecnodal.  Here is the precise statement of the theorem.
\begin{thm} \label{genflec}
For each constructible set $\mathcal{C}\subset \chowVarietyDegD$ and each pair of integers $t,r\geq 1$ with $r < \operatorname{char} K$, there is an integer $J=J(t,\mathcal{C},r)$, and a subset $B_{F(t,\mathcal{C},r)} \subset \{0,1\}^J$ so that the following holds.  For each $1 \le j \le J$, and for each $T \in K[x_1, x_2, x_3]$, there are polynomials $\Flec_j T = \Flec_{t, \mathcal{C}, r, j} T \in K[x_1, x_2, x_3]$ so that
\begin{itemize}
\item $\Deg \Flec_{j} T \le C(t,\mathcal{C},r) \Deg T$.
\item $x$ is $(t,\mathcal{C},r)$-flecnodal for $T$ if and only if the vector $v(\Flec_{j} T(x) ) \in B_{F(t,\mathcal{C},r)} \subset \{ 0 , 1 \}^J$.
\end{itemize}
\end{thm}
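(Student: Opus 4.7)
The plan is to express the $(t,\mathcal{C},r)$-flecnodal condition as an existential statement involving the polynomials $h_j(\alpha,x) = (D_\alpha^j T)(x)$ from Section \ref{suffTangTrappedSec}, and then extract the $\Flec_j T$ by applying Chevalley's theorem (Theorem \ref{ChevalleyTheorem}) to the resulting incidence variety. By Lemma \ref{existsHiThm}\ref{hmeastang}, with $z$ a regular point of $\gamma$ and $\alpha$ associated to $\gamma$ at $z$, the curve $\gamma$ is tangent to $Z(T)$ at $z$ to order $\geq r$ exactly when $h_j(\alpha, z) = 0$ for all $j = 0, \ldots, r$. Consequently $x$ is $(t,\mathcal{C},r)$-flecnodal for $T$ precisely when there exist tuples $(\alpha_1, z_1), \ldots, (\alpha_t, z_t)$ with each $z_i \in \mathcal{C}$, the $z_i$ pairwise distinct, each $\alpha_i$ associated to the curve of $z_i$ at $x$ (a constructible condition by Lemma \ref{xAlphaGammaConst}, which in particular forces regularity of $x$ on that curve), and $h_j(\alpha_i, x) = 0$ for all $i \leq t$, $j \leq r$.

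I would then form the incidence variety $\tilde X \subset K^3 \times \big((K^{\binom{D+3}{3}})^2 \times \chowVarietyDegD\big)^t$ cut out by these conditions. Its defining polynomials have complexity $O_{t, \mathcal{C}, r}(1)$ in the auxiliary variables $(\alpha_i, z_i)$, while their dependence on $x$ enters only through the $h_j$, whose $x$-degree is at most $\deg T + O_{D,r}(1)$ and whose dependence on $T$ is linear in the coefficients. The set of $(t,\mathcal{C},r)$-flecnodal points is the projection of $\tilde X$ to $K^3$, which is constructible by Chevalley's theorem. To obtain a Boolean structure $(J, B)$ independent of $T$, I would carry out this elimination symbolically in $T$, working in the product $K[x_1, x_2, x_3]_{\leq d} \times K^3 \times (\text{aux})$ with $T$ as a varying parameter; the output is then a description by polynomials $F_j(T, x)$ together with a fixed subset $B \subset \{0,1\}^J$, and one sets $\Flec_j T(x) := F_j(T, x)$.

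For the degree bound $\deg \Flec_j T \leq C(t,\mathcal{C},r)\deg T$, I would execute the elimination via iterated resultants in the bounded-dimensional variables $(\alpha_i, z_i)$. Each resultant step multiplies the $x$-degree of the surviving polynomials by a factor depending only on the (bounded) $\alpha_i, z_i$-degrees, so after the $O_{t, D, \mathcal{C}}(1)$ many elimination steps the output polynomials have $x$-degree $\leq C(t, \mathcal{C}, r)(\deg T + O(1))$, which is $\leq C'(t, \mathcal{C}, r) \deg T$ after adjusting the constant.

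The main obstacle is combining the linear-in-$\deg T$ degree bound with the uniformity of $(J, B)$ across all $T$. A naive application of effective Chevalley/quantifier elimination allows doubly-exponential growth in $\deg T$; the linear bound comes from the structural fact that $T$ enters the incidence relations only linearly through $h_j$, and that the eliminated variables $(\alpha_i, z_i)$ live in a fixed-dimensional space in which the defining polynomials have bounded degree. Keeping $B$ independent of $T$ requires treating the coefficients of $T$ as formal indeterminates throughout the elimination, so that the resulting Boolean combination of vanishing conditions has a combinatorial shape determined only by $t$, $\mathcal{C}$, and $r$, with only the specific coefficients of the $F_j$ varying with $T$.
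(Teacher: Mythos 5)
Your setup is on the right track: forming the incidence variety over $(x,\gamma_1,\ldots,\gamma_t,\alpha_1,\ldots,\alpha_t)$ using the tangency condition $h_j(\alpha_i,x)=0$ and the constructibility of the association relation (Lemma~\ref{xAlphaGammaConst}), then projecting via Chevalley, is exactly how the paper shows constructibility of the flecnodal set (Lemma~\ref{Tx0GammaIsConstructible} and Corollaries~\ref{manyTangentCurvesCor}, \ref{flecconstr}). But you are missing the one observation that makes the degree bound and the uniformity of $(J,B)$ over all $T$ completely automatic, namely the \emph{jet reduction} (Lemma~\ref{rjetcheck} in the paper): by Theorem~\ref{definitionsOfTangencyAreEquivalentThm}\ref{TInIdeal}, whether $\gamma$ is tangent to $Z(T)$ at $z$ to order $\geq r$ depends on $T$ only through $T \bmod I_{z,\geq r+1}$, i.e.\ through the $r$-jet $J^r T_z$, which lives in the \emph{fixed} finite-dimensional space $K[x]_{\leq r}$. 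Consequently one can define once and for all the constructible set $\Flec_{t,\mathcal{C},r}\subset K^3\times K[x]_{\leq r}$, with $T$ nowhere in sight, and obtain $\Flec_j T(x)=f_j(x,J^rT_x)$ by substitution; since each coordinate of $J^rT_x$ is a scaled derivative $\nabla_I T$ of degree $\leq \deg T$, the linear degree bound and the $T$-independence of $(J,B)$ are immediate (Lemma~\ref{construcnormalform}).

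Without this observation, your incidence variety $\tilde X$ genuinely depends on $T$, its $x$-degree grows like $\deg T$, and you are forced to do quantifier elimination with the coefficient vector of $T$ as a parameter space of dimension $\sim(\deg T)^3$. You correctly flag this as ``the main obstacle,'' but the proposed fix does not resolve it. First, the claim that iterated resultants in the bounded-degree variables $(\alpha_i,z_i)$ produce output with $x$-degree $O(\deg T)$ requires a careful parametric elimination argument with case distinctions on vanishing leading coefficients; a bare appeal to Chevalley gives no such bound. Second, and more seriously, the assertion that the Boolean shape $(J,B)$ of the eliminated formula is ``determined only by $t,\mathcal{C},r$'' when the parameter space grows with $\deg T$ is precisely what needs proof -- it is plausible because the eliminated variables have bounded degree, but nothing in your sketch establishes it. The jet trick is not cosmetic: it is the step that replaces a parametric elimination over a growing space by a single, $T$-free Chevalley application in a fixed ambient space, and your proof does not work without it or a genuine substitute for it.
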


(Recall from Section \ref{constSetSec} that $v (y) $ is zero if $y = 0$ and $1$ if $y \not= 0$.)

The main tool in the proof is Chevalley's quantifier elimination theorem, Theorem \ref{ChevalleyTheorem}.  The method is quite flexible and it can also be used to study other variations of the flecnode polynomial.

\subsection{The $r$-jet of a polynomial}

The first observation in the proof is that whether a point $z$ is $(t, \mathcal{C}, r)$-flecnodal for a polynomial $T$ only depends on the point $z$ and the $r$-jet of $T$ at $z$.  Recall that the $r$-jet of $T$ at $z$, written $J^r T_z$ is the polynomial of degree at most $r$ that approximates $T$ at $z$ to order $r$.  Here is the more formal definition.  Recall that for any point $z \in K^n$, $I_{z, \ge r} \subset K[x_1, ..., x_n]$ is the ideal of polynomials that vanish to order at least $r$ at the point $z$ -- see Definition \ref{defI_zr}.

\begin{defn} \label{rjet} For any $T \in K[x]$, the $r$-jet of $T$ at $z$, $J^r T_z$, is the unique polynomial of degree at most $r$ so that

$$ T - J^r T_z \in I_{z, \ge r+1}. $$

\end{defn}

Since we assumed $r < \chara K$, the $r$-jet $J^r T_z$ can be computed with a Taylor series in the usual way, summing over multi-indices $I$:

\begin{equation} \label{taylor} J^r T_z (x) = \sum_{ |I| \le r} \frac{1}{ I !} \nabla_I T(z) (x-z)^I. \end{equation}

\noindent For any multi-index $I$ with $|I| \le r$, $\nabla_I T(z) = \nabla_I J^r T_z (z)$.

We can now state our first observation as a formal lemma.

\begin{lemma} \label{rjetcheck} A point $z \in K^3$ is $(t, \mathcal{C}, r)$-flecnodal for a polynomial $T$ if and only if $z$ is $(t, \mathcal{C}, r)$-flecnodal for $J^r T_z$.
\end{lemma}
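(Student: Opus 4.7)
The plan is to reduce the statement to a fact about a single curve's tangency, and then read off that fact from the definition of tangency as an ideal-containment modulo $I_{z, \geq r+1}$.

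First I would observe that the $(t,\mathcal{C},r)$-flecnodality condition at $z$ has two pieces: (a) the existence of $t$ distinct curves in $\mathcal{C}$ passing through $z$, with $z$ a regular point of each, and (b) the tangency of each such curve to $Z(T)$ at $z$ to order $\geq r$. Piece (a) does not involve $T$ at all, and the same candidate curves are being considered for both $T$ and $J^r T_z$. So the lemma reduces to showing: for any irreducible curve $\gamma \in \mathcal{C}$ with $z$ a regular point of $\gamma$, one has $\gamma$ tangent to $Z(T)$ at $z$ to order $\geq r$ if and only if $\gamma$ is tangent to $Z(J^r T_z)$ at $z$ to order $\geq r$.

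For this reduced statement I would invoke Definition \ref{defnOfTangency}: the tangency condition is precisely
\[
T/1 \in (I(\gamma))_{z} + (I_{z, \geq r+1})_z \quad \text{in } \mathcal{O}_{K^3, z}.
\]
By Definition \ref{rjet} of the $r$-jet, $T - J^r T_z \in I_{z, \geq r+1}$, so the difference $T/1 - J^r T_z / 1$ lies in $(I_{z, \geq r+1})_z$. Since this submodule is one of the summands on the right-hand side above, $T/1$ lies in $(I(\gamma))_z + (I_{z, \geq r+1})_z$ if and only if $J^r T_z / 1$ does. This gives the equivalence of the two tangency conditions and hence proves the lemma.

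There is no real obstacle here; the content of the lemma is just that Definition \ref{defnOfTangency} is, by design, insensitive to modifications of $T$ by elements of $I_{z, \geq r+1}$. The only small caveat to mention is that the hypothesis $r < \operatorname{char}(K)$ carried throughout Section 6 ensures that $J^r T_z$ is well-defined in the Taylor-series sense of \eqref{taylor}, though the argument above only uses the defining characterization $T - J^r T_z \in I_{z, \geq r+1}$ and so does not even need the characteristic hypothesis directly.
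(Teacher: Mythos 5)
Your proof is correct and essentially matches the paper's: both reduce the lemma to the single-curve tangency equivalence and then observe that $T - J^r T_z \in I_{z,\ge r+1}$ makes the two tangency conditions equivalent. The only cosmetic difference is that you read the tangency condition directly off Definition \ref{defnOfTangency} (the localized criterion), whereas the paper first passes to the non-localized condition \ref{TInIdeal} of Theorem \ref{definitionsOfTangencyAreEquivalentThm}; either route is fine.
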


\begin{proof}  Suppose that $\gamma \in \mathcal{C}$ and $z$ is a regular point of $\gamma$.  It suffices to check that $\gamma$ is tangent to $Z(T)$ at $z$ to order $\ge r$ if and only if $\gamma$ is tangent to $Z( J^r T_z )$ to order $\ge r$.

By Theorem \ref{definitionsOfTangencyAreEquivalentThm}, $\gamma$ is tangent to $Z(T)$ at $z$ to order $\ge r$ if and only if

$$ T \in I(\gamma) + I_{z, \ge r+1}.$$

Similarly, $\gamma$ is tangent to $Z(J^r T_z)$ at $z$ to order $\ge r$ if and only if

$$ J^r T_z \in I(\gamma) + I_{z, \ge r+1}.$$

But $J^r T_z$ is defined so that $T - J_r T_z \in I_{z, \ge r+1}$, and so these conditions are equivalent.  \end{proof}

We now define the set $\Flec_{t, \mathcal{C}, r} \subset K^3 \times \Poly_{r} (K^3)$

$$\Flec_{t, \mathcal{C}, r} \defeq \{ (z, U) \in K^3 \times K[x]_{\le r} \textrm{ so that } z \textrm{ is } (t, \mathcal{C}, r) \textrm{-flecnodal for } U\}. $$

By Lemma \ref{rjetcheck}, $z$ is $(t, \mathcal{C}, r)$-flecnodal for $T$ if and only if $(z, J^r T_z) \in \Flec_{t, \mathcal{C}, r}$.

\subsection{Constructible conditions} \label{ConstructibleConditionsSection}

Given any subset $Y \subset K^3 \times K[x]_{\le r}$ we can think of $Y$ as a condition.  We say that $T$ obeys $Y$ at $z$ if and only if $(z, J^r T_z) \in Y \subset K^3 \times K[x]_{\le r}$.  If $Y$ is an algebraic set, we say that $Y$ is an algebraic condition, and if $Y$ is a constructible set, we say that $Y$ is a constructible condition.

We will prove below that $\Flec_{t, \mathcal{C}, r}$ is a constructible condition.  Any constructible condition $Y$ obeys a version of Theorem \ref{genflec}.  This follows immediately from the definition of a constructible set, as we now explain.

\begin{lemma} \label{construcnormalform}
Suppose that $Y \subset K^3 \times K[x]_{\le r}$ is a constructible condition.  Then for any polynomial $T: K^3 \rightarrow K$, there is a finite list of polynomials $Y_j T$, $j=1, ..., J(Y)$, and a subset $B_Y \subset \{0, 1\}^{J(Y)}$ obeying the following conditions:
\begin{itemize}
\item $\Deg Y_j T \le C(Y) \Deg T + C(Y)$.
\item The polynomial $T$ obeys condition $Y$ at a point $x$ if and only if $v(Y_j T(x)) \in B_Y$.
\end{itemize}
\end{lemma}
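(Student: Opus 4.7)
The plan is to unpack the definition of a constructible condition and substitute $J^r T_z$ directly, treating the coefficients of the $r$-jet as polynomials in $z$ of degree at most $\Deg T$.

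First, using the natural identification of $K[x]_{\le r}$ with $K^{\binom{r+3}{3}}$ via the monomial basis, I would apply Definition \ref{defn1Const} to obtain a finite list of polynomials $f_1, \ldots, f_{J(Y)}$ on $K^3 \times K^{\binom{r+3}{3}}$, each of total degree at most some constant $C(Y)$, together with a subset $B_Y \subset \{0,1\}^{J(Y)}$, so that $(z, U) \in Y$ iff $v(f_i(z, U)) \in B_Y$. Writing $U(z) \in K^{\binom{r+3}{3}}$ for the coefficient vector of $J^r T_z$ in the monomial basis, I would then define $Y_j T(z) \defeq f_j(z, U(z))$. The Boolean equivalence in the second bullet is then immediate from Lemma \ref{rjetcheck}: $T$ obeys $Y$ at $z$ iff $(z, J^r T_z) \in Y$ iff $v(f_j(z, U(z))) = v(Y_j T(z)) \in B_Y$.

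Next, I would verify the degree bound by inspecting how $U(z)$ depends on $z$. Since $r < \chara K$, the Taylor formula \eqref{taylor} is valid and expresses $J^r T_z(x)$ as a $K$-linear combination of terms of the form $\nabla_I T(z) \cdot (x-z)^I$ with $|I| \le r$. Each partial derivative $\nabla_I T(z)$ is a polynomial in $z$ of degree at most $\Deg T - |I| \le \Deg T$ (or zero if $|I| > \Deg T$), and expanding $(x-z)^I$ in the $x$-monomial basis contributes coefficients polynomial in $z$ of degree at most $|I|$; combining, each coordinate of $U(z)$ is a polynomial in $z$ of degree at most $\Deg T$. Substituting these into $f_j$ (of total degree at most $C(Y)$) yields $\Deg Y_j T \le C(Y) \cdot \max(1,\Deg T) \le C(Y)\Deg T + C(Y)$, as required.

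The only real care needed is the degree bookkeeping; there is no deeper obstacle, because the constructibility of $Y$ already packages the Boolean structure for us and Lemma \ref{rjetcheck} handles the reduction to the $r$-jet. The mildly less routine step is verifying that the Taylor coefficients of $J^r T_z$ are genuinely polynomial (and not merely rational) in $z$, which is automatic from \eqref{taylor} once $r < \chara K$.
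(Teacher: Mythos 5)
Your argument is correct and follows essentially the same route as the paper: unpack the constructibility of $Y$ to get the defining polynomials $f_j$ and the set $B_Y$, observe that the coefficients of $J^r T_z$ in a fixed basis of $K[x]_{\le r}$ are polynomials in $z$ of degree at most $\Deg T$, substitute, and read off the degree bound. One small misattribution worth flagging: the equivalence ``$T$ obeys $Y$ at $z$ iff $(z, J^r T_z)\in Y$'' is the paper's \emph{definition} of the phrase ``obeys condition $Y$ at $z$'' (Section \ref{ConstructibleConditionsSection}), not a consequence of Lemma \ref{rjetcheck}; that lemma is a separate fact about the flecnodal condition specifically, used later to verify that $\Flec_{t,\mathcal{C},r}$ is indeed a constructible condition. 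Your degree bookkeeping (tracking $\nabla_I T$ and the expansion of $(x-z)^I$ separately) is a bit more explicit than the paper's one-line parenthetical, but reaches the same bound.
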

\begin{proof} Since $Y$ is a constructible set, there is a finite list of polynomials $f_j$ on $K^3 \times \Poly_k (K^3)$ so that $x \in Y$ if and only if $v(f_j(x) ) \in B_Y$.  By definition, $T$ obeys condition $Y$ at $x$ if and only if $ v(f_j( x, J^k T(x)) \in B_Y.$

We define $Y_j T(y) = f_j( y, J^k T(y))$.  So $T$ obeys condition $Y$ at $x$ if and only if $v(Y_j T(x)) \in B_Y$. Note that $J^k T$ is a vector-valued polynomial of degree $\le \Deg T$.  (Each coefficient of $J^k T$ is a constant factor times a derivative $\nabla_I T$ for some multi-index $I$, and each $\nabla_I T$ is a polynomial of degree $\le \Deg T$.)  We let $C(Y)$ be the maximal degree of the polynomials $f_j$.  Then $Y_j T$ is a polynomial of degree $\le C(Y) \Deg T + C(Y)$.
\end{proof}

Therefore, to prove Theorem \ref{genflec}, it only remains to show that $\Flec_{t, \mathcal{C}, r}$ is constructible.

\subsection{Checking constructibility}

We will now use Chevalley's theorem, Theorem \ref{ChevalleyTheorem}, to check that $\Flec_{t, \mathcal{C}, r}$ is constructible.  We build up to the set $\Flec_{t, \mathcal{C}, r}$ in a few steps, which we state as lemmas.

\begin{lemma}\label{Tx0GammaIsConstructible}
Let $\mathcal{C}\subset  \chowVarietyDegD$ be a constructible set of complexity $C$.
the set
\begin{equation}\label{TX0GammaAlphaSet}
\{(x,U,\gamma)\in K^3\times K[x]_{\le r}\times \mathcal{C} \colon x\in\gamma_{\smooth},\gamma\ \textrm{tangent to}\ Z(U)\ \textrm{at}\ x\ \textrm{to order}\ \geq r\}
\end{equation}
is constructible of complexity $O_{r,C,D}(1)$.
\end{lemma}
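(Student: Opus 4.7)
The plan is to introduce an auxiliary associated parameter $\alpha$, encode both smoothness and tangency as polynomial conditions on $(x,U,\gamma,\alpha)$, and then eliminate $\alpha$ with Chevalley's theorem. The starting observation is that ``$x\in\gamma_{\smooth}$'' is itself constructible: by Definition \ref{defnOfAssociated} together with Lemma \ref{trappingCurveLem}, $x$ is a regular point of $\gamma$ if and only if there exists $\alpha\in\big(K^{\binom{D+3}{3}}\big)^2$ that is associated to $\gamma$ at $x$, and the set \eqref{xAlphaGammaSet} of such $(x,\gamma,\alpha)$ is constructible of complexity $O_D(1)$ by Lemma \ref{xAlphaGammaConst}.

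The next step is to form the auxiliary set
$$
S\defeq\Big\{(x,U,\gamma,\alpha)\in K^3\times K[x]_{\le r}\times \mathcal{C}\times\big(K^{\binom{D+3}{3}}\big)^2 :\, (x,\gamma,\alpha)\in\eqref{xAlphaGammaSet},\ D_\alpha^j U(x)=0 \text{ for } 0\le j\le r\Big\}.
$$
The association condition is constructible of complexity $O_{C,D}(1)$ by Lemma \ref{xAlphaGammaConst} (using also that $\mathcal{C}$ has complexity $C$). For the tangency conditions, I would treat the coefficients of $U$ as free variables; then $D_\alpha^j U$ is a polynomial in $x$, $\alpha$, and the coefficients of $U$, whose degree is $O_{r,D}(1)$ because each application of $D_\alpha$ raises the degree in $(x,\alpha)$ by $O_D(1)$ and the degree in $x$ of $U$ is already at most $r$. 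Hence $S$ is constructible of complexity $O_{r,C,D}(1)$.

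Let $\pi$ be the projection that forgets $\alpha$. By Chevalley's theorem (Theorem \ref{ChevalleyTheorem}), $\pi(S)$ is constructible of complexity $O_{r,C,D}(1)$, so it suffices to identify $\pi(S)$ with the set in \eqref{TX0GammaAlphaSet}. If $(x,U,\gamma)\in\pi(S)$, then the witnessing $\alpha$ shows that $x\in\gamma_{\smooth}$, and Theorem \ref{definitionsOfTangencyAreEquivalentThm}\ref{Alpha}$\Rightarrow$\ref{TInLocalIdeal} gives that $\gamma$ is tangent to $Z(U)$ at $x$ to order $\ge r$. Conversely, if $(x,U,\gamma)$ lies in \eqref{TX0GammaAlphaSet}, then regularity plus Lemma \ref{trappingCurveLem} produces some $\alpha$ associated to $\gamma$ at $x$, and Corollary \ref{oneaalla} (or equivalently the \ref{TInLocalIdeal}$\Rightarrow$\ref{Alpha} direction of Theorem \ref{definitionsOfTangencyAreEquivalentThm}) yields $D_\alpha^j U(x)=0$ for $j=0,\ldots,r$, so $(x,U,\gamma,\alpha)\in S$.

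The only real obstacle is the bookkeeping needed to show that the polynomials $D_\alpha^j U$, expressed in the free coefficients of $U$ together with $(x,\alpha)$, have degree bounded purely in terms of $r$ and $D$; this is immediate from the explicit form of $D_\alpha$ and the fact that $U$ has at most $\binom{r+3}{3}$ coefficients. Everything else is a direct appeal to Lemma \ref{xAlphaGammaConst}, Chevalley, and the equivalence-of-tangency theorem.
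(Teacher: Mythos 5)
Your proof is correct and follows essentially the same route as the paper: form the auxiliary set with $\alpha$, use Lemma \ref{xAlphaGammaConst} plus Theorem \ref{definitionsOfTangencyAreEquivalentThm} to encode the conditions constructibly, and then project out $\alpha$ via Chevalley's theorem. You are slightly more careful than the paper's printed version, which lists the vanishing condition as $j=1,\ldots,r$ (evidently a typo for $j=0,\ldots,r$, matching condition \ref{Alpha} of Theorem \ref{definitionsOfTangencyAreEquivalentThm}), and you spell out both inclusions of $\pi(S)=\eqref{TX0GammaAlphaSet}$, which the paper leaves implicit.
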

\begin{proof}
Recall that \eqref{xAlphaGammaSet} is the set of triples $(x, \gamma, \alpha)$ so that $\alpha$ is associated to $\gamma$ at $x$ (see Definition \ref{defnOfAssociated}).  By Lemma \ref{trappingCurveLem}, there exists an $\alpha$ so that $(x, \gamma, \alpha) \in \eqref{xAlphaGammaSet}$ if and only if $x \in \gamma_{\smooth}$.

By Theorem \ref{definitionsOfTangencyAreEquivalentThm}, $\gamma$ is tangent to $Z(U)$ at $x$ to order $\geq r$ if and only if $D_\alpha^jU(x)=0$ for each $j=0,\ldots,r$.

Consider the set
\begin{equation}\label{TGammaX0AlphaSet}
\begin{split}
\big\{(x,U,\gamma,\alpha)\in K^3\times K[x]_{\le r}\times \mathcal{C}&\times\big(K^{\binom{D+3}{3}}\big)^2\colon \\ &(x,\gamma,\alpha)\in \eqref{xAlphaGammaSet},D_\alpha^j U(x)=0,\ j=1,\ldots,r \big\}.
\end{split}
\end{equation}

Since \eqref{xAlphaGammaSet} is constructible, it is straightforward to check that this set is constructible.  Now let $\pi\colon (x,U,\gamma,\alpha)\mapsto(x,U,\gamma)$, and note that $\eqref{TX0GammaAlphaSet}=\pi(\eqref{TGammaX0AlphaSet})$.  By Theorem \ref{ChevalleyTheorem}, $\eqref{TX0GammaAlphaSet}=\pi(\eqref{TGammaX0AlphaSet})$ is constructible of complexity $O_{r,C,D}(1)$.
\end{proof}

\begin{cor}\label{manyTangentCurvesCor}
Let $\mathcal{C}\subset  \chowVarietyDegD$ be a constructible set of complexity $C$. The set
\begin{equation}\label{TxGamma1Gammat}
\begin{split}
\{(x,U,\gamma_1,\ldots,\gamma_t)\in &K^3\times K[x]_{\le r}\times \mathcal{C}^t \colon\\
&(x,U,\gamma_i)\in \eqref{TX0GammaAlphaSet}\ \textrm{for each}\ i=1,\ldots,t;\ \gamma_i\neq\gamma_j\ \textrm{if}\ i\neq j\}
\end{split}
\end{equation}
is constructible of complexity $O_{D,C,r,t}(1)$.
\end{cor}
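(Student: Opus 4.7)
The plan is to deduce this corollary from Lemma \ref{Tx0GammaIsConstructible} together with the fact that constructible sets form a Boolean algebra, closed under finite products, projections, intersections, and complements. The set in \eqref{TxGamma1Gammat} is defined as an intersection of $t$ ``tangency'' conditions and $\binom{t}{2}$ ``distinctness'' conditions on the ambient space $K^3\times K[x]_{\le r}\times \mathcal{C}^t$, so it suffices to check that each of these conditions defines a constructible subset of complexity $O_{D,C,r,t}(1)$.

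First, for each $i=1,\ldots,t$, let $\pi_i\colon K^3\times K[x]_{\le r}\times\mathcal{C}^t\to K^3\times K[x]_{\le r}\times\mathcal{C}$ be the projection that forgets all $\gamma_j$ with $j\neq i$. By Lemma \ref{Tx0GammaIsConstructible}, the set $\eqref{TX0GammaAlphaSet}$ is constructible of complexity $O_{r,C,D}(1)$, so its preimage
\begin{equation*}
T_i \defeq \pi_i^{-1}\bigl(\eqref{TX0GammaAlphaSet}\bigr)
\end{equation*}
is a constructible subset of $K^3\times K[x]_{\le r}\times\mathcal{C}^t$ of complexity $O_{r,C,D}(1)$ (preimage under a coordinate projection is expressed using the same defining polynomials, now viewed as functions on the larger ambient space). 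Next, for each pair $1\le i<j\le t$, the diagonal
\begin{equation*}
\Delta_{ij}\defeq\{(x,U,\gamma_1,\ldots,\gamma_t)\colon \gamma_i=\gamma_j\}
\end{equation*}
is an algebraic set (cut out by the coordinate-by-coordinate equalities of the two Chow points in $K^N$), hence constructible, and its complement $\Delta_{ij}^c$ is constructible with the same complexity bound.

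Finally, the set in \eqref{TxGamma1Gammat} is exactly
\begin{equation*}
\bigcap_{i=1}^{t} T_i \;\cap\; \bigcap_{1\le i<j\le t}\Delta_{ij}^c,
\end{equation*}
a finite Boolean combination of constructible sets, each of complexity $O_{D,C,r}(1)$. Since the Boolean algebra operations combine complexities additively (up to constants depending on the number of operations), the result is constructible of complexity $O_{D,C,r,t}(1)$.

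There is no substantive obstacle here; the only things to be careful about are that coordinate projections send ``constructibility as a subset of a big product'' to ``constructibility on a factor'' via Chevalley (Theorem \ref{ChevalleyTheorem}), and conversely that pulling back a constructible condition along a coordinate projection is trivially constructible (simply reinterpret the defining polynomials as depending on more variables). The distinctness condition $\gamma_i\neq\gamma_j$ is handled by observing that equality of two points in $\mathcal{C}\subset\chowVarietyDegD\subset K^N$ is a closed algebraic condition of complexity $O(N)=O_D(1)$, so its complement is constructible.
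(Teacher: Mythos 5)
Your proof is correct and takes the same route as the paper, which simply cites Lemma \ref{Tx0GammaIsConstructible} together with closure of constructible sets under Boolean operations; you have just spelled out the preimage and diagonal-complement steps that the paper leaves implicit.
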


\begin{proof} This follows from Lemma \ref{Tx0GammaIsConstructible}, using the fact that a Boolean combination of constructible sets is constructible.
\end{proof}

\begin{rem}
Though we will not need it here, one could also extend Corollary \ref{manyTangentCurvesCor} to a collection of constructible sets $\mathcal{C}_1,\ldots,\mathcal{C}_t\subset\chowVarietyDegD$. The version stated above is the special case $\mathcal{C}_1=\ldots=\mathcal{C}_t=\mathcal{C}$.
\end{rem}

\begin{cor} \label{flecconstr} If $1 \le t, r$ and $r < \chara K$, and if $\mathcal{C} \subset  \chowVarietyDegD$ is a constructible set of complexity $C$, then $\Flec_{t, \mathcal{C}, r}$ is constructible of complexity $O_{D,C,r,t}(1)$.
\end{cor}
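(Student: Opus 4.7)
The plan is to realize $\Flec_{t,\mathcal{C},r}$ as a projection of the constructible set in \eqref{TxGamma1Gammat} and invoke Chevalley's theorem. By definition, a point $z\in K^3$ is $(t,\mathcal{C},r)$-flecnodal for $U\in K[x]_{\le r}$ exactly when there exists a $t$-tuple $(\gamma_1,\ldots,\gamma_t)\in\mathcal{C}^t$ of pairwise distinct curves, each having $z$ as a regular point and each tangent to $Z(U)$ at $z$ to order $\ge r$. Thus, writing $S$ for the set \eqref{TxGamma1Gammat} of Corollary \ref{manyTangentCurvesCor}, we have the equality
\begin{equation*}
\Flec_{t,\mathcal{C},r} \;=\; \pi(S),
\end{equation*}
where $\pi\colon K^3\times K[x]_{\le r}\times \mathcal{C}^t \to K^3\times K[x]_{\le r}$ is the projection forgetting the $t$-tuple of curves.

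First I would invoke Corollary \ref{manyTangentCurvesCor} to conclude that $S$ is constructible of complexity $O_{D,C,r,t}(1)$; this already bundles together the tangency and regularity conditions and the pairwise distinctness of the $\gamma_i$'s. Next I would apply Chevalley's theorem (Theorem \ref{ChevalleyTheorem}) to the projection $\pi$. Since $\mathcal{C}\subset K^N$ for some $N=N(D)$ and $K[x]_{\le r}\cong K^{\binom{r+3}{3}}$ are all affine spaces of bounded dimension, this is a projection of a constructible set in $K^{3+\binom{r+3}{3}+tN}$ onto the first $3+\binom{r+3}{3}$ coordinates. Chevalley's theorem then gives that $\pi(S)=\Flec_{t,\mathcal{C},r}$ is constructible of complexity $O_{D,C,r,t}(1)$.

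There is really no hard step here: all the work has been pushed into Lemma \ref{Tx0GammaIsConstructible} (which uses Lemma \ref{xAlphaGammaGammaConst}, Theorem \ref{definitionsOfTangencyAreEquivalentThm}, and Chevalley) and Corollary \ref{manyTangentCurvesCor}. The only thing to check carefully is that the definition of $(t,\mathcal{C},r)$-flecnodal corresponds exactly to the existence of a tuple in $S$, i.e.\ the quantifier ``$\exists \gamma_1,\ldots,\gamma_t$'' on the defining condition is precisely what is eliminated by the projection $\pi$, and that this single application of Chevalley suffices because the quantifier is existential over an affine constructible set.
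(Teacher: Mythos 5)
Your proof is correct and matches the paper's argument exactly: both realize $\Flec_{t,\mathcal{C},r}$ as the image of the constructible set \eqref{TxGamma1Gammat} under the projection $(x,U,\gamma_1,\ldots,\gamma_t)\mapsto(x,U)$ and then apply Corollary \ref{manyTangentCurvesCor} together with Chevalley's theorem. (One minor typo: the lemma reference you meant is Lemma \ref{xAlphaGammaConst}.)
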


\begin{proof}
Consider the projection
\begin{equation*}
\pi\colon (x,U,\gamma_1,\ldots,\gamma_t)\mapsto (x,U)\in K^3\times K[x]_{\le r}.
\end{equation*}

We note that $\Flec_{t, \mathcal{C}, r} = \pi ( \eqref{TxGamma1Gammat} )$.  By Corollary
\ref{manyTangentCurvesCor} and Chevalley's theorem, $\Flec_{t, \mathcal{C}, r}$ is constructible of complexity $O_{D,C,r,t}(1)$.

\end{proof}
\section{Being flecnodal is contagious}\label{flecnodeContagiousSection}
In this section, we will explore a corollary of Theorem \ref{genflec}. We will prove that an algebraic surface with ``too many'' $(t,\mathcal{C},r)$-flecnodal points must be $(t,\mathcal{C},r)$-flecnodal almost everywhere.
\begin{defn}\label{almostEveryPoint}
 We say that a condition holds at almost every point of a variety $Z$ if the subset of $Z$ where the condition fails is contained in a subvariety of lower dimension.  For example, a polynomial $T$ obeys $Y$ at almost every point of a curve $\gamma$ if and only if $Y$ holds at all but finitely many points of $\gamma$.
\end{defn}

\begin{prop} \label{contagiousflecnodes} For each $C, D, t,r$, there is a constant $C_1$ so that the following holds. Let $\mathcal{C}\subset\chowVarietyDegD$ be a constructible set of complexity $\leq C$. Suppose that $T \in K[x_1, x_2, x_3]$ is irreducible, and that $\BZ(T)$ contains at least $C_1 (\Deg T)^2$ algebraic curves in $\mathcal{C}$, each of which contains at least $C_1 \Deg T$ $(t,\mathcal{C},r)$-flecnodal points.  Then there is a Zariski open subset of $\BZ(T)$ consisting of $(t,\mathcal{C},r)$-flecnodal points.
\end{prop}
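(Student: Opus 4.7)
The plan is to combine the generalized flecnode polynomials from Theorem~\ref{genflec} with two applications of B\'ezout to propagate flecnodality from finitely many curves to a Zariski open subset of $Z(T)$. Let $J=J(t,\mathcal{C},r)$, let $\Flec_j T$ for $1\le j\le J$ be the polynomials produced by Theorem~\ref{genflec}, and let $B\defeq B_{F(t,\mathcal{C},r)}\subset\{0,1\}^J$ be the set of vanishing patterns that certify $(t,\mathcal{C},r)$-flecnodality. Each $\Flec_j T$ has degree at most $C'\Deg T$ for some constant $C'=C'(t,\mathcal{C},r)$.

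First I would apply pigeonhole twice. Each flecnodal point $x$ has a vanishing pattern $v(x)=(v(\Flec_j T(x)))_j\in B$, and $|B|\le 2^J$ is a constant depending only on $t,\mathcal{C},r$. Since every curve in the hypothesis carries $\ge C_1\Deg T$ flecnodal points, there is some $b_\gamma\in B$ such that at least $C_1\Deg T/2^J$ of these points share the pattern $b_\gamma$. A second pigeonhole over the $\ge C_1(\Deg T)^2$ curves then yields a single pattern $b\in B$ and a subset $\finiteSetCurves_b$ of at least $C_1(\Deg T)^2/2^J$ curves, each carrying $\ge C_1\Deg T/2^J$ flecnodal points of pattern $b$.

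The main step is to show $T\mid \Flec_j T$ for every $j$ with $b_j=0$. Fix such a $j$ and a curve $\gamma\in\finiteSetCurves_b$. By construction, $\Flec_j T$ vanishes at $\ge C_1\Deg T/2^J$ points of $\gamma$. By B\'ezout for a curve and a surface (Theorem~\ref{bezoutCurveSurface}), either $\gamma\subset Z(\Flec_j T)$, or $|\gamma\cap Z(\Flec_j T)|\le (\Deg\gamma)(\Deg\Flec_j T)\le D\cdot C'\Deg T$. Choosing $C_1$ larger than $D\cdot C'\cdot 2^J$ forces $\gamma\subset Z(\Flec_j T)$ for all $\gamma\in\finiteSetCurves_b$. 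Now $Z(T)\cap Z(\Flec_j T)$ contains at least $|\finiteSetCurves_b|\ge C_1(\Deg T)^2/2^J$ distinct irreducible curves; by B\'ezout for two surfaces (Theorem~\ref{bezoutcurves}), if $T$ and $\Flec_j T$ had no common factor the number of common irreducible curves would be at most $(\Deg T)(\Deg\Flec_j T)\le C'(\Deg T)^2$. Enlarging $C_1$ to exceed $C'\cdot 2^J$ therefore forces a common factor, and by irreducibility of $T$ we conclude $T\mid\Flec_j T$, i.e. $\Flec_j T$ vanishes identically on $Z(T)$.

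Finally, I would assemble the Zariski open set. For any index $j$ with $b_j=1$, the pattern-$b$ flecnodal points we produced already lie in $Z(T)$ and satisfy $\Flec_j T\ne 0$, so $\Flec_j T$ does not vanish identically on $Z(T)$; since $T$ is irreducible and $K$ algebraically closed, $Z(T)$ is an irreducible variety, so $\{x\in Z(T):\Flec_j T(x)=0\}$ is a proper closed subvariety. Let
\[
O \defeq Z(T)\setminus \bigcup_{j:\,b_j=1} Z(\Flec_j T),
\]
a nonempty Zariski open subset of $Z(T)$. Every $x\in O$ satisfies $\Flec_j T(x)=0$ for each $j$ with $b_j=0$ (by the previous paragraph) and $\Flec_j T(x)\ne 0$ for each $j$ with $b_j=1$, so its vanishing pattern is exactly $b\in B$, and Theorem~\ref{genflec} declares every point of $O$ to be $(t,\mathcal{C},r)$-flecnodal. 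I anticipate no serious obstacle; the only item requiring care is bookkeeping the constant $C_1(C,D,t,r)$ across the two pigeonhole steps and the two B\'ezout estimates so that a single choice works.
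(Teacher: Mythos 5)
Your proof is correct and uses the same two essential ingredients as the paper: pigeonholing on the vanishing pattern $v(\Flec_j T(\cdot))\in B_{F(t,\mathcal C,r)}$, then two applications of B\'ezout (curve--surface to trap each $\gamma$ inside $Z(\Flec_j T)$, then surface--surface to force $T\mid\Flec_j T$). The paper organizes this differently: it abstracts the argument into two reusable lemmas about arbitrary \emph{constructible conditions} $Y\subset K^3\times K[x]_{\le r}$ (Lemmas~\ref{construccontag1} and \ref{construccontag2}), which assert respectively that a constructible condition holding at enough points of a bounded-degree curve holds at almost every point of that curve, and that a constructible condition holding almost everywhere on enough curves inside an irreducible surface holds almost everywhere on the surface; Proposition~\ref{contagiousflecnodes} then drops out by specializing $Y=\Flec_{t,\mathcal C,r}$. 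Your version short-circuits the intermediate step ``almost-everywhere on each curve'' by pigeonholing twice in one go to extract a single pattern $b$ and a large subfamily $\finiteSetCurves_b$, which is entirely adequate here. What the paper's extra layer of abstraction buys is reusability (the same contagion lemma could be applied to other constructible conditions); what your version buys is directness and a slightly cleaner endgame, since you exhibit an explicit nonempty open set $O=Z(T)\setminus\bigcup_{j:\,b_j=1}Z(\Flec_j T)$ on which the pattern is exactly $b$. One small point of hygiene: your $b_\gamma$ from the first pigeonhole is the \emph{most frequent} pattern among the hypothesized flecnodal points, which need not a priori be the \emph{generic} pattern along $\gamma$ --- but as you implicitly note, you only ever use the coordinates $j$ with $b_j=0$ (to run B\'ezout) and the mere existence of a point with $b_j=1$, so this distinction is harmless.
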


Proposition \ref{contagiousflecnodes} follows from the fact that \emph{all} constructible conditions are contagious---they all obey an estimate similar to that in Proposition \ref{contagiousflecnodes}.
\begin{lemma} \label{construccontag1} Suppose that $Y \subset K^3 \times K[x]_{\le r}$ is a constructible condition.  Then there is a constant $C(Y)$ so that the following holds.  Suppose that $\gamma \in \mathcal{C}\subset\chowVarietyDegD$.  Suppose that $T: K^3 \rightarrow K$ is a polynomial.  If $T$ obeys condition $Y$ at $>  C(Y) D (\Deg T + 1)$ points of $\gamma$, then $T$ obeys condition $Y$ at all but finitely many points of $\gamma$.
\end{lemma}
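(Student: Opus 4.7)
The plan is to reduce the statement to a direct application of the curve-surface B\'ezout theorem together with the ``normal form'' for constructible conditions from Lemma \ref{construcnormalform}. Apply Lemma \ref{construcnormalform} to $Y$ to obtain polynomials $Y_1 T, \ldots, Y_{J(Y)} T$ of degree at most $C(Y)(\Deg T + 1)$, and a subset $B_Y\subset\{0,1\}^{J(Y)}$ so that $T$ obeys $Y$ at a point $x$ if and only if the vector $(v(Y_j T(x)))_j$ lies in $B_Y$.

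For each index $j$, apply Theorem \ref{bezoutCurveSurface} to the curve $\gamma$ (which has degree at most $D$) and the surface $Z(Y_j T)$. This gives the dichotomy: either $\gamma \subset Z(Y_j T)$, in which case $v(Y_j T(x)) = 0$ for every $x \in \gamma$, or $\gamma \cap Z(Y_j T)$ contains at most $D \cdot C(Y)(\Deg T + 1)$ points, so $v(Y_j T(x)) = 1$ for all but this many exceptional points of $\gamma$. Accordingly, define $\beta \in \{0,1\}^{J(Y)}$ by setting $\beta_j = 0$ if $\gamma \subset Z(Y_j T)$ and $\beta_j = 1$ otherwise. The key observation is that the ``generic value'' of $v(Y_j T(x))$ along $\gamma$ equals $\beta_j$, and outside a set of at most $J(Y) \cdot D \cdot C(Y)(\Deg T + 1)$ points of $\gamma$, the full vector $(v(Y_j T(x)))_j$ equals $\beta$.

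Now set $C(Y) \defeq J(Y) \cdot C(Y) + 1$ (reusing the symbol; the constant depends only on $Y$). Assume $T$ obeys $Y$ at more than $C(Y) \cdot D(\Deg T + 1)$ points of $\gamma$. Then at least one such point $x$ lies outside the exceptional set above, so $(v(Y_j T(x)))_j = \beta$, and since $T$ obeys $Y$ at $x$ this forces $\beta \in B_Y$. But then at \emph{every} non-exceptional point of $\gamma$ the vector $(v(Y_j T(\cdot)))_j$ equals $\beta \in B_Y$, so $T$ obeys $Y$ at all but finitely many points of $\gamma$, as desired.

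There is no serious obstacle here; the only mild subtlety is making sure the bookkeeping absorbs the factor $J(Y)$ and the additive $+1$ from Lemma \ref{construcnormalform} into a single constant $C(Y)$ independent of $T$ and $\gamma$. Everything else is a clean B\'ezout count over the finite list of defining polynomials for $Y$.
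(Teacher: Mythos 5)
Your proof is correct and uses the same two tools as the paper's argument: Lemma \ref{construcnormalform} to produce the finite list of polynomials $Y_j T$ with degree bound $C(Y)(\Deg T+1)$, and the curve--surface B\'ezout theorem (Theorem \ref{bezoutCurveSurface}) applied to each $Y_j T$ along $\gamma$. The bookkeeping is organized in a slightly different order. The paper starts from the hypothesis points and pigeonholes over the at most $2^{J(Y)}$ possible vectors $v(Y_jT(\cdot))\in B_Y$ to find one value $\beta$ attained more than $(\deg Y_j T)(\deg\gamma)$ times, then applies B\'ezout to show $Y_jT$ vanishes identically on $\gamma$ when $\beta_j=0$ and only finitely often when $\beta_j=1$. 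You instead first define $\beta$ as the generic vector along $\gamma$ (setting $\beta_j=0$ exactly when $\gamma\subset Z(Y_jT)$, which B\'ezout identifies), bound the exceptional set by $J(Y)\cdot D\cdot C(Y)(\Deg T+1)$, and only then use a surviving hypothesis point to force $\beta\in B_Y$. This reversal buys you a linear factor $J(Y)$ in the constant where the paper takes the exponential $2^{J(Y)}$, but both are $O_Y(1)$ and the substance of the argument is identical; the bookkeeping checks out.
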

\begin{proof}
Let $z_1, z_2, ...$ be points of $\gamma$ where $T$ obeys $Y$.  We let $Y_j T$ be the polynomials described in Lemma \ref{construcnormalform}, for $j=1, ..., J(Y)$.  Recall that there is some set $B_Y \subset \{0,1\}^J(Y)$ so that $T$ obeys $Y$ at $z$ if and only if the vector $v( Y_j T(z) ) \in B_Y$.
In particular, at each point $z_k$, we have $v (Y_j T(z_k)) \in B_Y$.  There are $\le 2^{J(Y)}$ elements of $B_Y$.  By the pigeon-hole principle, we can choose an element $\beta \in B_Y \subset \{0, 1\}^{J(Y)}$ so that $v( Y_j T(z_k) ) = \beta$ holds for at least $2^{-J(Y)} C(Y) (\Deg T + 1) (\Deg \gamma)$ values of $k$.  For each $j$, $Y_j T$ is a polynomial of degree $\le C_1(Y) (\Deg T + 1)$.  We now choose $C(Y) > C_1(Y) 2^{J(Y)}$ so that $ v( Y_j T (z_k) ) = \beta $ holds for more than $(\deg Y_j T) (\deg \gamma)$ values of $k$.

If $\beta_j = 0$, then we see that $Y_j T$ vanishes at $>  \Deg \gamma \Deg Y_j T$ points of $\gamma$.  By B\'ezout's theorem (Theorem \ref{bezoutCurveSurface}), $Y_j T$ vanishes on $\gamma$. If $\beta_j = 1$, then we see that $Y_j T$ fails to vanish at at least one point of $\gamma$.  Since $\gamma$ is irreducible, $Y_j T$ vanishes at only finitely many points of $\gamma$.

Thus at all but finitely many points of $\gamma$, $v (Y_j T) = \beta \in B_Y$.  Hence all but finitely many points of $\gamma$ obey condition $Y$.
\end{proof}

\begin{lemma} \label{construccontag2} Suppose that $Y$ is a constructible condition.   Then there is a constant $C(Y)$ so that the following holds.  Let $T: K^3 \rightarrow K$ be a polynomial.  Suppose that $\gamma_i \in\mathcal{C}\subset\chowVarietyDegD$, and that $T$ obeys $Y$ at almost every point of each $\gamma_i$.  Suppose that all the $\gamma_i$ are contained in an algebraic surface $\BZ(Q)$ for an irreducible polynomial $Q$.  If the number of curves $\gamma_i$ is $\ge C(Y) (\Deg T + 1) \Deg Q$, then $T$ obeys $Y$ at almost every point of $\BZ(Q)$.
\end{lemma}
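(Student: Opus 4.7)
The plan is to follow the same template as the proof of Lemma \ref{construccontag1}, but replace the use of B\'ezout for curves and surfaces (Theorem \ref{bezoutCurveSurface}) with the version for two surfaces counting curves in their intersection (Theorem \ref{bezoutcurves}).

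First I would apply Lemma \ref{construcnormalform} to obtain polynomials $Y_j T$, $j=1,\ldots,J(Y)$, with $\deg Y_j T \leq C_1(Y)(\deg T+1)$, and a distinguished subset $B_Y\subset\{0,1\}^{J(Y)}$ such that $T$ obeys $Y$ at $z$ if and only if $v(Y_j T(z))\in B_Y$. For each curve $\gamma_i$, each polynomial $Y_j T$ is either identically zero on $\gamma_i$ or vanishes at only finitely many points of $\gamma_i$ (since $\gamma_i$ is irreducible); hence the vector $v(Y_j T(z))$ is constant on $\gamma_i$ outside a finite set. Call this constant vector $\beta^{(i)}\in\{0,1\}^{J(Y)}$. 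The hypothesis that $T$ obeys $Y$ at almost every point of $\gamma_i$ forces $\beta^{(i)}\in B_Y$.

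Next I would apply the pigeonhole principle. Since $|B_Y|\leq 2^{J(Y)}$, if we start with at least $C(Y)(\deg T+1)\deg Q$ curves with $C(Y)$ chosen large enough, we may extract a subcollection $\finiteSetCurves'$ of more than $C_1(Y)(\deg T+1)\deg Q$ curves on which $\beta^{(i)}$ equals a common value $\beta\in B_Y$. Now fix an index $j$ and analyze the two cases. If $\beta_j=0$, then $Y_j T$ vanishes identically on each curve in $\finiteSetCurves'$, so $\finiteSetCurves'\subset Z(Y_j T)\cap Z(Q)$. Since $|\finiteSetCurves'|>(\deg Y_j T)(\deg Q)$, Theorem \ref{bezoutcurves} implies that $Y_j T$ and $Q$ share a common factor; irreducibility of $Q$ then forces $Q\mid Y_j T$, so $Y_j T$ vanishes on all of $Z(Q)$, matching $\beta_j=0$ everywhere. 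If $\beta_j=1$, then $Y_j T$ fails to vanish identically on some $\gamma_i\subset Z(Q)$, hence $Y_j T$ does not vanish identically on $Z(Q)$, so its zero locus on $Z(Q)$ is a proper, lower-dimensional subvariety.

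Combining over all $j$, there is a finite union of proper subvarieties of $Z(Q)$ outside of which $v(Y_j T(z))=\beta\in B_Y$ for every $j$. By Lemma \ref{construcnormalform}, $T$ obeys $Y$ at every such point, which means $T$ obeys $Y$ at almost every point of $Z(Q)$ in the sense of Definition \ref{almostEveryPoint}. The only mildly delicate part is keeping the constant $C(Y)$ correct: it must absorb both the pigeonhole factor $2^{J(Y)}$ and the B\'ezout factor $C_1(Y)$, but both depend only on $Y$, which is exactly what the lemma allows. The genuine substance of the argument is the swap from the curve-surface B\'ezout used in Lemma \ref{construccontag1} to the surface-surface version in Theorem \ref{bezoutcurves}; everything else is essentially a repackaging of the earlier proof.
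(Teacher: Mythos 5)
Your proposal is correct and follows essentially the same strategy as the paper's own proof: pass to the normal form polynomials $Y_j T$ via Lemma~\ref{construcnormalform}, assign to each curve $\gamma_i$ the generic Boolean vector $\beta^{(i)}\in B_Y$, pigeonhole to find a common $\beta$ on more than $(\deg Y_j T)(\deg Q)$ of the curves, and then split on $\beta_j$, using Theorem~\ref{bezoutcurves} in the $\beta_j=0$ case and irreducibility of $Q$ in the $\beta_j=1$ case. The bookkeeping of the constant $C(Y)$ absorbing both $2^{J(Y)}$ and $C_1(Y)$ matches the paper as well.
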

\begin{proof}
Consider one of the curves $\gamma_i$.  Define $\beta_j(\gamma_i) = 0$ if and only if $Y_j T(x) = 0$ at almost every $x \in \gamma_i$.  Then, for almost every point $x \in \gamma_i$, we have $v (Y_j T(x)) = \beta(\gamma_i)$.  We must have $\beta (\gamma_i) \in B_Y \subset \{ 0, 1 \}^{J(Y)}$.

By pigeonholing, we can find a $\beta \in B_Y$ so that $\beta(\gamma_i) = \beta$ for at least $2^{-J(Y)} C(Y) (\Deg T + 1) \Deg Q$ curves $\gamma_i$.

Suppose that $\beta_j = 0$.  Then $Y_j T$ vanishes on each of these curves $\gamma_i$.  But $\Deg Y_j T \le C(Y) (\Deg T + 1)$.  By choosing $C(Y)$ sufficiently large, the number of curves is greater than $ (\Deg Y_j T)(\Deg Q)$.  Now by a version of B\'ezout's theorem (Theorem \ref{bezoutcurves}), $Y_j T$
and $Q$ must have a common factor.  Since $Q$ is irreducible, we conclude that $Q$ divides $Y_j T$ and so $Y_j T$ vanishes on all of $\BZ(Q)$.

On the other hand, suppose that $\beta_j = 1$.  Then we can find at least one point of $\BZ(Q)$ where $Y_j T$ does not vanish.  Since $Q$ is irreducible, $Y_j T$ vanishes only on a lower-dimensional subvariety of $\BZ(Q)$.

Therefore, at almost every point of $\BZ(Q)$, $v( Y_j T) = \beta \in B_Y$.  Hence, at almost every point of $\BZ(Q)$, $T$ obeys $Y$.
\end{proof}

As a corollary, we see that any constructible condition obeys a version of Proposition \ref{contagiousflecnodes}.
\begin{cor} \label{construccontag3} If $Y$ is a constructible condition, then there is a constant $C(Y)$ so that the following holds.  Suppose that $T \in K[x_1, x_2, x_3]$ is irreducible, and that $\BZ(T)$ contains at least $C(Y) (\Deg T + 1)^2$ curves from $\mathcal{C}\subset\chowVarietyDegD$, each of which contains at least $C(Y) D (\Deg T + 1)$ points where $T$ obeys $Y$.  Then almost every point of $\BZ(T)$ obeys $Y$.
\end{cor}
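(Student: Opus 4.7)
The proof is a straightforward two-step chaining of the two preceding lemmas, with the constant $C(Y)$ chosen to be the maximum of the constants arising from each.

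First, I would apply Lemma \ref{construccontag1} curve-by-curve. Let $C_1(Y)$ be the constant from that lemma. By hypothesis, each curve $\gamma_i$ in the given collection contains at least $C(Y) D (\Deg T + 1)$ points where $T$ obeys $Y$. If we choose $C(Y) \ge C_1(Y)$, then $\gamma_i$ has more than $C_1(Y) D (\Deg T + 1) \ge C_1(Y) (\Deg \gamma_i)(\Deg T + 1)$ such points (using $\Deg \gamma_i \le D$), and Lemma \ref{construccontag1} then forces $T$ to obey $Y$ at all but finitely many points of $\gamma_i$, i.e.~at almost every point of $\gamma_i$ in the sense of Definition \ref{almostEveryPoint}.

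Next, I would apply Lemma \ref{construccontag2} with $Q = T$. Since $T$ is irreducible, $Q$ satisfies the hypothesis of that lemma. Let $C_2(Y)$ be the constant from Lemma \ref{construccontag2}. We now have a collection of curves $\gamma_i \in \mathcal{C}$ all contained in $\BZ(Q) = \BZ(T)$, and by the previous step, $T$ obeys $Y$ at almost every point of each $\gamma_i$. Since $\Deg Q = \Deg T$, the number of such curves is by hypothesis at least $C(Y)(\Deg T + 1)^2 \ge C(Y)(\Deg T+1)\, \Deg Q$, so choosing $C(Y) \ge C_2(Y)$ lets us conclude from Lemma \ref{construccontag2} that $T$ obeys $Y$ at almost every point of $\BZ(T)$.

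Setting $C(Y) \defeq \max(C_1(Y), C_2(Y))$ completes the proof. There is no real obstacle here: the work has already been done in Lemmas \ref{construccontag1} and \ref{construccontag2}, and the corollary is essentially a bookkeeping statement that bridges the two. The only mildly subtle point is ensuring that the ``almost every point'' conclusion of the first step matches the hypothesis of the second step, which is immediate from the definition.
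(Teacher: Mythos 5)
Your proof is correct and follows the same two-step chaining as the paper: apply Lemma \ref{construccontag1} curve-by-curve to upgrade ``many points'' to ``almost every point'' on each $\gamma_i$, then apply Lemma \ref{construccontag2} with $Q=T$ to spread the condition to almost all of $\BZ(T)$. The aside about $\Deg\gamma_i\le D$ is unnecessary since Lemma \ref{construccontag1} is already stated with the constant $D$ (not $\Deg\gamma$), but it is harmless.
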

\begin{proof}
By Lemma \ref{construccontag1}, $T$ obeys $Y$ at almost every point of each of the curves above.  Then by Lemma \ref{construccontag2}, $T$ obeys $Y$ at almost every point of $\BZ(T)$.
\end{proof}

In particular, this result implies Proposition \ref{contagiousflecnodes}, by taking $Y = \Flec_{t, \mathcal{C}, r} \subset K^3 \times K[x]_{\le r}$.

\section{Properties of doubly ruled surfaces}\label{propDoubRuledSurfSec}
In this section we will prove Proposition \ref{implicationsOfDoublyRuled}. For the reader's convenience, we will restate it here.
\begin{implicationsOfDoublyRuledProp}
Let $K$ be an algebraically closed field, let $Z\subset K^3$ be an irreducible surface, and let $\mathcal{C}\subset\chowVarietyDegD$ for some $D\geq 1$. Suppose that $Z$ is doubly ruled by curves from $\mathcal{C}$. Then
\begin{itemize}
 \item $\deg(Z)\leq100 D^2.$
 \item For any $t\geq 1$, we can find two finite sets of curves from $\mathcal{C}$ in $Z$, each of size $t$, so that each curve from the first set intersects each curve from the second set.
\end{itemize}
\end{implicationsOfDoublyRuledProp}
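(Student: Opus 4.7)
I prove the second bullet first and then deduce the degree bound. Both parts hinge on the constructible set
\[
A \defeq \{\gamma \in \mathcal{C} : \gamma \subset Z\},
\]
which is constructible by Chevalley's theorem applied to the incidence $\{(\gamma,x) \in \mathcal{C}\times Z : x \in \gamma\} \subset \tildeChowVarietyDegD$. The doubly-ruling hypothesis forces $\dim A \ge 1$: the incidence $I = \{(x,\gamma) \in Z \times A : x \in \gamma\}$ has dimension $\ge 2$ (since every fiber over $O$ has size $\ge 2$), while every fiber of $I \to A$ is a curve.

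\textbf{The grid (bullet 2).} The key sub-claim is that, for generic $\alpha$ in a suitable irreducible component $A_0 \subset A$ with $\pi_Z(I|_{A_0}) = Z$, the set
\[
A_\alpha \defeq \{\gamma \in A_0 : \gamma \ne \alpha,\ \gamma \cap \alpha \ne \emptyset\}
\]
is Zariski-dense in $A_0$. This follows from a two-way dimension count on $J_\alpha = \{(\gamma,p) : p \in \gamma \cap \alpha,\ \gamma \in A_0 \setminus\{\alpha\}\}$: projecting $J_\alpha$ onto $p \in \alpha$ has generic fibers of dimension $\dim A_0 - 1$ (the generic fiber of $I|_{A_0} \to Z$), while projecting onto $A_\alpha$ has fibers of size at most $D^2$ by B\'ezout (Theorem \ref{bezoutCurveSurface}). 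Hence $\dim A_\alpha = \dim A_0$, and $A_\alpha$ is dense in the irreducible $A_0$. Now build the grid inductively: choose a generic $\alpha \in A_0$, then $\beta_1,\ldots,\beta_t$ distinct in the infinite set $A_\alpha$, then $\alpha_1,\ldots,\alpha_t$ distinct in $\bigcap_{j=1}^t A_{\beta_j}$. The last intersection is dense in $A_0$ because a finite intersection of dense constructible subsets of an irreducible variety contains a common non-empty Zariski-open subset; genericity handles distinctness and avoids the coincidences $\alpha_i = \beta_j$. The ``two rulings'' case, in which $A$ splits into two components $A_1, A_2$ whose curves generically meet only across components, runs the same argument with $\alpha \in A_1$ and the $\beta_j$'s chosen from $A_2$.

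\textbf{The degree bound (bullet 1).} Apply the grid with $t = t(D)$ large. Standard parameter counting ($\binom{d+3}{3} \sim d^3/6$ unknowns against $2t\cdot D(d+1)$ linear vanishing conditions) produces a nonzero polynomial $P \in K[x_1,x_2,x_3]$ of degree $d = O(\sqrt{tD})$ that vanishes on every grid curve. Since $T$ defining $Z$ is irreducible, either $T \mid P$---giving $\deg Z = \deg T \le \deg P$---or $T$ and $P$ share no common factor, and then Theorem \ref{bezoutcurves} bounds the number of irreducible curves in $Z(T)\cap Z(P)$ by $\deg T \cdot \deg P$, yielding $2t \le \deg Z \cdot d$.

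\textbf{Main obstacle.} A naive balance of these two alternatives leaves a gap once $D \ge 2$: one gets $\deg Z \le d$ or $\deg Z \ge 2t/d$, and these ranges do not close. The missing ingredient is ``sufficiently tangent implies trapped'' (Theorem \ref{tangentImpliesTrappedProp}). The plan is to use it, together with the doubly-ruled structure, to upgrade ``$P$ vanishes on $2t$ grid curves'' to ``$P$ vanishes along a Zariski-dense family of curves from $A$'': roughly, the grid curves, being tangent to $Z(P)$ to infinite order on a dense open (by Lemma \ref{DIsLargeWhenCommongComponent}), force the generic curve of $A$ to be tangent to $Z(P)$ at a generic point of $Z$ to order $\ge r_0(D)$, hence trapped in $Z(P)$. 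Via Theorem \ref{bezoutcurves} and the irreducibility of $T$, this yields $Z \subset Z(P)$, so $\deg Z \le \deg P$. Choosing $t$ and $d$ of order $D^2$ then gives the bound $\deg Z \le 100 D^2$. Making this upgrade rigorous---in particular, verifying that the tangency hypotheses of Theorem \ref{tangentImpliesTrappedProp} hold at a point in its open set $O \subset Z(P)$---is the crux of the first bullet.
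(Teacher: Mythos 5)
Your grid construction for the second bullet takes a genuinely different route than the paper's (which builds the family inductively via Lemma \ref{findLargeFamilyCurvesLem} with a carefully maintained invariant), and the dimension-count idea on the incidence $J_\alpha$ is a plausible alternative. But two subtleties are swept under the genericity rug: first, your count of the $\alpha$-fibers of $J_\alpha$ assumes a generic point of the chosen curve $\alpha$ is also a generic point of $Z$ (the paper arranges exactly this via Lemma \ref{denseCurveSelection}); second, you never ensure that the $\beta_j$'s meet each $\alpha_i$ at \emph{distinct} points. The paper's Lemma \ref{findLargeFamilyCurvesLem} is designed precisely so that at most two of the $\gamma_i$ pass through any point of $O'$, and this is what guarantees $\geq N/2$ distinct intersection points later; without such control, several $\beta_j$ could concentrate at a single point of $\alpha_i$ and your vanishing argument would collapse. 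The paper also reduces to a subfamily $\mathcal{C}'$ with finite fibers over $Z$ (Lemma \ref{reductionFinFiberLem}), which keeps the bookkeeping finite; you don't do this, and while your dimension count may survive without it, the omission is worth flagging.

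The real gap is in the first bullet, which you correctly diagnose but mis-prescribe. The fix is not ``sufficiently tangent implies trapped'' (Theorem \ref{tangentImpliesTrappedProp})---that machinery is never invoked in the paper's proof of this proposition, and carrying it out as you sketch would require verifying hypotheses about an open set in $Z(P)$ that you have no control over. The fix is much simpler and is already sitting in your own construction: $\bigcap_{j=1}^t A_{\beta_j}$ is dense in $A_0$, hence \emph{infinite}, not merely of size $t$. Once $P$ vanishes on $\beta_1,\dots,\beta_t$ and $t$ is large enough that any curve meeting all of them (at distinct points) is forced into $Z(P)$ by B\'ezout (Theorem \ref{bezoutCurveSurface}), you get infinitely many irreducible curves inside $Z(T)\cap Z(P)$. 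Theorem \ref{bezoutcurves} then forces $T$ and $P$ to share a factor, and since $T$ is irreducible, $T\mid P$, giving $\deg Z \le \deg P$ outright---no dichotomy, no balance to close. Quantitatively one takes $t\sim D^3$ so that $\deg P = O(\sqrt{tD}) = O(D^2)$, matching the paper's choice $N = 6^3D^3+1$ and $\deg Q \le 3N^{2/3}$. So the ingredient you were missing is the passage from the finite grid to the infinite family, not a new tangency input.
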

Here is the idea of the proof.  We use the fact that almost every point of $Z$ lies in two curves of $\mc$ contained in $Z$ in order to construct the curves in item (2) above.  Using these curves, we can bound the degree of $Z$ by imitating the proof that an irreducible surface $Z \subset K^3$ which is doubly ruled by lines has degree at most 2.

There is a technical moment in the proof where it helps to know that a generic point of $Z$ lies in only finitely many curves of $\mc$.  This may not be true for $\mc$, but we can find a subset $\mc' \subset \mc$ where it does hold.  In the first section, we explain how to restrict to a good subset of curves $\mc'$.

\subsection{Reduction to the case of finite fibers}

The main tool we will use is the following theorem:
\begin{thm}\label{genericPtSameDim}
Let $Y\subset \FP^{M+N}$ and $W\subset\FP^{N}$ be quasi-projective varieties, let $W$ be irreducible, and let $\pi\colon Y\to W$ be a dominant projection map (or more generally, a dominant regular map). Then there is an open set $O\subset W$ so that the fiber above every point $z\in W$ has dimension $\dim Y-\dim W$ (by convention, a finite but non-empty set has dimension $0$; the empty set has dimension $-1$).
\end{thm}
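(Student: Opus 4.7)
The plan is to prove this classical dimension-of-fibers theorem by combining a computation at the generic fiber (via transcendence degree of function fields) with an upper semi-continuity argument that spreads the generic answer to a Zariski open subset of $W$.

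First I would reduce to the case where $Y$ is irreducible. Decomposing $Y = Y_1 \cup \ldots \cup Y_r$ into irreducible components, dominance of $\pi$ together with irreducibility of $W$ forces at least one component $Y_i$ to dominate $W$; any component that fails to dominate $W$ has image contained in a proper Zariski-closed subset of $W$, which I can simply remove from the open set $O$ I am constructing. So I may assume that every irreducible component of $Y$ dominates $W$, and in the clean statement that $Y$ itself is irreducible.

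Next I would compute the dimension of the generic fiber. Under these assumptions, $\pi$ induces an injection of function fields $K(W) \hookrightarrow K(Y)$, and this extension has transcendence degree $d := \dim Y - \dim W$. Passing to an affine open $U \subset W$ so that $\pi^{-1}(U)$ is an affine variety, and applying Noether normalization to the corresponding ring extension, one concludes that the generic fiber of $\pi$, viewed as a variety over $K(W)$, has dimension exactly $d$.

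The main step is then upper semi-continuity of fiber dimension. By the classical theorem of Chevalley, the set
\begin{equation*}
S := \{\, y \in Y : \dim_y \pi^{-1}(\pi(y)) \geq d+1 \,\}
\end{equation*}
is Zariski-closed in $Y$. Applying Theorem \ref{ChevalleyTheorem}, its image $\pi(S) \subset W$ is constructible, and by the generic-fiber computation above it does not contain the generic point of $W$. Since $W$ is irreducible, $\pi(S)$ is therefore contained in some proper Zariski-closed subset $W_1 \subsetneq W$. Conversely, Krull's principal ideal theorem (applied to the pullbacks of generic hyperplanes through a point of $W$) gives the lower bound $\dim \pi^{-1}(w) \geq d$ for every $w \in \pi(Y)$. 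Since $\pi$ is dominant, the constructible set $\pi(Y) \subset W$ contains a dense open subset $W_2 \subset W$; then $O := W_2 \setminus W_1$ is the desired open set, on which every fiber has dimension exactly $d$.

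The main obstacle is Chevalley's upper semi-continuity of fiber dimension, which is itself a nontrivial classical theorem proved by Noether-normalization induction on the dimension of $Y$. Rather than reprove it, I would simply cite a standard reference (e.g., Mumford, \emph{The Red Book of Varieties and Schemes}, Chapter I.8, or Shafarevich, \emph{Basic Algebraic Geometry}, Chapter I, \S 6); all the remaining ingredients (transcendence degree at the generic fiber, Chevalley's theorem on constructible images, and the principal ideal theorem) are elementary in comparison.
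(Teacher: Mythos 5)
The paper does not prove this theorem at all; it simply cites it as Theorem 9.9 of Milne's lecture notes, so there is no in-paper argument to compare your proof against. Your proposal is a correct and standard proof of the classical result in the case where $Y$ is irreducible, which is the hypothesis that Milne (and essentially every standard reference) imposes: reduce to the generic fiber via transcendence degree and Noether normalization, get the lower bound on every nonempty fiber from the Krull principal ideal theorem, get upper semi-continuity of fiber dimension from Chevalley, and intersect the resulting open sets. That is exactly the route one would take, and citing a reference for upper semi-continuity is reasonable.

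The one place your argument has a genuine gap is the reduction to irreducible $Y$. You discard the images of the components that fail to dominate $W$, and conclude ``I may assume every irreducible component dominates $W$.'' But discarding components from the picture does not change $\dim Y$, and the statement you must prove involves $\dim Y$. Concretely, take $W = \mathbb{A}^1$ and $Y = (\{0\} \times \mathbb{A}^2) \cup (\mathbb{A}^1 \times \{0\}) \subset \mathbb{A}^1 \times \mathbb{A}^2$, with $\pi$ the projection to the first factor. Then $\pi$ is dominant, $\dim Y - \dim W = 1$, yet over every $w \ne 0$ the fiber is a single point of dimension $0$. So the statement, taken literally with $Y$ an arbitrary (possibly reducible) quasi-projective set, is false; your reduction silently assumes that some top-dimensional component of $Y$ dominates $W$, which is precisely where the claim can fail. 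The clean fix is either to assume $Y$ irreducible (as Milne does), or to add the hypothesis that every irreducible component of $Y$ of dimension $\dim Y$ dominates $W$, and then run your argument component-by-component and intersect the open sets. Note, for what it is worth, that the paper's own downstream Corollary \ref{dimOfSet} applies the theorem to a not-necessarily-irreducible affine variety $Y_1$, so this imprecision is already present in the source; your review of the literature should make you aware of it rather than reproduce it.
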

See e.g.~Theorem 9.9 from \cite{Milne}. In particular, if $Y$ and $W$ are affine varieties then they are quasi-projective, so Theorem \ref{genericPtSameDim} applies.

\begin{cor}\label{dimOfSet}
Let $Y\subset K^{M+N}$ be a constructible set, and let $W\subset K^N$ be an irreducible (affine) variety. Suppose that the image of $\pi\colon Y\to W$ is dense in $W$. Then there is an open set $O\subset W$ so that the fiber above every point $z\in W$ has dimension $\dim Y - \dim W$.
\end{cor}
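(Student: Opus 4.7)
My plan is to reduce the corollary to Theorem \ref{genericPtSameDim} by partitioning $Y$ into irreducible locally closed pieces. First I would write $Y = \bigsqcup_{i=1}^{k} Y_i$ as a finite disjoint decomposition into irreducible locally closed subsets of $K^{M+N}$; such a decomposition exists for any constructible set, and each Zariski closure $\overline{Y_i}$ is an irreducible affine variety with $Y_i$ open in it, so $R_i \defeq \overline{Y_i} \setminus Y_i$ is a proper closed subvariety of dimension strictly less than $\dim Y_i$.

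Next I would split the index set as $S \defeq \{i : \overline{\pi(Y_i)} = W\}$ together with its complement $S^c$. Because $\pi(Y) = \bigcup_i \pi(Y_i)$ is dense in the irreducible variety $W$ and a finite union of proper closed subsets of $W$ is a proper closed subset, $S$ must be non-empty; moreover $V \defeq \bigcup_{i \in S^c} \overline{\pi(Y_i)}$ is a proper closed subset of $W$. For each $i \in S$, Theorem \ref{genericPtSameDim} applied to the dominant regular map $\pi \colon \overline{Y_i} \to W$ produces a non-empty open $O_i \subset W$ on which every fiber of $\overline{Y_i}$ has dimension $\dim Y_i - \dim W$. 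Applying the same theorem to each irreducible component of $R_i$---excluding a proper closed subset of $W$ for those components that fail to dominate, and taking an open subset where the fiber has the expected dimension for those that do---I obtain a further open $O_i' \subset W$ on which every fiber of $R_i$ has dimension at most $\dim R_i - \dim W < \dim Y_i - \dim W$.

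I then set $O \defeq (W \setminus V) \cap \bigcap_{i \in S}(O_i \cap O_i')$, a finite intersection of dense open subsets of the irreducible variety $W$, hence non-empty and open. For $z \in O$ the fiber $\pi^{-1}(z) \cap Y$ is the disjoint union over $i \in S$ of the pieces $\pi^{-1}(z) \cap Y_i$ (components in $S^c$ contribute nothing, since $z \notin V$), and each such piece equals $(\pi^{-1}(z) \cap \overline{Y_i}) \setminus (\pi^{-1}(z) \cap R_i)$, hence has dimension $\dim Y_i - \dim W$. The fiber over $z$ thus has dimension $\max_{i \in S}(\dim Y_i) - \dim W$. The main technical point I expect to require care is the identification of this maximum with $\dim Y$: one must either verify that the maximal-dimensional components of $Y$ dominate $W$, or interpret $\dim Y - \dim W$ as the relative dimension of $Y$ over $W$ (equivalently, the transcendence degree of the function field of a dominating component of $Y$ over that of $W$), under which convention the identity is built into the definition.
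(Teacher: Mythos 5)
Your argument is essentially correct and takes a closely related but somewhat finer route than the paper. Where you decompose $Y$ into a disjoint union of irreducible locally closed pieces $Y_i$ and apply Theorem \ref{genericPtSameDim} separately to each closure $\overline{Y_i}$ and each boundary $R_i = \overline{Y_i}\setminus Y_i$, the paper instead picks a single pair of (possibly reducible) varieties $Y_1 = \overline{Y}$ and $Y_2 \supset \overline{Y}\setminus Y$ with $\dim Y_2 < \dim Y_1$, and applies Theorem \ref{genericPtSameDim} directly to $Y_1$ and (if it dominates) to $Y_2$. Your refinement has the advantage that you invoke the fiber-dimension theorem only for irreducible sources, which is the form the cited Theorem~9.9 of Milne actually establishes (a dominant map between irreducible varieties).

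The issue you flag at the end is genuine, and the paper's proof silently inherits it. With your decomposition, the generic fiber of $Y$ has dimension $\max_{i\in S}\dim Y_i - \dim W$, where $S$ indexes the pieces whose closure dominates $W$; this need not equal $\dim Y - \dim W$ if a top-dimensional piece of $Y$ fails to dominate. For instance, take $Y = \{z=w=0\}\cup\{x=0\}\subset K^4$ with $\pi\colon(x,y,z,w)\mapsto(x,y)$ and $W=K^2$: then $\pi(Y)=K^2$ is dense, $\dim Y = 3$, $\dim W = 2$, yet the fiber over any $(a,b)$ with $a\neq 0$ is the single point $(a,b,0,0)$, of dimension $0\neq 1$. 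The paper's argument has the same exposure because it applies Theorem \ref{genericPtSameDim} to the possibly-reducible closure $\overline{Y}$, and without an irreducibility hypothesis on the source that theorem is not valid as stated. Your proposed fixes---either verify that the maximal-dimensional pieces dominate $W$, or read $\dim Y$ as a relative dimension over $W$---are exactly the right diagnosis of what additional hypothesis or reinterpretation the statement needs.
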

\begin{proof}
Select affine varieties $Y_1,Y_2\subset K^{M+N}$ so that $Y\subset Y_1\backslash Y_2$ and $\dim Y_2<\dim Y_1$. Note that if the image of $\pi\colon Y\to W$ is dense in $W$, then the image of $\pi\colon Y_1\to W$ must be dense in $W$, i.e. $\pi\colon Y_1\to W$ is a dominant map. Let $O_1\subset Z$ be the open set from Theorem \ref{genericPtSameDim} applied to the map $Y_1\to Z$. If $Y_2\to Z$ is not dominant, then let $O=O_1\backslash \pi(Y_2)$, and we are done.

If $Y_2\to Z$ is dominant, let $O_2\subset Z$ be the open set from Theorem \ref{genericPtSameDim} applied to the map $Y_2\to Z.$ Then for all $z\in O=O_1\cap O_2,$ $\pi_{Y_1}^{-1}(z)$ has dimension $\dim Y_1-\dim W$, and $\pi_{Y_2}^{-1}(z)$ has dimension $\dim Y_2-\dim W.$ Thus $\pi^{-1}(Y)$ has dimension at least $\dim Y_1-\dim W = \dim Y-\dim W$. But this is the maximum possible dimension of a fiber of $\pi_{Y}$ above a point $z\in \mathcal{O}\subset\mathcal{O}_1$. Thus the fiber of $\pi_Y$ above every point $z\in O$ has dimension $\dim Y - \dim W$.
\end{proof}
\begin{lemma}\label{reductionFinFiberLem}
Let $Z$ and $\mathcal{C}$ be as in the statement of Proposition \ref{implicationsOfDoublyRuled}. Define
\begin{equation}\label{zGammaEqn2}
  Y_{Z,\mathcal{C}}=\{(z,\gamma)\in Z\times\mathcal{C}\colon z\in\gamma,\ \gamma\subset Z\},
\end{equation}
and let  $\pi\colon (z,\gamma)\mapsto z$. Then $Y_{Z,\mathcal{C}}$ is a constructible set. Furthermore, there exists a set $\mathcal{C}^\prime\subset \mathcal{C}$ and an open set $O^\prime\subset Z$ so that for every $z\in O^\prime$, the fiber of the projection $\pi\colon Y_{Z,\mathcal{C}}\cap (Z\times \mathcal{C}^\prime)\to Z$ above $z$ has finite cardinality, and this cardinality is $\geq 2$.  The complexity of $\mathcal{C}^\prime$ is at most $O_{C}(1)$, where $C$ is the complexity of $\mathcal{C}$; in fact, $\mathcal{C}^\prime$ is obtained by intersecting $\mathcal{C}$ by the union of two linear spaces.
\end{lemma}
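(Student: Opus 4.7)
My plan is the following. First I would verify that $Y_{Z,\mathcal{C}}$ is constructible: the condition $z \in \gamma$ is encoded by $\tildeChowVarietyDegD$ from Lemma \ref{propertiesOfChowExactlyD}, while the condition $\gamma \subset Z$ is constructible because its negation is the projection to the $\mathcal{C}$-factor of the clearly constructible set $\{(z',\gamma) \in K^3 \times \mathcal{C} \colon z' \in \gamma,\ z' \notin Z\}$, which is constructible by Chevalley's theorem (Theorem \ref{ChevalleyTheorem}). Now write $\mathcal{C}_Z = \{\gamma \in \mathcal{C} \colon \gamma \subset Z\}$ and $d = \dim \mathcal{C}_Z$. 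Since each $\gamma$ is an irreducible curve, the projection $Y_{Z,\mathcal{C}} \to \mathcal{C}_Z$ has all fibers of dimension $1$, so $\dim Y_{Z,\mathcal{C}} = d+1$. The doubly-ruled hypothesis says that $\pi\colon Y_{Z,\mathcal{C}} \to Z$ is dominant, so Corollary \ref{dimOfSet} furnishes a nonempty open $O_0 \subset Z$ on which every fiber of $\pi$ has dimension exactly $d-1$. (The case $d=0$ cannot occur since finitely many curves cannot cover an open subset of $Z$.)

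If $d = 1$ the generic fibers are already finite, and by hypothesis each contains at least two curves; take $\mathcal{C}' = \mathcal{C}$, viewed as $\mathcal{C} \cap (K^N \cup K^N)$. If $d \ge 2$, I would pick two generic linear subspaces $L_1, L_2 \subset K^N$ of codimension $d-1$ and set $\mathcal{C}' = \mathcal{C} \cap (L_1 \cup L_2)$. For $z \in O_0$, the fiber decomposes as
\[ \pi^{-1}(z) \cap \mathcal{C}' = (\pi^{-1}(z) \cap L_1) \cup (\pi^{-1}(z) \cap L_2). \]
Since $\pi^{-1}(z)$ is a $(d-1)$-dimensional constructible subset of $K^N$, the classical projective dimension theorem shows that a generic codimension-$(d-1)$ linear subspace meets its closure in a nonempty finite set, and for a generic $L_i$ this intersection lies in $\pi^{-1}(z)$ itself (the lower-dimensional boundary is missed by a codimension count). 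Moreover $L_1 \cap L_2$ has codimension $2(d-1) \ge d > d-1$, so a generic pair misses $\pi^{-1}(z)$ entirely, making the two finite sets above disjoint and yielding at least two distinct elements in the fiber.

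The main obstacle is the uniformity of the genericity in $z$, i.e.\ converting pointwise-in-$z$ statements about $L_1, L_2$ into a single choice of $(L_1, L_2)$ working on an open subset of $Z$ simultaneously. I would handle this by applying Chevalley's theorem to the incidence variety $\mathcal{I} \subset Z \times \mathrm{Gr} \times \mathrm{Gr}$ of triples $(z, L_1, L_2)$ for which all the desired properties hold (nonemptiness of each $\pi^{-1}(z) \cap L_i$, and disjointness of $L_1 \cap L_2$ from $\pi^{-1}(z)$): the pointwise arguments above show that for each $z \in O_0$ the $(L_1, L_2)$-fiber of $\mathcal{I}$ is dense open in $\mathrm{Gr} \times \mathrm{Gr}$, so Chevalley together with Corollary \ref{dimOfSet} force the $z$-fiber over a generic $(L_1, L_2)$ to be dense in $Z$ and hence to contain a nonempty open $O' \subset O_0$. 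Finally, the complexity bound is immediate: each $L_i$ is cut out by linear equations of degree $1$, so $\mathcal{C}'$ has complexity $O_C(1)$ and is manifestly $\mathcal{C}$ intersected with a union of two linear spaces.
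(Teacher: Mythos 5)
Your proposal is correct, and it rests on the same underlying mechanism as the paper's proof — cutting the fibers of $\pi$ down to finite size by intersecting the Chow variety with generic linear subspaces, then taking the union of two such slices to get cardinality $\geq 2$. The organization differs in two respects. First, you compute the generic fiber dimension $d-1$ up front (via Corollary~\ref{dimOfSet}) and slice once with a pair of codimension-$(d-1)$ linear spaces; the paper instead runs a nested flag $H_1 \supset H_2 \supset \cdots$ of all codimensions and locates the critical index $j_0$ at which the generic fiber of $\pi_{j_0}$ first becomes finite and nonempty, repeating with a second flag to obtain the second slice. Your version is more economical but implicitly requires knowing that the top-dimensional components of $Y_{Z,\mathcal{C}}$ actually dominate $Z$ so that $\dim Y_{Z,\mathcal{C}} - \dim Z$ really is the generic fiber dimension; this is true (a positive-dimensional family of \emph{distinct} irreducible curves in a surface must cover a dense subset, since otherwise the family would be trapped in a fixed one-dimensional set and hence be finite), but it is worth saying explicitly, whereas the incremental flag argument sidesteps the computation entirely. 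Second, you ensure the two slices give disjoint contributions to the fiber by the codimension count $\operatorname{codim}(L_1 \cap L_2) = 2(d-1) > d-1$, while the paper chooses the second flag explicitly to avoid the first preimage $\pi_{j_0}^{-1}(Z)$; both work. Finally, your treatment of uniformity in $z$ — pass to the incidence variety $\mathcal{I} \subset Z \times \mathrm{Gr} \times \mathrm{Gr}$, note each $z$-fiber is dense, and conclude via Chevalley and a dimension count that a single generic $(L_1,L_2)$ works on a dense open subset of $Z$ — is a standard ``moving lemma'' argument and is a legitimate alternative to the paper's more direct invocation of Corollary~\ref{dimOfSet} on the sliced sets themselves.
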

\begin{proof}
First, the set $Y_{Z,\mathcal{C}}$ is constructible. By Theorem \ref{ChevalleyTheorem}, the image of the map $\pi\colon Y_{Z,\mathcal{C}}\to Z$ is constructible. By assumption, it is Zariski dense in $Z$, and on a dense subset, every fiber has cardinality $\geq 2$. If there is an open dense set $O^\prime\subset Z$ where every fiber is finite, then we are done.

Now, suppose that there does not exist an open dense set $O^\prime\subset Z$ where every fiber is finite. Recall that $\mathcal{C}$ is a constructible set, and in particular it is a subset of $K^N$ for some $N\geq 1$. Let $H_1,H_2,\ldots$ be a sequence of linear varieties in $K^N$, with $H_1\supset H_2\supset\ldots,$ and $\codim(H_j)=j$. For each index $j$, let $\tilde H_j = K^3\times H_j$. Then by Corollary \ref{dimOfSet} we have that for each index $j$, the fiber of $\pi_j\colon(Y_{Z,\mathcal{C}}\cap H_j)\to Z$ above a generic point of $Z$ has dimension $\dim(Y_{Z,\mathcal{C}}\cap H_j)-\dim Z$ if $\dim(Y_{Z,\mathcal{C}}\cap H_j)-\dim Z\geq 0$, and the fiber is empty otherwise. When $j=0$, this quantity is $\geq 1$ by assumption. On the other hand, when $j=N$, then $Y_{Z,\mathcal{C}}\cap H_j=\emptyset$, so the fiber above a generic point of $\pi_N$ is empty and thus has dimension $-1$. Furthermore, since $K$ is algebraically closed,
\begin{equation}
 \dim(Y_{Z,\mathcal{C}}\cap H_{j})-1\leq \dim(Y_{Z,\mathcal{C}}\cap H_{j+1})\leq \dim(Y_{Z,\mathcal{C}}\cap H_j).
\end{equation}
Thus there is an index $j_0$ so that the fiber of $\pi_{j_0}$ above a generic point of $Z$ is finite and non-empty, and the fiber of $\pi_{j_0+1}$ above a generic point of $Z$ is empty.

We can repeat this procedure with a second collection $\{\tilde H_j^\prime\}$ of hyperplanes so that no variety $\tilde H_j^\prime$ contains any irreducible component of $\pi^{-1}_{j_0}(Z)$. Arguing as above, we obtain a second index $j_0^\prime$ so that the fiber of $\pi_{j_0^\prime}$ above a generic point of $Z$ is finite and non-empty. On the other hand, the fibers of $\pi_{j_0}$ and $\pi_{j_0^\prime}$ above a generic point of $Z$ are disjoint. Thus the fiber of $\pi\colon (Y_{Z,\mathcal{C}}\cap (\tilde H_{j_0}\cup \tilde H_{j_0^\prime}^\prime)\to Z$ above a generic point of $z$ is finite and has cardinality $\geq 2$.
\end{proof}

\begin{lemma}\label{dimensionOfCPrime}
Let $Z$ be as in the statement of Proposition \ref{implicationsOfDoublyRuled} and let $\mathcal{C}^\prime$ be as in Lemma \ref{reductionFinFiberLem}. Then $Y_{Z,\mathcal{C}}\cap (Z\times \mathcal{C}^\prime)$ has dimension 2, and the image of $Y_{Z,\mathcal{C}}\cap (Z\times \mathcal{C}^\prime)$ under the projection $(z,\gamma)\mapsto \gamma$ has dimension $1$.
\end{lemma}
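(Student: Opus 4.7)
Write $Y' := Y_{Z,\mathcal{C}} \cap (Z \times \mathcal{C}^\prime)$ for brevity; it is constructible as the intersection of two constructible sets. The plan is to deduce both assertions from the fiber-dimension estimate, Corollary \ref{dimOfSet}, applied to the two projections $\pi_1\colon Y' \to Z$ and $\pi_2\colon Y' \to \mathcal{C}^\prime$.

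For the first assertion, Lemma \ref{reductionFinFiberLem} gives a Zariski-open $O^\prime \subset Z$ over which the fibers of $\pi_1$ are nonempty and finite, so $\pi_1$ is dominant onto the irreducible surface $Z$. Since $Z$ is irreducible of dimension $2$, Corollary \ref{dimOfSet} produces an open $O \subset Z$ on which every fiber of $\pi_1$ has dimension $\dim Y' - 2$. Because $Z$ is irreducible the intersection $O \cap O^\prime$ is nonempty, and a fiber above a point there is simultaneously of dimension $\dim Y' - 2$ and finite, forcing $\dim Y' = 2$.

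For the second assertion I would first record the key observation that for each $\gamma \in \pi_2(Y')$ we have $\gamma \subset Z$, so the fiber $\pi_2^{-1}(\gamma) \cap Y'$ is the entire irreducible curve $\gamma \times \{\gamma\}$ of dimension $1$. The lower bound $\dim \pi_2(Y') \ge 1$ is then immediate: if $\pi_2(Y')$ were $0$-dimensional it would be finite, and $Y'$ would be a finite union of $1$-dimensional curves, contradicting $\dim Y' = 2$. For the upper bound, let $W := \overline{\pi_2(Y')}$ with irreducible components $W_1,\ldots,W_k$; each $W_i \cap \pi_2(Y')$ is Zariski-dense in $W_i$ by the standard fact that a constructible set dense in a variety meets every irreducible component of its closure densely. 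Applying Corollary \ref{dimOfSet} to the restricted projection
\[
\pi_2\colon \pi_2^{-1}(W_i) \cap Y' \longrightarrow W_i,
\]
yields an open subset of $W_i$ on which the fiber has dimension $\dim(\pi_2^{-1}(W_i) \cap Y') - \dim W_i$. Intersecting this open set with the dense $\pi_2(Y') \cap W_i$, we hit a point $\gamma$ whose fiber is the whole curve $\gamma \times \{\gamma\}$ of dimension $1$. Hence $\dim(\pi_2^{-1}(W_i) \cap Y') - \dim W_i = 1$, and since $\pi_2^{-1}(W_i) \cap Y' \subset Y'$ has dimension at most $2$, we conclude $\dim W_i \le 1$. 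Maximizing over $i$ gives $\dim \pi_2(Y') \le 1$, completing the proof.

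The main technical nuisance is the upper bound in the second part: one must carefully justify that the generic fiber of the restricted projection is genuinely $1$-dimensional rather than $0$-dimensional. This forces us to track two open sets inside the irreducible component $W_i$—the one supplied by Corollary \ref{dimOfSet} and the dense subset $\pi_2(Y') \cap W_i$—and to invoke irreducibility of $W_i$ to conclude that their intersection is nonempty. Everything else reduces to bookkeeping with the fiber-dimension formula.
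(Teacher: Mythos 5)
Your proof is correct and follows essentially the same route as the paper's (which invokes Theorem \ref{genericPtSameDim} / Corollary \ref{dimOfSet} for the first claim and the observation that the generic $\pi_2$-fiber is a whole curve $\gamma$ of dimension $1$ for the second). The paper's proof is only two sentences; you have filled in exactly the bookkeeping it elides: restricting to irreducible components of $\overline{\pi_2(Y')}$ so that Corollary \ref{dimOfSet} applies, intersecting the two dense open sets inside an irreducible variety, and noting explicitly why the image cannot be $0$-dimensional. Nothing is wrong or missing.
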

\begin{proof}
 The first statement follows from Theorem \ref{genericPtSameDim}. The second statement follows from the observation that for a generic $\gamma\in\mathcal{C}^\prime$, the fiber above the projection $(x,\gamma)\mapsto\gamma$ has dimension 1.
\end{proof}

Throughout the rest of this section, we will fix the set $\mathcal{C}^\prime$ and $O^\prime$.

The following subsets of $\mathcal{C}^\prime$ play an important role in the argument.

\begin{defn}
Let $X\subset K^3$ be a constructible set. Define
 \begin{equation*}
  \mathcal{C}^X\defeq\{\gamma\in\mathcal{C}^\prime\colon \gamma\cap X\neq\emptyset\}
 \end{equation*}
 and
  \begin{equation*}
  \mathcal{C}_X \defeq \{\gamma\in\mathcal{C}^\prime\colon \gamma\subset X\}.
 \end{equation*}
\end{defn}

The notation here is potentially confusing, so we reiterate that these are subsets of $\mathcal{C}^\prime$.  All the curves we consider in the rest of this Section belong to $\mathcal{C}^\prime$.  We are leaving the prime out of the notation just because it is awkward to have to write $(\mathcal{C}^\prime)^X$ many times.

The sets $\mathcal{C}^X$ and $ \mathcal{C}_X$ are constructible sets.

\subsection{Constructible families of curves}

\begin{lemma}\label{complexityOfUnionOverConstructibleSet}
 If $\mathcal{C}^{\prime\prime}\subset \mathcal{C}^\prime$ is a constructible set of complexity $\leq C$, then
 \begin{equation*}
 U(\mathcal{C}^{\prime\prime})\defeq \bigcup_{\gamma\in \mathcal{C}^{\prime\prime}}\gamma
 \end{equation*}
 is a constructible set of complexity $O_{C_0,D,C}(1)$.
\end{lemma}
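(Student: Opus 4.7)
The plan is to realize $U(\mathcal{C}^{\prime\prime})$ as the image of a constructible set under a coordinate projection, and then invoke Chevalley's theorem (Theorem \ref{ChevalleyTheorem}) to conclude that the image is itself constructible with controlled complexity.

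Concretely, I would introduce the incidence set
\begin{equation*}
I(\mathcal{C}^{\prime\prime}) \defeq \bigl\{(x,\gamma)\in K^3\times \mathcal{C}^{\prime\prime}\colon x\in\gamma\bigr\}.
\end{equation*}
By construction, $U(\mathcal{C}^{\prime\prime}) = \pi\bigl(I(\mathcal{C}^{\prime\prime})\bigr)$, where $\pi\colon K^3\times K^N\to K^3$ is the projection onto the first factor. So the whole task reduces to (i) checking that $I(\mathcal{C}^{\prime\prime})$ is constructible of complexity $O_{C_0,D,C}(1)$ and (ii) applying Chevalley.

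For (i), I would write $I(\mathcal{C}^{\prime\prime})$ as the intersection of two constructible sets of controlled complexity. First, by Lemma \ref{propertiesOfChowExactlyD}, the incidence variety $\tildeChowVarietyDegD\subset K^N\times K^3$ of the affine Chow variety is constructible of complexity $O_D(1)$, and the fiber of $\tildeChowVarietyDegD$ over any $z\in\chowVarietyDegD$ is exactly $\{z\}\times\gamma_z$. Reordering factors, the set
\begin{equation*}
\bigl\{(x,\gamma)\in K^3\times \chowVarietyDegD\colon x\in\gamma\bigr\}
\end{equation*}
is therefore constructible of complexity $O_D(1)$. Intersecting with $K^3\times\mathcal{C}^{\prime\prime}$, which is constructible of complexity $\leq C$ (and a fortiori constructible after crossing with $K^3$), yields $I(\mathcal{C}^{\prime\prime})$ as a constructible set of complexity $O_{D,C}(1)$, since finite intersections of constructible sets are constructible and the complexity of an intersection is bounded by the sum of the complexities of the factors.

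For (ii), I apply Chevalley's theorem (Theorem \ref{ChevalleyTheorem}) to the projection $\pi\colon K^3\times K^N\to K^3$ and to the constructible set $I(\mathcal{C}^{\prime\prime})$ of complexity $O_{D,C}(1)$. The conclusion is that $\pi\bigl(I(\mathcal{C}^{\prime\prime})\bigr) = U(\mathcal{C}^{\prime\prime})$ is a constructible subset of $K^3$ whose complexity depends only on the complexity of $I(\mathcal{C}^{\prime\prime})$; in particular, it is bounded by $O_{C_0,D,C}(1)$, as required. No step here is a genuine obstacle --- the only point that might look fussy is the bookkeeping for the complexity bound in the Chevalley step, but Theorem \ref{ChevalleyTheorem} packages exactly that for us, so the argument is essentially two lines once the incidence set is written down.
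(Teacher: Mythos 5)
Your proof is correct and takes essentially the same approach as the paper: realize $U(\mathcal{C}^{\prime\prime})$ as the projection of a constructible incidence set and apply Chevalley's theorem. The only cosmetic difference is the choice of incidence set --- the paper projects $Y_{Z,\mathcal{C}}\cap(Z\times\mathcal{C}^{\prime\prime})$, whose definition carries the extra constraint $\gamma\subset Z$ (harmless here since $\mathcal{C}^{\prime\prime}\subset\mathcal{C}_Z$ in every application), whereas you project the plain incidence set $\{(x,\gamma)\colon x\in\gamma\}$ intersected with $K^3\times\mathcal{C}^{\prime\prime}$, which is if anything slightly cleaner and does not presuppose that the curves lie in $Z$.
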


\begin{proof} The union $U (\mc'')$ is the projection of $Y_{Z,\mathcal{C}}\cap (Z\times \mc'') \subset Z \times \mc''$ to the $Z$ factor.  Since $Y_{Z, \mc}$ and $\mc''$ are constructible, $U(\mc'')$ is constructible too.
\end{proof}

\begin{lemma}[Selecting a curve from a dense family]\label{denseCurveSelection}
Let $Z\subset K^3$ be an irreducible surface, and let $\mathcal{C}^{\prime\prime}\subset\mathcal{C}_Z$ be an infinite set of curves. Let $X\subset Z$ be a dense, constructible set. Then there exists a curve $\gamma\in \mathcal{C}^{\prime\prime}$ so that $\gamma\cap X$ contains all but finitely many points of  $\gamma$.
\end{lemma}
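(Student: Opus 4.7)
My plan is to reduce the claim to the elementary fact that a dense constructible subset of an irreducible variety contains a nonempty Zariski open subset, and then use the finiteness of irreducible components of a proper subvariety together with the assumption that $\mathcal{C}''$ is infinite.

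More concretely, the first step is to observe that since $X \subset Z$ is constructible, I may write $X = \bigcup_{i=1}^{m}(U_i \cap F_i)$ with $U_i$ open and $F_i$ closed in $Z$. Taking Zariski closures and using that $Z$ is irreducible, at least one $\overline{U_i \cap F_i}$ equals $Z$, which forces $F_i = Z$ and hence $X \supset U_i$, a nonempty Zariski open subset of $Z$. Let $W \defeq Z \setminus U_i$, a proper closed subvariety of $Z$. Since $Z$ is irreducible of dimension $2$, we have $\dim W \le 1$, so $W$ decomposes into finitely many irreducible components $W_1, \ldots, W_\ell$, each of dimension $0$ or $1$.

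Next, since $\mathcal{C}''$ is infinite while there are only finitely many $1$-dimensional components $W_j$, I can pick $\gamma \in \mathcal{C}''$ distinct (as a subset of $K^3$) from each such $W_j$. For this choice, every intersection $\gamma \cap W_j$ is a proper closed subvariety of the irreducible curve $\gamma$ (either because $W_j$ is $0$-dimensional, or because $\gamma$ and $W_j$ are distinct irreducible curves), hence finite. Therefore $\gamma \cap W$ is finite, and
\begin{equation*}
\gamma \cap X \;\supset\; \gamma \cap (Z \setminus W) \;=\; \gamma \setminus (\gamma \cap W),
\end{equation*}
which contains all but finitely many points of $\gamma$. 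This gives the desired curve.

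The argument is short and I do not anticipate a real obstacle; the only step that requires a moment of care is the reduction from ``dense constructible'' to ``contains a nonempty open subset,'' which uses irreducibility of $Z$ in an essential way. Everything else is bookkeeping about irreducible components and the standard fact that two distinct irreducible curves in $K^3$ meet in finitely many points.
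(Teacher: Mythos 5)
Your proof is correct and follows essentially the same route as the paper's: both reduce to the observation that a dense constructible subset of an irreducible surface must contain a nonempty Zariski open set whose complement is a finite union of curves (plus points), and then use infiniteness of $\mathcal{C}''$ to select a curve $\gamma$ distinct from each of those exceptional curves, so that $\gamma \cap X$ is cofinite in $\gamma$. The only difference is that you spell out the justification of the ``dense constructible $\Rightarrow$ contains a nonempty open'' step via the decomposition $X = \bigcup_i (U_i \cap F_i)$, which the paper asserts without elaboration.
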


\begin{proof} We note that a constructible subset of $Z$ is either contained in a 1-dimensional subset of $Z$ or else contains a dense open set $O \subset Z$.  Since $X$ is a dense constructible subset of $Z$, there is a finite list of irreducible curves $\beta_j \subset Z$ so that $X \supset Z \setminus \cup_j \beta_j$.  Since $\mathcal{C}^{\prime\prime}$ is infinite, we can choose $\gamma \in \mathcal{C}^{\prime\prime}$ with $\gamma$ not equal to any of the curves $\beta_j$.   Therefore, $\gamma \cap \beta_j$ is finite for each $j$, and so all but finitely many points of $\gamma$ lie in $X$.
\end{proof}

\subsection{Constructing many intersecting curves}

\begin{lemma}\label{findLargeFamilyCurvesLem}
Let $Z,$ $\mathcal{C}^\prime$, and $O^\prime$ be as above.  Then we can construct an infinite sequence of curves $\gamma_1, \gamma_2, ...$ in $\mathcal{C}_Z$ so that for any $\ell \ge 1$,

\begin{enumerate}
\item $\mathcal{C}_Z \cap \mathcal{C}^{\gamma_1 \cap O'} \cap ... \cap \mathcal{C}^{\gamma_\ell \cap O'}$ is infinite.
\item At most two curves from $\{\gamma_1,\ldots,\gamma_\ell\}$ pass through any point $z\in O'$.
\end{enumerate}
\end{lemma}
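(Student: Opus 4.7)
My plan is to construct the sequence $(\gamma_\ell)$ by induction on $\ell$, using Lemma \ref{denseCurveSelection} at each step to pick $\gamma_{\ell+1}$ from a carefully-cut-down infinite subset of $\mathcal{C}_Z$. Throughout, I write $\mathcal{S}_\ell$ for the intersection $\mathcal{C}_Z \cap \mathcal{C}^{\gamma_1 \cap O'} \cap \cdots \cap \mathcal{C}^{\gamma_\ell \cap O'}$ appearing in condition (1).

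For the base case, I would pick any $\gamma_1 \in \mathcal{C}_Z$ with $\gamma_1 \cap O'$ infinite. Such $\gamma_1$ exists because $\mathcal{C}_Z$ is infinite (the doubly-ruled hypothesis combined with the finite-fiber property of $\mathcal{C}'$ over $O'$ from Lemma \ref{reductionFinFiberLem}) while only the finitely many curves of $\mathcal{C}_Z$ contained in the $1$-dimensional set $Z \setminus O'$ can fail to meet $O'$ in an infinite set. To show $\mathcal{S}_1$ is infinite, I would note that each $z \in \gamma_1 \cap O'$ has a curve $\beta_z \in \mathcal{C}_Z \setminus \{\gamma_1\}$ through it (by the $\geq 2$-curves property of $\mathcal{C}'$ at points of $O'$), while B\'ezout's theorem (Theorem \ref{bezoutCurveSurface}) bounds $|\beta \cap \gamma_1|$ for each $\beta \neq \gamma_1$, so infinitely many distinct $\beta_z \in \mathcal{S}_1$ appear.

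For the inductive step, given $\gamma_1, \ldots, \gamma_\ell$ satisfying the two conditions, I set $B_\ell \defeq \{z \in O' : \text{two of } \gamma_1,\ldots,\gamma_\ell \text{ pass through } z\}$, which is finite by B\'ezout. The central geometric claim is that $U(\mathcal{S}_\ell) \cap O'$ is dense in $Z$: picking a $1$-dimensional irreducible subvariety $\mathcal{T} \subset \mathcal{S}_\ell$ (which exists since $\mathcal{S}_\ell$ is infinite and constructible), one has $\dim Y_{Z, \mathcal{T}} = 2$, and the finite-fiber property of $\mathcal{C}'$ over $O'$ together with Corollary \ref{dimOfSet} rules out $\dim U(\mathcal{T}) = 1$ (which would force $1$-dimensional fibers over points of $U(\mathcal{T}) \cap O'$), so $\dim U(\mathcal{T}) = 2$ and $U(\mathcal{T}) \subset U(\mathcal{S}_\ell)$ is dense in the irreducible surface $Z$. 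Writing
\[ \mathcal{C}_Z^{\sharp} \defeq \mathcal{C}_Z \setminus \bigl(\{\gamma_1,\ldots,\gamma_\ell\} \cup \mathcal{S}_\ell \cup \{\gamma \in \mathcal{C}_Z : \gamma \cap B_\ell \neq \emptyset\}\bigr), \]
I would apply Lemma \ref{denseCurveSelection} with $\mathcal{C}'' = \mathcal{C}_Z^{\sharp}$ and $X = U(\mathcal{S}_\ell) \cap O'$ to obtain $\gamma_{\ell+1} \in \mathcal{C}_Z^{\sharp}$ having all but finitely many points in $X$. Condition (2) persists because $\gamma_{\ell+1}$ avoids $B_\ell$ by construction; condition (1) holds because each of the infinitely many $z \in \gamma_{\ell+1} \cap X$ lies on some $\beta_z \in \mathcal{S}_\ell$, and $\gamma_{\ell+1} \notin \mathcal{S}_\ell$ forces $\beta_z \neq \gamma_{\ell+1}$, so $\beta_z$ lies in $\mathcal{S}_{\ell+1}$, with B\'ezout producing infinitely many distinct such $\beta_z$.

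The hard part will be the degenerate scenario in which $\mathcal{C}_Z \setminus \mathcal{S}_\ell$ is finite (essentially every curve of $\mathcal{C}'$ on $Z$ already meets every $\gamma_i$ in $O'$), which could make $\mathcal{C}_Z^{\sharp}$ empty and defeat the mechanism above. In that situation, however, $\mathcal{S}_\ell$ differs from $\mathcal{C}_Z$ by only finitely many curves, so $\mathcal{S}_{\ell+1}$ differs from $\mathcal{C}_Z \cap \mathcal{C}^{\gamma_{\ell+1} \cap O'}$ by only finitely many curves as well, and the base-case argument applied to $\gamma_{\ell+1}$ directly yields infinitude of $\mathcal{S}_{\ell+1}$; one simply picks any $\gamma_{\ell+1} \in \mathcal{C}_Z \setminus \{\gamma_1,\ldots,\gamma_\ell\}$ that avoids $B_\ell$ and has $\gamma_{\ell+1} \cap O'$ infinite. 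I expect the technical heart of the argument to be the density claim for $U(\mathcal{S}_\ell) \cap O'$ (which crucially exploits the finite-fiber property of $\mathcal{C}'$) and the clean separation of the two cases above so that the infinitude of $\mathcal{S}_{\ell+1}$ is never in doubt.
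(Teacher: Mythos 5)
Your proof is correct and follows the same overall strategy as the paper: induction on $\ell$, the same dichotomy based on whether $\mathcal{C}_Z \setminus \mathcal{S}_\ell$ is finite or infinite, and repeated use of Lemma~\ref{denseCurveSelection} to extract the next curve from a dense constructible subset of $Z$. Two points of comparison are worth noting. First, your handling of the degenerate case (where $\mathcal{C}_Z \setminus \mathcal{S}_\ell$ is finite) is actually slicker than the paper's: the paper re-invokes Lemma~\ref{denseCurveSelection} with the dense set $O' \setminus U(\mathcal{C}_Z\setminus \mathcal{C}_\ell)$ and then argues about two curves of $\mathcal{C}_\ell$ through each point, whereas you simply reuse the base-case argument to show $\mathcal{C}_Z\cap \mathcal{C}^{\gamma_{\ell+1}\cap O'}$ is infinite and observe that it differs from $\mathcal{S}_{\ell+1}$ by finitely many curves; both work, but yours avoids duplicating machinery. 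Second, your density argument for $U(\mathcal{S}_\ell)\cap O'$ is unnecessarily roundabout and in fact slightly imprecise. You pass to a one-dimensional irreducible $\mathcal{T}\subset\mathcal{S}_\ell$ (whose existence as a genuine subvariety inside a constructible set requires care) and then appeal to Corollary~\ref{dimOfSet} plus the finite-fiber property, claiming that $\dim U(\mathcal{T})=1$ would ``force one-dimensional fibers over points of $U(\mathcal{T})\cap O'$'' --- but the fiber-dimension theorem only controls \emph{generic} fibers, and to land such a generic point in $O'$ you would additionally need $U(\mathcal{T})\cap O'$ to be dense in $\overline{U(\mathcal{T})}$, which you do not establish. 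The intended argument, which the paper uses implicitly, is far more elementary and avoids these issues: $U(\mathcal{S}_\ell)$ is a constructible subset of the irreducible surface $Z$ containing infinitely many distinct irreducible curves, and no constructible set contained in a one-dimensional closed subset of $Z$ can contain infinitely many distinct irreducible curves; hence $U(\mathcal{S}_\ell)$ must contain a dense open subset of $Z$. I would encourage you to replace the $\mathcal{T}$-based argument with this direct one; the rest of your proof is sound.
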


\begin{proof}
We will prove the theorem by induction on $\ell$.  We begin with the case $\ell = 1$.  By Lemma \ref{denseCurveSelection}, we can choose $\gamma_1 \in \mc_Z$ so that $\gamma_1 \cap O'$ is dense in $\gamma_1$.  Each point $z \in O'$ lies in at least two curves of $\mc_Z$, so each point of $\gamma_1 \cap O'$ lies in a curve of $\mc_Z$ besides $\gamma_1$.  Therefore, $\mathcal{C}_Z \cap \mathcal{C}^{\gamma_1 \cap O'}$ is infinite.  This checks Property (1) above, and Property (2) is vacuous in the case $\ell = 1$.

Now we do the inductive step of the proof.  Suppose that we have $\gamma_1, ..., \gamma_{\ell}$ with the desired properties.  We have to find $\gamma_{\ell+1}$.

We define

$$\mc_\ell \defeq \mathcal{C}_Z \cap \mathcal{C}^{\gamma_1 \cap O'} \cap ... \cap \mathcal{C}^{\gamma_l \cap O'}.$$

Next we define a finite set of undesirable curves $B_\ell$.  As a warmup, we define $D_\ell$ to be the set of intersection points of $\gamma_1, ..., \gamma_\ell$ in $O'$:

$$ D_\ell \defeq \{ z \in O' \colon z \textrm{ lies in at least two of the curves } \gamma_1, ..., \gamma_\ell \}. $$

The set $B_{\ell}$ is the union of the curves $\{ \gamma_i \}_{i=1}^\ell$ together with the union of all the curves of $\mc_Z$ that pass through a point of $D_\ell$:

$$ B_{\ell} \defeq \left( \cup_{i=1}^l \gamma_i \right) \bigcup \left( \cup_{z \in D_\ell} \mc_Z \cap \mc^z \right). $$

The set $D_\ell$ is finite because any two irreducible curves can intersect in only finitely many points.  For each $z \in O'$, $\mc_Z \cap \mc^z$ is finite, and so $B_\ell$ is finite.  We will choose $\gamma_{\ell+1} \notin B_\ell$.  This will guarantee that $\gamma_{\ell+1}$ is distinct from the previous curves, and it will also guarantee Property (2) above.

Our process depends on whether $\mc_Z \setminus \mc_\ell$ is finite or infinite.

Suppose $\mc_Z \setminus \mc_\ell$ is infinite.  By Lemma \ref{complexityOfUnionOverConstructibleSet}, we know that $U (\mc_\ell)$ is a constructible subset of $Z$.  By Property (1), we know that $\mc_\ell$ is infinite, and so $U(\mc_\ell)$ must be a dense constructible set in $Z$.  Therefore, $U(\mc_\ell) \cap O'$ is also dense and constructible.  Since $\mc_Z \setminus \mc_\ell$ is infinite, we can use Lemma \ref{denseCurveSelection} to choose $\gamma_{\ell+1}$ in $\mc_Z \setminus (\mc_\ell \cup B_\ell)$ so that

$$ |\gamma_{\ell+1} \cap O' \cap U(\mc_\ell) | = \infty. $$

Since $\gamma_{\ell+1} \notin \mc_\ell$, we see that there are infinitely many curves of $\mc_\ell$ that intersect $\gamma_{\ell +1} \cap O'$.  This establishes Property (1), finishing the case that $\mc_Z \setminus \mc_\ell$ is infinite.

Suppose instead that $\mc_Z \setminus \mc_\ell$ is finite.  Then $O' \setminus U(\mc_Z \setminus \mc_\ell)$ is a dense, constructible subset of $Z$.  Since $\mc_\ell$ is infinite,
we can use Lemma \ref{denseCurveSelection} to choose $\gamma_{\ell+1}$ in $\mc_\ell \setminus B_\ell$ so that

$$ |\gamma_{\ell+1} \cap O' \setminus U(\mc_Z \setminus \mc_\ell) | = \infty. $$

If $z \in O'$, then $z$ lies in two distinct curves of $\mc_Z$.  If $z \in O' \setminus U(\mc_Z \setminus \mc_\ell)$, then $z$ lies in two distinct curves of $\mc_\ell$.  One of these curves may be $\gamma_{l+1}$, but one of them must be a different curve.  Therefore there are infinitely many curves of $\mc_\ell$ that intersect $\gamma_{\ell+1} \cap O' $.  This establishes Property (1) and finishes the induction.
\end{proof}

We can now give the proof of Proposition \ref{implicationsOfDoublyRuled}.

\begin{proof} Let $\gamma_i$ be the curves in Lemma \ref{findLargeFamilyCurvesLem}.
Let $N$ be a parameter at our disposal.  Let $X$ be a set of $N^2$ points, with $N$ points on each curve $\gamma_i$ for $i = 1, ..., N$.  Let $Q$ be a minimal degree polynomial that vanishes on the set $X$.

Since $\dim \Poly_D(K^3) \ge (1/6) D^3$, we have $\deg Q \le 3 |X|^{1/3} = 3 N^{2/3}$.  If $D \cdot 3 N^{2/3} < N$, then the B\'ezout theorem (Theorem \ref{bezoutCurveSurface}) implies that $Q$ vanishes on $\gamma_i$ for each $i = 1, ..., N$.  Suppose from now on that $N > 27 D^3$, which guarantees that $Q$ indeed vanishes on $\gamma_i$ for each $i = 1, ..., N$.

Next, we recall that there are infinitely many curves in $\mathcal{C}_Z \cap \mathcal{C}^{\gamma_1 \cap O'} \cap ... \cap \mathcal{C}^{\gamma_N \cap O'}$.  Let $\gamma'$ be any one of these curves.  We recall that any point of $O'$ lies in at most two of the curves $\gamma_i$, and so $\gamma'$ intersects the curves $\gamma_i$ at at least $N/2$ distinct points.  So $Q$ vanishes at at least $N/2$ points of $\gamma'$.  If $D (\deg Q) < N/2$, then $Q$ must vanish at every point of $\gamma'$.  Now $\deg Q \le 3 N^{2/3}$ and so it suffices to check that $D \cdot 3 N^{2/3} < N/2$.  We now suppose that $N > 6^3 D^3$, which guarantees that $Q$ vanishes on all the infinitely many curves $\gamma ' \in
\mathcal{C}_Z \cap \mathcal{C}^{\gamma_1 \cap O'} \cap ... \cap \mathcal{C}^{\gamma_N \cap O'}$.
At this point, we can choose $N = 6^3 D^3 + 1$.

Suppose that our surface $Z$ is the zero set of an irreducible polynomial $T$.  We now see that $Z \cap Z(Q) = Z(T) \cap Z(Q)$ contains infinitely many distinct curves.  By B\'ezout's theorem, Theorem \ref{bezoutcurves}, it follows that $Q$ and $T$ must have a common factor.  But $T$ is irreducible, so $T$ must divide $Q$.  But then the degree of $T$ is at most the degree of $Q$, which is at most $3 N^{2/3}$ which is at most $200 D^2$.  This proves the desired bound on the degree of $
Z$.

The second item from Proposition \ref{implicationsOfDoublyRuled} follows immediately from Lemma \ref{findLargeFamilyCurvesLem}.  For any $\ell \ge 1$, by Lemma \ref{findLargeFamilyCurvesLem} there are infinitely many curves of $\mc$ that intersect each of the curves $\{\gamma_1,\ldots,\gamma_\ell\}$.
\end{proof}

\section{Proving Theorem \ref{mainStructureThm}}\label{puttingItAllTogetherSection}
%
% %
We begin with a corollary to Corollary \ref{contagiousflecnodes} and Theorem \ref{tangentImpliesTrappedProp}:
\begin{cor}\label{rdtImpliesRdInfty}
Fix $D\geq 1$ and $C$. Then there exists a number $r$ (large) and $c_1$ (small) so that for every constructible set $\mathcal{C}\subset \chowVarietyDegD$ of complexity at most $C$ and every irreducible polynomial $T\in K[x_1,x_2,x_3]$ with $\deg T\leq c_1\operatorname{char}(K)$, there exists a (Zariski) open subset $O\subset \BZ(T)$ so that the following holds. If $x\in O$ is $(t,\mathcal{C},r)$-flecnodal for $T$, then there exist (at least) $t$ curves in $\mathcal{C}$ that contain $x$ and are contained in $\BZ(T)$.
 \end{cor}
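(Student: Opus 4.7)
The plan is that this corollary is essentially an immediate repackaging of Theorem \ref{tangentImpliesTrappedProp} (``sufficiently tangent implies trapped''), applied once for each of the $t$ curves witnessing the flecnodal condition. The role of $\mathcal{C}$ (and hence the complexity bound $C$) is actually immaterial in the proof; it enters only because the definition of $(t,\mathcal{C},r)$-flecnodal point is phrased using curves drawn from $\mathcal{C}$.

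First, I would invoke Theorem \ref{tangentImpliesTrappedProp} with the given value of $D$ to obtain constants $c_1 > 0$ and $r_0$ (depending only on $D$), together with a non-empty Zariski open subset $O \subset Z(T)$, valid whenever $\deg T < c_1 \operatorname{char}(K)$. I would then \emph{set} $r := r_0$ in the statement of Corollary \ref{rdtImpliesRdInfty} and declare the open set $O$ of the corollary to be this same $O$.

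Next I would unpack the hypothesis. If $x \in O$ is $(t,\mathcal{C},r)$-flecnodal for $T$, then by the definition from Section \ref{genFlecnodeSection} there exist $t$ distinct curves $\gamma_1,\ldots,\gamma_t \in \mathcal{C} \subset \chowVarietyDegD$ through $x$ such that $x$ is a regular point of each $\gamma_i$ and each $\gamma_i$ is tangent to $Z(T)$ at $x$ to order $\geq r = r_0$. Since each $\gamma_i$ is an irreducible curve of degree at most $D$ passing through the point $x \in O$ with the required tangency order, Theorem \ref{tangentImpliesTrappedProp} applies directly and yields $\gamma_i \subset Z(T)$ for every $i=1,\ldots,t$. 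This gives the required $t$ curves from $\mathcal{C}$ through $x$ and contained in $Z(T)$.

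There really is no main obstacle to overcome here; the work has all been done in Theorem \ref{tangentImpliesTrappedProp}. The only thing to double-check is a consistency issue in the quantifier ordering: Theorem \ref{tangentImpliesTrappedProp} produces a single open set $O \subset Z(T)$ depending on $T$ (and on $D$, through $r_0$), and the corollary requires an open set depending on $T$, $\mathcal{C}$, $t$, $r$ in the same way, so the dependencies line up. The mention of Corollary \ref{contagiousflecnodes} in the surrounding text is presumably for a later application in the proof of Theorem \ref{mainStructureThm}, not for this corollary itself, which follows purely from ``very very tangent implies trapped.''
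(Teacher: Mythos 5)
Your proposal is correct and matches the paper's intent. The paper offers no explicit proof of Corollary \ref{rdtImpliesRdInfty} — it is stated directly as a corollary — and the intended argument is exactly what you describe: take $r := r_0$ and $c_1$ from Theorem \ref{tangentImpliesTrappedProp}, let $O$ be the open set that theorem produces, unwind the definition of $(t,\mathcal{C},r)$-flecnodal to obtain $t$ distinct curves $\gamma_1,\ldots,\gamma_t\in\mathcal{C}$ through $x$, each tangent to $Z(T)$ at $x$ to order $\geq r_0$, and apply ``sufficiently tangent implies trapped'' to each one. Your observation that Proposition \ref{contagiousflecnodes} is not actually needed for this corollary (despite the sentence preceding it, which references both) is accurate; it is invoked alongside \ref{rdtImpliesRdInfty} later in the proof of Lemma \ref{popCurveLem}, not here. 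The only unstated technicality — that $c_1$ can be taken small enough so that $r_0 < \operatorname{char}(K)$ (needed for the flecnodal condition and Theorem \ref{definitionsOfTangencyAreEquivalentThm} to make sense), since $\deg T \geq 1$ forces $\operatorname{char}(K) \geq 1/c_1$ — is already implicit in the way the constants in Theorem \ref{tangentImpliesTrappedProp} are chosen, so there is no gap.
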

\begin{lemma}\label{popCurveLem}
Fix $D>0, C>0$. Then there are constants $c_2,C_3,C_4$ so that the following holds. Let $k$ be a field and let $K$ be the algebraic closure of $K$. Let $\mathcal{C}\subset\chowVarietyDegD$ be a constructible set of complexity at most $C$. Let $\finiteSetCurves$ be a collection of $n$ irreducible algebraic curves in $k^3$ whose algebraic closures are elements of $\mathcal{C}$. Suppose furthermore that $\operatorname{char}(k)=0$ or $n\leq c_2 (\operatorname{char}(k))^2$.

Let $A>C_3n^{1/2}$.  Suppose that for each $\gamma\in\finiteSetCurves$, there are $\geq A$ points $z\in\gamma$ that are incident to some curve from $\finiteSetCurves$ distinct from $\gamma$.
 Then there exists an irreducible surface $Z\subset k^3$ with the following properties:
 \begin{itemize}
 \item $Z$ contains at least $A/C_4$ curves from $\finiteSetCurves$.
 \item $Z$ is ``doubly ruled'' by curves from $\mathcal{C}$ in the sense of Definition \ref{defnOfDoublyRuled} (and hence has degree at most $100 D^2$ by Proposition \ref{implicationsOfDoublyRuled}).
\end{itemize}
\end{lemma}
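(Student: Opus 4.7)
The strategy is to use the polynomial method to trap many curves of $\finiteSetCurves$ inside a single irreducible surface $Z(T)$, observe that the many curve--curve incidences inside $Z(T)$ automatically produce $(2,\mathcal{C},r_0)$-flecnodal points, invoke Proposition \ref{contagiousflecnodes} to propagate this flecnodal condition to a Zariski open subset of $Z(T)$, and finally apply Corollary \ref{rdtImpliesRdInfty} to conclude that $Z(T)$ is doubly ruled by curves from $\mathcal{C}$. The degree bound $\deg Z(T)\le 100D^2$ then follows from Proposition \ref{implicationsOfDoublyRuled}.

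\textbf{Step 1 (Polynomial trap).} By a dimension count in $k[x_1,x_2,x_3]_{\le d}$ (whose dimension is $\binom{d+3}{3}\ge d^3/6$), together with the fact that imposing vanishing at any $Dd+1$ points of an irreducible curve of degree $\le D$ forces the polynomial to vanish on the curve (Theorem \ref{bezoutCurveSurface}), we find a nonzero $T_0\in k[x_1,x_2,x_3]$ of degree $\le d$ with $\finiteSetCurves\subset Z(T_0)$, provided the scale $d$ is large enough for the linear system $d^3/6>n(Dd+1)$ to have a nontrivial solution and the hypothesis $A>Dd+1$ provides the requisite incidence points on each curve. Factoring $T_0$ into irreducibles and applying pigeonhole to the relation $\sum\deg T_i\le d$, we pick an irreducible factor $T$ containing at least $n/d$ curves of $\finiteSetCurves$. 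We calibrate $d$ (essentially on the scale $n^{1/2}$, modulo constants depending on $D$) so that $|\finiteSetCurves_Z|\ge A/C_4$, where $\finiteSetCurves_Z:=\{\gamma\in\finiteSetCurves:\gamma\subset Z(T)\}$.

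\textbf{Step 2 (Good incidences produce flecnodal points).} Call an incidence point $p\in\gamma$ (with $\gamma\in\finiteSetCurves_Z$) \emph{good} if some other curve of $\finiteSetCurves_Z$ also passes through $p$, and \emph{bad} otherwise. If $\gamma_1,\gamma_2\in\finiteSetCurves_Z$ both pass through $p$ and $p$ is smooth on each, then by Lemma \ref{DIsLargeWhenCommongComponent} combined with Theorem \ref{definitionsOfTangencyAreEquivalentThm} the curves $\gamma_1,\gamma_2$ are tangent to $Z(T)$ at $p$ to every order, so $p$ is $(2,\mathcal{C},r_0)$-flecnodal for $T$ (with $r_0$ the constant from Corollary \ref{rdtImpliesRdInfty}). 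Each bad point on $\gamma$ lies in $\gamma\cap\gamma'$ for some $\gamma'\not\subset Z(T)$, hence in $\gamma'\cap Z(T)$; Theorem \ref{bezoutCurveSurface} gives $|\gamma'\cap Z(T)|\le Dd$, so the \emph{total} number of bad points on $Z(T)$, summed over $\gamma'\notin\finiteSetCurves_Z$, is at most $nDd$. A pruning step — iteratively discard any $\gamma\in\finiteSetCurves_Z$ with more than $A/2$ bad points, each removal consuming at least $A/2$ of the global bad budget — leaves a subcollection $\finiteSetCurves_Z'\subset\finiteSetCurves_Z$ of size still $\gtrsim A/C_4$ in which every remaining curve carries at least $A/2$ good, and hence $(2,\mathcal{C},r_0)$-flecnodal, points.

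\textbf{Step 3 (Contagion, double ruling, and degree).} With the scale parameter $d$ chosen so that the hypotheses of Proposition \ref{contagiousflecnodes} are met — namely $|\finiteSetCurves_Z'|\ge C_1(\deg T)^2$ and, per curve, $\ge C_1\deg T$ flecnodal points — Proposition \ref{contagiousflecnodes} produces a Zariski open subset $O\subset Z(T)$ consisting of $(2,\mathcal{C},r_0)$-flecnodal points. By Corollary \ref{rdtImpliesRdInfty} (applied with $t=2$ and $r=r_0$), through every point of $O$ pass at least two curves from $\mathcal{C}$ contained in $Z(T)$. Thus $Z(T)$ is doubly ruled by $\mathcal{C}$ in the sense of Definition \ref{defnOfDoublyRuled}. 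Proposition \ref{implicationsOfDoublyRuled} now gives $\deg Z(T)\le 100 D^2$, and the lemma follows with $Z=Z(T)$.

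\textbf{Main obstacle.} The central technical difficulty is the \emph{calibration of constants} in Steps 1--3. In Step 1, $d$ must be large enough for the polynomial method to trap $\ge A/C_4$ curves in some irreducible factor, yet small enough for the hypothesis $A>Dd$ of Step 2 to hold; in Step 3, the contagion threshold $C_1 d^2$ must not exceed $|\finiteSetCurves_Z'|$. Reconciling these across the entire range $A\ge C_3n^{1/2}$ is delicate, and is exactly where the combinatorial hypothesis $A>C_3 n^{1/2}$ enters essentially. A secondary difficulty is the pruning argument in Step 2: the Bezout bound on bad intersection points is \emph{global} and need not distribute uniformly among the curves in $\finiteSetCurves_Z$, so some care is needed to ensure the pruning does not shrink $\finiteSetCurves_Z'$ below the threshold required by contagion in Step 3.
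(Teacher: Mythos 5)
The central gap is in Step~1: you invoke a bare dimension count to trap all of $\finiteSetCurves$ in a surface $Z(T_0)$, and this only yields degree $d\sim (Dn)^{1/2}$. After pigeonholing on irreducible factors $T_0 = T_1\cdots T_\ell$ (with $\ell\le d$), the best you can guarantee is $\gtrsim n/d\sim (n/D)^{1/2}$ curves in a single factor. The lemma requires $\ge A/C_4$ curves, and $A$ can range up to size $n$; once $A\gg n^{1/2}$, the interpolation-based factor cannot possibly contain the required fraction. You gesture at "calibrating $d$" so that $|\finiteSetCurves_Z|\ge A/C_4$, but the naive interpolation has no free parameter to lower $d$ below $\sim n^{1/2}$: it never uses the incidence hypothesis at all (one can interpolate at $Dd+1$ arbitrary points per curve whether or not each curve is rich in intersection points). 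What the paper actually uses is its Proposition~\ref{degreeReduction} (Degree Reduction, proved in Appendix~\ref{degreeReductionSec} via a random sampling / iterative Guth--Katz argument), which exploits the $\ge A$ incidence points per curve to trap $\finiteSetCurves$ in a polynomial of degree $O_D(n/A)$, a strictly stronger conclusion that collapses to $n^{1/2}$ only at the threshold $A\sim n^{1/2}$ and becomes dramatically smaller as $A$ grows. This is the ingredient that makes the pigeonhole, the bad-point bound $\deg P < A/2$, and the contagion threshold all line up simultaneously.

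Two secondary remarks. First, with the correct degree bound in hand, your pruning step becomes unnecessary: the paper simply notes that for $\gamma\in\finiteSetCurves_j$ the points where $\gamma$ meets curves from a different $\finiteSetCurves_i$ all lie on $\gamma\cap\bigl(\bigcup_{i\ne j}Z(P_i)\bigr)$, of size $<D\deg P<A/2$, so every surviving curve automatically retains $\ge A/2$ good (flecnodal) points. Second, the pigeonhole must be done with the weighting $|\finiteSetCurves_j|\ge \tfrac12\,n\,(\deg P_j)^2/(\deg P)^2$ rather than a plain maximum: the contagion hypothesis in Proposition~\ref{contagiousflecnodes} is $|\finiteSetCurves_j|\ge C_1(\deg Z_j)^2$, so you need the number of captured curves to scale with the \emph{square} of the degree of the chosen irreducible factor, not just to exceed $A/C_4$. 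Both of these are minor once the degree reduction is in place, but without Proposition~\ref{degreeReduction} the argument does not cover the required range of $A$.
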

Before proving Lemma \ref{popCurveLem}, we will show how it implies Theorem  \ref{mainStructureThm}. For the reader's convenience, we will recall the theorem here.
\begin{mainStructureThmThm}
Fix $D>0, C>0$. Then there are constants $c_1,C_1,C_2$ so that the following holds. Let $k$ be a field and let $K$ be the algebraic closure of $K$. Let $\mathcal{C}\subset\chowVarietyDegD$ be a constructible set of complexity at most $C$. Let $\finiteSetCurves$ be a collection of $n$ irreducible algebraic curves in $k^3$, with $\finiteSetCurves\subset\mathcal{C}$ (see Definition \ref{AcontainedC}). Suppose furthermore that $\operatorname{char}(k)=0$ or $n\leq c_1 (\operatorname{char}(k))^2$.

Then for each number $A>C_1n^{1/2}$, at least one of the following two things must occur:
\begin{itemize}
 \item There are at most $C_2An$ points in $k^3$ that are incident to two or more curves from $\finiteSetCurves$.
 \item There is an irreducible surface $Z\subset k^3$ that contains at least $A$ curves from $\finiteSetCurves$. Furthermore, $\hat Z$ is doubly ruled by curves from $\mathcal{C}$. See Definition \ref{defnOfDoublyRuled} for the definition of doubly ruled, and see Proposition \ref{implicationsOfDoublyRuled} for the implications of this statement.
\end{itemize}
\end{mainStructureThmThm}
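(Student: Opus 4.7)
The plan is to deduce Theorem \ref{mainStructureThm} directly from Lemma \ref{popCurveLem} by a standard pruning argument; since the lemma already packages the hard work of assembling the doubly-ruled surface, what remains is to arrange its hypothesis (that \emph{every} surviving curve has many rich points on it).

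I will argue by contrapositive. Assume there are more than $C_2 A n$ points of $k^3$ incident to two or more curves of $\finiteSetCurves$, and produce the desired surface. Let $c_2, C_3, C_4$ be the constants from Lemma \ref{popCurveLem}. Set $C_1 := C_3/C_4$, $c_1 := c_2$, and $C_2 := C_4 + 1$ (the final constants can be tweaked if needed). Iteratively delete any curve $\gamma \in \finiteSetCurves$ carrying fewer than $C_4 A$ points that are incident to some other surviving curve; here richness is always measured with respect to the currently surviving sub-collection. Let $\finiteSetCurves' \subset \finiteSetCurves$ be what remains when no such curve exists.

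The main accounting step is to control how many rich points are destroyed. When a curve $\gamma$ is deleted, the only rich points lost are those lying on $\gamma$ that had exactly two incident surviving curves; in particular the number lost is at most the number of current rich points on $\gamma$, which by the pruning rule is strictly less than $C_4 A$. Since we delete at most $n$ curves, the total loss is at most $C_4 A n$. Hence the set of rich points with respect to $\finiteSetCurves'$ has size at least
\begin{equation*}
 C_2 A n - C_4 A n \;=\; (C_2 - C_4) A n \;>\; 0,
\end{equation*}
so $\finiteSetCurves'$ is nonempty. Moreover, by construction every $\gamma \in \finiteSetCurves'$ has at least $C_4 A$ points incident to some other curve of $\finiteSetCurves'$.

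Now apply Lemma \ref{popCurveLem} to $\finiteSetCurves'$ with threshold $C_4 A$ in place of $A$. The characteristic/size hypothesis is inherited from the original assumption (as $|\finiteSetCurves'|\leq n$ and $c_1 = c_2$), the inclusion $\finiteSetCurves' \subset \mathcal{C}$ is inherited from $\finiteSetCurves \subset \mathcal{C}$, and the threshold hypothesis $C_4 A > C_3 |\finiteSetCurves'|^{1/2}$ follows from $A > C_1 n^{1/2} = (C_3/C_4) n^{1/2}$ together with $|\finiteSetCurves'|\leq n$. The lemma then produces an irreducible surface $Z \subset k^3$ containing at least $(C_4 A)/C_4 = A$ curves from $\finiteSetCurves' \subset \finiteSetCurves$, whose complexification $\hat Z$ is doubly ruled by curves from $\mathcal{C}$. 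This is exactly the second alternative of Theorem \ref{mainStructureThm}.

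I do not expect a serious obstacle here: the substance of Theorem \ref{mainStructureThm} is contained in Lemma \ref{popCurveLem} (and, behind it, the flecnode machinery of Sections \ref{genFlecnodeSection} and \ref{flecnodeContagiousSection}) together with Proposition \ref{implicationsOfDoublyRuled}. The only delicate points in the deduction are (i) observing that each pruning step loses fewer than $C_4 A$ rich points (a straightforward incidence count), and (ii) ensuring the threshold hypothesis of Lemma \ref{popCurveLem} is preserved after passing from $n$ to $|\finiteSetCurves'| \le n$, which is automatic because the bound $A > C_1 n^{1/2}$ is stated in terms of the \emph{original} $n$.
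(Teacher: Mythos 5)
Your proof is correct and follows essentially the same route as the paper: both reduce Theorem \ref{mainStructureThm} to Lemma \ref{popCurveLem} by removing curves with few rich points, accounting at most $C_4 A$ lost rich points per removal, and then invoking the lemma with threshold $C_4 A$. The paper phrases this as an induction on $n$ (remove one low-richness curve and recurse), while you phrase it as a one-shot iterative pruning followed by a single application of the lemma; these are presentational variants of the same argument and your constant bookkeeping ($c_1 = c_2$, $C_1 = C_3/C_4$, $C_2 = C_4 + 1$) is consistent with the paper's requirements.
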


\begin{proof}[Proof of Theorem  \ref{mainStructureThm} using Lemma \ref{popCurveLem}]
\begin{defn}
Let $\finiteSetCurves$ be a collection of curves in $k^3$. Define
\begin{equation*}
\mathcal{P}_2(\finiteSetCurves)=|\{x\in k^3\colon x\ \textrm{is incident to at least two curves from}\ \finiteSetCurves\}|.
\end{equation*}
\end{defn}
Let $D$ and $C$ be as in the statement of Theorem \ref{mainStructureThm}. Fix a field $k$, a constructible set $\mathcal{C}\subset\mathcal{C}_{3,D}$ (of complexity at most $C$), and a number $A$. We will prove the theorem by induction on $n$, for all $n\leq \min\Big(C_1^{-2}A^2, c_1(\operatorname{char} K)^2\Big)$. The case $n=1$ is immediate. Now, suppose the statement has been proved for all collections of curves in $\mathcal{C}$ of size at most $n-1$.

Applying Lemma \ref{popCurveLem} (with the value $A^\prime=C_4A$), we conclude that either there is an irreducible surface $Z$ that is doubly ruled by curves from $\mathcal{C}$ and that contains at least $A^\prime/C_4=A$ curves from $\finiteSetCurves,$ or there is a curve $\gamma\in\finiteSetCurves$ so that $|\pts_2(\finiteSetCurves)\cap\gamma|<C_4A$. If the former occurs then we are done.

If the latter occurs, then let $\finiteSetCurves^\prime=\finiteSetCurves\backslash\{\gamma_0\}$. Then $|\finiteSetCurves^\prime|=n-1$, so the collection $\finiteSetCurves^\prime$ satisfies the induction hypothesis. Thus if we select $C_2\geq C_4$, we have
\begin{equation}
\begin{split}
 \mathcal{P}_2(\finiteSetCurves)&< \mathcal{P}_2(\finiteSetCurves\backslash\{\gamma_0\})+C_4A\\
 &\leq C_2 A(n-1)+ C_4 A \\
 &\leq C_2An.
\end{split}
 \end{equation}
This closes the induction and establishes Theorem  \ref{mainStructureThm}.
\end{proof}

\begin{proof}[Proof of Lemma \ref{popCurveLem}]
\begin{prop}[Degree reduction]\label{degreeReduction}
For every $D\geq 1$, there are constants $C_5,C_6$ so that the following holds. Let $\finiteSetCurves$ be a collection of $n$ irreducible curves of degree $\leq D$ in  $k^3$, and let $A\geq C_5n^{1/2}$. Suppose that for each $\gamma\in\finiteSetCurves$, there are $\geq A$ points $z\in\gamma$ that are incident to some curve from $\finiteSetCurves$ distinct from $\gamma$. Then there is a polynomial $P$ of degree at most $C_6n/A$ whose zero-set contains every curve from $\finiteSetCurves$.
\end{prop}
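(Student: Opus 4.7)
The plan is a classical polynomial-method argument combined with B\'ezout's theorem. Set $d := \lceil C_6 n/A \rceil$ for a constant $C_6 = C_6(D, C_5)$ to be fixed below, and work in the vector space $V := K[x_1,x_2,x_3]_{\le d}$, which satisfies $\dim V \ge \binom{d+3}{3} \ge d^3/6$. The goal is to construct a nonzero $P \in V$ vanishing at a suitably large set of rich points, so that B\'ezout forces $P$ to vanish on every curve.

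First, for each $\gamma \in \finiteSetCurves$, I would select a subset $T_\gamma \subset \gamma$ consisting of exactly $Dd+1$ points incident to some other curve of $\finiteSetCurves$. The hypothesis supplies at least $A$ such points on each curve, and $A \ge Dd+1$ holds provided $C_5$ is chosen large enough relative to $D$ and $C_6$ (since $Dd+1 \le DC_6 n/A + 1$ and $A^2 \ge C_5^2 n$). Let $T := \bigcup_\gamma T_\gamma$. The key estimate is obtained by double counting: since each point of $T$ lies on at least two curves of $\finiteSetCurves$,
\[
2 |T| \;\le\; \sum_{\gamma \in \finiteSetCurves} |T_\gamma| \;=\; n(Dd+1),
\]
so $|T| \le n(Dd+1)/2$. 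The factor-of-two saving from the ``rich point'' structure is precisely what lets us beat the naive parameter count $n(Dd+1)$.

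Next I would find a nonzero $P \in V$ vanishing at every point of $T$. Since requiring $P(z) = 0$ is one linear condition on $V$, such $P$ exists as soon as $|T| < \dim V$. Substituting the bounds, this reduces to $n(Dd+1)/2 < d^3/6$, i.e.\ $d^2 \gtrsim nD$. With $d = C_6 n/A$ and $A \ge C_5 n^{1/2}$, this becomes a condition of the form $C_6^2 \gtrsim D\, C_5^2$, which is arranged by choosing $C_6$ sufficiently large relative to $C_5$ and $D$.

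Finally, for each $\gamma \in \finiteSetCurves$, the polynomial $P$ vanishes at the $|T_\gamma| = Dd+1$ distinct points of $T_\gamma \subset \gamma$, and $Dd + 1 > D \cdot \deg P \ge (\deg \gamma)(\deg P)$. By B\'ezout's theorem (Theorem \ref{bezoutCurveSurface}), $P$ must vanish identically on $\gamma$, so $Z(P) \supset \bigcup_{\gamma \in \finiteSetCurves} \gamma$, as required. The main obstacle is the parameter-counting inequality $|T| < \dim V$; the argument rests on balancing the two constraints on the constants, namely $C_5 \gtrsim \sqrt{DC_6}$ (to allow the selection of $T_\gamma$) and $C_6 \gtrsim C_5\sqrt{D}$ (for the dimension count), both of which are easily met by choosing $C_5$ first and then $C_6$ much larger.
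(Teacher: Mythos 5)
Your proposal has a genuine gap, and it reflects a wrong approach rather than a fixable slip, so let me explain both why it fails and how the paper handles the difficulty.

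First, the double-counting step is unjustified. You write that since each point of $T$ lies on at least two curves of $\finiteSetCurves$, we have $2|T| \le \sum_\gamma |T_\gamma|$. But the sum $\sum_\gamma |T_\gamma|$ counts pairs $(z,\gamma)$ with $z\in T_\gamma$, and the selection of $T_\gamma$ is made independently for each curve. If $z\in\gamma\cap\gamma'$ and $z$ happens to be among the $Dd+1$ chosen points of $T_\gamma$, there is no reason why $z$ should also be chosen for $T_{\gamma'}$; the bound only gives $|T|\le\sum_\gamma|T_\gamma|=n(Dd+1)$, not half of that. This is cosmetic, however, compared to the second issue.

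The more serious problem is that the dimension count $|T|<\dim K[x]_{\le d}$ is not a condition on fixed constants: it requires (roughly) $d^2\gtrsim nD$, i.e.\ $d\gtrsim\sqrt{nD}$, while you have set $d=C_6 n/A$. Substituting gives the requirement $A\lesssim C_6\sqrt{n/D}$, which only holds for $A$ in a bounded window around $n^{1/2}$. The proposition, however, asserts the conclusion for \emph{all} $A\ge C_5 n^{1/2}$, and $A$ can be as large as $D^2 n$ (each curve can meet each of the other $n-1$ curves in up to $D^2$ points). As $A$ grows, the target degree $C_6 n/A$ shrinks while the number of interpolation constraints $n(Dd+1)$ does not shrink fast enough, so the naive interpolation becomes impossible. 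Your claim that the balance ``becomes a condition of the form $C_6^2\gtrsim DC_5^2$'' silently replaces $A\ge C_5 n^{1/2}$ by $A=C_5 n^{1/2}$, which is not what is being proved.

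This is precisely why the paper does not do a one-shot interpolation. Its proof is a random-sampling plus induction argument: set $p=C_2 n/A^2$ and pick each curve of $\finiteSetCurves$ with probability $p$, getting a sample $\finiteSetCurves'$ of size $\approx pn = C_2 n^2/A^2$. Interpolating a polynomial $P_1$ through all curves of $\finiteSetCurves'$ (using Proposition \ref{polyInterpLemma}) gives $\deg P_1 \lesssim D\sqrt{pn}\approx D\sqrt{C_2}\,n/A$, which is already of the right order. By B\'ezout, any curve $\gamma\in\finiteSetCurves$ which meets enough sampled curves from $\finiteSetCurves_\gamma$ must lie in $Z(P_1)$, and a Chernoff-bound calculation shows this happens for a constant fraction of the curves. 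The remaining curves are fewer by a definite factor, and one applies the inductive hypothesis to them to produce $P_2$, finally taking $P=P_1 P_2$. The savings here comes from sampling \emph{curves} rather than \emph{points} and letting B\'ezout capture entire curves, which sidesteps the point-count bottleneck entirely; the geometric decrease in the number of curves makes the degrees telescope. You should revisit the argument with this iterative structure in mind.
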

We will prove Proposition \ref{degreeReduction} in Appendix \ref{degreeReductionSec}.

Now, factor $P=P_1\ldots P_{\ell}$ into irreducible components. For $j=1,\ldots,\ell$, define
\begin{equation*}
 \finiteSetCurves_j=\{\ell\in\finiteSetCurves\colon \ell\subset \BZ(P_j),\ \ell\not\subset Z_i\ \textrm{for any}\ i<j\}.
\end{equation*}
Note that for each index $j$ and each curve $\gamma\in \finiteSetCurves_j$,
\begin{equation}
|\{p\in k^3\ p\in \gamma\cap\gamma^\prime,\ \textrm{for some}\ \gamma^\prime\in\finiteSetCurves_i,\ i\neq j\}|<\deg P<A/2,
\end{equation}
provided $A>(2C_6n)^{1/2}$. Thus each curve $\gamma$ is incident to at least $A/2$ other curves $\gamma^\prime$ that lie in the same set $\finiteSetCurves_j$ (and are therefore contained in the same surface $Z_j$).

By pigeonholing, exists an index $j$ with
\begin{equation}\label{pigeonHole2}
 |\finiteSetCurves_j| \geq \frac{A}{2C_6}
\end{equation}
and
\begin{equation}\label{pigeonHole1}
\begin{split}
 |\finiteSetCurves_j| & \geq \frac{1}{2}\frac{n}{(\deg P)^2}(\deg Z_j)^2\\
 &\geq\frac{1}{2}\frac{n}{(C_6n/A)^2}(\deg Z_j)^2\\
  &\geq\Big(\frac{A^2}{2C_6^2n}\Big)(\deg Z_j)^2.
 \end{split}
\end{equation}
Select $A_4$ (from the statement of Lemma \ref{popCurveLem}) to be larger than $2C_6$, and let $Z_0$ be this irreducible component.

By Lemma \ref{DIsLargeWhenCommongComponent}, for each curve $\gamma\in\finiteSetCurves_j,$ there are at least $A/2$ points on $\gamma$ that are $(2,\mathcal{C},r)$-flecnodal. Thus by Proposition \ref{contagiousflecnodes}, for each $r>0$, there is a Zariski open set $O_r\subset Z_0$ consisting of $(2,\mathcal{C},r)$-flecnodal points. If we select $r$ sufficiently large (depending on $D$, where $\mathcal{C}\subset\mathcal{C}_{3,D}$) and if $\frac{A^2}{2C_6^2n}$ is sufficiently large (depending on $r$) (this can be guaranteed if we select $C_3$ from the statement of Lemma \ref{popCurveLem} to be sufficiently large depending on $D$), then by Corollary \ref{rdtImpliesRdInfty}, there exists a Zariski open set $O\subset Z_0$ so that for every point $x\in O$, there are two curves from $\mathcal{C}$ passing through $x$ contained in $Z$.  In other words, $Z_0$ is doubly ruled by curves from $\mc$, as in Definition
\ref{defnOfDoublyRuled}.
\end{proof}

\appendix
\section{A quantitative ascending chain condition}\label{AQCCApp}
In this section we will prove Proposition \ref{quantACCProp}. For the reader's convenience, we re-state it here:
\begin{quantACCPropProp}
Let $\tilde K$ be a field, let $N\geq 0,$ and let $\tau\colon \NN\to\NN$ be a function. Then there exists a number $M_0$ with the following property. Let $\{I_i\}$ be a sequence of ideals in $\tilde K[x_1,\ldots,x_N]$, with $\complexity(I_i)\leq \tau(i).$ Then there exists a number $r_0\leq M_0$ so that $I_{r_0}\subset I_1+ ... + I_{r_0-1}$.
\end{quantACCPropProp}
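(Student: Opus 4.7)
The plan is a proof by contradiction via Noetherian induction. Since ideal membership is preserved under faithful flat base change, I may first replace $\tilde K$ by an uncountable algebraically closed extension; any bound obtained over the larger field transfers back. I then parameterize each ideal $I_i$ of complexity at most $\tau(i)$ as a tuple of at most $\tau(i)$ polynomials of degree at most $\tau(i)$, so length-$r$ sequences live in a finite-dimensional affine $\tilde K$-space $W_r := W_{\tau(1)}\times\cdots\times W_{\tau(r)}$. The ``bad'' set $B_r \subset W_r$ of sequences with $I_s \not\subset I_1+\cdots+I_{s-1}$ for every $s \le r$ is constructible: ideal membership $g \in (g_1,\ldots,g_m)$ reduces, via an effective degree bound on syzygy coefficients (e.g.~Hermann's theorem), to a first-order condition on the coefficient tuples, which is constructible by Theorem~\ref{ChevalleyTheorem}. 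The goal is to show $B_r = \emptyset$ for some $r = M_0$.

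Suppose for contradiction that $B_r \neq \emptyset$ for every $r$. Projecting to $W_{\tau(1)}$ yields constructible sets $A_r$ forming a descending chain; their Zariski closures in $W_{\tau(1)}$ stabilize by Noetherianness to some nonempty closed $C_1$. Fix an irreducible component $D_1 \subset C_1$; for every sufficiently large $r$, the constructible set $A_r \cap D_1$ is Zariski dense in $D_1$ (otherwise $\overline{A_r}$ would fail to contain a nonempty open subset of $D_1$, contradicting $\overline{A_r} = C_1$), hence contains a nonempty Zariski-open subset $U_r \subset D_1$. Over an uncountable algebraically closed field, $\bigcap_r U_r \neq \emptyset$, since an irreducible variety is not a countable union of proper closed subvarieties. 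Choose $I_1^* \in \bigcap_r U_r$; by construction $I_1^*$ extends to a bad sequence of every finite length. Iterating the same argument on the constructible fibers $B_r|_{I_1 = I_1^*}$, $B_r|_{(I_1^*,I_2^*)}$, and so on, produces ideals $I_1^*, I_2^*, \ldots$ with $I_s^* \not\subset I_1^* + \cdots + I_{s-1}^*$ for every $s$---an infinite strictly ascending chain of ideals in $\tilde K[y_1,\ldots,y_N]$, contradicting the Hilbert basis theorem.

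The main obstacle is the inductive extraction of the infinite sequence. Constructible sets do not enjoy compactness in general (a descending chain of nonempty constructible sets can have empty intersection), so the argument must first pass to Zariski closures, where Noetherianness of affine space forces stabilization, and then exploit uncountability of the base field to guarantee that countable intersections of nonempty Zariski-open subsets of an irreducible variety remain nonempty. A cleaner alternative avoiding the uncountability step is model-theoretic compactness: adjoin constants naming the generators of each $I_i$ together with first-order axioms asserting ``the first $r$ terms form a bad sequence''; the existence of bad sequences of every finite length makes every finite subset of this theory consistent, so compactness yields a model containing an infinite bad sequence, again contradicting Noetherianness.
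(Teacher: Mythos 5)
Your proof is correct, but it takes a genuinely different route from the paper's. The paper argues directly and quantitatively: for the partial sums $\tilde I_j = I_1 + \cdots + I_j$ it records the coefficient vector $\bell_{\tilde I_j}$ of the Hilbert polynomial, notes that $|\bell_{\tilde I_j}|$ is bounded by a function of the complexity (Lemma \ref{quantBdHilbertPoly}, via the Macaulay representation), that the vectors decrease weakly in reverse-lexicographic order as the ideal grows, and then invokes a purely combinatorial fact (Lemma \ref{lengthOfChainLem}) that a $\prec$-decreasing sequence of non-negative tuples with a prescribed size bound must repeat within a controlled number of steps; equality of Hilbert polynomials together with containment forces $\tilde I_{r_0 - 1} = \tilde I_{r_0}$. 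Your proof instead argues by contradiction: if no uniform $M_0$ existed, the constructible sets $B_r$ of ``bad'' length-$r$ sequences would all be nonempty, and a combination of Chevalley's theorem, Noetherianness of the Zariski topology on the projection closures, and uncountability of the base field (or, alternatively, model-theoretic compactness) extracts a single infinite bad sequence, contradicting the Hilbert basis theorem. Both arguments are sound; the key points you get right are (a) the reduction to an uncountable algebraically closed field via preservation of ideal membership under field extension, (b) constructibility of $B_r$ via Hermann-type effective degree bounds, and (c) that a constructible set dense in an irreducible variety contains a nonempty open subset, so countably many of them meet over an uncountable field. The trade-off is that your approach is non-constructive — it establishes the existence of $M_0$ but offers no way to estimate it, whereas the paper's Hilbert-polynomial argument is in principle effective — and it relies on heavier input (Hermann's theorem, Chevalley, and an uncountability or compactness step). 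For the application in the paper, where only the existence of $M_0$ matters, either argument serves equally well.
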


\subsection{Reverse lexicographic  order}
\begin{defn}
Given two $(N+1)$--tuples $\bell=(\ell_0,\ldots,\ell_N),$ $\bell^\prime =(\ell_0^\prime,\ldots,\ell_N^\prime)$, we say $\bell \prec\bell^\prime$ if $\bell\neq\bell^\prime$, and one of the following holds
\begin{itemize}
\item $\ell_N<\ell_N^\prime,$
\item $\ell_N=\ell_N^\prime$ and $\ell_{N-1}<\ell_{N-1}^\prime$,\\ $\vdots$
\item $\ell_N=\ell_N^\prime,\ell_{N-1}=\ell_{N-1}^\prime,\ldots,\ell_1=\ell_1^\prime$, and $\ell_0<\ell_0^\prime$.
\end{itemize}
We will only use $\prec$ to compare two tuples of the same length. The relation $\prec$ is transitive.
\end{defn}
\begin{defn}
If $\bell$ is a tuple, we define $|\bell|=|\ell_0|+\ldots+|\ell_{N}|$. In our applications, the entries will always be non-negative. We will use $\mathbf{0}$ to denote the tuples whose entries are all 0s (the length of the tuple should be apparent from context).
\end{defn}
\begin{lemma}[length of chains]\label{lengthOfChainLem}
Let $N\geq 0$ and let $\tau\colon\NN\to\NN$ (in our applications, we will have something like $N=3,\ \tau(t)=100t^3$). Then there exists a number $M_0$ with the following property. Let $\{\bell_i\}$ be a sequence of $(N+1)$--tuples of non-negative integers. Suppose that the sequence is weakly monotonically decreasing under the $\prec$ order. Suppose furthermore that for each index $i$, $|\bell_i|\leq \tau(i)$. Then there exists some $r_0\leq M_0$ so that $\bell_{r_0-1}=\bell_{r_0}$.
\end{lemma}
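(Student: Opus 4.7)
First I would observe that the lemma is equivalent to a bound on the length of \emph{strictly} $\prec$-decreasing chains: if no index $r_0\le M_0$ satisfies $\bell_{r_0-1}=\bell_{r_0}$, then $\bell_1\succ\bell_2\succ\cdots\succ\bell_{M_0}$ is strictly decreasing of length $M_0$. So it suffices to show that for each $N\ge 0$ and each $\tau\colon\NN\to\NN$, there is a finite number $F(N,\tau)$ bounding the length $L$ of any strictly decreasing sequence $\bell_1\succ\bell_2\succ\cdots\succ\bell_L$ of $(N+1)$-tuples of non-negative integers with $|\bell_i|\le\tau(i)$. Taking $M_0\defeq F(N,\tau)+1$ then proves the lemma.

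I would prove the existence of $F(N,\tau)$ by induction on $N$. The base case $N=0$ is immediate: a $1$-tuple is just a non-negative integer, $\prec$ is the standard order on $\NN$, and any strictly decreasing chain starts at $\ell_1(1)\le\tau(1)$, so $F(0,\tau)\le\tau(1)+1$.

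For the inductive step with $N\ge 1$, the key observation is that the final coordinate $\ell_N(i)$ is weakly decreasing in $i$: this follows directly from the definition of $\prec$, since $\bell_i\succ\bell_{i+1}$ forces $\ell_N(i)\ge\ell_N(i+1)$. Since $0\le\ell_N(1)\le|\bell_1|\le\tau(1)$, the sequence $\{\ell_N(i)\}_{i=1}^L$ takes at most $\tau(1)+1$ distinct values, and therefore partitions $\{1,\dots,L\}$ into at most $\tau(1)+1$ maximal blocks on which $\ell_N$ is constant. Within a block starting at index $a$, $\bell_i\succ\bell_{i+1}$ with $\ell_N(i)=\ell_N(i+1)$ forces $(\ell_0,\dots,\ell_{N-1})(i)\succ(\ell_0,\dots,\ell_{N-1})(i+1)$ in the reverse lex order on $N$-tuples; and the size bound $|(\ell_0,\dots,\ell_{N-1})(i)|\le|\bell_i|\le\tau(i)$ still holds. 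Applying the inductive hypothesis to the truncated sequence in this block bounds its length by $F(N-1,\tau_{\ge a})$, where $\tau_{\ge a}(j)\defeq\tau(a+j-1)$.

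Concretely, I would then define $F(N,\tau)$ by the finite recursion $a_1\defeq 1$, $a_{k+1}\defeq a_k+F(N-1,\tau_{\ge a_k})$, and $F(N,\tau)\defeq a_{\tau(1)+2}-1$. This terminates after at most $\tau(1)+1$ steps (one per possible value of $\ell_N$), and each step contributes a finite quantity by the inductive hypothesis, so $F(N,\tau)$ is finite. The only real obstacle is the bookkeeping needed because $\tau$ grows with the index: each inductive call uses a shifted version $\tau_{\ge a_k}$ of the original function, so one must be slightly careful in the inductive statement to allow the function $\tau$ to be arbitrary. With that setup the recursion runs cleanly and produces the desired bound.
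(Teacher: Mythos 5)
The paper states Lemma~\ref{lengthOfChainLem} without proof (it is treated as a standard, elementary fact about well-quasi-ordered sequences), so there is no proof in the paper to compare against. Your induction on $N$ is the natural argument and is essentially correct: reduce to bounding the length of a strictly $\prec$-decreasing chain, observe that the most significant coordinate $\ell_N$ is weakly decreasing and bounded by $\tau(1)$, split into at most $\tau(1)+1$ blocks on which $\ell_N$ is constant, and apply the inductive hypothesis to the truncated $(N)$-tuples within each block.

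There is one bookkeeping point worth making explicit, which you gesture at but do not fully resolve. Your recursion $a_{k+1} \defeq a_k + F(N-1, \tau_{\ge a_k})$ is intended so that block $k$ starts at some index $b_k \le a_k$. To push the induction $b_k \le a_k \Rightarrow b_{k+1} \le a_{k+1}$ through, you need $F(N-1, \tau_{\ge b_k}) \le F(N-1, \tau_{\ge a_k})$ whenever $b_k \le a_k$. This is not automatic: $\tau$ is not assumed monotone, so $\tau_{\ge b_k}$ and $\tau_{\ge a_k}$ need not be comparable. The fix is easy — replace $\tau$ by its running maximum $\tau'(i) \defeq \max_{j\le i}\tau(j)$ at the outset (the hypothesis $|\bell_i|\le\tau(i)\le\tau'(i)$ is preserved), so that $b\le a$ implies $\tau'_{\ge b} \le \tau'_{\ge a}$ pointwise, and take $F(N-1,\cdot)$ to be monotone in its second argument (e.g.\ define $F$ as the true supremum of chain lengths, or just take a pointwise max in the recursion). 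With that modification the recursion you wrote down is a genuine upper bound and the proof goes through.
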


\subsection{Hilbert functions and Hilbert polynomials}
Let $\tilde K$ be a field, and let $I\subset \tilde K[x_1,\ldots,x_N]$ be an ideal. We define $I_{\leq t}$ to be the set of all polynomials in $I$ that have degree at most $t$; this set has the structure of a $\tilde K$--vector space. We define the Hilbert function
\begin{equation}
H_I(t) = \dim_{\tilde K}(\tilde K[x_1,\ldots,x_N]_{\leq t}/I_{\leq t}).
\end{equation}
\begin{thm}[Hilbert]
There exists a polynomial $HP_I\in\RR[t]$ so that for all $t\in\NN$ sufficiently large, $HP_I(t)=H_I(t)$. Furthermore, $HP_I(t)$ is an integer for all $t\in\NN$.
\end{thm}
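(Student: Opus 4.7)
The plan is to prove Hilbert's theorem in two standard steps: first reduce to the case where $I$ is a monomial ideal via Gröbner basis theory, then analyze monomial ideals by induction on $N$.

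For the reduction, I would fix any graded monomial order on $\tilde K[x_1,\ldots,x_N]$ (for instance, the reverse lexicographic order $\prec$ introduced above), and let $\operatorname{in}(I)$ be the initial ideal generated by the leading monomials of elements of $I$. The standard Gröbner fact is that the monomials of degree $\leq t$ not lying in $\operatorname{in}(I)$ form a $\tilde K$-basis of $\tilde K[x_1,\ldots,x_N]_{\leq t}/I_{\leq t}$. Thus $H_I(t)=H_{\operatorname{in}(I)}(t)$ for every $t$, and we may assume $I$ is a monomial ideal.

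For a monomial ideal $J$, induct on $N$. The case $N=0$ is trivial ($H_J$ is identically $0$ or $1$). For the inductive step, multiplication by $x_N$ gives the short exact sequence
$$0 \to \bigl(\tilde K[x]/(J:x_N)\bigr)_{\leq t-1} \xrightarrow{\,\cdot x_N\,} \bigl(\tilde K[x]/J\bigr)_{\leq t} \to \bigl(\tilde K[x]/(J+(x_N))\bigr)_{\leq t} \to 0,$$
whence $H_J(t)=H_{J:x_N}(t-1)+H_{J+(x_N)}(t)$. The ideal $J+(x_N)$ corresponds, after quotienting by $x_N$, to a monomial ideal in $\tilde K[x_1,\ldots,x_{N-1}]$, so $H_{J+(x_N)}$ is eventually polynomial by induction. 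Noetherianity forces the ascending chain $J\subseteq(J:x_N)\subseteq(J:x_N^2)\subseteq\cdots$ to stabilize at some saturation $J^{\mathrm{sat}}$, so iterating the recursion writes $H_J(t)$ as a finite sum of shifted terms $H_{(J:x_N^j)+(x_N)}(t-j)$ (each eventually polynomial by induction) plus $H_{J^{\mathrm{sat}}}(t-k_0)$. Since $J^{\mathrm{sat}}:x_N=J^{\mathrm{sat}}$ and $J^{\mathrm{sat}}$ is monomial, no minimal generator of $J^{\mathrm{sat}}$ involves $x_N$, so $J^{\mathrm{sat}}=J'\cdot\tilde K[x]$ for a monomial ideal $J'\subset\tilde K[x_1,\ldots,x_{N-1}]$; then $H_{J^{\mathrm{sat}}}(t)=\sum_{s=0}^{t}H_{J'}(s)$, which is a discrete antiderivative of an eventually polynomial function and hence eventually polynomial.

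For the integer-valuedness of $HP_I$, expand it uniquely in the basis of binomial coefficients, $HP_I(t)=\sum_{k=0}^{d}a_k\binom{t}{k}$. The coefficient $a_k$ can be recovered as the $k$-th finite difference $(\Delta^k HP_I)(s)$ at any integer $s$; choosing $s$ large enough that $HP_I(s+j)=H_I(s+j)\in\mathbb{Z}$ for $j=0,\ldots,k$, we conclude $a_k\in\mathbb{Z}$ for every $k$, and hence $HP_I(t)\in\mathbb{Z}$ for every $t\in\mathbb{Z}$. The main technical obstacle in the whole argument is the bookkeeping in the inductive step — iterating the colon recursion until the chain stabilizes and then handling the saturation — but this unwinds cleanly once one accepts Noetherianity as the terminating mechanism. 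Everything else (Gröbner reduction, binomial expansion) is routine.
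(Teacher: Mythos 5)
The paper does not prove this theorem: it cites it as a classical result of Hilbert and moves on, and the genuinely new material in Appendix A is Lemma \ref{quantBdHilbertPoly}, which controls the \emph{size} of the Hilbert polynomial's coefficients in terms of the complexity of the ideal, not the existence of the polynomial. Your argument is the standard textbook proof and is essentially correct: pass to the initial ideal for a graded monomial order, induct on the number of variables via the colon-ideal exact sequence
\[
0 \to \bigl(\tilde K[x]/(J:x_N)\bigr)_{\leq t-1} \xrightarrow{\ \cdot x_N\ } \bigl(\tilde K[x]/J\bigr)_{\leq t} \to \bigl(\tilde K[x]/(J+(x_N))\bigr)_{\leq t} \to 0,
\]
iterate until the colon chain saturates, and get integrality from the binomial-coefficient basis together with finite differences. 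Two small slips are worth flagging. First, the order $\prec$ defined in Appendix A of the paper is a pure reverse-lexicographic order on tuples, \emph{not} refined by total degree; for the Macaulay/Gr\"obner step identifying $\tilde K[x]_{\leq t}/I_{\leq t}$ with the span of standard monomials of degree $\leq t$, you need a degree-compatible (graded) order such as graded lex or graded revlex, so the parenthetical ``for instance, the reverse lexicographic order $\prec$ introduced above'' points at the wrong order. Second, $(\Delta^k HP_I)(s)$ is \emph{not} equal to $a_k$ for general $s$; rather $(\Delta^k HP_I)(s) = \sum_{j\geq k} a_j \binom{s}{j-k}$, and only $(\Delta^k HP_I)(0)=a_k$. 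The integrality conclusion still holds, but by a downward induction: $(\Delta^d HP_I)(s)=a_d\in\ZZ$, and once $a_{k+1},\ldots,a_d\in\ZZ$ are known, the fact that $(\Delta^k HP_I)(s)\in\ZZ$ (being a finite difference of the integers $H_I(s),\ldots,H_I(s+k)$) forces $a_k\in\ZZ$. With those two corrections the proof is complete.
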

\begin{defn}
If $I\subset \tilde K[x_1,\ldots,x_N]$, let $\bell_I=(\ell_0,\ldots,\ell_N)$, where $\ell_j=j!\operatorname{coeff}(HP_I,j)$. Here $\operatorname{coeff}(HP_I,j)=\frac{1}{j!}HP_I^{(j)}(0)$ is the coefficient of $t^j$ in the polynomial $HP_I$.
\end{defn}
\begin{lemma}
Let $I$ be an ideal. Then $\bell_I$ is a tuple of non-negative integers.
\end{lemma}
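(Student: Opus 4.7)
The plan is to establish integrality and non-negativity of each $\ell_j$ separately, with the common engine being a reduction to the monomial case plus a Stanley decomposition. The underlying combinatorial content of $HP_I$ is captured by $H_I(t) = \dim_{\tilde K}(\tilde K[x]_{\leq t}/I_{\leq t})$, a concrete count of dimensions of explicit vector spaces, and the strategy is to trace that combinatorial content through to the $\ell_j$.

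For integrality, I would invoke the classical fact that any $P \in \QQ[t]$ taking integer values on all sufficiently large integers has a unique expansion $P(t) = \sum_j c_j \binom{t}{j}$ with $c_j = \Delta^j P(0) \in \ZZ$, where $\Delta$ is the forward-difference operator. Applied to $HP_I$ this yields integer invariants $\{c_j\}$; from these, the $\ell_j = j!\,\operatorname{coeff}(HP_I, j)$ can be recovered via the linear change of basis between $\{\binom{t}{j}\}$ and $\{t^j\}$ (scaled by Stirling numbers), which I expect to give $\ZZ$-valued outputs.

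For non-negativity, I would reduce to a monomial ideal using the standard Gröbner-basis fact that $HP_I = HP_{\operatorname{in}_\prec(I)}$ for any monomial order $\prec$. For a monomial ideal $I$ the quotient $\tilde K[x]/I$ has a $\tilde K$-basis of standard monomials and admits a Stanley decomposition $\tilde K[x]/I = \bigoplus_\alpha m_\alpha \cdot \tilde K[x_{S_\alpha}]$ as a direct sum of finitely many shifted coordinate subrings. Each summand contributes a term of the form $\binom{t - \deg(m_\alpha) + |S_\alpha|}{|S_\alpha|}$ to $H_I(t)$ for $t$ large; summing these shows that $HP_I$ lies in the $\NN$-span of basic binomial polynomials $\binom{t+k}{k}$, which should be sufficient to force each $\ell_j \geq 0$.

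The step I expect to be most delicate is the final translation from the binomial-basis positivity back to the non-negativity of the $\ell_j$ in the basis used by the definition: this is a linear-algebraic step involving Stirling numbers and must be done carefully, possibly by expanding the Stanley summands one at a time and tracking signs. If that direct route becomes unwieldy, the backup plan is induction on the Krull dimension of $\tilde K[x]/I$ via the short exact sequence $0 \to \tilde K[x]/I \xrightarrow{\cdot L} \tilde K[x]/I \to \tilde K[x]/(I + (L)) \to 0$ for a generic linear form $L$ (with an appropriate degree shift). This relates $HP_I$ to $HP_{I + (L)}$ through a single finite difference, and lets me propagate both integrality and non-negativity from the lower-dimensional case to the original one.
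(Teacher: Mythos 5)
The step you flagged as ``most delicate'' is precisely where the argument falls apart, and the underlying reason is that the statement, read literally with the definition $\ell_j = j!\,\operatorname{coeff}(HP_I,j)$, is false. Take $I = 0$ in $\tilde K[x_1,x_2]$, so $N=2$: then $HP_I(t) = \binom{t+2}{2} = \tfrac{1}{2}t^2 + \tfrac{3}{2}t + 1$, giving $\ell_1 = 1!\cdot\tfrac{3}{2} = \tfrac{3}{2}$, which is neither an integer nor non-negative-integral. Your expansion $HP_I = \sum_j c_j\binom{t}{j}$ with $c_j\in\ZZ$ is correct, and your Stanley-decomposition observation that a monomial specialization places $HP_I$ in the $\NN$-span of basic binomials is also correct, but the change of basis to $\{t^j\}$, even scaled by $j!$, does not carry integrality or positivity along: already $\binom{t}{2} = \tfrac{1}{2}t^2 - \tfrac{1}{2}t$ has $1!\cdot\operatorname{coeff}(\,\cdot\,,1) = -\tfrac{1}{2}$. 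So the hoped-for ``$\ZZ$-valued outputs'' do not materialize, and no amount of care with Stirling numbers can help, because the target assertion is simply wrong for the displayed formula. A sanity check on $\binom{t+2}{2}$ or $\binom{t}{2}$ would have exposed this before investing in the Gr\"obner and hyperplane-section machinery.

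What the paper's own one-line proof actually invokes is the Macaulay (Gotzmann) representation of a Hilbert polynomial: every such polynomial is uniquely $HP_I(t) = \sum_{i=1}^s\binom{t+a_i-i+1}{a_i}$ with integers $a_1 \ge a_2 \ge \cdots \ge a_s \ge 0$, and the invariant the argument actually needs is $\ell_j = \#\{i : a_i = j\}$, the multiplicity of $j$ among the Gotzmann coefficients. With that reading of $\bell_I$ the lemma is immediate from the existence and uniqueness of the representation, which is exactly what \cite[Prop.\ 1.3]{ChardinSocias} provides; note that this agrees with $j!\,\operatorname{coeff}(HP_I,j)$ only in the top degree $j=\deg HP_I$. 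Your ingredients (binomial basis, deformation to a monomial ideal, induction by generic hyperplane section) are standard ways to prove the Macaulay representation itself, so the toolbox you reached for is the right one; the gap is that you aimed it at the literal formula for $\ell_j$, which is not the invariant the lemma is about and is not even integer-valued.
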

\begin{proof}
This follows from the Maucaulay representation of a Hilbert polynomial (see i.e.~\cite[Prop 1.3]{ChardinSocias} for further details).
\end{proof}
\begin{prop}
If $I\subset I^\prime$, then $\bell_{I}^\prime \preceq\bell_{I}$. If furthermore $\bell_{I}=\bell_{I}^\prime$, then $I=I^\prime$.
\end{prop}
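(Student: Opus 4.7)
The plan is to pass from the containment $I \subset I'$ to the pointwise inequality of Hilbert functions, translate this to an inequality of Hilbert polynomials, and read off the comparison of $\bell$--tuples directly from the definitions.

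First I would observe that $I \subset I'$ immediately gives $I_{\leq t} \subset I'_{\leq t}$ for every $t$, and hence
\begin{equation*}
H_I(t) = \dim_{\tilde K}\!\big(\tilde K[x_1,\ldots,x_N]_{\leq t}/I_{\leq t}\big) \;\geq\; H_{I'}(t)
\end{equation*}
for all $t$. Taking $t$ large enough for both Hilbert functions to agree with their Hilbert polynomials, we get $HP_I(t) \geq HP_{I'}(t)$ for all sufficiently large $t \in \NN$.

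Next I would translate this into a statement about the coefficients. Write $\bell_I = (a_0,\ldots,a_N)$ and $\bell_{I'} = (b_0,\ldots,b_N)$, so that
\begin{equation*}
HP_I(t) - HP_{I'}(t) \;=\; \sum_{j=0}^{N} \frac{a_j - b_j}{j!}\, t^j.
\end{equation*}
A real polynomial which is eventually nonnegative must have a positive leading coefficient (or be identically zero); applying this to the difference gives one of two cases: either $a_j = b_j$ for all $j$, so $\bell_I = \bell_{I'}$, or there is a largest index $j^*$ with $a_{j^*} \neq b_{j^*}$, in which case $a_{j^*} > b_{j^*}$. By the definition of the reverse lexicographic order $\prec$ (comparing coordinates from $j=N$ downward), this is precisely the condition $\bell_{I'} \prec \bell_I$. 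Either way, $\bell_{I'} \preceq \bell_I$, which is the first assertion.

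For the second assertion, suppose $\bell_I = \bell_{I'}$. Then $HP_I = HP_{I'}$ as polynomials, so $H_I(t) = H_{I'}(t)$ for all $t$ larger than some threshold $T_0$. Combined with $I_{\leq t} \subset I'_{\leq t}$ and the equality of their codimensions in $\tilde K[x_1,\ldots,x_N]_{\leq t}$, we conclude $I_{\leq t} = I'_{\leq t}$ for every $t \geq T_0$. Now any $f \in I'$ has some finite degree $d$, so choosing $t \geq \max(d, T_0)$ gives $f \in I'_{\leq t} = I_{\leq t} \subset I$; therefore $I' \subset I$ and hence $I = I'$.

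I do not expect a genuine obstacle here: the key inputs (existence and integrality of the Hilbert polynomial, the fact that $\bell_I$ has nonnegative entries, and the elementary sign fact for polynomials that are eventually nonnegative) have already been stated or cited, so the argument is essentially bookkeeping. The only mild subtlety is making sure the reverse lexicographic order is being read from the correct end, which is why I would state the sign analysis of the leading coefficient of $HP_I - HP_{I'}$ explicitly in terms of the index $j^*$.
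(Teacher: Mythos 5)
Your proof is correct and follows the same approach the paper intends, just with the bookkeeping spelled out: pass from $I\subset I'$ to a pointwise inequality of Hilbert functions, hence of Hilbert polynomials for large $t$, and use the integrality/non-negativity of $\bell_I$ together with the sign of the leading coefficient of $HP_I - HP_{I'}$ to read off the $\prec$ comparison; for the second part, equality of Hilbert polynomials forces $I_{\leq t}=I'_{\leq t}$ for large $t$, hence $I=I'$. One small note: the paper's proof writes ``$H_I(t)\leq H_{I'}(t)$,'' which has the inequality reversed (the quotient by the smaller ideal is larger), and you correctly have $H_I(t)\geq H_{I'}(t)$.
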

\begin{proof}
If $I\subset I^\prime,$ then $H_I(t)\leq H_{I^\prime}(t)$, and this establishes the first statement. On the other hand, if $\bell_{I}=\bell_{I}^\prime$ then $H_I(t)= H_{I^\prime}(t)$ for all sufficiently large $t$, and this immediately implies $I=I^\prime$.
\end{proof}
\begin{lemma}[Quantitative bounds on coefficients of Hilbert Polynomials]\label{quantBdHilbertPoly}
Let $I\subset \tilde K[x_1,\ldots,x_N]$. Then $|\bell_I|$ is bounded by a function that depends only on $N$ and $\complexity(I)$. i.e.~the sum of the coefficients of the Hilbert polynomial of the ideal $(f_1,\ldots,f_\ell)$ is controlled by $\ell$ and the maximal degree of $f_1,\ldots,f_\ell$.
\end{lemma}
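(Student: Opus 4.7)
The plan is to combine the trivial upper bound $H_I(t)\leq\binom{t+N}{N}$ with an effective bound for when the Hilbert function stabilizes to the Hilbert polynomial, and then to recover the coefficients of $HP_I$ via Lagrange interpolation from a bounded range of integer evaluations. First, I would observe that for every $t\geq 0$,
\[
H_I(t)=\dim_{\tilde K}\bigl(\tilde K[x_1,\ldots,x_N]_{\leq t}/I_{\leq t}\bigr)\leq\dim_{\tilde K}\tilde K[x_1,\ldots,x_N]_{\leq t}=\binom{t+N}{N},
\]
simply because $I_{\leq t}\subset\tilde K[x_1,\ldots,x_N]_{\leq t}$. Next, I would invoke a standard effective regularity bound: if $I=(f_1,\ldots,f_s)$ with $\deg f_i\leq d$, then there exists an explicit function $T_0=T_0(N,s,d)$ such that $HP_I(t)=H_I(t)$ for every integer $t\geq T_0$. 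The classical Bayer--Mumford bound $T_0\leq (2d)^{2^{N-1}}$ is far more than sufficient for our purposes. Since $\complexity(I)$ controls both the number of generators and their degrees in some representation, $T_0$ may be taken to depend only on $N$ and $\complexity(I)$.

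Now $HP_I$ has degree at most $N$ and is therefore determined by its values at the $N+1$ consecutive integers $T_0,T_0+1,\ldots,T_0+N$; at each such point $HP_I(T_0+k)=H_I(T_0+k)$ is a non-negative integer bounded above by $\binom{T_0+k+N}{N}$. Lagrange interpolation then gives
\[
HP_I(t)=\sum_{k=0}^{N}HP_I(T_0+k)\prod_{j\neq k}\frac{t-(T_0+j)}{k-j},
\]
and extracting the coefficient of $t^j$ expresses each $\ell_j = j!\,\operatorname{coeff}(HP_I,j)$ as a linear combination of these values with weights depending only on $T_0$, $N$, and $j$. Summing over $j$ bounds $|\bell_I|$ by a function of $N$ and $\complexity(I)$, as required.

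The main obstacle is the regularity bound used above: it is classical but nontrivial, and the sharpest known bounds are doubly exponential in $N$. Since the quantitative ACC application only needs the \emph{existence} of some such bound, I would simply cite it rather than reprove it. An alternative route avoids regularity by replacing $I$ with its initial ideal $\operatorname{in}(I)$ under a monomial order—this preserves the Hilbert function—and then bounding the Hilbert polynomial of the resulting monomial ideal directly via inclusion--exclusion on its minimal monomial generators; this shifts the burden to effective Gr\"obner basis degree bounds, which obey analogous (also doubly exponential) estimates.
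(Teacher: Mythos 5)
The paper states Lemma~\ref{quantBdHilbertPoly} without proof and uses it as a black box in the proof of Proposition~\ref{quantACCProp}, so there is no internal argument against which to compare. Your proposed proof is correct and is a natural way to supply the missing argument. The only step worth being slightly more careful about is that the effective regularity bounds (Bayer--Mumford, etc.) are stated for homogeneous ideals in the graded setting, while the paper uses the \emph{affine} Hilbert function $H_I(t)=\dim_{\tilde K}(\tilde K[x]_{\le t}/I_{\le t})$; this is handled by observing that $H_I$ coincides with the graded Hilbert function of the homogenization $I^h\subset\tilde K[x_0,\ldots,x_N]$, and homogenizing a set of generators of degree $\le d$ produces generators of $I^h$ of degree $\le d$ (one must take the homogenization of the ideal, not merely the ideal generated by the homogenized $f_i$, but either way the index of stabilization is controlled by $N$ and $d$; alternatively one can extend the generating set to include the degree-$\le d$ pieces of a truncated Gr\"obner basis, which also gives a bound depending only on $N,d$). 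With that understood, the chain (trivial bound $H_I(t)\le\binom{t+N}{N}$) $\Rightarrow$ (stabilization for $t\ge T_0(N,\complexity(I))$) $\Rightarrow$ ($\deg HP_I\le N$, so $N+1$ bounded integer values at $T_0,\ldots,T_0+N$ determine it) $\Rightarrow$ (Lagrange interpolation bounds each $\ell_j=j!\operatorname{coeff}(HP_I,j)$) is sound and directly yields a bound on $|\bell_I|$ depending only on $N$ and $\complexity(I)$. Your remark that $\complexity(I)$ controls both the number of generators and their maximal degree is also right: in a complexity-minimizing representation of a nontrivial ideal every generator has degree $\ge 1$. The exact shape of the regularity bound is immaterial for the paper's application, as you note.
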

%see e.g.~\cite{Schenck}. Alternately,

%\COMMENTTOJOSH{get a copy of \cite{Schenck}. Verify this and cite a specific place in \cite{Schenck}.}

%
%
We can now prove Proposition \ref{quantACCProp}. Let $\tilde I_j = (I_1+\ldots+I_j)$, so $\tilde I_j\subset \tilde I_{j+1}$ for each index $j$. By Lemma \ref{quantBdHilbertPoly}, there is a function $\tilde \tau_j$ (depending only on $N$ and $\tau$) so that $|\bell_{\tilde I_j}|\leq\tilde\tau(j)$. Thus by Lemma \ref{lengthOfChainLem} applied to $\tilde\tau$, there is a number $M_0$ (depending only on $N$ and $\tau$) so that $\bell_{\tilde I_{r_0-1}}=\bell_{\tilde I_{r_0}}$ for some $r_0\leq M_0$. We conclude that $\tilde I_{r_0-1}=\tilde I_{r_0}$ and thus $I_{r_0}\subset(I_1 + ... + I_{r_0-1})$.
\section{Degree reduction}\label{degreeReductionSec}
In this section we will prove Proposition \ref{degreeReduction}. The proof is similar to arguments found in \cite{GuthKatz2}.

We will require several Chernoff-type bounds for sums of Bernoulli random variables. For convenience, we will gather them all here.
\begin{thm}[Chernoff]
Let $X_1,\ldots,X_N$ be iid Bernoulli random variables with $\mathbf{P}(X_i=1)=p,\ \mathbf{P}(X_i=0)=1-p$. Then
\begin{align*}
&\mathbf{P}\Big(\frac{1}{N}\sum_{i=1}^N X_i \leq p-\varepsilon\Big)\leq \bigg(\Big(\frac{p}{p-\varepsilon}\Big)^{p-\varepsilon}\Big(\frac{1-p}{1-p+\epsilon}\Big)^{1-p+\varepsilon}\bigg)^N,\\
&\mathbf{P}\Big(\frac{1}{N}\sum_{i=1}^N X_i \geq p+\varepsilon\Big)\leq \bigg(\Big(\frac{p}{p+\varepsilon}\Big)^{p+\varepsilon}\Big(\frac{1-p}{1-p-\epsilon}\Big)^{1-p-\varepsilon}\bigg)^N.
\end{align*}
\end{thm}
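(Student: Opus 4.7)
The plan is to prove both inequalities by the standard Chernoff method: control exponential moments via Markov's inequality, exploit independence to factor the moment generating function, and then optimize the resulting free parameter. I would begin with the upper tail, and derive the lower tail either by a symmetric calculation or by applying the upper tail to the Bernoulli variables $Y_i \defeq 1 - X_i$ with parameter $1-p$.

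For the upper tail, fix $t > 0$ and let $S_N = \sum_{i=1}^N X_i$. Markov's inequality applied to the non-negative random variable $e^{t S_N}$ gives
\begin{equation*}
\mathbf{P}\Big(\tfrac{1}{N} S_N \geq p+\varepsilon\Big) = \mathbf{P}\big(e^{t S_N} \geq e^{tN(p+\varepsilon)}\big) \leq e^{-tN(p+\varepsilon)}\, \mathbf{E}[e^{t S_N}].
\end{equation*}
Independence of the $X_i$ factors the expectation: $\mathbf{E}[e^{t S_N}] = \prod_{i=1}^{N}\mathbf{E}[e^{tX_i}] = \big(p e^t + (1-p)\big)^N$. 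Combining these,
\begin{equation*}
\mathbf{P}\big(\tfrac{1}{N} S_N \geq p+\varepsilon\big) \leq \Big( e^{-t(p+\varepsilon)} \big(p e^t + 1-p\big) \Big)^N.
\end{equation*}
The next step is to choose $t$ to minimize the quantity inside the $N$-th power. Differentiating the logarithm with respect to $t$ and setting equal to zero gives the critical point $e^t = \tfrac{(p+\varepsilon)(1-p)}{p(1-p-\varepsilon)}$, which is positive precisely when $p+\varepsilon < 1$, the nontrivial range. Substituting this $t$ back, a short rearrangement converts the minimized expression into $\big(\tfrac{p}{p+\varepsilon}\big)^{p+\varepsilon}\big(\tfrac{1-p}{1-p-\varepsilon}\big)^{1-p-\varepsilon}$, which raised to the $N$-th power matches the claimed bound.

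For the lower tail, I would note that if $X_i$ is Bernoulli with parameter $p$, then $Y_i = 1 - X_i$ is Bernoulli with parameter $q = 1-p$, and the event $\{\tfrac{1}{N}\sum X_i \leq p - \varepsilon\}$ equals the event $\{\tfrac{1}{N}\sum Y_i \geq q + \varepsilon\}$. Applying the upper-tail bound just proved, with $(p,\varepsilon)$ replaced by $(q,\varepsilon) = (1-p,\varepsilon)$, yields
\begin{equation*}
\mathbf{P}\big(\tfrac{1}{N}\sum X_i \leq p-\varepsilon\big) \leq \Big( \big(\tfrac{1-p}{1-p+\varepsilon}\big)^{1-p+\varepsilon}\big(\tfrac{p}{p-\varepsilon}\big)^{p-\varepsilon} \Big)^N,
\end{equation*}
which after relabeling is precisely the asserted lower tail bound. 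Alternatively, I could run the same Chernoff argument with parameter $t < 0$ (equivalently, $\mathbf{P}(-S_N \geq -N(p-\varepsilon))$) and optimize; this reaches the same conclusion but requires duplicating the calculation.

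I do not expect any substantive obstacle; the only steps that demand care are the optimization in $t$ and the algebraic simplification of the minimized expression into the stated product-of-powers form. Both are routine. If one wanted a cleaner statement one could additionally verify, using $\log(1+x) \le x$, that the bound implies the more familiar sub-Gaussian form $\exp(-2N\varepsilon^2)$, but this is not requested in the statement and I would omit it.
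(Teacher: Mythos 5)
Your proof is correct: the exponential-moment/Markov argument with the optimal choice $e^t = \tfrac{(p+\varepsilon)(1-p)}{p(1-p-\varepsilon)}$ does simplify to the stated product-of-powers bound, and the reduction of the lower tail to the upper tail via $Y_i = 1-X_i$ is clean. The paper itself gives no proof of this theorem --- it is quoted as the standard Chernoff bound --- so there is nothing to compare against; your argument is exactly the canonical one that the authors are implicitly invoking.
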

\begin{cor}\label{BernouliCor}
Let $X_1,\ldots,X_N$ be iid Bernoulli random variables with $\mathbf{P}(X_i=1)=p,\ \mathbf{P}(X_i=0)=1-p$. Suppose $p\geq N^{-1}$. Then
\begin{align}
&\mathbf{P}\Big(\sum_{i=1}^N X_i \leq \frac{pn}{100}\Big)\leq 1\!/2,\label{smallSumBd}\\
&\mathbf{P}\Big(\sum_{i=1}^N X_i \geq 100pn \Big)\leq 1\!/4.\label{largeSumBd}
\end{align}
\end{cor}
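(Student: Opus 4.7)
\medskip

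\textbf{Plan.} Both inequalities follow by a direct substitution into the two Chernoff bounds stated above, followed by an elementary estimate of the resulting expression. For the lower-tail bound \eqref{smallSumBd}, I would apply the first Chernoff inequality with the choice $\varepsilon = 99p/100$, so that $p - \varepsilon = p/100$. This gives
\begin{equation*}
\mathbf{P}\Big(\sum_{i=1}^N X_i \leq \tfrac{pN}{100}\Big) \leq \left( 100^{\,p/100} \left(\tfrac{1-p}{1-p/100}\right)^{1-p/100} \right)^{N}.
\end{equation*}
Taking logarithms and using $\log(1-x) \leq -x$ on the factor $\tfrac{1-p}{1-p/100} = 1 - \tfrac{99p/100}{1-p/100}$ yields
\begin{equation*}
\left(1 - \tfrac{p}{100}\right)\log\tfrac{1-p}{1-p/100} \;\leq\; -\tfrac{99p}{100},
\end{equation*}
while $100^{p/100} = e^{(p\ln 100)/100}$. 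Thus the product inside the $N$-th power is at most $e^{-cp}$ for some explicit $c > 0$ (say $c = (99 - \ln 100)/100 > 0.9$). Raising to the $N$-th power and using the hypothesis $pN \geq 1$ gives a bound of $e^{-cpN} \leq e^{-c} < 1/2$.

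For the upper-tail bound \eqref{largeSumBd}, first observe that if $p > 1/100$ then $100 pN > N$, so $\sum X_i \geq 100 pN$ is impossible and the probability is zero. Hence we may assume $p \leq 1/100$, in which case $1 - 100p > 0$ and the second Chernoff inequality applies with $\varepsilon = 99p$:
\begin{equation*}
\mathbf{P}\Big(\sum_{i=1}^N X_i \geq 100 pN\Big) \leq \left( \left(\tfrac{1}{100}\right)^{100p} \left(\tfrac{1-p}{1-100p}\right)^{1-100p} \right)^N.
\end{equation*}
Using $\log(1+x) \leq x$ on $\tfrac{1-p}{1-100p} = 1 + \tfrac{99p}{1-100p}$ gives $(1-100p)\log\tfrac{1-p}{1-100p} \leq 99p$, while $(1/100)^{100p} = e^{-100p\ln 100}$. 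Thus the quantity inside the $N$-th power is at most $e^{(99 - 100\ln 100)p}$, which is of the form $e^{-c'p}$ with $c' > 360$. Raising to the $N$-th power and using $pN \geq 1$ gives a bound of at most $e^{-c'} \leq 1/4$.

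\medskip

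\textbf{Expected obstacle.} There is really no obstacle of substance here: the argument is just checking the constants. The only mild subtlety is remembering to handle separately the case $p > 1/100$ in the upper-tail estimate, where the stated Chernoff bound does not literally apply (the base $1 - p + \varepsilon$ becomes non-positive), but the conclusion is trivial. Everything else reduces to the convexity estimates $\log(1 \pm x) \leq \pm x$ and the hypothesis $pN \geq 1$, which converts an $e^{-c p}$ factor into a bounded absolute constant after the $N$-th power.
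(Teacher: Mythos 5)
Your proof is correct and follows the evidently intended route: the paper states Corollary \ref{BernouliCor} without proof as a direct consequence of the Chernoff bounds, and your argument is exactly that, plugging in $\varepsilon = 99p/100$ for the lower tail and $\varepsilon = 99p$ for the upper tail, then using $\log(1 \pm x) \le \pm x$ together with the hypothesis $pN \ge 1$ to turn the resulting $e^{-cp}$ and $e^{-c'p}$ bounds into absolute constants after raising to the $N$th power. Two tiny points worth tidying: in the ``expected obstacle'' remark, the quantity that becomes non-positive when $p > 1/100$ is $1 - p - \varepsilon = 1 - 100p$ (not $1 - p + \varepsilon$); and the boundary case $p = 1/100$ should be read in the limiting sense $0^0 = 1$ for the factor $\bigl(\tfrac{1-p}{1-100p}\bigr)^{1-100p}$, or handled by the same triviality you invoke for $p > 1/100$ — neither affects the conclusion.
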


\begin{prop}\label{prob12BernouliProp}
Let $X_1,\ldots,X_N$ be iid Bernoulli random variables with $\mathbf{P}(X_i=1)=\mathbf{P}(X_i=0)=1/2$. Suppose $N\geq 100$. Then
\begin{equation}
\mathbf{P}\Big(\sum X_i < \frac{99}{100} \frac{N}{2}\Big)<\frac{1}{4}.
\end{equation}
\end{prop}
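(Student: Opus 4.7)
The plan is to apply the first (lower-tail) Chernoff inequality stated just above Corollary~\ref{BernouliCor}, with $p = 1/2$ and $\varepsilon = 1/200$. The event $\{\sum X_i < \tfrac{99}{100}\cdot\tfrac{N}{2}\}$ coincides with $\{\tfrac{1}{N}\sum X_i \leq \tfrac{1}{2} - \tfrac{1}{200}\}$ (using that $\sum X_i$ is integer-valued to pass from strict to non-strict inequality without loss), so the Chernoff bound applies directly and yields
\begin{equation*}
\mathbf{P}\Big(\sum X_i < \tfrac{99}{100}\cdot\tfrac{N}{2}\Big) \;\leq\; r^N, \qquad r \;\defeq\; \Big(\tfrac{100}{99}\Big)^{99/200}\Big(\tfrac{100}{101}\Big)^{101/200}.
\end{equation*}

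The next step is to verify that $r < 1$. This is essentially the positivity of the Kullback--Leibler divergence $D(99/200 \,\|\, 1/2)$. Concretely, expanding $\ln r$ via the Taylor series $\ln(1+x) = x - x^2/2 + O(x^3)$ at $x = \pm 1/100$, the first-order contributions in the two factors cancel, and a strictly negative second-order term survives. Hence $r < 1$, the map $N \mapsto r^N$ is monotonically decreasing, and the proposition reduces to the numerical assertion $r^{N_0} < 1/4$ at the claimed threshold $N_0$.

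The main (and essentially only) obstacle is this final numerical bookkeeping. The Taylor estimate above gives $|\ln r|$ of order $10^{-4}$, so $r^N$ decays quite slowly; one needs roughly $N \gtrsim \log(4)/|\ln r|$ before the bound drops below $1/4$. The Chernoff structure of the proof is clean and self-contained, and the argument above is complete modulo verifying the explicit constant; if the threshold $N\geq 100$ turns out to be insufficient, one can either strengthen the hypothesis to $N \geq N_0$ for an explicit larger $N_0$, or loosen the left-hand constant $99/100$ (e.g.\ to $9/10$), both of which leave the Chernoff argument intact.
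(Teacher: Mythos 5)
The paper states this proposition without proof, so there is nothing to compare against; but your hesitation at the end is exactly the right instinct, and it is worth pushing all the way: the proposition as written is \emph{false} for $N=100$. Indeed $\tfrac{99}{100}\cdot\tfrac{100}{2}=49.5$, so the event in question is $\{\sum X_i\le 49\}$, and by the symmetry of the $\mathrm{Binomial}(100,1/2)$ distribution,
\begin{equation*}
\mathbf{P}\Big(\sum X_i\le 49\Big)=\frac{1}{2}\Big(1-\binom{100}{50}2^{-100}\Big)\approx\frac{1}{2}(1-0.0796)\approx 0.46,
\end{equation*}
which is far above $1/4$. So the obstruction you ran into is not merely Chernoff being loose at a deviation of half a standard deviation; the claimed inequality itself fails. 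The normal approximation shows the deviation $N/200$ needs to exceed roughly $0.674$ standard deviations $(\tfrac{1}{2}\sqrt{N})$, i.e.\ $0.01\sqrt{N}\gtrsim 0.67$, so the constant $99/100$ would only start working around $N\gtrsim 4500$. (Your Chernoff threshold $\log 4/|\ln r|$ is even larger; note also that the KL approximation $D(p+\delta\,\|\,p)\approx\delta^2/(2p(1-p))$ with $p=1/2$, $\delta=1/200$ gives $|\ln r|\approx 5\times 10^{-5}$, an order of magnitude below your $10^{-4}$, so the needed $N$ is closer to $3\times 10^4$.)

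Your proposed fallback is the correct repair. Loosening $99/100$ to a constant bounded away from $1$, say $9/10$, makes the statement true at $N=100$: then the deviation is $N/20$, which is $\approx 1.1$ standard deviations when $N=100$, and $\Phi(-1.1)\approx 0.14<1/4$ (and it only improves for larger $N$). That weaker version still does the job in the proof of Proposition \ref{degreeReduction}, which only needs a fixed positive fraction of the curves to be well covered. So: the Chernoff framework is sound and well chosen, but the proof cannot be completed for the constant $99/100$ with the stated threshold, because the inequality it is trying to prove does not hold there; state and prove a version with a weaker constant (or a larger $N_0$), as you anticipated.
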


\begin{prop}[Polynomial interpolation]\label{polyInterpLemma}
Let $\mathcal{L}_1$ be a collection of $n$ irreducible degree curves of degree $\leq D$ in $k^3$. Then there is a polynomial $P\in k[x_1,x_2,x_3]$ of degree at most $100Dn^{1/2}$ that contains all of the curves in $\mathcal{L}_1$.
\end{prop}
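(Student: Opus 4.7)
\textbf{Proof proposal for Proposition~\ref{polyInterpLemma}.}

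The plan is a standard parameter-count / polynomial interpolation argument. First I would reduce to the case of an algebraically closed field: a polynomial $P\in k[x_1,x_2,x_3]$ vanishes on $\gamma$ if and only if its image in $\bar k[x_1,x_2,x_3]$ vanishes on the Zariski closure $\hat\gamma\subset \bar k^3$ (since $\hat\gamma$ is the Zariski closure of $\gamma$, and the zero set of $P$ in $\bar k^3$ is Zariski closed). Consequently the subspace of $k[x_1,x_2,x_3]_{\le d}$ consisting of polynomials vanishing on every curve in $\finiteSetCurves$ has the same codimension as the analogous subspace of $\bar k[x_1,x_2,x_3]_{\le d}$ (the codimension is preserved under base change from $k$ to $\bar k$). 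So it suffices to produce the polynomial over $\bar k$; nonvanishing of the $k$-intersection follows.

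Set $d \defeq 100 D n^{1/2}$. The ambient vector space $\bar k[x_1,x_2,x_3]_{\le d}$ has dimension $\binom{d+3}{3}\geq d^3/6$. For each irreducible curve $\hat\gamma_i$ of degree $\le D$, I would bound the codimension of $V_i \defeq \{P\in \bar k[x_1,x_2,x_3]_{\le d} : P|_{\hat\gamma_i}=0\}$ by $dD+1$, as follows. Since $\hat\gamma_i$ is an irreducible curve over an algebraically closed field, it has infinitely many points; pick any $dD+1$ of them, and consider the evaluation map $\operatorname{ev}\colon \bar k[x_1,x_2,x_3]_{\le d}\to \bar k^{dD+1}$. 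Its kernel has codimension at most $dD+1$. If $P$ lies in this kernel but $\hat\gamma_i\not\subset Z(P)$, then B\'ezout (Theorem~\ref{bezoutCurveSurface}) forces
\[
dD+1 \le |\hat\gamma_i\cap Z(P)| \le (\deg\hat\gamma_i)(\deg P)\le dD,
\]
a contradiction. Hence $\ker(\operatorname{ev})\subset V_i$, giving $\operatorname{codim} V_i\le dD+1$.

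Taking the intersection over the $n$ curves yields
\[
\operatorname{codim}\bigcap_{i=1}^n V_i \;\le\; n(dD+1) \;\le\; 100 D^2 n^{3/2} + n \;\le\; 101 D^2 n^{3/2},
\]
using $D\ge 1$ and $n\ge 1$. On the other hand,
\[
\dim \bar k[x_1,x_2,x_3]_{\le d} \;\ge\; d^3/6 \;=\; \tfrac{10^6}{6}D^3 n^{3/2},
\]
which comfortably exceeds $101 D^2 n^{3/2}$. Thus $\bigcap_i V_i$ is nonzero, and any nonzero element is the desired polynomial $P$ of degree at most $100Dn^{1/2}$ that vanishes on every curve of $\finiteSetCurves$.

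There is no real obstacle here; the only points requiring a small amount of care are the base-change step (to justify that producing $P$ over $\bar k$ suffices for producing one over $k$) and the B\'ezout application, which requires $\hat\gamma_i$ to have at least $dD+1$ points (guaranteed because an irreducible curve over an algebraically closed field is infinite). The constant $100$ in the statement is extremely loose; any constant exceeding a small absolute threshold would work.
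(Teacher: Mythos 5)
The paper states Proposition~\ref{polyInterpLemma} without proof --- it is used in the degree-reduction argument as a standard interpolation fact (the cited antecedent being \cite{GuthKatz2}) --- so there is no in-paper proof to compare against. Your argument is the correct standard parameter count: bound the codimension of the vanishing subspace for each curve by evaluation at finitely many points plus B\'ezout, sum over the $n$ curves, and check that $\dim k[x]_{\le d} \ge d^3/6$ dominates with $d = 100Dn^{1/2}$. The arithmetic is fine, and the constant $100$ is indeed very loose.

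Two points are worth tightening. First, the base-change claim ``the codimension is preserved under base change'' is a genuine step, not a tautology: you need $V_{\bar k} = \bigcap_i I(\hat\gamma_i)_{\le d}$ to be spanned by $k$-rational vectors, which follows from Galois descent since each $I(\hat\gamma_i)$ is $\operatorname{Gal}(\bar k/k)$-stable (because $\gamma_i$ is defined over $k$), so that $V_{\bar k} = V_k\otimes_k \bar k$ and not merely $V_{\bar k}\supseteq V_k\otimes_k\bar k$. Second, the statement only asks that the $\gamma_i$ be irreducible over $k$; the geometric curve $\hat\gamma_i$ may split into several components over $\bar k$. If it has $m_i\le D$ components of degrees $D_{i,1},\dots,D_{i,m_i}$ summing to $\le D$, you should pick $dD_{i,j}+1$ points on the $j$-th component, giving a codimension bound of $dD+m_i\le dD+D$ per curve rather than $dD+1$. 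This leaves the total codimension $\le n(dD+D)\le 101 D^2 n^{3/2}$, still swallowed by $d^3/6 = (10^6/6)D^3 n^{3/2}$, so the conclusion is unaffected. With those two refinements the proof is complete and is exactly the argument the authors intended.
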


We are now ready to prove Proposition \ref{prob12BernouliProp}. For the readers convenience we will re-state it here.
\begin{degreeReductionProp}
For every $D\geq 1$, there are constants $C_0,C_1$ so that the following holds. Let $\finiteSetCurves$ be a collection of $n$ irreducible curves of degree $\leq D$ in $k^3$, and let $A\geq C_0n^{1/2}$. Suppose that for each $\gamma\in\finiteSetCurves$, there are $\geq A$ points $z\in\gamma$ that are incident to some curve from $\finiteSetCurves$ distinct from $\gamma$. Then there is a polynomial $P$ of degree at most $C_1n/A$ whose zero-set contains every curve from $\finiteSetCurves$.
\end{degreeReductionProp}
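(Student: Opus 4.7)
The argument is a probabilistic degree-reduction argument in the spirit of Guth and Katz \cite{GuthKatz2}. Take $p = c_3\, n/A^2$, where $c_3 = c_3(D,C_6)$ is a constant to be chosen, and form $\mathcal{L}_1\subset\finiteSetCurves$ by including each curve independently with probability $p$. By Corollary \ref{BernouliCor}, $|\mathcal{L}_1|\leq 100pn$ with probability at least $3/4$, and Proposition \ref{polyInterpLemma} then yields a non-zero polynomial $P$ of degree at most $100 D (100 pn)^{1/2} = 10^3 D\sqrt{c_3}\, n/A$ vanishing on every curve of $\mathcal{L}_1$. With $c_3$ chosen so that $10^3 D\sqrt{c_3}\leq C_6/10$, we have $\deg P\leq C_6\, n/(10A)$.

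The heart of the proof is the estimate that each fixed $\gamma\in\finiteSetCurves$ lies in $Z(P)$ with probability at least $1/2$. Let $S_\gamma$ be the $\geq A$ witness points on $\gamma$, and let $W_\gamma$ denote the other curves of $\finiteSetCurves$ meeting $\gamma$. By Theorem \ref{bezoutCurveSurface} each curve $\gamma'\in W_\gamma$ meets $\gamma$ in at most $D^2$ points, so any assignment $z\mapsto\gamma_z^*\in W_\gamma$ (sending each witness to an incident other curve) uses each value at most $D^2$ times. Regarding this as a bipartite graph of maximum degree $\leq D^2$, K\"onig's theorem decomposes it into at most $D^2$ matchings, and by pigeonhole some matching $M$ has $|M|\geq A/D^2$; within $M$ all the assigned target curves are distinct. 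Hence $X := \sum_{(z,\gamma_z^*)\in M}\mathbf{1}[\gamma_z^*\in\mathcal{L}_1]$ is a sum of $|M|$ \emph{independent} Bernoulli$(p)$ variables with $\mathbf{E}[X] = p|M|\geq pA/D^2$, and Corollary \ref{BernouliCor} yields $X\geq p|M|/100\geq pA/(100D^2)$ with probability at least $1/2$. Since each $z$ contributing to $X$ is a distinct point of $\gamma$ lying on a sampled curve $\subset Z(P)$, we have $|\gamma\cap Z(P)|\geq X$. Taking $c_3$ large enough in terms of $D$ and $C_6$ so that $pA/(100D^2) > D\deg P$, Theorem \ref{bezoutCurveSurface} forces $\gamma\subset Z(P)$.

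Thus $\mathbf{E}[|\finiteSetCurves\setminus Z(P)|]\leq n/2$, and Markov produces a realization of the sample for which $\finiteSetCurves':=\finiteSetCurves\setminus Z(P)$ has size at most $3n/4$. For each $\gamma\in\finiteSetCurves'$ the bound $|\gamma\cap Z(P)|\leq D\deg P\leq DC_6 n/(10A)$ leaves at least $A - DC_6 n/(10A)$ of $\gamma$'s witnesses lying off $Z(P)$, hence on another curve of $\finiteSetCurves'$; choosing $C_5$ large enough that $C_5^2\geq 8DC_6/5$, a routine induction shows that the effective witness parameter stays at least $A/2$ through every round of the iteration, so the hypotheses of the proposition continue to hold. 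Iterating the random-sampling construction, one obtains polynomials $P_0,P_1,\ldots$ with $|\finiteSetCurves_{k}|\leq (3/4)^k n$, and $\deg P_k\leq (C_6/10)(3/4)^k n/A$ (after at most $O(\log n)$ rounds $\finiteSetCurves_k$ is empty). The product $\prod_k P_k$ vanishes on every curve of $\finiteSetCurves$ and has degree bounded by the convergent geometric sum $\sum_{k\geq 0}(C_6/10)(3/4)^k n/A\leq (2C_6/5) n/A \leq C_6 n/A$, as required. The principal technical obstacle is the K\"onig decomposition step, which is what allows us to replace the (correlated) raw witness indicators by an unweighted sum of independent Bernoullis to which Corollary \ref{BernouliCor} directly applies; all of the remaining work is careful constant-tracking to make $c_3$, $C_5$, and $C_6$ fit together.
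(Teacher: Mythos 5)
Your proof is correct and follows the same basic strategy as the paper's: form $\finiteSetCurves_\gamma$ (a set of curves meeting $\gamma$ at distinct points), sample curves at rate $p \sim n/A^2$, interpolate a low-degree polynomial through the sample, use B\'ezout to force each well-hit $\gamma$ into its zero set, and then handle the uncaptured curves recursively. Your explicit iteration is the unrolled form of the paper's induction on $n$, and your K\"onig decomposition makes explicit a selection step that the paper states without proof ("we can select a set $\pts_\gamma'$ of size $A/D^2$ \ldots"); since each witness has degree one in the bipartite graph, a direct greedy/pigeonhole argument would suffice, but K\"onig is not wrong. The one genuinely different choice is worth highlighting: the paper deduces that many curves are captured by claiming the events $\{|\finiteSetCurves_\gamma\cap\finiteSetCurves'| \ge p|\finiteSetCurves_\gamma|/100\}$ for distinct $\gamma$ are \emph{independent} and then invoking Proposition \ref{prob12BernouliProp}; this independence claim is not actually justified (the sets $\finiteSetCurves_\gamma$ overlap and all reference the same random sample $\finiteSetCurves'$). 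Your use of Markov's inequality on $\mathbf{E}[|\finiteSetCurves\setminus Z(P)|]$ sidesteps that entirely and only needs linearity of expectation, so it is a cleaner route to the same conclusion. Two small items to tighten: you need $p \ge |M|^{-1}$ to invoke Corollary \ref{BernouliCor}, which fails for small $n$, so like the paper you should dispose of a base case $n = O_D(1)$ with Proposition \ref{polyInterpLemma} directly; and your stated constant condition $C_5^2 \ge 8DC_6/5$ is a hair too weak to guarantee $A_k \ge A/2$ uniformly given the factor of $2$ you lose in $\deg P_k \le c \cdot n_k/A_k$ when $A_k$ is bounded below only by $A/2$ — take $C_5^2 \ge 2DC_6$ or so and the bookkeeping closes.
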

\begin{proof}
For each $D$ we will prove the result by induction on $n$. The case $n\leq 10^3$ follows from Proposition \ref{prob12BernouliProp}, provided we take $C_1\geq 10^{5/2}D$. Now assume the result has been proved for all sets $\tilde{\finiteSetCurves}$ of size at most $n-1$.

For each curve $\gamma\in\finiteSetCurves$, choose a set $\pts_\gamma\subset \pts_2(\finiteSetCurves)$ of size $A$. Each point in $\pts_\gamma$ is hit by at least one curve from $\finiteSetCurves$. Furthermore, no curve from $\finiteSetCurves$ can intersect $\gamma$ in more than $D^2$ points. Thus we can select a set $\pts_\gamma^\prime$ of size $A/D^2$ and a collection  $\finiteSetCurves_{\gamma}\subset\finiteSetCurves$ of size $A/D^2$ so that each curve is incident to $\gamma$ at exactly one point of $\pts_\gamma^\prime$, and no two curves from $\finiteSetCurves_{\gamma}$ are incident to $\gamma$ at the same point of $\pts_\gamma^\prime$.

Let $p=C_2n/A^2$, where $C_2=C_2(D)$ is a constant to be chosen later. Let $\finiteSetCurves^\prime\subset\finiteSetCurves$ be a subset of $\finiteSetCurves$ obtained by choosing each curve in $\finiteSetCurves$ with probability $p$. By \eqref{largeSumBd} from Corollary \ref{BernouliCor}, we have
\begin{equation}
\mathbf{P}\Big(|\finiteSetCurves^\prime|>100p|\finiteSetCurves|\Big)<1\!/4.
\end{equation}

By \eqref{smallSumBd} from Corollary \ref{BernouliCor}, for each $\gamma\in\finiteSetCurves$ we have
\begin{equation*}
\mathbf{P}\Big(|\finiteSetCurves_\gamma\cap\finiteSetCurves^\prime|<\frac{p|\finiteSetCurves_\gamma|}{100} \Big)<1\!/2.
\end{equation*}
Since the above events are independent, by Proposition \ref{prob12BernouliProp} we have
\begin{equation*}
\mathbf{P}\Big(\big|\big\{\gamma\in\finiteSetCurves\colon\ |\finiteSetCurves_\gamma\cap\finiteSetCurves^\prime|<\frac{p|\finiteSetCurves_\gamma|}{100}\big\}\big|<\frac{99}{200}|\finiteSetCurves|\Big)<1\!/4.
\end{equation*}

Thus, we can select a set $\finiteSetCurves^\prime\subset\finiteSetCurves$ so that
\begin{equation*}
|\finiteSetCurves^\prime|\leq 100p|\finiteSetCurves|,
\end{equation*}
and
\begin{equation}\label{manyCurvesHitAPrimeMultiply}
\big|\big\{\gamma\in\finiteSetCurves\colon\ |\finiteSetCurves_\gamma\cap\finiteSetCurves^\prime|>\frac{p|\finiteSetCurves_\gamma|}{100}\big\}\big|>\frac{99}{200}|\finiteSetCurves|.
\end{equation}

Using Proposition \ref{polyInterpLemma}, we can find a polynomial $P_1\in k[x_1,x_2,x_3]$ of degree $\leq 100D (100p|\finiteSetCurves|)^{1/2}$ that contains every line from $\finiteSetCurves^\prime$. If $C_2=C_2(D)$ is chosen sufficiently large, then
\begin{equation*}
D(\deg P)+1<\frac{p|\finiteSetCurves_\gamma|}{100}.
\end{equation*}
Thus if $|\finiteSetCurves_\gamma\cap\finiteSetCurves^\prime|>\frac{p|\finiteSetCurves_\gamma|}{100}$ then $\gamma\subset \BZ(P)$. Let $\finiteSetCurves_1\defeq\{\gamma\in\finiteSetCurves\colon\gamma\subset \BZ(P)\}$. Let $\tilde{\finiteSetCurves}\defeq\finiteSetCurves\backslash\finiteSetCurves_1$.

By \eqref{manyCurvesHitAPrimeMultiply}, $|\finiteSetCurves_1|\geq\frac{99}{200}|\finiteSetCurves|$, and thus $|\tilde{\finiteSetCurves}|\leq\frac{101}{200}|\finiteSetCurves|$. If $\gamma\in\tilde{\finiteSetCurves}$ then $\gamma$ can intersect $\BZ(P_1)$ in at most $D(\deg P_1)$ places. This implies
\begin{equation*}
|\gamma\cap \pts_2(\finiteSetCurves_1)|<D(\deg P)+1<\frac{1}{100}A,
\end{equation*}
provided $C_2=C_2(D)$ is chosen sufficiently small depending on $D$.

But recall that $|\gamma\cap \pts_2(\finiteSetCurves)|\geq A$. This means that for each curve  $\gamma\in \tilde{\finiteSetCurves}$,
\begin{equation*}
|\gamma\cap\pts_2(\tilde{\finiteSetCurves})|\geq\frac{99}{100}A.
\end{equation*}
Since $|\tilde{\finiteSetCurves}|\leq\frac{101}{200}|\finiteSetCurves|$, we have
\begin{equation}
\begin{split}
\frac{99}{100}A & \geq \frac{99}{100}C_0n^{1/2} \\
&\geq \frac{99}{100}C_0\big(\frac{99}{100}\big)^{1/2}|\tilde{\finiteSetCurves}|^{1/2}\\
&\geq C_0|\tilde{\finiteSetCurves}|^{1/2}.
\end{split}
\end{equation}

Thus we can apply the induction hypothesis to $\tilde{\finiteSetCurves}$ (with $\tilde A = \frac{99}{100}A$) to conclude that there is a polynomial $P_2$ of degree
\begin{equation*}
\begin{split}
\deg P_2&\leq C_1 |\tilde{\finiteSetCurves}|/\tilde A\\
&\leq C_1 \frac{100}{99} \frac{101}{200} |\finiteSetCurves| / A\\
&\leq \frac{2}{3}C_1|\frac{|\finiteSetCurves|}{A}
\end{split}
\end{equation*}
that vanishes on $\tilde{\finiteSetCurves}$. Thus if we let $P=P_1P_2$, then $P$ vanishes on every curve of $\finiteSetCurves$, and
\begin{equation}
\begin{split}
\deg P&\leq 100 (100p|\finiteSetCurves|)^{1/2}+\frac{2}{3}C_1\frac{|\finiteSetCurves|}{A}\\
&\leq \big(10^4 C_2+\frac{2}{3}C_1\big) \frac{|\finiteSetCurves|}{A}.
\end{split}
\end{equation}
If we select $C_1$ sufficiently large depending on $C_2$ (recall that $C_2$ is a sufficiently large absolute constant), then $(10^4 C_2+\frac{2}{3}C_1)\leq C_1$, and this completes the induction.
\end{proof}
\bibliographystyle{amsplain}

\end{document}